\def\mf#1{\mathfrak{#1}}
\def\mc#1{\mathcal{#1}}
\def\mb#1{\mathbb{#1}}
\def\tx#1{\textrm{#1}}
\def\R{\mathbb{R}}
\def\C{\mathbb{C}}
\def\Q{\mathbb{Q}}
\def\Z{\mathbb{Z}}
\def\N{\mathbb{N}}
\def\ul#1{\underline{#1}}
\def\hat{\widehat}
\def\from{\leftarrow}
\def\lrw{\longrightarrow}
\def\sm{\smallsetminus}
\def\<{\langle}
\def\>{\rangle}
\newenvironment{mytitle}
{\begin{center}\large\sc}
{\end{center}}
\newcommand{\qedwhite}{\hfill\ensuremath{\Box}}
\newtheorem*{thm*}{Theorem}
\theoremstyle{definition}
\newtheorem{thm}{Theorem}[subsection]
\newtheorem{lem}[thm]{Lemma}
\newtheorem{pro}[thm]{Proposition}
\newtheorem{cor}[thm]{Corollary}
\newtheorem{fct}[thm]{Fact}
\newtheorem{cnj}[thm]{Conjecture}
\newtheorem{dfn}[thm]{Definition}
\newtheorem{rem}[thm]{Remark}
\newtheorem{exa}[thm]{Example}
\newtheorem{cns}[thm]{Construction}
\newtheorem{prp}[thm]{Properties}
\numberwithin{equation}{subsection}
\newlength{\sumcorr}
\begin{document}

\begin{mytitle}
Covers of reductive groups and functoriality
\end{mytitle}

\begin{center}
Tasho Kaletha
\end{center}

\begin{abstract}
For a quasi-split connected reductive group $G$ over a local field $F$ we define a compact abelian group $\tilde\pi_1(G)$ and an extension $1 \to \tilde\pi_1(G) \to G(F)_\infty \to G(F) \to 1$ of topological groups equipped with a splitting over $G_\tx{sc}(F)$. Any character $x : \tilde\pi_1(G) \to \mu_n(\C)$ leads to an $n$-fold cover $G(F)_x$ of $G(F)$ via pushout. We define an $L$-group $^LG_x$ for this cover that is generally a non-split extension of $\tx{Gal}(F^s/F)$ by $\hat G$. We prove a refined local Langlands correspondence for $G(F)_x$, assuming it is known for connected reductive groups with the same adjoint group as $G$.

Motivation for this construction comes from considerations of Langlands' functoriality conjecture, where subgroups $\mc{H} \subset {^LG}$ of the 
$L$-group of $G$ arise that need not be $L$-groups of other reductive groups. If such a subgroup is full and intersects $\hat G$ in a connected reductive subgroup of maximal rank, we construct a natural triple $(H,x,\xi)$ consisting of a quasi-split connected reductive group $H$, a double cover $H(F)_x$, and an $L$-embedding $\xi : {^LH}_x \to {^LG}$ that is an isomorphism onto $\mc{H}$. We expect that genuine representations of $H(F)_x$ transfer functorially to representations of $G(F)$.

In the special case of endoscopy, we show that the construction of transfer factors simplifies when the natural double cover $H(F)_x$ of the endoscopic group is used. The transfer factor becomes the product of two natural invariants that do not depend on auxiliary choices. One of them is closely related to Kottwitz's work \cite{Kot99} on transfer factors for Lie algebras. The other one is not specific to the case of endoscopy, and will likely play a role in general functoriality questions.

For $F=\R$, analogues of $\tilde\pi_1(G)$ and $G(F)_\infty$ were constructed by Adams and Vogan \cite{AV92} and our work is motivated by their ideas.

\end{abstract}

{\let\thefootnote\relax\footnotetext{This research is supported in part by NSF grant DMS-1801687 and a Simons Fellowship.}}

\tableofcontents

\section{Introduction}

Let $F$ be a local field and let $G$ be a connected reductive $F$-group. According to the Langlands conjectures, the representation theory of the topological group $G(F)$ should be governed by the $L$-group $^LG = \hat G \rtimes \Gamma$, where $\Gamma$ is the absolute Galois group of $F$ relative to a fixed separable closure $F^s$, and $\hat G$ is the dual group of $G$. The Langlands reciprocity conjecture predicts a correspondence between $L$-homomorphisms $L_F \to {^LG}$ and representations of $G(F)$, where $L_F$ is the local Langlands group of $F$, defined to be the Weil group $W_F$ when $F$ is archimedean, and the group $W_F \times \tx{SL}_2(\C)$ when $F$ is non-archimedean. The Langlands functoriality conjecture predicts a relationship between the representations of two groups $G_1(F)$ and $G_2(F)$ if there is an $L$-homomorphism $^LG_1 \to {^LG}_2$.

While reflecting on these questions one often encounters groups $\mc{G}$ which, like the group $^LG$, are extensions of $\Gamma$ by $\hat G$, but may not possess all properties of $^LG$. For example, the extension $\mc{G}$ may not be split. Or it may be split, but not equipped with a splitting. Or a splitting may be given, but the resulting action of $\Gamma$ on $\hat G$ may not preserve a pinning. The question then arises about the meaning of such groups $\mc{G}$ with relation to Langlands' conjectures.

The following are some examples of the occurrence of such groups. 
\begin{enumerate}
	\item Consider a discrete $L$-parameter $\varphi : W_F \to {^LG}$. If $F$ is non-archimedean, assume that $G$ splits over a tame extension of $F$ and the residual characteristic $p$ does not divide the order of the Weyl group of $G$. Then $\varphi$ determines a subgroup $\mc{S} \subset {^LG}$ that contains the image of $\varphi$ and is an extension $1 \to \hat S \to \mc{S} \to \Gamma \to 1$, where $\hat S$ is the dual of an elliptic maximal torus of $G$, cf. \cite[\S3.4]{She83} and \cite[\S4.1]{KalSLP}.

	When $F=\R$, $\mc{S}$ is usually not the $L$-group of $S$. When $F$ is non\-archimedean, $\mc{S}$ is non\-canonically isomorphic to $^LS$, but the set of possible isomorphisms is a torsor under the group of all characters of $S(F)$, so it is not clear how to obtain from the factored parameter $\varphi : W_F \to \mc{S}$ a character of $S(F)$ in a canonical way.
	
	\item Consider an endoscopic datum $(H,s,\mc{H},\eta)$ for $G$, as defined in \cite[\S2.1]{KS99}. The group $\mc{H}$ is an extension of $\Gamma$ by $\hat H$ that is split, but no splitting is given, and it is not assumed that there exists a splitting that gives an action of $\Gamma$ on $\hat H$ preserving a pinning. In fact, there are situations where such a splitting does not exist. Therefore, $\mc{H}$ need not be isomorphic to $^LH$.
	
	\item In the study of functoriality and questions of ``Beyond Endoscopy'', as for example in \cite{Lan04}, subgroups of $^LG$ denoted by $^\lambda H_\pi$ arise. These groups can often be assumed to be extensions of $\Gamma$ by a connected group $\hat H$, cf. \cite[\S1.2, \S1.4]{Lan04}, at least upon replacing $\Gamma$ by $\tx{Gal}(E/F)$ for a suitable finite Galois extension $E/F$. Similar groups occur in the discussion of \cite[\S2]{Art17}, where they are denoted by $\mc{G}'$ and are part of a ``beyond endoscopic datum''.
\end{enumerate}

The lack of an isomorphism $\mc{G} \to {^LG}$ can be resolved by a technical work-around. When $G$ is a torus, which is the setting 1. above (where now $S$ plays the role of $G$), using the Weil group to form $^LG$ in place of the Galois group leads to an extension of $W_F$ by $\hat G$, and such an extension is always split, cf. Proposition \ref{pro:cc_rajan}. However, there is no distinguished choice of splitting. When $G$ is a general connected reductive group, this procedure doesn't always guarantee that $\mc{G}$ will become isomorphic to $^LG$, and one needs to combine it with a second procedure, that of replacing $G$ by a central extension $G_1 \to G$ whose kernel is an induced torus. This is a so called $z$-extension, introduced by Langlands. There is always an $L$-embedding $\mc{G} \to {^LG_1}$, and one can now use $G_1$ in place of $G$ in the study of functoriality. However, the problem remains that there is no natural choice for a $z$-extension, nor for an $L$-embedding $\mc{G} \to {^LG_1}$. 

In this paper we propose a solution to this problem. Given a quasi-split connected reductive group $G$ we define (Definition \ref{dfn:tildepi}) a compact abelian group $\tilde\pi_1(G)$, functorial in $G$, and an extension (cf. \eqref{eq:uni}) of topological groups
\[ 1 \to \tilde\pi_1(G) \to G(F)_\infty \to G(F) \to 1. \]
This extension is equipped with a splitting over $G_\tx{sc}(F)$, i.e. a continuous homomorphism $G_\tx{sc}(F) \to G(F)_\infty$ that lifts the natural homomorphism $G_\tx{sc}(F) \to G(F)$. The definition of $\tilde\pi_1(G)$ is based on Borovoi's algebraic fundamental group \cite{Brv98}. The continuous characters of $\tilde\pi_1(G)$ are in bijection with the group $Z^2(\Gamma,Z(\hat G))$. If $x : \tilde\pi_1(G) \to \mb{S}^1$ is such a character, we obtain via pushout an extension 
\[ 1 \to \mb{S}^1 \to G(F)_x \to G(F) \to 1 \]
as well as an extension (cf. \eqref{eq:h-l1})
\[ 1 \to \hat G \to {^LG_x} \to \Gamma \to 1, \]
equipped with a set-theoretic splitting. The following is an imprecise version of Theorem \ref{thm:llc-h}.

\begin{thm*}
Assume that the refined local Langlands correspondence is known for $z$-extensions of $G$. Then the analogous formulation is known for genuine representations of the topological group $G(F)_x$ and $L$-parameters valued in $^LG_x$.	
\end{thm*}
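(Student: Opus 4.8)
The plan is to reduce the statement for the cover $G(F)_x$ to the known refined local Langlands correspondence for $z$-extensions of $G$ by exhibiting $G(F)_x$ as a quotient of a suitable $z$-extension $G_1 \to G$. The first step is to observe that the character $x : \tilde\pi_1(G) \to \mb{S}^1$ factors through a finite quotient — since $\tilde\pi_1(G)$ is compact and the image lands in some $\mu_n(\C)$ (as in the construction of the $n$-fold cover $G(F)_x$ discussed in the introduction) — so that $G(F)_x$ is really an $n$-fold cover, and the associated cocycle in $Z^2(\Gamma, Z(\hat G))$ takes values in the $n$-torsion $Z(\hat G)[n]$. Dually, via the functoriality of $\tilde\pi_1$ and Borovoi's algebraic fundamental group, such a cocycle corresponds to an isogeny datum: one can choose a $z$-extension $1 \to Z_1 \to G_1 \to G \to 1$ with $Z_1$ an induced torus, together with a character $Z_1(F) \to \mb{S}^1$, whose associated pushout cover of $G(F)$ is canonically identified with $G(F)_x$. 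Concretely, the pullback of $G(F)_\infty$ along $G_1(F) \to G(F)$ splits (using the given splitting over $G_\tx{sc}(F)$ together with the induced-torus part), and this splitting, composed with $x$, produces the desired character of $Z_1(F)$.

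Once $G(F)_x$ is realized as the quotient $G_1(F)/\ker(\chi)$ for a character $\chi : Z_1(F) \to \mb{S}^1$ — equivalently, genuine representations of $G(F)_x$ are precisely the representations of $G_1(F)$ whose central character restricted to $Z_1(F)$ equals $\chi$ — the second step is to match $L$-parameters. On the dual side, the $z$-extension gives a surjection $\hat G_1 \to \hat G$ with central torus kernel $\hat Z_1$, hence a map ${^LG_1} \to {^LG}$, and the character $\chi$ corresponds (by local class field theory / the abelian case of the correspondence for the torus $Z_1$) to an $L$-parameter $W_F \to {^LZ_1}$, i.e. to a cohomology class that twists ${^LG}$ into the extension ${^LG_x}$. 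The key identity to verify is that pushing out the extension $1 \to \hat G \to {^LG_1} \to \Gamma \to 1$ — or rather a Weil-form variant — along the maps induced by $x$ reproduces ${^LG_x}$ together with its set-theoretic splitting, matching the cocycle description in $Z^2(\Gamma, Z(\hat G))$. This is essentially a compatibility between Borovoi's fundamental group on the automorphic side and the center of the dual group on the Galois side, and it should follow from the functorial definition of $\tilde\pi_1(G)$.

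The third step is the translation of the correspondence itself. Granting the refined local Langlands correspondence for the $z$-extension $G_1$ (which is a connected reductive group with the same adjoint group as $G$, so covered by the hypothesis), one restricts to those parameters $\varphi_1 : L_F \to {^LG_1}$ whose composition with ${^LG_1} \to {^LZ_1}$ equals the parameter attached to $\chi$; these are exactly the parameters that descend to $L$-parameters valued in ${^LG_x}$. On the representation side, the corresponding $L$-packets consist of representations with the prescribed $Z_1(F)$-central character, i.e. genuine representations of $G(F)_x$. One then checks that the internal parametrization of each packet — by characters of the relevant component group, with the appropriate normalization coming from a Whittaker datum — is inherited from that of $G_1$, using that a Whittaker datum for $G$ pulls back to one for $G_1$ and that the cover $G(F)_x$ carries a genuine analogue of it. The independence of the final statement from the auxiliary choices ($z$-extension, splitting of the pullback) is checked by the standard argument comparing two $z$-extensions through a common refinement.

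The main obstacle I expect is the second step: establishing, cleanly and canonically, that the $L$-group ${^LG_x}$ built by pushout from the cocycle in $Z^2(\Gamma, Z(\hat G))$ attached to $x$ coincides with the extension one gets from the $z$-extension $G_1$ and the parameter of $\chi$ — in other words, that the automorphic-side bookkeeping via $\tilde\pi_1(G)$ and Borovoi's algebraic fundamental group is dual to the Galois-side bookkeeping via $Z(\hat G)$ in a way compatible with all the extension structures and set-theoretic splittings. Everything else is either the hypothesis, standard $z$-extension manipulations, or abelian class field theory, but pinning down this duality and verifying it respects the chosen splittings (so that ``genuine representation of $G(F)_x$'' corresponds exactly to ``parameter valued in ${^LG_x}$'') is where the real content lies.
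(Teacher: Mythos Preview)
Your approach is essentially the same as the paper's: reduce to a $z$-extension $G_1$, use that $G_1$ has simply connected derived subgroup to trivialize the cover, and match parameters and representations through the resulting dictionary. Two points deserve correction, though neither is fatal.

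First, your opening claim that $x : \tilde\pi_1(G) \to \mb{S}^1$ necessarily factors through a finite quotient is false: the compact group $\tilde\pi_1(G)$ may have a nontrivial connected component, and a continuous character is then either surjective onto $\mb{S}^1$ or of finite order (Lemma~\ref{lem:surjfin}). The paper does not need this reduction. Working with the Weil form of the $L$-group and using that $Z(\hat G_1)$ is connected (since $G_{1,\tx{der}}$ is simply connected), one invokes the vanishing $H^2(W_F, Z(\hat G_1)) = 0$ (Proposition~\ref{pro:cc_rajan}) to produce a cochain $r_1 \in C^1(W_F, Z(\hat G_1))$ with $\partial r_1 = t$ for arbitrary $t$, not just torsion $t$.

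Second, your phrasing ``$G(F)_x = G_1(F)/\ker(\chi)$'' is not literally correct; the cover $G(F)_x$ is the pushout of the extension $1 \to Z_1(F) \to G_1(F) \to G(F) \to 1$ along $\chi$, not a quotient of $G_1(F)$ alone. What is correct, and what you immediately restate, is that genuine representations of $G(F)_x$ correspond to representations of $G_1(F)$ with fixed $Z_1(F)$-central character. The paper implements this via the intermediate group $G_1(F)_{t,*}$ and the genuine character $\mu_1$ with parameter $r_1^{-1}$, sending $\pi_1 \mapsto \pi_1 \otimes \mu_1$.

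Finally, the ``main obstacle'' you flag is resolved precisely by the single choice of $r_1$: the same cochain simultaneously furnishes the $L$-embedding $^LG_t \to {^LG_1}$ (via $g \boxtimes \sigma \mapsto gr_1(\sigma) \rtimes \sigma$) and the genuine character $\mu_1$, so compatibility is automatic rather than something to be verified after the fact. Independence of the choice of $r_1$ and of the $z$-extension is checked exactly as you suggest.
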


When the character $x$ is of finite order, thus valued in $\mu_n(\C)$ for some $n$, then we have analogously the extension
\[ 1 \to \mu_n(\C) \to G(F)_{x,n} \to G(F) \to 1. \]
Thus $G(F)_{x,n}$ is a finite cover of $G(F)$. The above theorem holds for $G(F)_{x,n}$ as well.

The proof of this theorem relies on the fact that, when the derived subgroup of $G$ is simply connected, there exists a (non-canonical) $L$-isomorphism ${^LG} \to {^LG_x}$ between the Weil-forms of the $L$-groups of $G$ and $G(F)_x$, as well as a (non-canonical) genuine  character of $G(F)_x$, and the fact that these two objects can be chosen compatibly. Tensoring with the genuine character provides a bijection between the representations of $G(F)$ and the genuine representations of $G(F)_x$, and this bijection is the reflection of the $L$-isomorphism ${^LG} \to {^LG_x}$. In hindsight, composing the natural section $W_F \to {^LG}$ with this $L$-isomorphism provides the parameter for the genuine character. When the derived subgroup of $G$ is not simply connected there need not exist a genuine character of $G(F)_x$, or an $L$-isomorphism ${^LG} \to {^LG_x}$. One reduces the general case to the case when the derived subgroup of $G$ is simply connected by using a $z$-extension.

The application of this theorem to the problem we just sketched is the following result, proved in \S\ref{sec:l-covers}.

\begin{thm*}
Let $G$ be a connected reductive group and let $\mc{H} \subset {^LG}$ be a subgroup, given up to conjugation by $\hat G$, such that
\begin{enumerate}
	\item The projection $^LG \to \Gamma$ remains surjective upon restriction to $\mc{H}$.
	\item The intersection $\hat H = \mc{H} \cap \hat G$ is a connected reductive subgroup of maximal rank.
\end{enumerate}
There exists a tuple $(H,x,\xi)$ consisting of a quasi-split connected reductive group $H$, a character $x : \bar\pi_1(H) \to \{\pm 1\}$, and an $L$-embedding $^LH_x \to {^LG}$ that is an isomorphism onto $\mc{H}$. This triple is unique up to isomorphism.
\end{thm*}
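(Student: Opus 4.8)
The plan is to reconstruct $H$ from the combinatorial data encoded by $\mc{H}$, then identify the required character $x$ via the obstruction class of the extension $\mc{H}$, and finally package everything into the $L$-embedding using the machinery of the earlier theorem.

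\textbf{Step 1: Recover the based root datum of $H$ and hence $H$ itself.}
Since $\hat H = \mc{H} \cap \hat G$ is a connected reductive subgroup of $\hat G$ of maximal rank, it contains a maximal torus $\hat T$ of $\hat G$, and its root system $R(\hat H, \hat T)$ is a subsystem of $R(\hat G, \hat T)$ closed under the relevant symmetries. Dualizing, this produces a reductive group $H$ over $F^s$ with maximal torus $T$: the character lattice of $T$ is the cocharacter lattice of $\hat T$, and the roots of $H$ are the coroots of $\hat H$. The surjectivity hypothesis (1) says $\mc{H} \to \Gamma$ is onto, so $\mc{H}$ acts on $\hat H$ and, after choosing a $\Gamma$-stable splitting data inside $\hat H$ (a pinning, which exists up to $\hat H$-conjugacy because $\mc{H}$ normalizes $\hat H$ and acts through $\mathrm{Out}(\hat H)$ on the based root datum), we obtain an action of $\Gamma$ on the based root datum of $H$. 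This is exactly the data of a quasi-split connected reductive group $H$ over $F$, defined up to isomorphism. The maximality of the rank is what guarantees $Z(\hat H) \supset Z(\hat G)$ is of finite index only up to the torus part — more precisely $\hat T \subset \hat H$ forces $X^*(H) = X_*(\hat T)$, pinning down $Z(H)$ and the center.

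\textbf{Step 2: Identify the character $x$ from the extension class of $\mc{H}$.}
The extension $1 \to \hat H \to \mc{H} \to \Gamma \to 1$ together with the chosen pinning-preserving section modulo $\hat H_\tx{ad}$ determines, just as in the construction of ${^LH_x}$ in \eqref{eq:h-l1}, a class in $H^2(\Gamma, Z(\hat H))$, or rather a cocycle in $Z^2(\Gamma, Z(\hat G))$ once we use that $Z(\hat G) \subset Z(\hat H)$ (here $Z(\hat G)$ enters because $\mc{H} \subset {^LG}$ and the ambient $L$-group is split, so the obstruction lands in the relevant subgroup). By the stated bijection between $Z^2(\Gamma, Z(\hat G))$ and continuous characters of $\tilde\pi_1$, respectively the finite-order statement giving characters of $\bar\pi_1(H)$, this cocycle is a character $x$. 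The claim that $x$ is valued in $\{\pm 1\}$, i.e. that ${^LH_x}$ is a double cover, should follow because the relevant obstruction cocycle takes values in the $2$-torsion: the failure of $\mc{H}$ to admit a pinned splitting, for $\mc{H}$ sitting maximal-rank inside a split $L$-group, is governed by a sign (this is the analogue of the parity phenomenon in the endoscopy and $\mc{S}$ examples (1), (2) of the introduction, where the obstruction is exactly an order-two class coming from half-sums of roots).

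\textbf{Step 3: Build the $L$-embedding and prove uniqueness.}
By Theorem \ref{thm:llc-h} (the ``imprecise'' theorem quoted above) applied to $H$ and $x$, there is an $L$-group ${^LH_x}$, a non-split extension of $\Gamma$ by $\hat H$, whose extension class is precisely the $x$ we extracted in Step 2. Since $\mc{H}$ and ${^LH_x}$ are extensions of $\Gamma$ by the same group $\hat H$ with the same action on the based root datum and the same class in $H^2(\Gamma, Z(\hat H))$, they are isomorphic as extensions; choosing such an isomorphism and composing with the inclusion $\mc{H} \hookrightarrow {^LG}$ gives the desired $L$-embedding $\xi : {^LH_x} \to {^LG}$ that is an isomorphism onto $\mc{H}$. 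For uniqueness: two tuples $(H,x,\xi)$ and $(H',x',\xi')$ have $\xi({^LH_x}) = \mc{H} = \xi'({^LH'_{x'}})$ up to $\hat G$-conjugacy, so $\hat H \cong \hat H'$ compatibly with the $\Gamma$-actions (both equal $\mc{H} \cap \hat G$), whence $H \cong H'$; the extension class then forces $x = x'$; and $\xi, \xi'$ differ by an automorphism of ${^LH_x}$ over $\Gamma$, which is the ambiguity absorbed into ``up to isomorphism''.

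\textbf{Main obstacle.}
The serious point is Step 2, and specifically proving that $x$ has order dividing $2$ and is canonically defined independently of the auxiliary choice of pinning inside $\hat H$ and of section $\Gamma \to \mc{H}$. Different pinnings are conjugate under $\hat H/Z(\hat H)$, so they change the cocycle by a coboundary valued in $Z(\hat H)$ — but one must check this coboundary does not move the class outside the image of $Z(\hat G)$, i.e. that the residue class in $H^2(\Gamma, Z(\hat H)/Z(\hat G))$ vanishes; this is where the maximal-rank hypothesis (2) does real work, since it controls $Z(\hat H)/Z(\hat G)$ in terms of the coroot lattice of $\hat H$ modulo that of $\hat G$. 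The $2$-torsion statement will likely be proved by exhibiting the cocycle explicitly as the coboundary, under the $\Gamma$-action, of the half-sum of coroots in $\hat H$ not lying in $\hat G$ (equivalently, the Kottwitz-type sign appearing in the endoscopic transfer factor discussed later in the paper), and observing this half-sum is well-defined modulo the full coroot lattice, hence its $\Gamma$-coboundary is $2$-torsion. Verifying that all these compatibilities hold simultaneously, and that the resulting $x$ is genuinely canonical, is the technical heart of the argument.
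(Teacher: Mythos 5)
Your Step 1 (recover the quasi-split group $H$ from $\hat H$ and the $\Gamma$-outer action) matches the paper. The problem lies in Steps 2 and 3.

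The central gap is in Step 2: you try to extract $x$ as a class in $H^2(\Gamma,Z(\hat H))$, or somehow a cocycle in $Z^2(\Gamma,Z(\hat G))$ (the direction of this last step is unclear, since $Z(\hat G)\subset Z(\hat H)$ gives a map \emph{into} $Z^2(\Gamma,Z(\hat H))$, not out of it). But this is exactly the point at which the statement departs from $\tilde\pi_1(H)$ and instead uses $\bar\pi_1(H)$, which is by definition Pontryagin dual to $Z^2(\Gamma,\hat T^H\to\hat T^H_\tx{ad})$. The paper produces $x=x_{H,G}$ not as an abstract obstruction class but as a concrete \emph{hypercocycle} $(z,[c])\in Z^2(\Gamma,\hat T^H[2]\to\hat T^H_\tx{ad}[2])/B^1(\Gamma,\hat T^H_\tx{ad}[2])$ as in \eqref{eq:h-tits}: $z$ is the Tits $2$-cocycle of the set-theoretic section \eqref{eq:tits-splitting} of $\mc{H}\to\Gamma$ built from Tits lifts $n(\sigma_{H,G})$, and $[c]$ is the $1$-cochain of Lemma \ref{lem:modified-splitting} that corrects that section so it preserves an adapted pinning of $\hat H$. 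Neither component lands in $Z(\hat H)$, and there is no canonical way to force the pair to factor through $Z^2(\Gamma,Z(\hat H))$; the ``main obstacle'' you flag (showing a residue in $H^2(\Gamma,Z(\hat H)/Z(\hat G))$ vanishes) is a constraint the paper never needs, precisely because $\bar\pi_1$ was introduced to avoid it. As a consequence your construction would only define $x$ up to an action of $H^1(\Gamma,Z(\hat H))[2]$, whereas the theorem asserts a specific character.

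The $2$-torsion claim also needs a concrete mechanism. Your heuristic (``half-sum of coroots'') is in the right spirit but not the actual argument: in the paper, $z$ is $\hat T^H[2]$-valued because the Tits $2$-cocycle is a product of $\alpha^\vee(-1)$'s, and $c$ is $\hat T^H_\tx{ad}[2]$-valued because the Tits group preserves the weak Chevalley system of $\hat G$, forcing the discrepancy scalars $\epsilon_\alpha$ in Lemma \ref{lem:modified-splitting} to be signs. Finally, your uniqueness discussion in Step 3 (``whence $H\cong H'$; the extension class then forces $x=x'$'') glosses over the substantive part: the paper's unnamed lemma following \eqref{eq:liso} carries out a nontrivial cocycle computation (the table of gauge conditions and the product with $s_{p/q}$) to show independence of the choice of representative of $\mc{H}$ and of the $\Gamma$-pinning of $\hat G$, and this is where most of the work sits.
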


\noindent Here $\bar\pi_1(H)$ is a slight variation of $\tilde\pi_1(H)$ that is introduced in \S\ref{sub:def}. The group $H(F)_x$ is a double cover of $H(F)$, and is not the group of $F$-points of a connected reductive group $H_x$.

This theorem answers the question as to which group is the partner to $G(F)$ in the conjectural functorial transfer that is embodied by the inclusion $\mc{H} \to {^LG}$. Examples of groups that satisfy the assumptions of the theorem are those in cases 1. and 2. discussed above, as well as some of those in case 3.

Let us examine case 1. The double cover $S(F)_x$ of the elliptic maximal torus $S(F)$ of $G(F)$ had already been constructed by different means in \cite{KalDC}. It was discussed there (\cite[\S4.2]{KalDC}) that a discrete $L$-parameter for $G$, with the proviso made in 1. when $F$ is non-archimedean, provides canonically a genuine character of the double cover $S(F)_x$, and that the stable character of the conjectural $L$-packet on $G(F)$ corresponding to that $L$-parameter can be written down explicitly in terms of that genuine character (\cite[\S4.4]{KalDC}).  The construction of $S(F)_x$ in \cite{KalDC} is quite different from the one given here. It has the virtue of being explicit, but only applies to tori and to very specific $x$. The construction given here applies to general $x$, but is not explicit.

Let us now examine case 2. Since the above theorem reveals the double cover $H(F)_x$ as the natural partner to $G(F)$ for endoscopic transfer, the question arises if one can formulate the Langlands--Shelstad--Kottwitz transfer factors in terms of $H(F)_x$ and state appropriate geometric and spectral transfer theorems or conjectures. Not only is that possible, but we find that the structure of the transfer factor becomes more transparent. In \cite[\S3]{LS87}, the transfer factor is defined as a product of 5 terms 
\[ \Delta = \Delta_I \cdot \Delta_{II} \cdot \Delta_{III_1} \cdot \Delta_{III_2} \cdot \Delta_{IV}. \]
The final term $\Delta_{IV}$ is a quotient of Weyl discriminants, and is thus of a simple nature, but the other terms are somewhat involved, and depend on auxiliary choices, called $a$-data and $\chi$-data. While each of these terms can be motivated in a certain way, the true nature of $\Delta_I$ and $\Delta_{II}$ may appear mysterious. Moreover, while the product $\Delta$ does not depend on $a$-data or $\chi$-data, it does depend on other auxiliary choices, namely that of a $z$-extension $H_1 \to H$ and $L$-embedding $\mc{H} \to {^LH_1}$.

On the other hand, we define in this paper (cf. \eqref{eq:tf_pin}) a transfer factor $\Delta_x$ that is just the product of three terms
\[ \<\tx{inv}(\tx{pin},-),s\>^{-1} \cdot \<\tx{inv}_\mc{H},-\> \cdot \Delta_{IV}. \]
In fact, following Waldspurger, we drop $\Delta_{IV}$ and agree to normalize orbital integrals and characters accordingly. Each of the terms $\<\tx{inv}(\tx{pin},-),s\>$ and $\<\tx{inv}_\mc{H},-\>$ has transparent structure, and does not depend on auxiliary data such as $a$-data or $\chi$-data. 

To explain the structure of $\<\tx{inv}(\tx{pin},-),s\>$ we recall Kottwitz's result \cite{Kot99} that, in the case of Lie algebras over a local field of characteristic zero, the transfer factor has the following very simple description: $\Delta_\tx{Lie}(X^H,X^G)=1$ if $G$ is quasi-split and $X^G \in \tx{Lie}(G)(F)$ meets the Kostant section for the fixed pinning of $G$, and in general is given by the usual $\kappa$-behavior in the variable $X^G$. Said slightly differently, there exists a cohomological invariant $\tx{inv}(\tx{pin},X^G)$ that measures the relative position of $X^G$ and the fixed pinning of $G$, and 
\[ \Delta_\tx{Lie}(X^H,X^G)=\<\tx{inv}(\tx{pin},X^G),s\>^{-1} \] 
is just the pairing of that cohomological invariant with the endoscopic element $s$ (one needs to transport $s$ to the right place, and this depends on $X^H$; moreover, the inverse we have placed here is not in \cite{Kot99}, but is dictated by the conventions of \cite{KS12} that we follow, see \S\ref{sub:endoreview}). 

It turns out that, in the group case, there is an analogous object. Namely, there is a cohomological invariant (see \S\ref{sub:rel1}) that measures the relative position of a pinning of $G$ and a strongly regular semi-simple element $\delta_{\pm\pm}$ of a certain cover $S(F)_{\pm\pm}$ of a maximal torus $S \subset G$, and the term $\<\tx{inv}(\tx{pin},\delta_{\pm\pm}),s\>$ is again simply the pairing of this cohomological invariant with the endoscopic element $s$. This invariant is a globalization (to the whole group) of the infinitesimal invariant in Kottwitz's work, in the sense that there is an open neighborhood $V \subset \tx{Lie}(S)(F)$ of $0$ such that the exponential map $\exp : V \to S(F)_{\pm\pm}$ converges and $\tx{inv}(\tx{pin},\exp(X^G))=\tx{inv}(\tx{pin},X^G)$, where on the left we have used the invariant for the group, and on the right the invariant for the Lie algebra, cf. Lemma \ref{lem:expinv}. Of course, not all elements in $S(F)_{\pm\pm}$ lie in the image of the exponential map.

We point out that, unlike for Lie algebras, the definition of the invariant $\tx{inv}(\tx{pin},-)$ in the group case requires the use of covers. We do not believe that it is possible to define such an invariant without using covers. We further point out that, by using rigid inner forms, this invariant can be defined for arbitrary connected reductive $G$, not necessarily quasi-split, and we work in this generality in \S\ref{sub:rel1}.

The term $\<\tx{inv}_\mc{H},-\>$ also has transparent structure. There is a diagram of $L$-embeddings
\[ \xymatrix{
	^LS_{x,H}\ar[r]&^LH_x\ar[d]\\
	^LS_G\ar[r]&^LG
}
\]
each of which is canonical (up to conjugation). This diagram leads naturally (cf. \S\ref{sub:rel2}) to a genuine character of a cover of $S(F)$ that arises as a Baer sum $S(F)_{G/H} \oplus S(F)_x$, and $\tx{inv}_\mc{H}$ is just this character. The product $\<\tx{inv}(\tx{pin},-),s\>^{-1} \cdot \<\tx{inv}_\mc{H},-\>$ becomes a function of $H(F)_x \times G(F)$ that is genuine on $H(F)_x$.

In order to obtain a balanced theory, this discussion needs to be performed not just with the group $G$ and its $L$-group $^LG$, but also with any cover of $G$ and its associated $L$-group. This does not require much additional effort and is done in \S\ref{sub:l2-covers} and \S\ref{sub:tf2}.

It is clear from the construction that the transfer factor $\Delta_x$ is compatible with the Lie algebra transfer factor (the term $\tx{inv}_\mc{H}$, being a smooth character of a cover of a torus, vanishes near the identity). The transfer factor $\Delta_x$ is also compatible with the original transfer factor of Langlands--Shelstad--Kottwitz, as is verified in \S\ref{sub:ks_comp}. Via this compatibility we are able to derive the following two basic results from their classical analogs. 
\begin{enumerate}
	\item Every anti-genuine function $f^G \in \mc{C}^\infty_c(G(F)_{x_G})$ has a matching anti\-genuine function $f^H \in \mc{C}^\infty_c(H(F)_{x_H})$, cf. Theorem \ref{thm:orbit}.
	\item Assume that endoscopic character identities hold for connected reductive groups with the same derived subgroup of $G$ and their endoscopic groups corresponding to $H$. Then they also hold for $G(F)_{x_G}$ and $H(F)_{x_H}$. More precisely, 
	\[ \Theta_{\varphi_G}^{s,\mf{w}}(f^G)=S\Theta_{\varphi_H}(f^H), \]
	for any tempered $L$-parameter $\varphi_H$ valued in $^LH_{x_H}$ and its composition $\varphi_G$ with the canonical $L$-embedding $^LH_{x_H} \to {^LG}_{x_G}$, cf. Theorem \ref{thm:charid}.
\end{enumerate}
It may be worth pointing out how this discussion relates to Langlands' result \cite{Lan79} that, when the derived subgroup of $G$ is simply connected, there exists an $L$-embedding $^LH \to {^LG}$ between the Weil forms of the $L$-groups. From our current point of view this result may be interpreted as the existence of an isomorphism $^LH \to {^LH_x}$ between the Weil-forms of the $L$-groups. This isomorphism, together with the canonical $L$-embedding ${^LH_x} \to {^LG}$, recovers Langlands' $L$-embedding.

In the more general setting of twisted endoscopy, an $L$-embedding $^LH \to {^LG}$ may fail to exist even when the derived subgroup of $G$ is simply connected. This is why \cite[\S2.2]{KS99} introduced the notion of a $z$-pair, which consists of a $z$-extension $H_1 \to H$ and an $L$-embedding $^LH \to {^LH_1}$, and phrased the construction of the transfer factor in terms of that choice. But when the canonical double cover $H(F)_x$ and the transfer factor $\Delta_x$ is used, the need for such auxiliary choices disappears.

This ends our discussion of 2. We do not have anything serious to say about 3., but can offer some vague remarks. It is expected that there should be ``beyond endoscopy transfer factors'', as for example discussed in \cite{Lan13}. With this in mind, one can ask if some parts of these may be related to parts of the endoscopic transfer factors. A related question may be, which parts of the endoscopic transfer factors are of truly endoscopic nature, and which are more general. The appearance of the term $\Delta_{II}^\tx{abs}$ in the character formula for supercuspidal representations in \cite[\S4]{KalRSP} suggested that some parts of the endoscopic transfer factor are intrinsic to the group $G$, and thus not of endoscopic nature. The construction of the transfer factor $\Delta_x$ discussed above offers some insight into this question. It is clear that the term $\tx{inv}_\mc{H}$ is not of endoscopic nature, and will likely play a role in more general considerations of functoriality. The term $\<\tx{inv}(\tx{pin},-),s\>$ arises as the pairing of a cohomological invariant that is again intrinsic to $G$ with the endoscopic element $s$. Thus, while this term itself is of endoscopic nature, the cohomological invariant might again play a role in more general considerations of functoriality. For example, it is implicit in \cite{KalRSP} that this invariant is closely related to the genericity of supercuspidal representations of $G(F)$.

The idea that non-split extensions of $\Gamma$ by $\hat G$ are to be viewed as $L$-groups of covers of $G(F)$ is not new. Adams and Vogan proposed this approach in \cite{AV92} for real reductive groups, and our work is inspired by theirs. Given such a group $G/\R$ they define a finitely generated abelian group $\pi_1(G)(\R)$ and an extension
\[ 1 \to \pi_1(G)(\R) \to G(\R)^\sim \to G(\R) \to 1, \]
cf. \cite[(5.3)(c),(7.11)(a,c)]{AV92}. They also state a basic form of the local Langlands correspondence for the resulting covers (\cite[Theorem 10.7]{AV92}).

The constructions of Adams and Vogan use the particularities of the real numbers, most notably that the Galois group is finite of order $2$, and in particular its cohomology is cyclic with period $2$. They also use the fact that $G(\R)$ is a real Lie group. This makes it unclear if these constructions can be carried out for non-archimedean fields. Our construction is different, but we are able to compare it with that of Adams and Vogan in \S\ref{sub:avcomp}. The rough result is the following.

\begin{thm*}\ \\[-15pt]
\begin{enumerate}
	\item There exists a natural homomorphism $\pi_1(G)(\R) \to \tilde\pi_1(G)$, which is injective and realizes the component group $\pi_0(\tilde\pi_1(G))$ as the profinite completion of $\pi_1(G)(\R)$. 
	\item The extension $G(\R)_\infty$ is the pushout of $G(\R)^\sim$ under the above homomorphism.
\end{enumerate}
\end{thm*}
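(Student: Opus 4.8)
The statement compares two functorial constructions, both governed by Borovoi's algebraic fundamental group $\pi_1(G)$, which over $\R$ is a finitely generated module over $\Z[\Gamma]$ with $\Gamma=\Gal(\C/\R)=\Z/2$. By Definition \ref{dfn:tildepi} one has $\tilde\pi_1(G)=\tx{Hom}(Z^2(\Gamma,Z(\hat G)),\mb{S}^1)$, with $Z(\hat G)=\tx{Hom}(\pi_1(G),\C^\times)$, so $\tilde\pi_1(G)$ depends only on the $\Z[\Gamma]$-module $\pi_1(G)$, while $\pi_1(G)(\R)$ is, up to the normalization of \cite{AV92}, the coinvariant group $\pi_1(G)_\Gamma$. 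The plan is: first prove part (1) by a direct module computation; then handle the extension statement (2) by reducing to tori, where the two covers can be compared explicitly; the precise matching of normalizations will be the main point to pin down.

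For part (1): over $\Z/2$ a normalized $2$-cocycle $c$ is determined by $c_{\sigma,\sigma}$, and the single surviving cocycle condition says precisely $c_{\sigma,\sigma}\in Z(\hat G)^\Gamma$, so $Z^2(\Gamma,Z(\hat G))\xrightarrow{\ \sim\ }Z(\hat G)^\Gamma=\tx{Hom}(\pi_1(G),\C^\times)^\Gamma=\tx{Hom}(\pi_1(G)_\Gamma,\C^\times)$. Dualizing, $\tilde\pi_1(G)$ is the Pontryagin dual of the discrete group $\tx{Hom}(\pi_1(G)_\Gamma,\C^\times)$, whose torsion subgroup is $\tx{Hom}(\pi_1(G)_\Gamma,\Q/\Z)$ (using finite generation); hence $\pi_0(\tilde\pi_1(G))=\widehat{\pi_1(G)_\Gamma}$, the profinite completion of $\pi_1(G)_\Gamma=\pi_1(G)(\R)$, the identity component $\tilde\pi_1(G)^\circ$ being the solenoidal piece that originates in the divisibility of $\C^\times$. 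I would then identify the homomorphism of \S\ref{sub:avcomp} with the canonical map $\pi_1(G)(\R)\to\tilde\pi_1(G)$; it is injective because a finitely generated abelian group embeds in its profinite completion, and it realizes $\pi_0$ as that completion by the computation just made. What needs care here is reading off from \cite{AV92} that $\pi_1(G)(\R)$ is $\pi_1(G)_\Gamma$ and that the comparison map is the canonical one; both are routine once the dual descriptions are in hand, but must be carried out.

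For part (2): all objects in sight --- $\tilde\pi_1(-)$, $\pi_1(-)(\R)$, the classes of the extensions $G(\R)_\infty$ and $G(\R)^\sim$, and the splittings over $G_\tx{sc}(\R)$ --- are functorial and compatible with the short exact sequences presenting $\pi_1(G)$ via tori. Choosing a $z$-extension $1\to Z\to G_1\to G\to 1$ with $Z$ an induced torus and $(G_1)_\tx{der}$ simply connected, and using that the simply connected part contributes nothing via the splitting, a diagram chase reduces (2) to the case of a torus $T$; this uses only the definitions, not the refined correspondence. For such $T$ both $T(\R)^\sim$ and $T(\R)_\infty$ are built from $X_\ast(T)$ and the map $\exp(2\pi i\,\cdot)\colon X_\ast(T)\otimes\C\to T(\C)$, and writing the two extensions out exhibits $T(\R)_\infty$ as the pushout of $T(\R)^\sim$ along $(X_\ast(T))_\Gamma\to\tilde\pi_1(T)$; since forming the pushout commutes with the $z$-extension sequence, this yields (2) for all $G$.

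The main obstacle is the normalization matching rather than any deep structural point: one must reconcile the cochain-theoretic description of $\tilde\pi_1$ used here with the topological construction of \cite{AV92} --- show that $\pi_1(G)(\R)$ is $\pi_1(G)_\Gamma$ and that the ``completion'' encoded in $\tilde\pi_1$ is precisely the profinite one --- and this is where the cyclicity of $\Gal(\C/\R)$ (here used only to obtain $Z^2(\Z/2,A)\cong A^\Gamma$) is essential and the argument genuinely depends on $F=\R$. The remaining work is the functoriality and extension bookkeeping in part (2): checking that the two covers, with their canonical splittings, are compatible with $z$-extensions and with change of maximal torus, so that the comparison map is canonical and the reduction to tori is legitimate. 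None of this uses the hypotheses of the theorems stated earlier, so the comparison is unconditional.
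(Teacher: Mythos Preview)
Your part (1) is correct and matches the paper's Proposition \ref{pro:avcomp_pi1}: both identify $Z^2(\Gamma,Z(\hat G))\cong Z(\hat G)^\Gamma$ via the cocycle condition over $\Z/2\Z$, then dualize and invoke residual finiteness of finitely generated abelian groups for injectivity. The paper packages the dualization step as Lemma \ref{lem:diag}, but the content is the same.

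For part (2) there is a gap in the torus case. You assert that ``both $T(\R)^\sim$ and $T(\R)_\infty$ are built from $X_*(T)$ and the map $\exp(2\pi i\,\cdot)$'', but $T(\R)_\infty$ is \emph{not} constructed this way: it is defined in \S\ref{sub:covers_tori} as the Pontryagin dual of the sequence $1 \to H^1_u(W_\R,\hat T) \to \tilde H^1_u(W_\R,\hat T) \to Z^2(\Gamma,\hat T) \to 1$, with no direct description via $\exp$. The paper's comparison (Proposition \ref{pro:avcomp_torus1}) works instead by matching the genuine characters of the finite pushouts $T(\R)_{t,n}$ and $T(\R)_{\tx{AV},t,n}$: Corollary \ref{cor:llc-s}(4) parameterizes the former by $L$-parameters into $^LT_t$, and \cite[Theorem 5.11]{AV92} does the same for the latter; since both parameterize by the same set, the covers are identified. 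This appeal to the local Langlands correspondence for tori on both sides is the key ingredient you are missing --- ``writing the two extensions out'' does not substitute for it, and doing so honestly would amount to reproving that correspondence. As a secondary point, the paper's reduction from $G$ to tori does not go through $z$-extensions: it derives a presentation $G(\R)^\sim = \tx{cok}\big(T_\tx{sc}(\R) \to G_\tx{sc}(\R) \rtimes T(\R)_\tx{AV}^G\big)$ directly from the topological universal cover (Corollary \ref{cor:avcomp}), parallel to Construction \ref{cns:h-univ}, and compares the two cokernels via the torus embedding already established. Your $z$-extension route would additionally require checking that the Adams--Vogan construction is compatible with $z$-extensions.
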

According to this theorem, there is a natural bijection between the \emph{finite-order} characters of $\pi_1(G)(\R)$ and $\tilde\pi_1(G)$. The resulting covers of $G(\R)$ are canonically isomorphic. It is not clear if the covers obtained from infinite-order characters can be compared.

We close this introduction with a brief remark about the relationship between this paper and other works on covering groups and their $L$-groups, such as \cite{Del96}, \cite{BD01}, \cite{Weissman18a}, \cite{Weissman18b}, and \cite{Zhao22}. The purpose of those papers is to obtain a class of covers of reductive groups that is as large as possible. These constructions can be rather complicated, and establishing Langlands reciprocity or endoscopic functoriality for them in any generality appears currently well out of reach. Indeed, already the experience with the special case of the mateplectic group through the work of Wen-Wei Li \cite{WenWeiLi11}, \cite{WenWeiLi19}, \cite{WenWeiLi20} has shown that this problem is deep. Even the case of covers of general tori is complicated and not fully known; the case of covers of split tori is treated in \cite{Weissman18b}.

Our purpose here is rather orthogonal. Our focus is on linear reductive groups, and the covers are brought in to clarify notions and constructions involved in the study of functorliality for linear groups, such as endoscopic groups and transfer factors, as well as more general cases of functoriality. For this we have isolated a small class of covers that offers a natural setting for studying these phenomena while also keeping to a minimum the required technology and effort. We have established in \S\ref{sub:llc} and \S\ref{sec:endo} both reciprocity and endoscopic functoriality for these covers, assuming that it holds for closely related linear groups. This allows these covers to be used effectively in the study of linear groups, for example through the approach via induction on dimension due to Langlands and Arthur. 

It would be fruitful to compare the constructions of covers given here with these other works and to contemplate to what extent the discussions of \S\ref{sub:llc} and \S\ref{sec:endo} may be extended to a more general setting. 

\newpage

\section{The pseudo-fundamental group} \label{sec:fund}

Let $F$ be a local field, $F^s$ a fixed separable closure, and $\Gamma=\tx{Gal}(F^s/F)$. Let $G$ be a quasi-split connected reductive group over a field $F$.

\subsection{Definition of the pseudo-fundamental group} \label{sub:def}

Let $\pi_1(G)$ be Borovoi's algebraic fundamental group, cf. Appendix \ref{app:borovoi}. 

\begin{dfn} \label{dfn:tildepi}
Let $\tilde\pi_1(G)$ be the compact abelian group that is Pontryagin dual to the discrete abelian group $Z^2(\Gamma,\tx{Hom}_\Z(\pi_1(G),\C^\times))$.
\end{dfn}

\begin{fct}
The assignment $G \mapsto \tilde\pi_1(G)$ is a covariant functor from the category of connected reductive $F$-groups to the category of compact topological groups. If $\pi_1(G) \to \pi_1(H)$ is surjective, then so is $\tilde\pi_1(G) \to \tilde\pi_1(H)$.
\end{fct}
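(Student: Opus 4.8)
The plan is to realize $G \mapsto \tilde\pi_1(G)$ as a composite of four functors, three of them standard, and to keep track of exactness along the way. By Appendix~\ref{app:borovoi}, $\pi_1$ is a covariant functor from connected reductive $F$-groups to (finitely generated, though we shall not need this) $\Z[\Gamma]$-modules with continuous action. The assignment $M \mapsto \tx{Hom}_\Z(M,\C^\times)$, where $\Gamma$ acts through $M$ and trivially on $\C^\times$, is a contravariant functor from $\Z[\Gamma]$-modules to $\Gamma$-modules; as a Hom-functor it sends any surjection $M \onto N$ to an injection $\tx{Hom}_\Z(N,\C^\times) \into \tx{Hom}_\Z(M,\C^\times)$ (a formal property). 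Composing, $G \mapsto \tx{Hom}_\Z(\pi_1(G),\C^\times)=Z(\hat G)$ is a contravariant functor to $\Gamma$-modules carrying a surjection $\pi_1(G) \onto \pi_1(H)$ to an injection $Z(\hat H) \into Z(\hat G)$.

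Next, $A \mapsto Z^2(\Gamma,A)$ is a covariant functor from $\Gamma$-modules to discrete abelian groups: a $\Gamma$-equivariant homomorphism $\alpha\colon A \to B$ acts on continuous $2$-cochains by $c \mapsto \alpha\circ c$, which commutes with the coboundary and hence carries $2$-cocycles to $2$-cocycles; and if $\alpha$ is injective then so is $c \mapsto \alpha\circ c$, so $Z^2(\Gamma,-)$ preserves injections. Finally, Pontryagin duality $A \mapsto \hat A$ is a contravariant functor from discrete abelian groups to compact abelian groups, and since $\mb{S}^1$ is divisible, hence injective as a $\Z$-module, it turns injections of discrete abelian groups into surjections of compact abelian groups. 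The composite $\tilde\pi_1 = \hat{(\,\cdot\,)}\circ Z^2(\Gamma,-)\circ \tx{Hom}_\Z(-,\C^\times)\circ \pi_1$ thus has exactly two contravariant steps, hence is a covariant functor from connected reductive $F$-groups to compact abelian groups (a fortiori to compact topological groups), which is the first assertion. For the second, a surjection $\pi_1(G)\onto\pi_1(H)$ yields, by the exactness observations above, an injection $Z^2(\Gamma,\tx{Hom}_\Z(\pi_1(H),\C^\times)) \into Z^2(\Gamma,\tx{Hom}_\Z(\pi_1(G),\C^\times))$ of discrete abelian groups, whose Pontryagin dual is the desired surjection $\tilde\pi_1(G)\onto\tilde\pi_1(H)$.

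I do not expect a genuine obstacle here; the argument is a diagram chase through a chain of functors, and the ``hard part'' is only bookkeeping. The points deserving attention are the choices of categories and topologies: that the $\Gamma$-action on $\pi_1(G)$ is continuous so that $Z^2(\Gamma,-)$ of continuous cochains is defined; that this cocycle group is regarded as \emph{discrete}, as in Definition~\ref{dfn:tildepi}, so that Pontryagin duality applies between the discrete and the compact categories; and that the coefficient groups $\C^\times$ and $\mb{S}^1$ are divisible, which is precisely what makes $\tx{Hom}_\Z(-,\C^\times)$ and Pontryagin duality convert surjections into injections and injections into surjections, respectively.
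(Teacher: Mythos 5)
Your argument is correct and is the natural one: the paper states this as a \emph{Fact} without proof, and the intended justification is exactly the chain of functorial steps you spell out, with the observation that an even number of contravariant steps yields a covariant composite, and that each step in the chain behaves well with respect to surjections (resp.\ injections). One small imprecision in your closing remark: divisibility of $\C^\times$ is \emph{not} what makes $\tx{Hom}_\Z(-,\C^\times)$ carry surjections to injections — that is left exactness of $\tx{Hom}$, valid for any coefficient group, as you yourself correctly note in the body (``a formal property''). Divisibility (equivalently, injectivity as a $\Z$-module) is only needed for the \emph{Pontryagin} step, to extend a character on a subgroup of a discrete abelian group to the whole group, which is what turns the injection $Z^2(\Gamma,\tx{Hom}_\Z(\pi_1(H),\C^\times)) \into Z^2(\Gamma,\tx{Hom}_\Z(\pi_1(G),\C^\times))$ into a surjection of the compact duals. (If one instead wanted $\tx{Hom}_\Z(-,\C^\times)$ to carry \emph{injections} to \emph{surjections}, divisibility would be what is needed — but that direction is not used here.) Everything else, including the attention to the discrete topology on $Z^2$ and to continuity of $\Gamma$-cochains, is exactly right.
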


In terms of the dual group $\hat G$, the compact group $\tilde\pi_1(G)$ is the Pontryagin dual of the discrete abelian group $Z^2(\Gamma,Z(\hat G))$. In the following subsections we will define a central extension of locally compact groups
\begin{equation} \label{eq:uni}
1 \to \tilde\pi_1(G) \to G(F)_\infty \to G(F) \to 1
\end{equation}
equipped with a splitting over $G_\tx{sc}(F)$, i.e. with a homomorphism $s : G_\tx{sc}(F) \to G(F)_\infty$ that lifts the natural map $G_\tx{sc}(F) \to G(F)$. We will call \eqref{eq:uni} the \emph{pseudo-universal extension}, and $s$ a \emph{splitting over $G_\tx{sc}$.}

From any character $t : \tilde\pi_1(G) \to \mb{S}^1$ we obtain from \eqref{eq:uni} via pushout an extension $G(F)_t$ of $G(F)$ by $\mb{S}^1$. If the character is of finite order $n$, thus valued in $\mu_n(\C)$, we obtain in the same way an extension $G(F)_{t,n}$ of $G(F)$ by $\mu_n(\C)$. We can push-out the latter under the inclusion $\mu_n(\C) \to \mu_\infty(\C)$ to obtain an extension $G(F)_{t,\infty}$; here we are taking the discrete topology on $\mu_\infty(\C)$.

\begin{rem}
As reviewed in Appendix \ref{app:cag}, we have the exact sequence
\[ 1 \to \tilde\pi_1(G)^\circ \to \tilde \pi_1(G) \to \pi_0(\tilde \pi_1(G)) \to 1, \]
and a character of $\tilde\pi_1(G)$ has finite order if and only if it factors through the quotient $\pi_0(\tilde \pi_1(G))$, which is profinite. Throughout most of the paper we will only be concerned with characters of $\tilde\pi_1(G)$ that are of finite order. A character $t : \tilde\pi_1(G) \to \mb{S}^1$ has finite order if and only if it takes values in $\mu_\infty(\C)$, i.e. if and only if each of its values has finite order, cf. Lemma \ref{lem:surjfin}.
\end{rem}

\begin{dfn} \label{dfn:gen}
Let $X$ be a set with an action of $\C^\times$. A function $f : G(F)_{t,n} \to X$ is called \emph{genuine} if $f(\epsilon \cdot g)=\epsilon \cdot f(g)$ for $g \in G(F)_{t,n}$ and $\epsilon \in \mu_n(\C)$. It is called \emph{anti-genuine} if instead $f(\epsilon \cdot g)=\epsilon^{-1} \cdot f(g)$.
\end{dfn}

The main cases we have in mind are $X=\C$, $X=\C^\times$, $X=\mu_n(\C)$, $X=\mb{S}^1$, or $X=\tx{Aut}(V)$ for a complex vector space $V$.

\begin{rem} \label{rem:antigen}
The identity automorphism of $G(F)_\infty$ induces an isomorphism of topological groups $G(F)_{t,n} \to G(F)_{t^{-1},n}$ whose restriction to $\mu_n(\C)$ is the inversion automorphism. In this way, anti-genuine functions of $G(F)_{t,n}$ become identified with genuine functions of $G(F)_{t^{-1},n}$.
\end{rem}

\begin{rem}
We recall the following two basic constructions of extensions. Given $t_1,t_2 : \tilde\pi_1(G) \to \mu_n(\C)$, the \emph{Baer sum} $G(F)_{t_1,n} \oplus G(F)_{t_2,n}$ is defined as the pushout along the multiplication map $\mu_n(\C) \times \mu_n(\C) \to \mu_n(\C)$ of the fiber product $G(F)_{t_1,n} \times_{G(F)} G(F)_{t_2,n}$. Given $t : \tilde\pi_1(G) \to \mu_n(\C)$ the \emph{Baer inverse} of $G(F)_{t,n}$ is the pushout of $G(F)_{t,n}$ under the inversion map $\mu_n(\C) \to \mu_n(\C)$.

In our situation, these constructions behave as follows. The Baer inverse of $G(F)_{t,n}$ is naturally identified with $G(F)_{t^{-1},n}$. The homomorphism $G(F)_\infty \to G(F)_{t_1,n} \oplus G(F)_{t_2,n}$ induced by the diagonal embedding $G(F)_\infty \to G(F)_\infty \times G(F)_\infty$ factors through an isomorphism $G(F)_{t_1t_2,n} \to G(F)_{t_1,n} \oplus G(F)_{t_2,n}$.
\end{rem}

We now make some remarks about the dependence of the cover $G(F)_{t,n}$ on $n$.

\begin{rem}
Consider a multiple $m$ of $n$. Pushing out the central extension $G(F)_{t,n}$ of $G(F)$ by $\mu_n(\C)$ along the natural inclusion $\mu_n(\C) \to \mu_m(\C)$ we obtain the central extension $G(F)_{t,m}$. It is possible that for $t,t' : \tilde\pi_1(G) \to \mu_n(\C)$ the extensions $G(F)_{t,n}$ and $G(F)_{t',n}$ are not isomorphic, but the extensions $G(F)_{t,m}$ and $G(F)_{t',m}$ are isomorphic, cf. Remark \ref{rem:isofuse}.
\end{rem}

\begin{rem} \label{rem:n-m}
If $\pi$ is a genuine representation of $G(F)_{t,n}$, then the representation $\pi\boxtimes\tx{id}$ of $G(F)_{t,n} \times \mu_m(\C)$ transforms trivially under the antidiagonal embedding of $\mu_n$ into this product, and hence descends to a genuine representation of $G(F)_{t,m}$. This provides a functor between the categories of genuine representations of $G(F)_{t,n}$ and $G(F)_{t,m}$, and this functor is an equivalence, its inverse being restriction along $G(F)_{t,n} \to G(F)_{t,m}$. The same argument applies when $G(F)_{t,n}$ is replaced by $G(F)_t$.
\end{rem}

We now introduce a variation that will be quite useful. Let $T$ be the universal maximal torus of $G$ and let $\hat T$ be its dual. There is a natural embedding $Z(\hat G) \to \hat T$ and we set $\hat T_\tx{ad} = \hat T/Z(\hat G)$. Consider the complex of tori $[\hat T \to \hat T_\tx{ad}]$, where $\hat T$ is placed in degree $0$ and $\hat T_\tx{ad}$ is placed in degree $1$. We have the group of hypercocycles $Z^2(\Gamma,\hat T \to \hat T_\tx{ad})$. Our conventions are those of \cite[Appendix A]{KS99}. Thus this group consists of $z \in Z^2(\Gamma,\hat T)$ and $c \in C^1(\Gamma,\hat T_\tx{ad})$ such that $\partial c=\bar z$, where $\bar z \in Z^2(\Gamma,\hat T_\tx{ad})$ is the image of $z$.

\begin{dfn} \label{dfn:barpi}
Let $\bar\pi_1(G)$ be the compact abelian group that is Pontryagin dual to the discrete abelian group $Z^2(\Gamma,\hat T \to \hat T_\tx{ad})$. 
\end{dfn}

From the injection $Z^2(\Gamma,Z(\hat G)) \to Z^2(\Gamma,\hat T \to \hat T_\tx{ad})$ we obtain the natural surjective map $\bar\pi_1(G) \to \tilde\pi_1(G)$. The advantage of $\bar\pi_1(G)$ is that the extension \eqref{eq:uni} is obtained as a push-out of an analogous extension 
\begin{equation} \label{eq:baruni}
1 \to \bar\pi_1(G) \to G(F)_\infty \to G(F) \to 1.
\end{equation}
Any character $t : \tilde\pi_1(G) \to \mu_n(\C)$ pulls back to a character $t : \bar\pi_1(G) \to \mu_n(\C)$ and the corresponding extension $G(F)_{t,n}$ obtained from \eqref{eq:uni} is also obtainable from \eqref{eq:baruni}. In that sense, $\bar\pi_1(G)$ is the more primordial object. We will see that the theory of endoscopy leads to covers of $G(F)$ that most naturally present themselves via characters of $\bar\pi_1(G)$.

The disadvantage of $\bar\pi_1(G)$ is that its functoriality in $G$ rather limited (it respects isomorphisms, and embeddings of Levi subgroups, for example). Therefore, for some purposes, $\tilde\pi_1(G)$ is better behaved. We will see however in \S\ref{sub:elementary} that in many important cases functorial properties involving $\bar\pi_1(G)$ may be obtained from those of $\tilde\pi_1(G)$.

We will further see in \S\ref{sub:elementary} that, while not every character $t : \bar\pi_1(G) \to \mu_n(\C)$ descends to $\tilde\pi_1(G)$, we can always find a character $t' : \tilde\pi_1(G) \to \mu_m(\C)$ such that the extensions $G(F)_{t,m}$ and $G(F)_{t',m}$ are isomorphic. The problem is that $m$ is usually not equal to $n$, but rather a multiple of $n$, and the isomorphism between $G(F)_{t,m}$ and $G(F)_{t',m}$ is not unique, so one must keep track of it.

\subsection{Covers of tori} \label{sub:covers_tori}

We begin the construction and study of \eqref{eq:uni} in the special case that $G$ is an $F$-torus, which we shall denote by $S$ for emphasis. Then $\tilde\pi_1(S)=\bar\pi_1(S)$ is the Pontryagin dual of $Z^2(\Gamma,\hat S)$, where $\hat S$ is the complex dual torus of $S$. We write $C^1_u(W_F,\hat S)$ for the group of continuous 1-cochains $c:  W_F \to \hat S$ whose image is bounded. Note that $B^1(W_F,\hat S) = B^1(\Gamma,\hat S) \subset C^1(\Gamma,\hat S) \subset C^1_u(W_F,\hat S)$.

\begin{lem} \label{lem:s-key}
Let $t \in Z^2(\Gamma,\hat S)$. There exists $c \in C^1_u(W_F,\hat S)$ such that $\partial c=t$.
\end{lem}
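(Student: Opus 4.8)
The plan is to exploit the fact that the Weil group $W_F$ has cohomological dimension strictly smaller (in a suitable sense) than the Galois group $\Gamma$ when one allows bounded cochains, so that the class of $t$ in $H^2(W_F,\hat S)$ vanishes and $t$ can be trivialized by a \emph{bounded} cochain. Concretely, first I would inflate $t$ along the canonical map $W_F \to \Gamma$ to view it as an element of $Z^2(W_F,\hat S)$. The key input is the structure of $H^2(W_F,\hat S)$: since $\hat S$ is a complex torus, it is a divisible group, and for a complex torus the relevant fact (essentially Langlands' computation, cf. the reference to Proposition \ref{pro:cc_rajan} in the introduction, or Labesse's work on the cohomology of Weil groups) is that $H^2(W_F,\hat S)$ either vanishes or is controlled by $H^2(W_F,\C^\times)$-type terms that are themselves trivial for the relevant topology. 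Thus the inflation of $t$ is a coboundary in $Z^2(W_F,\hat S)$: there exists a continuous $c \in C^1(W_F,\hat S)$ with $\partial c = t$.

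The second and more delicate step is to arrange that $c$ has \emph{bounded image}, i.e. lies in $C^1_u(W_F,\hat S)$. Here I would use the decomposition of $\hat S$ up to isogeny into a maximal compact subtorus and a ``split part'': write $\hat S$ in terms of the cocharacter lattice, so that $\hat S \cong (\C^\times)^{\dim S}$ after choosing coordinates compatible with the $\Gamma$-action on a finite level, and separate the $\mb{S}^1$-valued part (automatically bounded) from the $\R_{>0}$-valued part. Because $t$ is a Galois cocycle — hence factors through a \emph{finite} quotient $\Gamma \to \tx{Gal}(E/F)$ — its values lie in $\hat S$, and one can modify $c$ by a coboundary and by a continuous cochain landing in the maximal $\R_{>0}$-direction so as to kill the unbounded component. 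The point is that the unbounded ($\R_{>0}$-valued) part of $H^2$ for the \emph{Weil} group is governed by $H^2(W_F,\R_{>0})$, which vanishes because $\R_{>0}$ is a uniquely divisible, hence cohomologically trivial, module (it is an injective $\Z$-module, and the relevant $W_F$-action factors through a finite group, on which a uniquely divisible module has trivial higher cohomology). So the obstruction to boundedness lives in a group that is itself zero, and $c$ can be corrected to lie in $C^1_u(W_F,\hat S)$.

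Finally I would note the compatibility assertion $B^1(W_F,\hat S) = B^1(\Gamma,\hat S)$: a continuous coboundary $d a$ for $a \in \hat S$ (a constant, hence bounded, $0$-cochain) is the same whether computed over $W_F$ or $\Gamma$, since $\hat S^{W_F} = \hat S^\Gamma$ and the differential only sees the $\Gamma$-action; this is what makes the correction terms in the previous step legitimate and shows the ambiguity in $c$ is exactly $B^1(W_F,\hat S)$, as one expects.

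The main obstacle I anticipate is the second step — controlling the \emph{boundedness} of the trivializing cochain, rather than merely its existence. Vanishing of $H^2(W_F,\hat S)$ is classical, but the passage from an arbitrary continuous trivialization to a bounded one requires carefully isolating the ``real'' direction of the torus and invoking the cohomological triviality of $\R_{>0}$ as a $\tx{Gal}(E/F)$-module; getting the topological/continuity bookkeeping right (the image of $c$ on the unbounded part of $W_F$, i.e. on a copy of $\R$ or $F^\times$ mapping to the idele class group, genuinely lands in a bounded set after correction) is where the real work lies.
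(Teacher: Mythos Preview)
Your approach is correct and takes a genuinely different route from the paper's. The paper observes that the unbounded version is \cite[Lemma 4]{Lan79}, whose proof hinges on the surjectivity of a certain map $H^1_c(W_{K/F},S_1) \to H^1_c(W_{K/F},S_2)$; replacing this by the surjectivity of $H^1_u(W_{K/F},S_1) \to H^1_u(W_{K/F},S_2)$ (which follows from Pontryagin duality, since unitary characters of tori are dual to bounded parameters) yields the bounded statement directly. So the paper modifies Langlands' argument from the inside. Your route instead invokes the full vanishing $H^2_c(W_F,\hat S)=0$ (Proposition~\ref{pro:cc_rajan}) to get an arbitrary continuous $c$, and then corrects its unbounded part via the $\Gamma$-stable decomposition $\hat S = \hat S^c \times \hat S^v$. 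The cleanest way to execute your step~2 is: the $\hat S^c$-component of $c$ is already bounded; for the $\hat S^v$-component, since $t^v \in Z^2(\Gamma_{E/F},\hat S^v)$ for some finite $E/F$ and $\hat S^v$ is uniquely divisible, one has $t^v = \partial d$ for some $d \in C^1(\Gamma_{E/F},\hat S^v)$, whose inflation to $W_F$ has finite (hence bounded) image; then $(c^c,d)$ is bounded with differential $t$. Your mention of ``$H^2(W_F,\R_{>0})$'' is slightly off-target---the relevant vanishing is $H^2(\Gamma_{E/F},\hat S^v)=0$ for the \emph{finite} quotient, which is exactly what you say at the end of that sentence. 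The paper's route avoids forward dependence on Proposition~\ref{pro:cc_rajan} (relying only on Langlands' 1979 result); yours avoids opening up Langlands' proof and is arguably more transparent.
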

\begin{proof}
If we only require that $c$ lie in $C^1(W_F,\hat S)$, rather than the subgroup $C^1_u(W_F,\hat S)$, then this is \cite[Lemma 4]{Lan79}. The proof given in loc. cit. rests on the surjective homomorphism $H^1_c(W_{K/F},S_1) \to H^1_c(W_{K/F},S_2)$ marked by (1), where $H^1_c$ is the notation used for cohomology classes represented by continuous cocycles; in this paper we are not using a subscript for this. If in this proof we replace that homomorphism with the homomorphism $H^1_u(W_{K/F},S_1) \to H^1_u(W_{K/F},S_2)$, which is still surjective by Pontryagin duality, we obtain the desired result.
\end{proof}

\noindent Define
\begin{eqnarray*}
\tilde Z^1(W_F,X)&=&\{ c \in C^1(W_F,X)\,|\, \partial c \in Z^2(\Gamma,X)\}\\
\tilde Z^1_u(W_F,X)&=&\tilde Z^1(W_F,X) \cap C^1_u(W_F,X)\\
\tilde H^1(W_F,X)&=&\tilde Z^1(W_F,X)/B^1(W_F,X)\\
\tilde H^1_u(W_F,X)&=&\tilde Z^1_u(W_F,X)/B^1(W_F,X)\\
\tilde H^i(\Gamma,X)&=&C^i(\Gamma,X)/B^i(\Gamma,X),
\end{eqnarray*}
where $X$ is any $\Gamma$-module in the first, third, and fifth lines, and a complex torus in the second and fourth lines. The differential $\partial : \tilde Z^1(W_F,\hat S) \to Z^2(\Gamma,\hat S)$ descends to the group $\tilde H^1(W_F,\hat S)$ and induces a group homomorphism that is surjective, in fact remains surjective even after restriction to $\tilde H^1_u(W_F,\hat S)$ according to Lemma \ref{lem:s-key}.

The Langlands correspondence (cf. Appendix \ref{app:langtori}) for $S$ identifies the abelian group $H^1_u(W_F,\hat S)$ with the Pontryagin dual $\tx{Hom}_\tx{cts}(S(F),\mb{S}^1)$ of $S(F)$. The latter has a natural topology, namely the compact-open topology. We endow $H^1_u(W_F,\hat S)$ with the transport of that topology via the Langlands isomorphism. We endow $\tilde H^1_u(W_F,\hat S)$ with the unique topology for which $H^1_u(W_F,\hat S)$ is an open subgroup. Then the exact sequence
\begin{equation}
1 \to H^1_u(W_F,\hat S) \to \tilde H^1_u(W_F,\hat S) \stackrel{-\partial}{\lrw} Z^2(\Gamma,\hat S) \to 1
\end{equation}
of abstract groups becomes an exact sequence of locally compact topological groups, with $Z^2(\Gamma,\hat S)$ discrete. Note that we have used the \emph{negative} of the differential. Pontryagin duality turns this exact sequence into the desired exact sequence \eqref{eq:uni} for $G=S$.

We will now discuss the Langlands duality statement for the cover $S(F)_\infty$, as well as for covers obtained from it via characters of $\tilde\pi_1(S)$.

\begin{pro} \label{pro:llc-s}
\textit{(The local correspondence):}
\begin{enumerate}
	\item There is a natural isomorphism between the group $\tilde H^1(W_F,\hat S)$ and the group of continuous characters $S(F)_\infty \to \C^\times$, under which the subgroup $\tilde H^1_u(W_F,\hat S)$ is identified with the subgroup of unitary characters.
	\item If $\chi_\infty : S(F)_\infty \to \C^\times$ corresponds to $[c] \in \tilde H^1(W_F,\hat S)$, the restriction of $\chi_\infty$ to $\tilde\pi_1(S) \to \C^\times$ equals the character $-\partial c \in Z^2(\Gamma,\hat S)$.
	\item If $\chi : S(F) \to \C^\times$ corresponds to $[z] \in H^1(W_F,\hat S)$, the inflation of $\chi$ to $S(F)_\infty$ corresponds to $[z] \in \tilde H^1(W_F,\hat S)$.
\end{enumerate}
\end{pro}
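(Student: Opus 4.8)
The plan is to obtain all three assertions by applying Pontryagin duality to the exact sequence of locally compact abelian groups
\[ 1 \to H^1_u(W_F,\hat S) \to \tilde H^1_u(W_F,\hat S) \stackrel{-\partial}{\lrw} Z^2(\Gamma,\hat S) \to 1 \]
constructed just above, whose Pontryagin dual is by definition the pseudo-universal extension $1 \to \tilde\pi_1(S) \to S(F)_\infty \to S(F) \to 1$. All terms are locally compact Hausdorff abelian, so $X \mapsto \widehat X$ is an exact, contravariant, reflexive functor on them. Because $\tilde\pi_1(S)$ is by construction the Pontryagin dual of the discrete group $Z^2(\Gamma,\hat S)$, and because the Langlands correspondence for tori (Appendix \ref{app:langtori}) identifies $H^1_u(W_F,\hat S)$ with $\widehat{S(F)}$, dualizing the displayed sequence does return the defining sequence of $S(F)_\infty$; dualizing once more returns the displayed sequence, and this double dualization is the mechanism behind the compatibilities in (2) and (3).

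First I would settle the unitary characters. Reflexivity gives a canonical identification $\widehat{S(F)_\infty} = \tilde H^1_u(W_F,\hat S)$, which is assertion (1) restricted to unitary characters. Tracing the duality: the map $\widehat{S(F)_\infty} \to \widehat{\tilde\pi_1(S)}$ dual to the inclusion $\tilde\pi_1(S) \hookrightarrow S(F)_\infty$ is exactly restriction of a character to $\tilde\pi_1(S)$, and under the identifications $\widehat{S(F)_\infty}=\tilde H^1_u(W_F,\hat S)$ and $\widehat{\tilde\pi_1(S)}=Z^2(\Gamma,\hat S)$ (reflexivity of a discrete group) it becomes $-\partial$; this is (2) for unitary $\chi_\infty$. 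Dually, the map $\widehat{S(F)} \to \widehat{S(F)_\infty}$ dual to the projection $S(F)_\infty \twoheadrightarrow S(F)$ is inflation of a character of $S(F)$, and it is the inclusion $H^1_u(W_F,\hat S) \hookrightarrow \tilde H^1_u(W_F,\hat S)$; this is (3) for unitary $\chi$.

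Next I would pass to arbitrary continuous characters via the canonical decomposition $\C^\times = \mb{S}^1 \times \R_{>0}$, which writes any continuous character uniquely as a product of a unitary and a positive-real-valued one. Since $\tilde\pi_1(S)$ is compact it carries no nontrivial continuous homomorphism to $\R_{>0}$, so every positive-real-valued character of $S(F)_\infty$ is trivial on $\tilde\pi_1(S)$ and hence inflated from a unique such character of $S(F)$; thus $\tx{Hom}_\tx{cts}(S(F)_\infty,\R_{>0})=\tx{Hom}_\tx{cts}(S(F),\R_{>0})$. On the dual side, Lemma \ref{lem:s-key} gives $\tilde H^1(W_F,\hat S)=\tilde H^1_u(W_F,\hat S)+H^1(W_F,\hat S)$ with intersection $H^1_u(W_F,\hat S)$, so $\tilde H^1(W_F,\hat S)/\tilde H^1_u(W_F,\hat S)\cong H^1(W_F,\hat S)/H^1_u(W_F,\hat S)$; the full ($\C^\times$-valued) form of the Langlands correspondence for tori (cf. Appendix \ref{app:langtori}) identifies $\tx{Hom}_\tx{cts}(S(F),\C^\times)$ with $H^1(W_F,\hat S)$ compatibly with the unitary subgroups and hence with these quotients. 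Combining the unitary part (from the previous paragraph) with this positive-real part — which agree on the overlap $H^1_u(W_F,\hat S)$ because a character that is both unitary and positive-real-valued is trivial — yields the isomorphism of (1) in general. For (2) in general, decompose $[c]=[c_u]+[c_r]$ with $c_u$ bounded and $c_r$ a genuine cocycle representing the positive-real part, so $\partial c=\partial c_u$; then $\chi_\infty|_{\tilde\pi_1(S)}$ is the product of the restrictions of the two parts, namely $(-\partial c_u)\cdot 1=-\partial c$. Assertion (3) in general follows the same way from the unitary case together with the identification of the positive-real parts via inflation.

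The individual steps are conceptually routine; I expect the main obstacle to be the bookkeeping of the several natural identifications in play — checking that the connecting maps in the dualized sequence are precisely $\pm\partial$ with the sign that was built into the construction, that the topology on $\tilde H^1_u(W_F,\hat S)$ for which $H^1_u(W_F,\hat S)$ is open makes the sequence exact with reflexive terms, and that the correspondence for tori is natural and compatible with inflation and with the splitting $\C^\times=\mb{S}^1\times\R_{>0}$. The only input beyond the construction of the displayed sequence and the classical correspondence for tori is the handling of non-unitary characters, and there the one fact that matters is the compactness of $\tilde\pi_1(S)$.
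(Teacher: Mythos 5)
Your proof is correct and takes essentially the same route as the paper: the unitary case is obtained by Pontryagin duality on the defining exact sequence $1 \to H^1_u(W_F,\hat S) \to \tilde H^1_u(W_F,\hat S) \to Z^2(\Gamma,\hat S) \to 1$, and the general case follows from the observation (via Lemma \ref{lem:s-key}) that $\tilde H^1(W_F,\hat S)$ is the amalgamation of $\tilde H^1_u(W_F,\hat S)$ and $H^1(W_F,\hat S)$ over $H^1_u(W_F,\hat S)$. The extra detail you supply — the splitting $\C^\times = \mb{S}^1 \times \R_{>0}$ and the use of compactness of $\tilde\pi_1(S)$ to show positive-real characters are inflated from $S(F)$ — is just the unwinding of the paper's terse "and the proposition follows."
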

\begin{proof}
All statements hold by construction if we replace $\tilde H^1$ by $\tilde H^1_u$. Lemma \ref{lem:s-key} implies that $\tilde H^1(W_F,\hat S)$ is the amalgamation of $\tilde H^1_u(W_F,\hat S)$ and $H^1(W_F,\hat S)$ over $H^1_u(W_F,\hat S)$, and the proposition follows.
\end{proof}

\begin{pro} \label{pro:func-s}
\textit{(Functoriality of pseudo-universal covers):} 
\begin{enumerate}
	\item A homomorphism $f : S \to T$ of $F$-tori induces a continuous homomorphism $f_\infty : S(F)_\infty \to T(F)_\infty$ lifting $f: S(F) \to T(F)$ and restricting to $\tilde\pi_1(f): \tilde\pi_1(S) \to \tilde\pi_1(T)$.
	\item The correspondence between continuous characters of $S(F)_\infty$ and elements of $\tilde H^1(W_F,\hat S)$ is functorial in $S$.
\end{enumerate}
\end{pro}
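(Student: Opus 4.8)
The plan is to obtain both assertions by dualizing, via Pontryagin duality, a morphism of short exact sequences induced by the dual homomorphism $\hat f : \hat T \to \hat S$, and, for the non-unitary characters appearing in (2), to fall back on the already-established functoriality of the Langlands correspondence for tori (Appendix \ref{app:langtori}).

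For (1), I would start from $\hat f : \hat T \to \hat S$, which induces maps on cochains $C^i_u(W_F,\hat T) \to C^i_u(W_F,\hat S)$ commuting with the differential (boundedness is preserved because $\hat f$ is an algebraic homomorphism of complex tori, hence carries bounded sets to bounded sets). These carry $B^1 \to B^1$, $\tilde Z^1_u \to \tilde Z^1_u$, and $Z^2(\Gamma,-) \to Z^2(\Gamma,-)$, so on passing to cohomology they give a morphism from the exact sequence $1 \to H^1_u(W_F,\hat T) \to \tilde H^1_u(W_F,\hat T) \xrightarrow{-\partial} Z^2(\Gamma,\hat T) \to 1$ to the analogous one for $\hat S$ — the minus sign on $\partial$ is harmless since $\hat f$ commutes with $-\partial$ as well. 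This morphism is continuous: by functoriality of the Langlands correspondence for tori the left vertical map $H^1_u(W_F,\hat T)\to H^1_u(W_F,\hat S)$ is the Pontryagin dual of $f : S(F) \to T(F)$, hence continuous; since $H^1_u$ is open in $\tilde H^1_u$, the middle vertical map is then continuous as well; the $Z^2$'s are discrete. Applying the exact contravariant functor of Pontryagin duality produces a morphism of short exact sequences from \eqref{eq:uni} for $S$ to \eqref{eq:uni} for $T$; its middle arrow is the desired $f_\infty$. It lifts $f$ because the dual of the dual of $f$ is again $f$, and it restricts on $\tilde\pi_1$ to the dual of $Z^2(\Gamma,\hat T) \to Z^2(\Gamma,\hat S)$, which is $\tilde\pi_1(f)$: for a torus $\pi_1(S) = X_*(S)$, so $\tx{Hom}_\Z(\pi_1(S),\C^\times) = \hat S$ and the functoriality map of $\tilde\pi_1$ is precisely the dual of $\hat f$.

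For (2), the unitary case is immediate: by Pontryagin biduality $\tx{Hom}_\tx{cts}(S(F)_\infty,\mb{S}^1)$ is canonically $\tilde H^1_u(W_F,\hat S)$, and precomposition with $f_\infty$ is the dual of $f_\infty$, which by the construction in (1) is the map $\tilde H^1_u(W_F,\hat T) \to \tilde H^1_u(W_F,\hat S)$ induced by $\hat f$; hence the relevant square commutes on unitary characters. For the general case I would use the proof of Proposition \ref{pro:llc-s}, which presents $\tilde H^1(W_F,\hat S)$ as the amalgam of $\tilde H^1_u(W_F,\hat S)$ and $H^1(W_F,\hat S)$ over $H^1_u(W_F,\hat S)$, and correspondingly $\tx{Hom}_\tx{cts}(S(F)_\infty,\C^\times)$ as generated by its unitary subgroup together with the inflations of continuous characters of $S(F)$. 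On the unitary generators the two homomorphisms agree by the previous sentence; on the inflations they agree because $f_\infty$ lifts $f$ and the Langlands correspondence for $S(F)$ itself is functorial (so both sides compute the inflation to $S(F)_\infty$ of the pullback along $f$ of a character of $T(F)$, using Proposition \ref{pro:llc-s}(3)). Two group homomorphisms agreeing on a generating set agree everywhere, which is the asserted functoriality.

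I do not expect a genuine conceptual difficulty; the only real work is bookkeeping. Specifically, one must check that the maps induced by $\hat f$ are continuous for the topologies fixed in \S\ref{sub:covers_tori} — in particular that the topology declared on $\tilde H^1_u(W_F,\hat S)$ (via the openness of $H^1_u$) is exactly the one for which Pontryagin duality may be applied verbatim — and one must unwind the definition of the functor $\tilde\pi_1$ on tori to confirm that the $\tilde\pi_1$-component of $f_\infty$ is $\tilde\pi_1(f)$ and not merely some map induced by $\hat f$. Neither point is more than routine.
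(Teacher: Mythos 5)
Your argument is correct and follows essentially the same route as the paper's: induce the map on $\tilde H^1_u(W_F,-)$ from $\hat f$, obtain continuity from Langlands duality on the open subgroup $H^1_u$, Pontryagin dualize for (1), and invoke the amalgam presentation from Proposition \ref{pro:llc-s} for (2). Your version just spells out a few of the routine checks (boundedness under $\hat f$, the identification of the restriction to $\tilde\pi_1$ with $\tilde\pi_1(f)$) that the paper leaves implicit.
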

\begin{proof}
The dual homomorphism $\hat f : \hat T \to \hat S$ induces a homomorphism of abstract groups $\tilde H^1_u(W_F,\hat T) \to \tilde H^1_u(W_F,\hat S)$. This homomorphism is continuous, because it restricts to a homomorphism $H^1_u(W_F,\hat T) \to H^1_u(W_F,\hat S)$ that is continuous, being the dual under Langlands duality of the continuous homomorphism $S(F) \to T(F)$. Dualizing again we obtain the desired continuous homomorphism $S(F)_\infty \to T(F)_\infty$, proving (1). To prove (2) we note that by construction the homomorphism $S(F)_\infty \to T(F)_\infty$ respects the homomorphism $\tilde H^1_u(W_F,\hat T) \to \tilde H^1_u(W_F,\hat S)$. On the other hand, the homomorphism $S(F) \to T(F)$ respects the homomorphism $H^1(W_F,\hat T) \to H^1(W_F,\hat S)$ by functoriality of Langlands duality. The argument of the proof of Proposition \ref{pro:llc-s} completes (2).
\end{proof}

Every element $t \in Z^2(\Gamma,\hat S)$ gives a continuous character $t : \tilde\pi_1(S) \to \mb{S}^1$ and we obtain the corresponding extension $S(F)_t$. When $t$ is of finite order $n$, we also have the extensions $S(F)_{t,n}$ and $S(F)_{t,\infty}$.

We define the $L$-group
\begin{equation} \label{eq:s-l}
^LS_t = \hat S \boxtimes_t \Gamma.
\end{equation}
Multiplication in this group is given by $s\boxtimes \sigma \cdot s' \boxtimes \sigma' = s\sigma(s')t(\sigma,\sigma')\boxtimes\sigma\sigma'$.

Propositions \ref{pro:llc-s} and \ref{pro:func-s} imply the following. 
\begin{cor}\ \\[-15pt] \label{cor:llc-s}
\begin{enumerate}
	\item The group of continuous characters of $S(F)_t$ is identified with the fiber product $\tilde H^1(W_F,\hat S) \times_{Z^2(\Gamma,\hat S)} \Z$, where the left map is the negative differential and the right map is $k \mapsto t^k$. When $t^n=1$, the analogous statement holds for $S(F)_{t,n}$, resp. $S(F)_{t,\infty}$ upon replacing $\Z$ by $\Z/n\Z$, resp. $\hat\Z$.
	\item Restriction along $S(F)_{t,n} \to S(F)_t$ resp. $S(F)_{t,n} \to S(F)_{t,\infty}$ is dual to the projection induced by the natural projection $\Z \to \Z/n\Z$ resp $\hat\Z \to \Z/n\Z$.
	\item The coset of genuine characters is the fiber over $1$ of the second projection, i.e. $\tilde H^1(W_F,\hat S)_t = \{ [c] \in \tilde H^1(W_F,\hat S)\,|\, (-\partial) c=t\}$. 
	\item The coset of genuine characters is also in bijection with the set of $\hat S$-conjugacy classes of $L$-homomorphisms $W_F \to {^LS_t}$. 
	\item Given a homomorphism $f : T \to S$ of $F$-tori the homomorphism $f_\infty : T(F)_\infty \to S(F)_\infty$ of Proposition \ref{pro:func-s}(1) induces a continuous homomorphism $f_{t,n} : T(F)_{t\circ f,n} \to S(F)_{t,n}$ that covers $f : T(F) \to S(F)$ and restricts to the identity on $\mu_n(\C)$. This homomorphism identifies $T(F)_{t \circ f,n}$ as the pull-back of $T(F) \to S(F) \from S(F)_{t,n}$. The analogous statement holds for the covers $S(F)_t$ and $S(F)_{t,\infty}$.
	\item The correspondence of (3) relates the homomorphism $f_{t,n}$ of (5) with the homomorphism 
	\[ ^Lf_t : \hat S \boxtimes_t \Gamma \to \hat T \boxtimes_{\hat f(t)} \Gamma,\qquad s\boxtimes \sigma \mapsto \hat f(s) \boxtimes \sigma. \]
\end{enumerate}
\end{cor}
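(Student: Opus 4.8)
The plan is to deduce all six assertions from Propositions \ref{pro:llc-s} and \ref{pro:func-s} by keeping careful track of the pushout constructions of the various covers. Throughout I would use the explicit model $S(F)_t=(S(F)_\infty\times\mb{S}^1)/\{(z,t(z)^{-1}):z\in\tilde\pi_1(S)\}$ coming from the definition of the pushout of \eqref{eq:uni} along $t$, together with the analogous models for $S(F)_{t,n}$ and $S(F)_{t,\infty}$ obtained by replacing $\mb{S}^1$ with $\mu_n(\C)$ and $\mu_\infty(\C)$.

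For (1)--(3): I would note that a continuous character $\chi$ of $S(F)_t$ restricts on the central $\mb{S}^1$ to the $k$-th power character for a unique $k\in\Z$ (and, in the two finite cases, to the one recorded by a unique $k\in\Z/n\Z$, resp.\ $k\in\hat\Z$), and that its pull-back $\chi_\infty$ along $S(F)_\infty\to S(F)_t$ then satisfies $\chi_\infty|_{\tilde\pi_1(S)}=t^k$; conversely every such $\chi_\infty$ descends. Proposition \ref{pro:llc-s}(1),(2) identifies the characters $\chi_\infty$ with $\chi_\infty|_{\tilde\pi_1(S)}=t^k$ with the classes $[c]\in\tilde H^1(W_F,\hat S)$ satisfying $-\partial c=t^k$, which gives the fiber-product description in (1); this is a homeomorphism for the compact-open topologies since it is one for $S(F)_\infty$. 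Assertion (2) is then the transparent remark that the restriction maps act on the second coordinate as $\Z\to\Z/n\Z$ and $\hat\Z\to\Z/n\Z$ and fix $[c]$, and (3) follows because $\chi$ is genuine precisely when $k=1$ (resp.\ $k\equiv1$), in which case $t^k=t$, so the genuine coset is $\{[c]:-\partial c=t\}=\tilde H^1(W_F,\hat S)_t$.

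For (4) I would write an $L$-homomorphism $\varphi:W_F\to{}^LS_t=\hat S\boxtimes_t\Gamma$ as $\varphi(w)=c(w)\boxtimes\bar w$ with $c:W_F\to\hat S$ continuous, and use the multiplication law of \eqref{eq:s-l} to translate the condition that $\varphi$ be a homomorphism into the statement that $c\in\tilde Z^1(W_F,\hat S)$ with $-\partial c=t$; conjugating $\varphi$ by $s\in\hat S$ shifts $c$ by a coboundary in $B^1(W_F,\hat S)$, so $\hat S$-conjugacy classes of $L$-homomorphisms correspond exactly to the classes in $\tilde H^1(W_F,\hat S)$ with $-\partial c=t$, which by (3) is the genuine coset. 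The step I expect to demand the most care is exactly the sign of the differential here: one must verify that the cocycle relation produced by the multiplication law of $\hat S\boxtimes_t\Gamma$ reads $-\partial c=t$ rather than $\partial c=t$, so as to match (3) and Proposition \ref{pro:llc-s}(2).

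For (5) and (6) I would argue by functoriality. Proposition \ref{pro:func-s}(1), applied to $f:T\to S$, gives a continuous $f_\infty:T(F)_\infty\to S(F)_\infty$ lifting $f$ and restricting to $\tilde\pi_1(f)$ on $\tilde\pi_1(T)$; by Pontryagin duality $t\circ\tilde\pi_1(f)$ is the pushforward $\hat f(t)\in Z^2(\Gamma,\hat T)$ of $t$ along $\hat f$. Since the square formed by $\tilde\pi_1(f)$, $t$, $\hat f(t)$ and $\tx{id}_{\mu_n(\C)}$ commutes, the universal property of pushout produces a continuous $f_{t,n}:T(F)_{\hat f(t),n}\to S(F)_{t,n}$ that is the identity on $\mu_n(\C)$ and covers $f$; pairing it with the projection to $T(F)$ gives a morphism $T(F)_{\hat f(t),n}\to T(F)\times_{S(F)}S(F)_{t,n}$ of central extensions of $T(F)$ by $\mu_n(\C)$ that is the identity on both ends, hence an isomorphism by the short five lemma, which is the pull-back claim; the cases of $S(F)_t$ and $S(F)_{t,\infty}$ are identical. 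For (6) I would check from \eqref{eq:s-l} that $^Lf_t$ is a homomorphism, using the $\Gamma$-equivariance of $\hat f$, observe that it carries the cochain $c$ of $\varphi$ to the cochain $\hat f\circ c$ of $^Lf_t\circ\varphi$, and note that by the functoriality of the character--cochain correspondence in Proposition \ref{pro:func-s}(2), together with the compatibility of $f_{t,n}$ with $f_\infty$ built into the pushout construction, the pulled-back character $\chi\circ f_{t,n}$ corresponds to $[\hat f\circ c]$; comparing these identifies $f_{t,n}$ with $^Lf_t$ under the correspondence of (3), completing the proof.
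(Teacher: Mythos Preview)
Your argument is correct and is precisely the unpacking of what the paper leaves implicit: the paper's own proof consists of the single sentence ``Propositions \ref{pro:llc-s} and \ref{pro:func-s} imply the following,'' and your proposal supplies exactly those deductions via the pushout models and Pontryagin duality. Your sign check in (4) and your use of the short five lemma for the pull-back identification in (5) are the right moves, and nothing further is needed.
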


\begin{cor} \label{cor:dc_pull}
Let $f:S \to T$ be a homomorphism of $F$-tori whose kernel $A$ is connected, and hence a torus. Let $t : \tilde\pi_1(T) \to \mu_n(\C)$. In the commutative diagram 
\[ \xymatrix{
1\ar[r]&A(F)\ar@{=}[d]\ar[r]&S(F)_{t,n}\ar[d]\ar[r]&T(F)_{t,n}\ar[d]\\
1\ar[r]&A(F)\ar[r]&S(F)\ar[r]&T(F)
}\]
that results from the pull-back property stated in Corollary \ref{cor:llc-s}(5), the natural map $A(F) \to S(F)_{t,n}$ is dual to the map $\hat S\boxtimes_t\Gamma \to \hat A\boxtimes_t\Gamma = \hat A \rtimes \Gamma$ where we have denoted by $t$ also the images of $t$ in $Z^2(\Gamma,\hat S)$ and $Z^2(\Gamma,\hat T)$, and the identification $\hat A\boxtimes_t\Gamma = \hat A \rtimes \Gamma$ arises from the fact that $t=1$ in $Z^2(\Gamma,\hat A)$. The analogous statement holds for $S(F)_t$ and $S(F)_{t,\infty}$.
\end{cor}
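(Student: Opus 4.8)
The plan is to deduce this from Corollary \ref{cor:llc-s}(5)--(6), applied to the inclusion $\iota : A \hookrightarrow S$ rather than to $f$ itself.

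The first step is the elementary observation that $t = 1$ in $Z^2(\Gamma,\hat A)$. The composite $Z^2(\Gamma,\hat T) \to Z^2(\Gamma,\hat S) \to Z^2(\Gamma,\hat A)$ sending $t$ to its indicated images is induced by the composite $\hat T \to \hat S \to \hat A$ of the homomorphisms dual to $f$ and to $\iota$, and this composite is dual to $f \circ \iota : A \to T$, which is trivial since $A = \ker f$. Hence the image of $t$ in $Z^2(\Gamma,\hat A)$ is the constant cocycle, so $\hat A \boxtimes_t \Gamma = \hat A \rtimes \Gamma$; equivalently the character $t \circ \iota : \tilde\pi_1(A) \to \mu_n(\C)$ is trivial, so that $A(F)_{t \circ \iota, n}$, being the push-out of $A(F)_\infty$ along it, is the split extension $A(F) \times \mu_n(\C)$, with canonical section $a \mapsto (a,1)$.

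The second step is to identify the natural map $A(F) \to S(F)_{t,n}$ of the diagram with the composite of this canonical section and the functorial homomorphism $\iota_{t,n} : A(F)_{t \circ \iota, n} \to S(F)_{t,n}$ of Corollary \ref{cor:llc-s}(5). Both are homomorphic lifts of $A(F) \hookrightarrow S(F)$ through $S(F)_{t,n}$; writing $S(F)_{t,n}$ as the pull-back of $S(F) \xrightarrow{f} T(F) \from T(F)_{t,n}$ provided by Corollary \ref{cor:llc-s}(5) for $f$, the natural map is $a \mapsto (a,1)$, while $\iota_{t,n}$ composed with the section selects, for each $a$, exactly this lift of $\iota(a)$ --- the one projecting to $1$ in $T(F)_{t,n}$. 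This is a compatibility between the two instances, for $f$ and for $\iota$, of the pull-back construction of Corollary \ref{cor:llc-s}(5), and I expect it to be the only step needing genuine care; the rest of the argument is a formal unwinding of Corollary \ref{cor:llc-s}.

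Granting these identifications, the assertion is precisely Corollary \ref{cor:llc-s}(6) applied to $\iota$: under the correspondence of Corollary \ref{cor:llc-s}(3)--(4) between genuine characters and $L$-homomorphisms, $\iota_{t,n}$ is dual to $^L\iota_t : \hat S \boxtimes_t \Gamma \to \hat A \boxtimes_t \Gamma = \hat A \rtimes \Gamma$, given by $s \boxtimes \sigma \mapsto \hat\iota(s) \boxtimes \sigma$. Since the genuine characters of the split cover $A(F) \times \mu_n(\C)$ are exactly the characters of $A(F)$ tensored with the tautological character of $\mu_n(\C)$, the correspondence of Corollary \ref{cor:llc-s}(4) for $A(F)_{t \circ \iota, n}$ reduces along the section to the ordinary Langlands correspondence for the torus $A$, and precomposing $\iota_{t,n}$ with the section yields the claim for $S(F)_{t,n}$. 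The cases of $S(F)_t$ and $S(F)_{t,\infty}$ follow in the same way, using the versions of Corollary \ref{cor:llc-s}(1)--(6) with $\Z$, respectively $\hat\Z$, in place of $\Z/n\Z$.
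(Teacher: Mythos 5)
Your proposal is correct and follows essentially the same route as the paper: both reduce to the pull-back/functoriality statements of Corollary \ref{cor:llc-s}(5)--(6) applied to the inclusion $A \hookrightarrow S$, after noting that $t$ restricts trivially to $\tilde\pi_1(A)$, and then identify the natural map $A(F)\to S(F)_{t,n}$ with the composite of the canonical section of the split cover and the functorial map $\iota_{t,n}$. The only cosmetic difference is that for the final identification (that the canonical splitting is dual to $\hat A \boxtimes_t \Gamma = \hat A \rtimes\Gamma$) the paper invokes Corollary \ref{cor:llc-s}(2) via the push-out from $\mu_1$ to $\mu_n$, whereas you argue it directly from the structure of genuine characters of the split cover; both are fine.
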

\begin{proof}
An equivalent way to obtain the left square is the following. We form the pull-back of $A(F) \to S(F) \from S(F)_{t,n}$. According to Corollary \ref{cor:llc-s}(5) this pull-back is identified with the cover obtained from the character $\tilde\pi_1(A) \to \C^\times$ that is the restriction of $t$. Since that character is trivial, this pull-back is canonically split. Composing the splitting $A(F) \to A(F)_{t,n}$ with the pull-back map $A(F)_{t,n} \to S(F)_{t,n}$ leads to the map $A(F) \to S(F)_{t,n}$. The map $A(F)_{t,n} \to S(F)_{t,n}$ is dual to the map $\hat S \boxtimes_t \Gamma \to \hat A \boxtimes_t \Gamma$ according to Corollary \ref{cor:llc-s}(6). 

On the other hand, the canonical splitting $A(F) \to A(F)_{t,n}$ can be interpreted as coming from the fact that $A(F)_{t,n}$ is the pushout of the trivial cover $A(F)_{t,1}$ of $A(F)$ by $\mu_1(\C)=\{1\}$ under the trivial embedding $\mu_1(\C) \to \mu_n(\C)$. According to Corollary \ref{cor:llc-s}(2) applied once for $n$ and once for $n=1$, this splitting is therefore dual to the identification $\hat S \boxtimes_t \Gamma = \hat A \rtimes \Gamma$.
\end{proof}

\begin{fct}
The $L$-group of the Baer sum $S(F)_{t_1,n} \oplus S(F)_{t_2,n}$ is naturally identified with the Baer sum of the $L$-groups $^LS_{t_1} \oplus {^LS}_{t_2}$.
\end{fct}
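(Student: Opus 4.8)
The plan is to realize both sides of the asserted identification as the single extension $^LS_{t_1t_2}=\hat S\boxtimes_{t_1t_2}\Gamma$, where $t_1t_2\in Z^2(\Gamma,\hat S)$ denotes the pointwise product of the two cocycles, and then to check that the two realizations coincide. Structurally, the assignment $t\mapsto{^LS}_t$ is a homomorphism from the abelian group $Z^2(\Gamma,\hat S)$ under pointwise multiplication to the group of extensions of $\Gamma$ by $\hat S$ rigidified by a set-theoretic section, the group law on the target being Baer sum; likewise $t\mapsto S(F)_{t,n}$ is a homomorphism from the $n$-torsion subgroup of $Z^2(\Gamma,\hat S)$ to the group of extensions of $S(F)$ by $\mu_n(\C)$ under Baer sum. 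The statement amounts to saying that these two homomorphisms are dual to one another via the local correspondence of Corollary \ref{cor:llc-s}.

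\textbf{Covering side.} By the discussion of Baer sums in \S\ref{sub:def}, the Baer sum $S(F)_{t_1,n}\oplus S(F)_{t_2,n}$ is canonically isomorphic to $S(F)_{t_1t_2,n}$ via the map induced by the diagonal embedding $S(F)_\infty\to S(F)_\infty\times S(F)_\infty$. Since the $L$-group attached to a cover $S(F)_{t,n}$ is by \eqref{eq:s-l} the extension $^LS_t$ determined by the underlying cocycle $t$ (consistently for all $n$ with $t^n=1$, by Corollary \ref{cor:llc-s}(1)--(4) and Remark \ref{rem:n-m}), the $L$-group of the left-hand side is $^LS_{t_1t_2}$.

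\textbf{Dual side and naturality.} Each $^LS_{t_i}=\hat S\boxtimes_{t_i}\Gamma$ is a strict extension of $\Gamma$ by $\hat S$ carrying the distinguished section $\sigma\mapsto 1\boxtimes\sigma$, whose associated $2$-cocycle is $t_i$ by the multiplication rule following \eqref{eq:s-l}. The Baer sum $^LS_{t_1}\oplus{^LS}_{t_2}$ is the pushout of the fibre product $^LS_{t_1}\times_\Gamma{^LS}_{t_2}$ along multiplication $\hat S\times\hat S\to\hat S$; with respect to the induced product section $\sigma\mapsto(1\boxtimes\sigma,1\boxtimes\sigma)$ its associated $2$-cocycle is $t_1t_2$, so $[(s_1\boxtimes\sigma,s_2\boxtimes\sigma)]\mapsto s_1s_2\boxtimes\sigma$ is a canonical isomorphism $^LS_{t_1}\oplus{^LS}_{t_2}\xrightarrow{\sim}\hat S\boxtimes_{t_1t_2}\Gamma={^LS}_{t_1t_2}$ — the rigidified form of the classical fact that the cohomology class of a Baer sum of extensions is the product of the classes. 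It remains to check that the two realizations of $^LS_{t_1t_2}$ thus obtained are compatible with the local correspondence, which is what ``naturally'' should mean. For this I would use Corollary \ref{cor:llc-s}(3),(4): a genuine character $\chi_i$ of $S(F)_{t_i,n}$ corresponds to a class $[c_i]\in\tilde H^1(W_F,\hat S)$ with $-\partial c_i=t_i$, equivalently to the $\hat S$-conjugacy class of the $L$-homomorphism $w\mapsto c_i(w)\boxtimes\sigma_w$ into $^LS_{t_i}$, where $\sigma_w$ is the image of $w$ in $\Gamma$. Under the diagonal isomorphism of the covering side the genuine character $\chi_1\chi_2$ of $S(F)_{t_1t_2,n}$ restricts to the pair $(\chi_1,\chi_2)$, while on the parameter side $[c_1c_2]$ satisfies $-\partial(c_1c_2)=t_1t_2$ and is carried by the isomorphism above to the pair $\bigl(w\mapsto c_1(w)\boxtimes\sigma_w,\ w\mapsto c_2(w)\boxtimes\sigma_w\bigr)$, i.e. to the image of its own parameter under the diagonal into the fibre product. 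Since the correspondence of Corollary \ref{cor:llc-s}(3)--(4) is a functorial bijection and, by Lemma \ref{lem:s-key}, $S(F)_t$ carries enough genuine characters to detect the extension, the two realizations must agree as isomorphisms of extensions.

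\textbf{Main obstacle.} No individual step is hard; the delicate part is the bookkeeping that makes ``naturally'' literally correct. One must fix conventions so that the sign $-\partial$ appearing in Corollary \ref{cor:llc-s}, the sections used to read off the Baer-sum cocycle on the dual side, and the truncation index $n$ (which enters on the two tori only through the constraints $t_i^n=1$) are all mutually consistent, so that the composite of the two canonical isomorphisms is genuinely the identity of $^LS_{t_1t_2}$ and not, say, an inversion. Everything else reduces to a routine $2$-cocycle computation.
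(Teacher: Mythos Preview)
Your argument is correct and follows exactly the approach one would expect: identify both sides with $^LS_{t_1t_2}$ via the canonical isomorphisms coming from the Baer-sum-to-product-cocycle correspondence on each side, and then verify compatibility with the local correspondence via Corollary \ref{cor:llc-s}. Note that the paper states this result as a \emph{Fact} with no proof given, so you have supplied considerably more detail than the paper itself; your treatment of the ``naturality'' bookkeeping (signs, sections, the index $n$) is appropriate caution but ultimately routine, as you anticipated.
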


\begin{lem} \label{lem:dc_split}
The following data are equivalent
\begin{enumerate}
	\item A homomorphic section $s : S(F) \to S(F)_{t,n}$.
	\item A genuine character $\alpha : S(F)_{t,n}\to \mu_n(\C)$.
	\item A 1-cochain $c \in C^1(W_F,\hat S)$ satisfying $\partial c=t$ and $c^n \in B^1(W_F,\hat S)$.
\end{enumerate}
The same holds for the cover $S(F)_t$, where we replace $\mu_n(\C)$ by $\mb{S}^1$ and replace the condition $c^n \in B^1(W_F,\hat S)$ by $c \in C^1_u(W_F,\hat S)$. Finally, it also holds for the cover $S(F)_{t,\infty}$, where we replace $\mu_n(\C)$ by $\mu_\infty(\C)$ and impose the existence of $n \in \N$ such that $c^n \in B^1(W_F,\hat S)$.
\end{lem}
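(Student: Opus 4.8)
The plan is to prove the two equivalences $(1)\Leftrightarrow(2)$ and $(2)\Leftrightarrow(3)$, treating the $\mu_n(\C)$-cover $S(F)_{t,n}$, the $\mb{S}^1$-cover $S(F)_t$, and the $\mu_\infty(\C)$-cover $S(F)_{t,\infty}$ in parallel. The first equivalence is a general fact about central extensions of topological groups and uses nothing about tori or local fields; the second is a translation of the local correspondence of Corollary \ref{cor:llc-s}, refined by Proposition \ref{pro:llc-s}. In each of the three cases a genuine character valued in the ``small'' group is the same thing as an arbitrary genuine (i.e. $\C^\times$-valued) character subject to one extra constraint, and that constraint is precisely what separates the conditions $c^n\in B^1(W_F,\hat S)$, $c\in C^1_u(W_F,\hat S)$, and $\exists n\in\N:\ c^n\in B^1(W_F,\hat S)$ on the cochain side.

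For $(1)\Leftrightarrow(2)$: a continuous homomorphic section $s$ of $S(F)_{t,n}\to S(F)$ gives the continuous genuine character $g\mapsto g\cdot s(\bar g)^{-1}$, where $\bar g$ is the image of $g$ in $S(F)$ (the ratio lies in the central $\mu_n(\C)$ since $g$ and $s(\bar g)$ have the same image in $S(F)$). Conversely, a continuous genuine $\alpha:S(F)_{t,n}\to\mu_n(\C)$ satisfies $\ker\alpha\cap\mu_n(\C)=1$ and $\ker\alpha\cdot\mu_n(\C)=S(F)_{t,n}$ (routine, using that $\alpha$ restricts to the inclusion on $\mu_n(\C)$ and is $\mu_n(\C)$-valued), so the projection restricts to a continuous bijective homomorphism $\ker\alpha\to S(F)$ of $\sigma$-compact locally compact groups, hence a topological isomorphism by the open mapping theorem, whose inverse is a section; the two constructions are visibly mutually inverse. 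The identical argument works with $\mu_n(\C)$ replaced by $\mb{S}^1$ (the ratio $g\cdot s(\bar g)^{-1}$ then automatically lies in $\mb{S}^1$) or by the discrete group $\mu_\infty(\C)$.

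For $(2)\Leftrightarrow(3)$: by Corollary \ref{cor:llc-s}(1),(3) the genuine $\C^\times$-valued characters of $S(F)_{t,n}$ are in bijection with the cochains $c\in C^1(W_F,\hat S)$ with $\partial c=t$, taken modulo $B^1(W_F,\hat S)$ (Corollary \ref{cor:llc-s} states this with the opposite sign $-\partial c=t$; the replacement $c\mapsto c^{-1}$ reconciles the two and preserves all conditions below, and cohomologous cochains yield the same section, which is why the ``data'' in (3) is really a class). Such a character $\alpha$ is valued in $\mu_n(\C)$ iff $\alpha^n=1$; since $\alpha$ lies over $1\in\Z/n\Z$ in the presentation of all characters of $S(F)_{t,n}$ as $\tilde H^1(W_F,\hat S)\times_{Z^2(\Gamma,\hat S)}\Z/n\Z$, the character $\alpha^n$ lies over $0$, hence is inflated from $S(F)$, and (using $\partial(c^n)=(\partial c)^n=t^n=1$, so $c^n\in Z^1(W_F,\hat S)$) Proposition \ref{pro:llc-s}(3) identifies it with the character of $S(F)$ attached by local duality to $[c^n]\in H^1(W_F,\hat S)$; thus $\alpha^n=1$ iff $c^n\in B^1(W_F,\hat S)$. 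For $S(F)_t$ one instead keeps the \emph{unitary} genuine characters, which by construction are those with $c\in C^1_u(W_F,\hat S)$ (a weaker condition than $c^n\in B^1(W_F,\hat S)$, since on a torus boundedness of $c^n$ forces that of $c$, consistently with $S(F)_{t,n}\to S(F)_t$). For $S(F)_{t,\infty}$, presented as the pushout of $S(F)_{t,n_0}$ along $\mu_{n_0}(\C)\hookrightarrow\mu_\infty(\C)$ with $n_0$ the order of $t$, a genuine character is $\mu_\infty(\C)$-valued iff its restriction to $S(F)_{t,n}$ is $\mu_n(\C)$-valued for some multiple $n$ of $n_0$ --- using that a $\mu_\infty(\C)$-valued continuous character of $S(F)$ is automatically of finite order --- which by the finite-order case is the condition $\exists n\in\N:\ c^n\in B^1(W_F,\hat S)$.

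There is no serious obstacle; the argument is a chain of translations and the only thing requiring attention is the bookkeeping. In $(1)\Leftrightarrow(2)$ the lone non-formal input is the open mapping theorem, applicable because all groups in sight are $\sigma$-compact and locally compact. In $(2)\Leftrightarrow(3)$ the crux is to pin down, cohomologically, the extra constraint distinguishing $\mu_n(\C)$-, $\mb{S}^1$-, or $\mu_\infty(\C)$-valued genuine characters among all genuine characters; the mechanism that makes this work is that the $n$-th power of a genuine character of $S(F)_{t,n}$ descends to $S(F)$ and is therefore governed by the ordinary local Langlands correspondence for the torus $S$, which is exactly what renders the condition $c^n\in B^1(W_F,\hat S)$ visible via Proposition \ref{pro:llc-s}(3).
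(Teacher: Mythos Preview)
Your proof is correct and follows essentially the same approach as the paper's own argument: the equivalence $(1)\Leftrightarrow(2)$ via the formula $x=\alpha(x)\cdot s(\bar x)$, and the equivalence $(2)\Leftrightarrow(3)$ via the local correspondence of Proposition~\ref{pro:llc-s} and Corollary~\ref{cor:llc-s}, together with the reduction of the $\mu_\infty$-case to the finite case using Lemma~\ref{lem:torchartor}. You are simply more explicit than the paper about continuity (invoking the open mapping theorem) and about the sign convention $-\partial c=t$ versus $\partial c=t$.
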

\begin{proof}
In all three cases the equivalence of (1) and (2) is given by the formula $\alpha(x)s(\bar x)=x$ for all $x \in S(F)_{t,n}$, where $\bar x \in S(F)$ is the image of $x$. For the equivalence of (2) and (3), the case of $S(F)_t$ is immediate from Proposition \ref{pro:llc-s}. The case of $S(F)_{t,n}$ follows from the multiplicativity of the local correspondence for $S(F)_\infty$, which implies that the order of a genuine character $\alpha : S(F)_{t,n} \to \C^\times$ is equal to the order of its parameter $c$ in the group $\tilde H^1(W_F,\hat S)$. Finally, in the case of $S(F)_{t,\infty}$ we use that a character $t : \tilde\pi_1(S) \to \mu_\infty(\C)$ has finite order $n$ due to Lemma \ref{lem:surjfin}. Therefore $S(F)_{t,\infty}$ is the push-out of $S(F)_{t,n}$ along the inclusion $\mu_n(\C) \to \mu_\infty(\C)$, hence genuine characters $S(F)_{t,\infty} \to \mu_\infty(\C)$ are in bijection with genuine characters $S(F)_{t,n} \to \mu_\infty(\C)$. The group $S(F)_{t,n}$, being a finite cover of $S(F)$, satisfies the assumption of Lemma \ref{lem:torchartor}, so every genuine character $S(F)_{t,n} \to \mu_\infty(\C)$ has finite order.
\end{proof}

We next turn to the analysis of the isomorphisms between the various covers $S(F)_{t,n}$. We begin with automorphisms.

\begin{lem} \label{lem:[n]}
Let $i$ and $n$ be positive integers.
\begin{enumerate}
 	\item The map $\tilde H^i(\Gamma,\hat S[n]) \to \tilde H^i(\Gamma,\hat S)[n]$ is surjective.
 	\item The map $\tilde H^1(\Gamma,\hat S)[n] \to \tilde H^1(W_F,\hat S)[n]$ is bijective.
 	\item If $S$ is induced, then the map $\tilde H^1(\Gamma,\hat S[n]) \to \tilde H^1(\Gamma,\hat S)[n]$ is bijective.
\end{enumerate} 
\end{lem}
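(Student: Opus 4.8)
All three parts are manipulations with cochains; part (3) uses part (1), and part (2) is the one that requires a genuine input. I would treat them in that order, working throughout with (continuous) cochains that factor through a finite quotient of $\Gamma$.

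\textbf{Part (1).} A class in $\tilde H^i(\Gamma,\hat S)[n]$ is represented by some $c\in C^i(\Gamma,\hat S)$ with $c^n=\partial b$, $b\in C^{i-1}(\Gamma,\hat S)$. Such a $b$ has finite image, so, $\hat S$ being divisible, I can choose $b'\in C^{i-1}(\Gamma,\hat S)$ with $(b')^n=b$ (take an $n$-th root of each of the finitely many values; the result factors through the same finite quotient, hence is continuous). Then $c\cdot(\partial b')^{-1}$ takes values in $\hat S[n]$, since its $n$-th power is $c^n\cdot(\partial b)^{-1}=1$, and it is cohomologous to $c$ modulo $B^i(\Gamma,\hat S)$; this is the desired preimage.

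\textbf{Part (2).} Injectivity holds with no torsion hypothesis: inflation along the dense embedding $W_F\hookrightarrow\Gamma$ is injective on continuous cochains, it carries $C^1(\Gamma,\hat S)$ into $\tilde Z^1(W_F,\hat S)$, and it identifies $B^1(\Gamma,\hat S)$ with $B^1(W_F,\hat S)$ because the $\Gamma$-action on $\hat S$ factors through a finite quotient; hence the induced map on quotients is injective. For surjectivity, fix $[c]\in\tilde H^1(W_F,\hat S)[n]$ and a representative $c\in\tilde Z^1(W_F,\hat S)$ with $c^n=\partial s$, $s\in\hat S$. Modifying $c$ by $\partial(s^{1/n})^{-1}\in B^1(W_F,\hat S)$ (which is harmless), I may assume $c$ takes values in $\hat S[n]$; then $\partial c\in Z^2(\Gamma,\hat S)$ is $\hat S[n]$-valued, so $c\in\tilde Z^1(W_F,\hat S[n])$ with $\partial c$ inflated from a finite quotient $\tx{Gal}(E/F)$, which I enlarge so that $\tx{Gal}(F^s/E)$ also acts trivially on $\hat S[n]$. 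Now $\partial(c|_{W_E})=(\partial c)|_{W_E\times W_E}$ vanishes (the restriction of the $\tx{Gal}(E/F)$-cocycle to the identity pair), so $c|_{W_E}$ is a continuous homomorphism $W_E\to\hat S[n]$; since the target is finite it factors through $W_E^{\tx{ab}}$, hence by local class field theory (with the analogous, easier statement when $E$ is archimedean) through a finite abelian extension $E'/E$, which I take Galois over $F$. A direct check, using that $\partial c$ is inflated from $\tx{Gal}(E/F)$ and that $c|_{W_E}$ agrees with a homomorphism killing $W_{E'}$, shows that $c$ is left- and right-$W_{E'}$-invariant, hence inflated from $\tx{Gal}(E'/F)$ and so from $\Gamma$. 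The resulting class in $\tilde H^1(\Gamma,\hat S)$ is $n$-torsion because its representative is $\hat S[n]$-valued, and it maps to $[c]$.

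\textbf{Part (3).} By part (1) the map is surjective, so only injectivity remains. The key observation is that for induced $S$ the fixed-point group $\hat S^\Gamma$ is a connected torus, on which multiplication by $n$ is therefore surjective. Suppose $c\in C^1(\Gamma,\hat S[n])$ becomes $\partial s$ in $\tilde H^1(\Gamma,\hat S)$ with $s\in\hat S$. The $\hat S[n]$-valuedness of $\partial s$ forces $\sigma(s)-s\in\hat S[n]$ for all $\sigma$, hence $ns\in\hat S^\Gamma$; choosing $t\in\hat S^\Gamma$ with $nt=ns$, the element $m:=s-t$ lies in $\hat S[n]$ and satisfies $\partial m=\partial s=c$, so $[c]=0$ in $\tilde H^1(\Gamma,\hat S[n])$.

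The only step that is not bookkeeping is the surjectivity in part (2); there the crux is the reduction to the finite coefficient module $\hat S[n]$ followed by the assertion that a $W_F$-cochain valued in a finite module whose differential is inflated from $\Gamma$ is itself inflated from $\Gamma$, which is where (local) class field theory enters and where the archimedean and non-archimedean cases are handled uniformly.
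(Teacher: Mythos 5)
Your proof is correct and parts (1) and (2) follow essentially the same path as the paper's: in (1) the paper uses a global set-theoretic section of the $n$-th power map on $\hat S$ rather than taking roots of the finitely many values of $b$, and in (2) it chooses $E$ to split $S$ (so $W_E$ acts trivially on all of $\hat S$, not just on $\hat S[n]$), but these are cosmetic variations on the same idea. The one place you genuinely diverge is part (3): the paper reduces to injectivity of $H^1(\Gamma,\hat S[n])\to H^1(\Gamma,\hat S)$ and then appeals to \cite[Corollary 2.3]{Kot84}, which gives the exact sequence $\pi_0(\hat S^\Gamma)\to H^1(\Gamma,\hat S[n])\to H^1(\Gamma,\hat S)$ and finishes via $\pi_0(\hat S^\Gamma)=1$ for induced $S$, whereas you unwind the connecting map by hand and use $n$-divisibility of $\hat S^\Gamma$. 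Both arguments rest on the same input, namely connectedness of $\hat S^\Gamma$ when $S$ is induced; yours has the mild advantage of being self-contained, while the paper's phrasing is set up to match the Kottwitz machinery used elsewhere.
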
 
\begin{proof}
(1) Since $\hat S$ is a divisible group there exists a set-theoretic section $s : \hat S \to \hat S$ of the $n$-th power map. For the cohomology of $\Gamma$ we are considering $\hat S$ with the discrete topology, so this section is trivially continuous. Given $x \in C^i(\Gamma,\hat S)$ with the property $x^n=\partial y$ for $y \in C^{i-1}(\Gamma,\hat S)$ we take $z=s \circ y \in C^{i-1}(\Gamma,\hat S)$. Then $z^n=y$ and $x \cdot \partial z^{-1} \in C^i(\Gamma,\hat S[n])$ represents the same class as $x$. This proves (1).

(2) Consider $x \in C^1(W_F,\hat S)$ such that $x^n = \partial y$ for $y \in B^1(W_F,\hat S)=B^1(\Gamma,\hat S)$ and $\partial x \in Z^2(\Gamma,\hat S)$. Let $E/F$ be a finite Galois extension that splits $S$ and such that $\partial x$ and $y$ are inflated from $\Gamma_{E/F}$. The restriction of $x$ to $W_E$ lies in $Z^1(W_E,\hat S[n])=\tx{Hom}(E^\times,\hat S[n])$. Since $\hat S[n]$ is finite, the kernel of $x|_{E^\times}$ is a closed subgroup of $E^\times$ of finite index. If $E=\C$ the only possibility is $\C^\times$. If $E$ is non-archimedean, this subgroup contains the norms from a larger Galois extension of $F$ that contains $E$. Enlarging $E$ as necessary we may assume that $x|_{E^\times}$ is trivial. But this means that $x$ is inflated from $\Gamma_{E/F}$, i.e. $x \in \tilde H^1(\Gamma,\hat S)[n]$, proving surjectivity. 

To prove injectivity we note that the restriction map $C^1(\Gamma,\hat S) \to C^1(W_F,\hat S)$ is injective due to $W_F$ being dense in $\Gamma$, while $B^1(W_F,\hat S)=B^1(\Gamma,\hat S)$.

(3) Surjectivity has already been proved in (1), so it remains to prove injectivity. An element of $\tilde H^1(\Gamma,\hat S[n])$ with trivial image in $\tilde H^1(\Gamma,\hat S)[n]$ lies in $H^1(\Gamma,\hat S[n])$. Thus we need to prove the injectivity of $H^1(\Gamma,\hat S[n]) \to H^1(\Gamma,\hat S)$. We consider the exact sequence $1 \to \hat S[n] \to \hat S \to \hat S \to 1$, where the second map is the $n$-th power map. To this exact sequence we apply \cite[Corollary 2.3]{Kot84} to obtain the exact sequence $\pi_0(\hat S^\Gamma) \to H^1(\Gamma,\hat S[n]) \to H^1(\Gamma,\hat S)$. Since $S$ is induced, $\hat S^\Gamma$ is connected, and the claim follows.
\end{proof} 

For a central extension of abelian topological groups $1 \to A \to B \to C \to 1$ write $\tx{Aut}(A \to B \to C)$ for the group of continuous automorphisms of $B$ that preserve $A$ and induce the identity automorphisms on $A$ and $C$. The identity on $B$ is a distinguished element in this group, and the group of continuous homomorphisms $C \to A$ acts simply transitively on $\tx{Aut}(A \to B \to C)$ by multiplication. Taking the orbit of this action through the identity on $B$ provides an isomorphism $\tx{Aut}(A \to B \to C) = \tx{Hom}_\tx{cts}(C,A)$. From Lemmas \ref{lem:dc_split} and \ref{lem:[n]} we obtain

\begin{cor}\ \\[-15pt] \label{cor:s-auto}
\begin{enumerate}
	\item $\tx{Aut}(\mu_n(\C) \to S(F)_{t,n} \to S(F)) = H^1(\Gamma,\hat S)[n]$.
	\item $\tx{Aut}(\mu_\infty(\C) \to S(F)_{t,\infty} \to S(F)) = H^1(\Gamma,\hat S)[\infty]$.
	\item $\tx{Aut}(\mb{S}^1 \to S(F)_{t} \to S(F)) = H^1_u(W_F,\hat S)$.
\end{enumerate}
\end{cor}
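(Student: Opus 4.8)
The plan is to reduce all three statements to the identification $\tx{Aut}(A \to B \to C) = \tx{Hom}_\tx{cts}(C,A)$ recorded just before the corollary, and then to compute the relevant $\tx{Hom}$-group. For part (1), $\tx{Hom}_\tx{cts}(S(F),\mu_n(\C))$ is the subgroup of $\tx{Hom}_\tx{cts}(S(F),\mb{S}^1)$ consisting of characters killed by $n$; under the Langlands isomorphism of Appendix \ref{app:langtori} this is $H^1_u(W_F,\hat S)[n]$. So the task is to identify $H^1_u(W_F,\hat S)[n]$ with $H^1(\Gamma,\hat S)[n]$. First I would invoke Lemma \ref{lem:[n]}(2), which gives a bijection $\tilde H^1(\Gamma,\hat S)[n] \to \tilde H^1(W_F,\hat S)[n]$; then I would observe that an $n$-torsion class whose differential is trivial lands in $H^1$ rather than the larger $\tilde H^1$, so the bijection restricts to $H^1(\Gamma,\hat S)[n] \to H^1(W_F,\hat S)[n]$. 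It remains to check that the boundedness condition is automatic: a class of finite order $n$ in $H^1(W_F,\hat S)$ is represented by a cocycle valued in $\hat S[n]$, which is finite hence bounded, so $H^1(W_F,\hat S)[n] = H^1_u(W_F,\hat S)[n]$. This also shows $H^1(\Gamma,\hat S)[n] = H^1_u(W_F,\hat S)[n]$, which is the claim.

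For part (2), $\tx{Hom}_\tx{cts}(S(F),\mu_\infty(\C))$ is the group of finite-order characters of $S(F)$, i.e. $\bigcup_n H^1_u(W_F,\hat S)[n] = H^1_u(W_F,\hat S)[\infty]$ in the notation of Appendix \ref{app:cag}; taking the union of the identifications from part (1) over all $n$ gives $H^1(\Gamma,\hat S)[\infty]$. One subtlety is that $S(F)_{t,\infty}$ is the pushout of $S(F)_{t,n}$ along $\mu_n(\C) \to \mu_\infty(\C)$ for the finite order $n$ of $t$ (using Lemma \ref{lem:surjfin}), and an automorphism of the pushout corresponds to a compatible family of automorphisms, which is handled by the same colimit argument that underlies Remark \ref{rem:n-m}; I would spell this out only to the extent of noting that the pushout structure makes $\tx{Hom}_\tx{cts}(S(F),\mu_\infty(\C))$ the correct group. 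For part (3), $\tx{Hom}_\tx{cts}(S(F),\mb{S}^1)$ is by definition the Pontryagin dual of $S(F)$, which is $H^1_u(W_F,\hat S)$ by the Langlands correspondence, with no torsion restriction.

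The main obstacle is part (1): verifying that no boundedness is lost when passing between $W_F$-cohomology and $\Gamma$-cohomology at finite level, and that finite-order $W_F$-cocycles may be taken $\hat S[n]$-valued. The first point is exactly where Lemma \ref{lem:[n]}(2) does the work (its proof already contains the argument that a finite-order $W_F$-cocycle on a splitting extension is trivial on norms from a large enough field, hence inflated from a finite quotient); the second is the standard observation that an $n$-torsion class lifts along $\hat S[n] \hookrightarrow \hat S$ because $\hat S$ is divisible, as used in Lemma \ref{lem:[n]}(1). Once these are in place, each of (1), (2), (3) follows by combining the Langlands correspondence with the abstract identification $\tx{Aut}(A \to B \to C) = \tx{Hom}_\tx{cts}(C,A)$, so the corollary is essentially a bookkeeping exercise on top of Lemmas \ref{lem:dc_split} and \ref{lem:[n]}.
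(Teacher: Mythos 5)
Your proof is correct and follows essentially the same route the paper intends: identify $\tx{Aut}$ with $\tx{Hom}_\tx{cts}(S(F),-)$ via the remark preceding the corollary, translate through Langlands duality, and use Lemma \ref{lem:[n]}(2) (restricted to $H^1$, as you note) to pass from $W_F$- to $\Gamma$-cohomology; your divisibility argument for $H^1(W_F,\hat S)[n]=H^1_u(W_F,\hat S)[n]$ is a clean way to handle the boundedness point. The paper's citation of Lemma \ref{lem:dc_split} amounts to the same computation of $\tx{Hom}_\tx{cts}(S(F),\mu_n(\C))$ (take $t=1$ there), so there is no substantive divergence.
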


To handle isomorphisms between different extensions, we define for $t \in Z^2(\Gamma,\hat S)$ 

\begin{eqnarray*}
C^1(\Gamma,\hat S)_{t}&=&\{ c \in C^1(\Gamma,\hat S)\,|\, (-\partial) c=t\},\\
\tilde H^1(\Gamma,\hat S)_{t}&=&C^1(\Gamma,\hat S)_{t}/B^1(\Gamma,\hat S),\\
C^1_u(W_F,\hat S)_{t}&=&\{ c \in C^1_u(W_F,\hat S)\,|\, (-\partial) c=t\},\\
\tilde H^1_u(W_F,\hat S)_{t}&=&C^1_u(W_F,\hat S)_{t}/B^1(\Gamma,\hat S).\\
\end{eqnarray*}

\begin{cns} \label{cns:iso-t}
Assume given $t,t' \in Z^2(\Gamma,\hat S)$ and $c \in C^1(\Gamma,\hat S)_{t/t'}$ and write $h=[c]$. We construct isomorphisms
\begin{equation} \label{eq:iso-t}
\xi_h : S(F)_{t,*} \to S(F)_{t',*}
\end{equation}
of extensions of $S(F)$, functorial in $S$, and multiplicative in $h$. Here $*$ is either void, or $n$, or $\infty$, and in the latter two cases we further assume given $n \in \N$ with $t^n=1$, $t'^n=1$, $h^n=1$.

The construction in all cases is the same, so we describe only the $\mb{S}^1$ case. Multiplication by $h^k$ on $\tilde H^1(W_F,\hat S)_{t'^k}$ for $k \in \Z$ and the identity on $\Z$ splice to an isomorphism 
\[ \tilde H^1_u(W_F,\hat S) \times_{Z^2(\Gamma,\hat S),t'} \Z \to \tilde H^1_u(W_F,\hat S) \times_{Z^2(\Gamma,\hat S), t} \Z. \] 
Pontryagin duality induces the isomorphism $\xi_h$. Given a homomorphism $f : T \to S$ we can map $t$ and $t'$ to $Z^2(\Gamma,\hat T)$ and $h$ to $\tilde H^1(\Gamma,\hat T)$ via $\hat f$. The compatibility of $\xi_h$ with $f$ is then immediate. The multiplicativity statement $\xi_{hh'} = \xi_h \circ \xi_{h'}$ is also immediate from the construction.

The corresponding isomorphism on the level of $L$-groups is given by 
\begin{equation} \label{eq:iso-l-s}
\hat S \boxtimes_{t'} \Gamma \to \hat S \boxtimes_{t} \Gamma,\qquad s \boxtimes \sigma \mapsto sc(\sigma) \boxtimes \sigma.	
\end{equation}

While the isomorphism $\xi_h$ only depends on the class $h$ of $c$, the isomorphism of $L$-groups depends on $c$ itself. But multiplying $c$ by a coboundary conjugates the isomorphism by an element of $\hat S$. Therefore the resulting bijections between the sets of conjugacy classes of $L$-parameters are the same.

Finally, we note that the same construction works with $C^1_u(W_F,\hat S)_{t/t'}$ in place of $C^1(\Gamma,\hat S)_{t/t'}$, but one has to use the Weil-form of the $L$-groups to write the corresponding dual isomorphism.
\end{cns}

\begin{lem}\ \\[-15pt] \label{lem:s-iso}
\begin{enumerate}
	\item The map sending $h \in \tilde H^1(\Gamma,\hat S)_{t/t'}[n]$ to the isomorphism of covers $\xi_h : S(F)_{t,n} \to S(F)_{t',n}$ induces a bijection between the set $\tilde H^1(\Gamma,\hat S)_{t/t'}[n]$ and the set of such isomorphisms. 
	\item The map sending $h \in \tilde H^1(\Gamma,\hat S)_{t/t'}$ to the isomorphism of covers $\xi_h : S(F)_{t,\infty} \to S(F)_{t',\infty}$ induces a bijection between the set $\tilde H^1(\Gamma,\hat S)_{t/t'}$ and the set of such isomorphisms. 
	\item The map sending $h \in \tilde H^1_u(W_F,\hat S)_{t/t'}$ to the isomorphism of covers $\xi_h : S(F)_{t} \to S(F)_{t'}$ induces a bijection between the set $\tilde H^1_u(W_F,\hat S)_{t/t'}$ and the set of such isomorphisms. 
\end{enumerate}
\end{lem}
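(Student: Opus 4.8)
The plan is to realize both the source and the target of $h \mapsto \xi_h$ as torsors under one and the same group, to note that the map is equivariant for these torsor structures, and thereby to reduce the whole statement to the assertion that source and target are simultaneously empty or nonempty.

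For the target: fixing one isomorphism of covers identifies the set of all of them with $\tx{Aut}(\mu_n(\C) \to S(F)_{t',n} \to S(F))$, and likewise with the $\mu_\infty(\C)$- or $\mb{S}^1$-analogue; by Corollary \ref{cor:s-auto} this group is $H^1(\Gamma,\hat S)[n]$, resp.\ $H^1(\Gamma,\hat S)[\infty]$, resp.\ $H^1_u(W_F,\hat S)$, and since the cohomology of the profinite group $\Gamma$ with discrete coefficients is torsion we have $H^1(\Gamma,\hat S)[\infty]=H^1(\Gamma,\hat S)$. For the source: if $h_0\in\tilde H^1(\Gamma,\hat S)_{t/t'}$, every other element differs from $h_0$ by an element of $\tilde H^1(\Gamma,\hat S)_1=H^1(\Gamma,\hat S)$; the extra condition $h^n=1$ in part (1) cuts this down to $H^1(\Gamma,\hat S)[n]$; in part (2) there is no extra condition, but $t$ and $t'$, hence $t/t'$, are of finite order — as is forced by the very definition of $S(F)_{t,\infty}$ — and $H^1(\Gamma,\hat S)$ is torsion, so every $h\in\tilde H^1(\Gamma,\hat S)_{t/t'}$ is itself of finite order and $\xi_h$ is defined; in part (3) one uses $\tilde H^1_u(W_F,\hat S)_1=H^1_u(W_F,\hat S)$. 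Thus in each case the source, when nonempty, is a torsor under the same group as the target, and the map $h\mapsto\xi_h$ is equivariant: this is the multiplicativity $\xi_{gh}=\xi_g\circ\xi_h$ of Construction \ref{cns:iso-t} together with the fact, visible from the proof of Corollary \ref{cor:s-auto}, that the identification there of $\tx{Aut}$ with the cohomology group is the one sending $g$ to $\xi_g$.

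An equivariant map between torsors under the same group is automatically injective (from $\xi_h=\xi_{h'}$, writing $h'=gh$, one gets $\xi_g=\tx{id}$, hence $g=1$) and has image a full orbit, so it is bijective as soon as its source is nonempty; and since each $\xi_h$ is by construction an isomorphism of covers, nonemptiness of the source forces nonemptiness of the target. It remains to prove the reverse implication. Here one uses that $t\mapsto[S(F)_{t,*}]$ is a homomorphism into the group of continuous central extensions of $S(F)$ — because the Baer sum of covers corresponds to the product of the defining cocycles and the Baer inverse to their inversion — so that an isomorphism of covers $S(F)_{t,*}\cong S(F)_{t',*}$ forces $S(F)_{t/t',*}$ to be a split extension. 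By Lemma \ref{lem:dc_split}, a splitting produces a cochain $c\in C^1(W_F,\hat S)$ with $(-\partial)c=t/t'$ (up to the sign convention relating $\partial$ and $-\partial$, which is harmless since the set of admissible classes for the parameter $t/t'$ is nonempty precisely when the one for $t'/t$ is), which in case (3) is bounded and in cases (1),(2) satisfies $c^n\in B^1(W_F,\hat S)$ for the relevant $n$. In case (3), $[c]$ already lies in $\tilde H^1_u(W_F,\hat S)_{t/t'}$ and we are done. In cases (1) and (2) we push $c$ down to a $\Gamma$-cochain exactly as in the proof of Lemma \ref{lem:[n]}(2): after restricting to $W_E$ for a sufficiently large finite Galois extension $E/F$ splitting $\hat S$ and over which $t/t'$ and the relevant coboundary are inflated, $c|_{W_E}$ is a continuous homomorphism into the finite group $\hat S[n]$ and hence trivial after further enlarging $E$, so $c$ is inflated from $\Gamma_{E/F}$ and $[c]\in\tilde H^1(\Gamma,\hat S)_{t/t'}[n]$.

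The main obstacle is precisely this last passage between the three cohomology theories — $\Gamma$-cochains, $W_F$-cochains, and bounded $W_F$-cochains — carried out in the affine setting where the differential is pinned to $t/t'$ and an order condition is imposed, an enhanced rerun of Lemma \ref{lem:[n]}(2). A lesser but real nuisance is keeping the sign conventions of Construction \ref{cns:iso-t} and Lemma \ref{lem:dc_split} aligned, so that $(-\partial)c=t/t'$ really is the condition carving out the target sets.
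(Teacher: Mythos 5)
Your proof follows exactly the paper's torsor-equivariance argument: identify the automorphism group via Corollary \ref{cor:s-auto}, view both sides as (pseudo-)torsors under that group, and use the multiplicativity of Construction \ref{cns:iso-t} to conclude that $h\mapsto\xi_h$ is equivariant, hence a bijection whenever the source is nonempty. Where you go beyond the paper is in spelling out the reverse implication — that an isomorphism of covers forces $\tilde H^1(\Gamma,\hat S)_{t/t'}[n]$ (resp.\ its $\infty$- and unitary analogues) to be nonempty — via the Baer-group splitting of $S(F)_{t/t',*}$, Lemma \ref{lem:dc_split}, and descent to $\Gamma$-cochains through Lemma \ref{lem:[n]}(2). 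The paper's proof simply asserts that both sides are torsors under the same group and that the structures agree, which settles the case when both are nonempty but silently passes over the possibility of one side being empty and the other not; your added paragraph closes that small gap, and the remaining details (the dismissal of the $\partial$ versus $-\partial$ sign, the torsionness of $H^1(\Gamma,\hat S)$ handling case (2)) all line up with the paper's argument.
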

\begin{proof}
We have already constructed a map from $\tilde H^1(\Gamma,\hat S)_{t/t'}[n]$ to the set of such isomorphisms. To show that it is bijective, note that the set of isomorphisms is a torsor under the group of automorphisms of $S(F)_{t,n}$. This group equals $H^1(\Gamma,\hat S)[n]$ by Corollary \ref{cor:s-auto}. On the other hand, $\tilde H^1(\Gamma,\hat S)_{t/t'}[n]$ is also a torsor under $H^1(\Gamma,\hat S)[n]$, and the two torsor structures agree by the multiplicativity of Construction \ref{cns:iso-t}. This proves (1). The arguments for (2) and (3) are analogous. For (2) note that if there exists a positive integer $n$ such that $t^n=1=t'^n$, then for any $c \in C^1(\Gamma,\hat S)_{t/t'}$ we have $c^n \in Z^1(\Gamma,\hat S)$, so the cohomology class of $c^n$ is torsion, so the cohomology class of $c$ is torsion (i.e. the corresponding element of $C^1/B^1$).
\end{proof}

\begin{cor}\ \\[-15pt] \label{cor:s-iso}
\begin{enumerate}
	\item The set of isomorphism classes of covers of $S(F)$ by $\mu_n(\C)$ that are obtained from the extension $S(F)_\infty$ via continuous characters $t : \tilde\pi_1(S) \to \mu_n(\C)$ is in bijection with $H^2(\Gamma,\hat S[n])$.
	\item The set of isomorphism classes of covers of $S(F)$ by $\mu_\infty(\C)$ that are obtained from the extension $S(F)_\infty$ via continuous characters $t : \tilde\pi_1(S) \to \mu_\infty(\C)$ is in bijection with $H^2(\Gamma,\hat S)$.
	\item All covers of $S(F)$ by $\mb{S}^1$ that are obtained from the extension $S(F)_\infty$ via continuous characters $t : \tilde\pi_1(S) \to \mb{S}^1$ are isomorphic.
\end{enumerate}
\end{cor}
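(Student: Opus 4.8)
The plan is to reduce all three parts to the explicit parametrization of isomorphisms between the covers $S(F)_{t,*}$ furnished by Lemma \ref{lem:s-iso}, and then to translate the resulting conditions on $1$-cochains into the cohomology groups appearing in the statement. In each case the covers under consideration are exactly the $S(F)_{t,*}$ as $t$ ranges over the characters of $\tilde\pi_1(S)$ of the relevant kind, and by Pontryagin duality such characters are honest elements of $Z^2(\Gamma,\hat S)$: a character valued in $\mu_n(\C)$ is an $n$-torsion element of $Z^2(\Gamma,\hat S)$, which, being a $2$-cochain killed by $n$, is the same as an element of $Z^2(\Gamma,\hat S[n])$; a character valued in $\mu_\infty(\C)$ is, by Lemma \ref{lem:surjfin}, a torsion element of $Z^2(\Gamma,\hat S)$; and a character valued in $\mb{S}^1$ is an arbitrary element of $Z^2(\Gamma,\hat S)$.

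For (1), Lemma \ref{lem:s-iso}(1) says that $S(F)_{t,n}$ and $S(F)_{t',n}$ are isomorphic iff $\tilde H^1(\Gamma,\hat S)_{t/t'}[n] \neq \emptyset$, i.e. iff there is $c \in C^1(\Gamma,\hat S)$ with $(-\partial)c = t/t'$ and $c^n \in B^1(\Gamma,\hat S)$. I would show this is equivalent to $[t/t'] = 0$ in $H^2(\Gamma,\hat S[n])$. One direction is immediate, taking for $c$ a bounding cochain valued in $\hat S[n]$. For the other, write $c^n = \partial y$ with $y \in \hat S = C^0(\Gamma,\hat S)$, use divisibility of $\hat S$ to pick $y_0$ with $y_0^n = y$, and replace $c$ by $c\cdot(\partial y_0)^{-1}$, which is $\hat S[n]$-valued and has the same differential. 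Hence the quotient map $Z^2(\Gamma,\hat S[n]) \to H^2(\Gamma,\hat S[n])$ descends to the asserted bijection on isomorphism classes.

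For (2) the same template applies with Lemma \ref{lem:s-iso}(2) in place of (1): since the torsion condition on $1$-cochains is automatic there, $S(F)_{t,\infty} \cong S(F)_{t',\infty}$ iff $t/t' \in B^2(\Gamma,\hat S)$, i.e. iff $[t]=[t']$ in $H^2(\Gamma,\hat S)$. What remains is the surjectivity of the map from the torsion subgroup of $Z^2(\Gamma,\hat S)$ onto $H^2(\Gamma,\hat S)$. Here I would use the standard fact that $H^i(\Gamma,M)$ is torsion for $i\geq 1$ when $\Gamma$ is profinite and $M$ a discrete module, applicable since the Galois action on $\hat S$ factors through a finite quotient, together with the same divisibility trick: if $z^n = \partial y$, replace $z$ by $z\cdot(\partial y_0)^{-1}$ with $y_0^n=y$ to obtain an $n$-torsion cocycle in the class $[z]$. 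Part (3) is then immediate: by Lemma \ref{lem:s-iso}(3) it suffices that $\tilde H^1_u(W_F,\hat S)_{t/t'}$ be non-empty for all $t,t'$, and Lemma \ref{lem:s-key} produces the required $c \in C^1_u(W_F,\hat S)$ with $(-\partial)c = t/t'$.

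The only step carrying content beyond bookkeeping is the two invocations of divisibility of $\hat S$, which trade the condition that $c^n$ (resp. a power of the class of $c$) be a coboundary for an honest $\hat S[n]$-valued bounding cochain; I expect that, together with pinning down the identification $Z^2(\Gamma,\hat S)[n] = Z^2(\Gamma,\hat S[n])$ and keeping the signs in $(-\partial)$ consistent with Construction \ref{cns:iso-t}, to be where most of the care is needed. Everything else is a direct appeal to Lemmas \ref{lem:s-iso} and \ref{lem:s-key}.
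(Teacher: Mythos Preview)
Your proof is correct and follows the same route as the paper: reduce to Lemma \ref{lem:s-iso}, then translate the torsion condition on cochain classes into membership in the indicated cohomology groups, with part (3) handled by Lemma \ref{lem:s-key}. The only cosmetic difference is that the paper packages your divisibility trick as Lemma \ref{lem:[n]}(1) and cites it for (1), while you reprove it inline; you also spell out the surjectivity step in (2), which the paper leaves implicit.
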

\begin{proof}
(1) According to Lemma \ref{lem:s-iso}(1), given $t,t' \in Z^2(\Gamma,\hat S[n])$, the covers $S(F)_{t,n}$ and $S(F)_{t',n}$ are isomorphic if and only if there exists $h \in \tilde H^1(\Gamma,\hat S)[n]$ with $\partial h=t/t'$. According to Lemma \ref{lem:[n]}(1) the class $h$ is representable by an element of $C^1(\Gamma,\hat S[n])$.

(2) is immediate from Lemma \ref{lem:s-iso}(2).

(3) is immediate from Lemmas \ref{lem:s-iso}(3) and \ref{lem:s-key}.
\end{proof}

\begin{lem} \label{lem:uniqdiv}
Let $S$ be an $F$-torus.
\begin{enumerate}
	\item The natural map $\varinjlim_m H^i(\Gamma,\hat S[m]) \to H^i(\Gamma,\hat S)$ is an isomorphism for $i>1$, and surjective for $i=1$. 
	\item If $S$ is induced, then this map is an isomorphism also for $i=1$.
\end{enumerate}
\end{lem}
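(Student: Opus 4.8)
The plan is to work with the short exact sequence of $\Gamma$-modules $1 \to \hat S[m] \to \hat S \xrightarrow{[m]} \hat S \to 1$, taking the discrete topology on $\hat S$ throughout (as is done for Galois cohomology of $\hat S$ elsewhere in the paper). For fixed $i$ and varying $m$, the long exact cohomology sequences fit into a directed system indexed by $m$ ordered by divisibility, the transition maps $\hat S[m] \to \hat S[mk]$ being the inclusions. First I would pass to the colimit over $m$, using that filtered colimits are exact and commute with group cohomology of the (profinite, hence) compactly generated group $\Gamma$ acting on discrete modules, so that $\varinjlim_m H^i(\Gamma, \hat S[m]) = H^i(\Gamma, \varinjlim_m \hat S[m]) = H^i(\Gamma, \hat S_\tx{tors})$, where $\hat S_\tx{tors} = \bigcup_m \hat S[m]$ is the torsion subgroup of $\hat S$. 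The content of part (1) is then the statement that $H^i(\Gamma, \hat S_\tx{tors}) \to H^i(\Gamma, \hat S)$ is an isomorphism for $i > 1$ and surjective for $i = 1$.

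To get this, apply the long exact sequence of the short exact sequence $1 \to \hat S_\tx{tors} \to \hat S \to \hat S / \hat S_\tx{tors} \to 1$. The quotient $V := \hat S / \hat S_\tx{tors}$ is a torsion-free divisible group, i.e. a $\Q$-vector space, equipped with a $\Gamma$-action. For a profinite group $\Gamma$ acting on a $\Q$-vector space, $H^i(\Gamma, V) = 0$ for all $i \geq 1$: indeed $V$ is a module over $\Q$, and for the continuous cohomology of a profinite group with coefficients in a discrete module, $H^i(\Gamma, V) = \varinjlim_{U} H^i(\Gamma/U, V^U)$ over open normal $U$, and each $H^i(\Gamma/U, V^U)$ is a module annihilated by the finite order $|\Gamma/U|$ while also being a $\Q$-vector space, hence zero. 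Feeding the vanishing $H^i(\Gamma, V) = 0$ for $i \geq 1$ into the long exact sequence
\[
\cdots \to H^i(\Gamma, \hat S_\tx{tors}) \to H^i(\Gamma, \hat S) \to H^i(\Gamma, V) \to H^{i+1}(\Gamma, \hat S_\tx{tors}) \to \cdots
\]
shows $H^i(\Gamma, \hat S_\tx{tors}) \to H^i(\Gamma, \hat S)$ is an isomorphism for $i \geq 2$ and surjective for $i = 1$ (the kernel at $i=1$ being a quotient of $H^0(\Gamma, V) = V^\Gamma$, which is generally nonzero — this is exactly why one only gets surjectivity at $i=1$). Combining with the colimit identification gives part (1). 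One should double-check the compatibility: the composite $H^i(\Gamma, \hat S[m]) \to H^i(\Gamma, \hat S_\tx{tors}) \to H^i(\Gamma, \hat S)$ agrees with the map induced by the inclusion $\hat S[m] \hookrightarrow \hat S$, which it does since all maps in sight are induced by inclusions of coefficient modules.

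For part (2), when $S$ is induced we must upgrade surjectivity at $i=1$ to bijectivity, i.e. show the connecting map $H^0(\Gamma, V) = V^\Gamma \to H^1(\Gamma, \hat S_\tx{tors})$ is zero, equivalently $H^1(\Gamma, \hat S_\tx{tors}) \to H^1(\Gamma, \hat S)$ is injective. I would instead argue directly at finite level and then take the colimit: it suffices to show that $H^1(\Gamma, \hat S[m]) \to H^1(\Gamma, \hat S)$ is injective for every $m$, and for this I would reuse the argument already given in the proof of Lemma \ref{lem:[n]}(3) — namely, apply \cite[Corollary 2.3]{Kot84} to $1 \to \hat S[m] \to \hat S \xrightarrow{[m]} \hat S \to 1$ to get an exact sequence $\pi_0(\hat S^\Gamma) \to H^1(\Gamma, \hat S[m]) \to H^1(\Gamma, \hat S)$, and observe that when $S$ is induced, $\hat S^\Gamma$ is connected, so $\pi_0(\hat S^\Gamma)$ is trivial and the first map vanishes. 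Taking $\varinjlim_m$ then yields injectivity of $\varinjlim_m H^1(\Gamma, \hat S[m]) \to H^1(\Gamma, \hat S)$, which with part (1) gives (2). The main obstacle is purely bookkeeping: making sure the colimit identification $\varinjlim_m H^i(\Gamma,\hat S[m]) \cong H^i(\Gamma,\hat S_\tx{tors})$ is stated and used correctly for continuous cochain cohomology of the profinite group $\Gamma$ on discrete modules, and that the vanishing $H^{\geq 1}(\Gamma, \Q\text{-vector space}) = 0$ is invoked cleanly; both are standard but worth one careful sentence each.
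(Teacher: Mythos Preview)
Your argument is correct. Part (1) is essentially identical to the paper's proof: both pass to the torsion subgroup $\hat S[\infty]$ via the colimit identification, then use that $\hat S/\hat S[\infty]$ is uniquely divisible (a $\Q$-vector space) and hence cohomologically trivial in positive degrees.

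For part (2) you take a slightly different route from the paper. The paper uses Shapiro's lemma to reduce to the case $\hat S = \C^\times$ with trivial $\Gamma$-action, where $H^1(\Gamma,\C^\times[\infty]) \to H^1(\Gamma,\C^\times)$ is visibly bijective since both sides are just continuous homomorphisms from the profinite group $\Gamma$ into the respective targets, and any such homomorphism already lands in the torsion subgroup. Your approach instead reuses the argument of Lemma \ref{lem:[n]}(3): apply \cite[Corollary 2.3]{Kot84} to $1 \to \hat S[m] \to \hat S \to \hat S \to 1$ to get injectivity of $H^1(\Gamma,\hat S[m]) \to H^1(\Gamma,\hat S)$ for each $m$ from the connectedness of $\hat S^\Gamma$, then pass to the colimit. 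Both arguments are short and clean; yours has the virtue of avoiding the reduction step and directly recycling a result already proved, while the paper's has the virtue of being completely self-contained and not invoking an external reference. Either is perfectly acceptable.
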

\begin{proof}
Let $\hat S[\infty]$ denote the torsion subgroup of $\hat S$, so that $\hat S[\infty]=\varinjlim_m \hat S[m]$. Then $\varinjlim_m H^i(\Gamma,\hat S[m])=H^i(\Gamma,\varinjlim_m \hat S[m])=H^i(\Gamma,\hat S[\infty])$. Since $\hat S$ is divisible, $\hat S/\hat S[\infty]$ is uniquely divisible, and hence cohomologically trivial. It follows that the map $H^i(\Gamma,\hat S[\infty]) \to H^i(\Gamma,\hat S)$ induced by the inclusion $\hat S[\infty] \to \hat S$ is an isomorphism for $i>1$ and surjective for $i=1$.

Assume now that $S$ is induced. Then Shapiro's lemma reduces the claim to the bijectivity of $H^1(\Gamma,\C^\times[\infty]) \to H^1(\Gamma,\C^\times)$, where $\Gamma$ acts trivially on $\C^\times$. Both of these groups are just the groups of continuous homomorphisms of $\Gamma$ into $\C^\times[\infty]$ and $\C^\times$, respectively, and the claim follows.
\end{proof}

The following will be used in the discussion of general reductive groups, and follows at once from Lemma \ref{lem:uniqdiv} and the 5-lemma.
\begin{cor} \label{cor:uniqdiv}
Let $T \to S$ be a complex of $F$-tori in degrees $0$ and $1$. 
\begin{enumerate}
	\item The natural map 
	$\varinjlim H^i(\Gamma,\hat S[m] \to \hat T[m]) \to H^i(\Gamma,\hat S \to \hat T)$ 
	is an isomorphism for $i>2$ and surjective for $i=2$. 
	\item If $T$ is induced, then this map is an isomorphism for $i=2$.
\end{enumerate}
\end{cor}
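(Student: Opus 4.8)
The plan is to reduce everything to Lemma \ref{lem:uniqdiv} by means of the hypercohomology long exact sequence of the two-term complex and the five lemma, exactly as the remark preceding the statement suggests. For a complex $[\hat S \to \hat T]$ of $\Gamma$-modules with $\hat S$ in degree $0$ and $\hat T$ in degree $1$ there is a natural long exact sequence
\[ \cdots \to H^{i-1}(\Gamma,\hat T) \to H^i(\Gamma,\hat S \to \hat T) \to H^i(\Gamma,\hat S) \to H^i(\Gamma,\hat T) \to H^{i+1}(\Gamma,\hat S \to \hat T) \to \cdots, \]
coming from the short exact sequence of complexes $0 \to \hat T[-1] \to [\hat S \to \hat T] \to \hat S \to 0$. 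First I would write this down both for $[\hat S \to \hat T]$ and, for every positive integer $m$, for the complex $[\hat S[m] \to \hat T[m]]$ of $m$-torsion subgroups, obtaining a morphism of long exact sequences for each $m$. These morphisms are compatible under the divisibility ordering, and since filtered colimits of abelian groups are exact, passing to $\varinjlim_m$ produces a morphism of long exact sequences whose top row is
\[ \cdots \to \varinjlim_m H^{i-1}(\Gamma,\hat T[m]) \to \varinjlim_m H^i(\Gamma,\hat S[m] \to \hat T[m]) \to \varinjlim_m H^i(\Gamma,\hat S[m]) \to \varinjlim_m H^i(\Gamma,\hat T[m]) \to \cdots \]
and whose bottom row is the sequence for $[\hat S \to \hat T]$; the vertical map in the position $H^i(\Gamma,\hat S[m] \to \hat T[m])$ is the map in the statement. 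Alternatively, as in the proof of Lemma \ref{lem:uniqdiv}, one may first identify $\varinjlim_m H^i(\Gamma,\hat S[m] \to \hat T[m])$ with $H^i(\Gamma,\hat S[\infty] \to \hat T[\infty])$.

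Next I would run the five lemma on this morphism, feeding in Lemma \ref{lem:uniqdiv}: the natural maps $\varinjlim_m H^i(\Gamma,\hat S[m]) \to H^i(\Gamma,\hat S)$ and $\varinjlim_m H^i(\Gamma,\hat T[m]) \to H^i(\Gamma,\hat T)$ are isomorphisms for $i > 1$ and surjective for $i = 1$, and are isomorphisms for $i = 1$ as well when the torus in question is induced. For $i > 2$ the four terms flanking $H^i(\Gamma,\hat S \to \hat T)$ are $H^{i-1}(\Gamma,\hat S)$, $H^{i-1}(\Gamma,\hat T)$, $H^i(\Gamma,\hat S)$, $H^i(\Gamma,\hat T)$, all in cohomological degree $\geq 2 > 1$, so all four comparison maps are isomorphisms and the five lemma gives the isomorphism of part (1) in that range. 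For $i = 2$ the five terms in order are $H^1(\Gamma,\hat S)$, $H^1(\Gamma,\hat T)$, $H^2(\Gamma,\hat S \to \hat T)$, $H^2(\Gamma,\hat S)$, $H^2(\Gamma,\hat T)$; by Lemma \ref{lem:uniqdiv}(1) the comparison maps on the two $H^2$ terms are isomorphisms and the one on $H^1(\Gamma,\hat T)$ is surjective, so the surjectivity half of the five lemma gives surjectivity of the map on $H^2(\Gamma,\hat S \to \hat T)$, completing (1). For (2), assuming $T$ induced, Lemma \ref{lem:uniqdiv}(2) upgrades the comparison map on $H^1(\Gamma,\hat T)$ to an isomorphism, while the one on $H^1(\Gamma,\hat S)$ remains surjective by Lemma \ref{lem:uniqdiv}(1), and the full five lemma then yields an isomorphism on $H^2(\Gamma,\hat S \to \hat T)$.

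The argument is pure diagram chasing once Lemma \ref{lem:uniqdiv} is in hand, so there is no real obstacle. The only subtlety worth flagging is the degree shift in the hypercohomology sequence, which places $H^1$ of the degree-one term $\hat T$ — the dual of the degree-zero torus $T$ of the original complex — in the critical slot for the case $i = 2$. This is exactly why it is $T$, rather than $S$, that must be assumed induced in part (2), and why in general, in part (1), one can only conclude surjectivity at $i = 2$, since $\varinjlim_m H^1(\Gamma,\hat S[m]) \to H^1(\Gamma,\hat S)$ need not be injective.
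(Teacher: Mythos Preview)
Your proposal is correct and follows exactly the approach indicated in the paper, which simply states that the corollary follows at once from Lemma \ref{lem:uniqdiv} and the five lemma. Your write-up supplies the details of the hypercohomology long exact sequence and the precise application of the five lemma that the paper leaves implicit.
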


\begin{cor} \label{cor:s-minf}
Let $t,t' : \tilde\pi_1(S) \to \mu_n(\C)$ be continuous characters. The following are equivalent.
\begin{enumerate}
	\item There exists an integer multiple $m$ of $n$ such that the extensions $S(F)_{t,m}$ and $S(F)_{t',m}$ are isomorphic.
	\item The extensions $S(F)_{t,\infty}$ and $S(F)_{t',\infty}$ are isomorphic.
	\item $t$ and $t'$ become cohomologous in $H^2(\Gamma,\hat S)$.
\end{enumerate}
\end{cor}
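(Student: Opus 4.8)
The plan is to rephrase each of the three conditions as an assertion about the cohomology classes attached to $t$ and $t'$, using the classifications in Corollary \ref{cor:s-iso}, and then to pass to the limit via Lemma \ref{lem:uniqdiv}. First recall that, by Pontryagin duality, a character $t : \tilde\pi_1(S) \to \mu_n(\C)$ is nothing but an element of $Z^2(\Gamma,\hat S)$ annihilated by $n$, hence an element of $Z^2(\Gamma,\hat S[n])$; passing to a multiple $m$ of $n$ just views this cocycle inside $Z^2(\Gamma,\hat S[m])$ through the inclusion $\hat S[n]\subset\hat S[m]$, and the associated cover is $S(F)_{t,m}$. With this identification in place, Corollary \ref{cor:s-iso}(1), applied with $m$ in place of $n$, says that $S(F)_{t,m}$ and $S(F)_{t',m}$ are isomorphic as extensions of $S(F)$ by $\mu_m(\C)$ exactly when $t$ and $t'$ have the same image in $H^2(\Gamma,\hat S[m])$. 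Thus condition (1) is equivalent to: the images of $t$ and $t'$ agree in $H^2(\Gamma,\hat S[m])$ for some multiple $m$ of $n$. Likewise, Corollary \ref{cor:s-iso}(2) identifies the isomorphism $S(F)_{t,\infty}\cong S(F)_{t',\infty}$ with the equality of the images of $t$ and $t'$ in $H^2(\Gamma,\hat S)$, which is precisely condition (3).

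Granting this dictionary, the equivalence $(2)\Leftrightarrow(3)$ is immediate, and it remains to prove $(1)\Leftrightarrow(3)$. For this I would invoke Lemma \ref{lem:uniqdiv}(1), which for $i=2$ provides an isomorphism $\varinjlim_m H^2(\Gamma,\hat S[m]) \to H^2(\Gamma,\hat S)$, the transition maps being those induced by $\hat S[m]\hookrightarrow\hat S[m']$. The index set, the positive multiples of $n$ ordered by divisibility, is directed, so two classes lying in the image of $H^2(\Gamma,\hat S[n])$ become equal in the colimit if and only if they are already equal in $H^2(\Gamma,\hat S[m])$ at some finite stage $m$. Since the displayed isomorphism factors the natural map $H^2(\Gamma,\hat S[n])\to H^2(\Gamma,\hat S)$ through this colimit, equality of the images of $t$ and $t'$ in $H^2(\Gamma,\hat S)$ is the same as their equality in $H^2(\Gamma,\hat S[m])$ for some multiple $m$ of $n$. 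Combined with the reformulation of (1) from the first paragraph, this yields $(1)\Leftrightarrow(3)$ and completes the proof.

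The argument is essentially bookkeeping once Corollary \ref{cor:s-iso} and Lemma \ref{lem:uniqdiv} are available; the only point that needs care — the main obstacle, such as it is — is to match the clause ``$m$ a multiple of $n$'' in (1) with the filtered index system of Lemma \ref{lem:uniqdiv}(1), i.e. to check that enlarging $n$ to a multiple genuinely corresponds to the transition maps of that colimit, and to use that equality in a filtered colimit is witnessed at a finite stage. If one prefers to avoid the colimit language, the same step can be done by hand: writing $t/t' = \partial h$ with $h\in C^1(\Gamma,\hat S)$, one has $h^n\in Z^1(\Gamma,\hat S)$, whose class is torsion because $H^1(\Gamma,\hat S)$ is torsion; choosing suitable roots in the divisible group $\hat S$ one adjusts $h$ by a coboundary so that it lands in $C^1(\Gamma,\hat S[m])$ for an appropriate multiple $m$ of $n$, whence $t$ and $t'$ agree in $H^2(\Gamma,\hat S[m])$ and Corollary \ref{cor:s-iso}(1) applies.
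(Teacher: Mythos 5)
Your argument is correct and follows the same route as the paper: use Corollary \ref{cor:s-iso} to translate (1) and (2) into equalities in $H^2(\Gamma,\hat S[m])$ and $H^2(\Gamma,\hat S)$ respectively, then invoke Lemma \ref{lem:uniqdiv}(1) to identify $\varinjlim_m H^2(\Gamma,\hat S[m])$ with $H^2(\Gamma,\hat S)$ and conclude via the standard fact that equality in a filtered colimit is witnessed at a finite stage. The explicit cocycle-level alternative in your final paragraph is a nice supplement but not needed.
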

\begin{proof}
According to Corollary \ref{cor:s-iso}, the extensions $S(F)_{t,m}$ and $S(F)_{t',m}$ are isomorphic if and only if $t$ and $t'$ become cohomologous in $H^2(\Gamma,\hat S[m])$. Thus, the existence of this $m$ is equivalent to the equality of the classes of $t$ and $t'$ in $\varinjlim_m H^2(\Gamma,\hat S[m])$. The equivalence of (1) and (3) follows from Lemma \ref{lem:uniqdiv}. The equivalence of (2) and (3) is Corollary \ref{cor:s-iso}(2).
\end{proof}

\begin{rem} \label{rem:isofuse}
Let $m$ be a multiple of $n$. Since the natural map $H^2(\Gamma,\hat S[n]) \to H^2(\Gamma,\hat S[m])$ need not be injective, two extensions $S(F)_{t,n}$ and $S(F)_{t',n}$ may be non-isomorphic, but the extensions $S(F)_{t,m}$ and $S(F)_{t',m}$ may become isomorphic.

When $F$ is non-archimedean then the connecting homomorphism in the exponential sequence leads to the isomorphism $H^2(\Gamma,\hat S) \to H^3(\Gamma,X_*(\hat S))$, and the latter is trivial because $F$ has strict cohomological dimension 2. Thus, any two covers become (non-canonically) isomorphic after pushing out to some $\mu_m$. On the other hand, the group $H^2(\Gamma,\hat S[n])$ need not vanish. For example, when $S$ is a 1-dimensional anisotropic torus, then $H^2(\Gamma,\hat S[2])$ has two elements. Thus, there do exist non-isomorphic extensions $S(F)_{t,n}$.

When $F=\R$, the group $H^2(\Gamma,\hat S)$ need not vanish, so there can be covers $S(F)_{t,n}$ and $S(F)_{t',n}$ that do not become isomorphic after any enlargement of $n$.
\end{rem}

\begin{rem}
The automorphism group of the cover $S(F)_{t,n}$ does not depend on $t$. Note further that it need not vanish. Therefore, the datum of an isomorphism class of extensions of $S(F)$ by $\mu_n(\C)$ coming from $S(F)_\infty$, i.e. the corresponding element of $H^2(\Gamma,\hat S[n])$, usually contains less information than an actual such extension, i.e. an element of $Z^2(\Gamma,\hat S[n])$ representing its cohomology class.
\end{rem}

\subsection{Covers of quasi-split connected reductive groups} \label{sub:covers_red}

Let $G$ be a quasi-split connected reductive $F$-group. In this subsection we will construct the extension \eqref{eq:baruni}, and obtain \eqref{eq:uni} via push-out along $\bar\pi(G) \to \tilde\pi(G)$. All the construction in this subsection will depend only on the quotient $Z^2(\Gamma,\hat T \to \hat T_\tx{ad})/B^1(\Gamma,\hat T_\tx{ad})$, so we can freely replace $\bar\pi_1(G)$ with the corresponding subgroup (the resulting extension can then be pushed out to the full $\bar\pi_1(G)$).

Let $T$ be the universal maximal torus of $G$. By functoriality we obtain a surjective (automatically open) homomorphism $\tilde\pi_1(T) \to \tilde\pi_1(G)$. It factors through a not necessarily surjective homomorphism $\tilde\pi_1(T) \to \bar\pi_1(G)$. We denote the push-out of the universal cover $T(F)_\infty$ along $\tilde\pi_1(T) \to \bar \pi_1(G)$ by $T(F)_\infty^G$.

\begin{cns} \label{cns:h-split}
We construct a splitting of $T(F)_\infty^G$ over $T_\tx{sc}(F)$, i.e. a homomorphism $s : T(F)_\tx{sc} \to T(F)_\infty^G$ that lifts the natural homomorphism $T_\tx{sc}(F) \to T(F)$.

We do the construction on the dual side. The Pontryagin dual of $T(F)_\infty^G$ is the pull-back of $\tilde H^1_u(W_F,\hat T) \to Z^2(\Gamma,\hat T) \from Z^2(\Gamma,\hat T \to \hat T_\tx{ad})/B^1(\Gamma,\hat T_\tx{ad})$. This pull-back is the subgroup of $\tilde H^1_u(W_F,\hat T) \times C^1(\Gamma,\hat T_\tx{ad})/B^1(\Gamma,\hat T_\tx{ad})$ consisting of those pairs $([c_1],[c_2])$ with $c_1 \in C^1_u(W_F,\hat T)$ and $c_2 \in C^1(\Gamma,\hat T_\tx{ad})$ that satisfy the properties $\partial c_1 \in Z^2(\Gamma,\hat T)$ and $(-\partial) \bar c_1=\partial c_2$. We define a homomorphism $\hat s$ from this subgroup to $H^1_u(W_F,\hat T_\tx{ad})$ to send $([c_1],[c_2])$ to $[\bar c_1 \cdot c_2]$. The restriction of $\hat s$ to the subgroup $H^1_u(W_F,\hat T)$ is the natural homomorphism $H^1_u(W_F,\hat T) \to H^1_u(W_F,\hat T_\tx{ad})$. 
\end{cns}

The choice of an $F$-Borel pair $(T_0,B_0)$ of $G$ provides an isomorphism $T \to T_0$, hence an embedding $T \to G$. Such an embedding will be called admissible. Since all $F$-Borel pairs are conjugate under $G_\tx{sc}(F)$, the same is true for all admissible embeddings $T \to G$.

\begin{lem} \label{lem:present}
Choose an admissible embedding $j : T \to G$ and let $T$ operate on $G_\tx{sc}$ by conjugation via this embedding. The sequence
\[ 1 \to T_\tx{sc}(F) \to G_\tx{sc}(F) \rtimes T(F) \to G(F) \to 1, \]
where the first map is $t_\tx{sc} \mapsto (j(t_\tx{sc}),\bar t_\tx{sc}^{-1})$ and the second is $(g_\tx{sc},t) \mapsto \bar g_\tx{sc} \cdot j(t)$, is exact.
\end{lem}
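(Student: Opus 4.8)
The plan is to establish, in turn: that both maps are homomorphisms composing to the trivial map; injectivity of the left map; exactness in the middle; and surjectivity of the right map --- the last being the only step in which quasi-splitness is used, and the one I expect to be the crux. For the formal points, the $T$-action on $G_\tx{sc}$ factors through the image $T_\tx{ad}$ of $T$ in $G_\tx{ad}$, and $T_\tx{ad}$ acts trivially on the subtorus $T_\tx{sc}\subseteq G_\tx{sc}$, so, $T_\tx{sc}$ being abelian, $t_\tx{sc}\mapsto(j(t_\tx{sc}),\bar t_\tx{sc}^{-1})$ is a homomorphism; and the identity $\overline{g_\tx{sc}\cdot{}^tg'_\tx{sc}}=\bar g_\tx{sc}\,j(t)\,\overline{g'_\tx{sc}}\,j(t)^{-1}$, valid because conjugation on $G_\tx{sc}$ descends to conjugation on $G_\tx{der}\subseteq G$, shows that $(g_\tx{sc},t)\mapsto\bar g_\tx{sc}\,j(t)$ is a homomorphism. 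The composite is trivial since the image in $G$ of $t_\tx{sc}\in T_\tx{sc}$ is $j(\bar t_\tx{sc})$. Injectivity of the left map follows at once: $(j(t_\tx{sc}),\bar t_\tx{sc}^{-1})=1$ forces $j(t_\tx{sc})=1$ in $G_\tx{sc}(F)$, and $T_\tx{sc}$ embeds into $G_\tx{sc}$ because the $F$-Borel pair defining $j$ pulls back to an $F$-Borel pair of $G_\tx{sc}$, so $t_\tx{sc}=1$.

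For exactness in the middle, suppose $\bar g_\tx{sc}\,j(t)=1$. Then $\bar g_\tx{sc}=j(t^{-1})$ lies in $j(T)\cap G_\tx{der}=T_{0,\tx{der}}$, the maximal torus of $G_\tx{der}$ determined by the chosen Borel pair. Since $T_\tx{sc}$, as a subtorus of $G_\tx{sc}$, is exactly the preimage of $T_{0,\tx{der}}$ under the central isogeny $G_\tx{sc}\to G_\tx{der}$, and $g_\tx{sc}$ is $F$-rational, we get $g_\tx{sc}\in T_\tx{sc}(F)$. Writing $\tau\in T(F)$ for its image, we have $j(\tau)=\bar g_\tx{sc}=j(t^{-1})$, hence $t=\tau^{-1}$; thus $(g_\tx{sc},t)$ is the image of $g_\tx{sc}$ under the first map. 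Together with the triviality of the composite, this gives $\ker=\tx{im}$ in the middle.

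Surjectivity of the right map is the main point, and here I would use that $G$ is quasi-split. As $j$ restricts to an $F$-isomorphism $T\cong T_0$, the image of the right map is $\tx{im}(G_\tx{sc}(F)\to G(F))\cdot T_0(F)$; this is a subgroup of $G(F)$ because $\tx{im}(G_\tx{sc}(F))$ is normal (stable under the action of $G$ on $G_\tx{sc}$ lifting inner automorphisms). Writing $B_0=T_0U_0$ for the chosen $F$-Borel and $U_0^-$ for the unipotent radical of the opposite Borel, the split unipotent groups $U_0$ and $U_0^-$ lift uniquely to $G_\tx{sc}$, so $U_0(F),U_0^-(F)\subseteq\tx{im}(G_\tx{sc}(F))$. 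Since $G$ is quasi-split, $B_0$ is a minimal parabolic, so the Bruhat decomposition over $F$ gives $G(F)=\bigcup_w B_0(F)\,\dot w\,B_0(F)$ with $w$ ranging over the relative Weyl group, and a standard rank-one computation produces representatives $\dot w$ in the subgroup generated by $U_0(F)$ and $U_0^-(F)$. Hence $\tx{im}(G_\tx{sc}(F))\cdot T_0(F)$ contains $T_0(F)$, $U_0(F)$ and all $\dot w$, so it contains every double coset $B_0(F)\dot w B_0(F)$ and therefore equals $G(F)$. The only genuinely non-formal inputs here are the $F$-rational Bruhat decomposition and the existence of Weyl representatives inside the group generated by root-group $F$-points; everything else is bookkeeping with central isogenies.
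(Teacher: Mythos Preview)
Your argument is correct, but the paper handles surjectivity quite differently and more economically. After noting (as you do) that the sequence is exact on $F^s$-points, the paper simply observes that $T_\tx{sc}$ is an \emph{induced} torus: since $j$ comes from an $F$-Borel pair, the $\Gamma$-action on $T$ permutes the simple coroots, which form a $\Z$-basis of $X_*(T_\tx{sc})$. Hence $H^1(F,T_\tx{sc})=1$, and the long exact sequence in Galois cohomology for $1\to T_\tx{sc}\to G_\tx{sc}\rtimes T\to G\to 1$ immediately gives surjectivity of $(G_\tx{sc}\rtimes T)(F)\to G(F)$. Your route via the rational Bruhat decomposition and the existence of Weyl representatives in $\langle U_0(F),U_0^-(F)\rangle$ is valid (the rank-one fact you invoke is the standard $m(u)=u'uu''$ property of root group data, as in Bruhat--Tits), but it is considerably heavier machinery for the same conclusion. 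Both approaches encode quasi-splitness in the same underlying fact---that the $\Gamma$-action on the chosen Borel pair preserves the set of simple roots---but the cohomological argument extracts exactly what is needed in one line, while yours reproves a consequence of $H^1(F,T_\tx{sc})=1$ by hand. Your careful verification of the formal points (homomorphism checks, injectivity, middle exactness) is fine and more explicit than the paper's terse ``immediate before taking $F$-points.''
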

\begin{proof}
Exactness is immediate before taking $F$-points. The sequence remains exact after taking $F$-points since $T_\tx{sc}$ is an induced torus.
\end{proof}

\begin{cns} \label{cns:h-univ}
We now construct the extension \eqref{eq:baruni} as follows. Choose an admissible embedding $T \to G$. Let $T(F)_\infty^G$ act on $G_\tx{sc}(F)$ through the map $T(F)_\infty^G \to T(F)$. Then we obtain the diagram with exact rows and columns
\[\xymatrix{
&&1\ar[d]&1\ar[d]\\
&&T_\tx{sc}(F)\ar[d]\ar@{=}[r]&T_\tx{sc}(F)\ar[d]\\
1\ar[r]&\bar\pi_1(G)\ar[r]\ar@{=}[d]& G_\tx{sc}(F) \rtimes T(F)_\infty^G\ar[r]\ar[d]&G_\tx{sc}(F) \rtimes T(F)\ar[r]\ar[d]&1\\
1\ar[r]&\bar\pi_1(G)\ar[r]& G(F)_\infty\ar[r]\ar[d]&G(F)\ar[r]\ar[d]&1\\
&&1&1
} \]
Here the right column is Lemma \ref{lem:present}. The map $T_\tx{sc}(F) \to G_\tx{sc}(F) \rtimes T(F)_\infty^G$ is given by $t_\tx{sc} \mapsto (j(t_\tx{sc}),s(t_\tx{sc})^{-1})$ and $G(F)_\infty$ is defined as its cokernel.

The splitting $s : T_\tx{sc}(F) \to T(F)_\infty^G$ extends to a splitting $G_\tx{sc}(F) \to G(F)_\infty$ in the evident way -- by mapping $g_\tx{sc} \in G_\tx{sc}(F)$ to $(g,1) \in G_\tx{sc}(F) \rtimes T(F)_\infty^G$ and then projecting onto $G(F)_\infty$.

Since any two admissible embeddings $j : T \to G$ are conjugate under $G_\tx{sc}(F)$, the extension $G(F)_\infty \to G(F)$ is well-defined up to conjugation by $G_\tx{sc}(F)$. More precisely, if $j_i,i=1,2$ are two admissible embeddings leading to the two extensions $G(F)_{\infty,i} \to G(F)$ there exists $h \in G_\tx{sc}(F)$ such that
\[ \xymatrix{
	G(F)_{\infty,1}\ar[r]^{\tx{Ad}(h)}\ar[d]&G(F)_{\infty,2}\ar[d]\\
	G(F)\ar[r]^{\tx{Ad}(h)}&G(F)
}
\]
\end{cns} 

\begin{rem} \label{rem:func-h}
An isomorphism of $F$-groups $f : G_1 \to G_2$ induces an isomorphism $f_\infty : G_1(F)_\infty \to G_2(F)_\infty$ that lifts the isomorphism $f : G_1(F) \to G_2(F)$ and restricts to the isomorphism $\bar\pi_1(f)  : \bar\pi_1(G_1) \to \bar\pi_2(G_2)$. Indeed, $f$ induces an isomorphism $f : T_1 \to T_2$ between the universal maximal tori, hence by  Proposition \ref{pro:func-s} an isomorphism $f_\infty : T_1(F)_\infty \to T_2(F)_\infty$ of covers. This isomorphism descends to an isomorphism $T_1(F)_\infty^{G_1} \to T_2(F)_\infty^{G_2}$ between the pushouts along $\tilde\pi_1(T_i) \to \bar\pi_1(G_i)$. The isomorphism $f_\infty : T_1(F)_\infty^{G_1} \to T_2(F)_\infty^{G_2}$ is compatible with the isomorphism $f : T_{1,\tx{sc}}(F) \to T_{2,\tx{sc}}(F)$ and the splittings of Construction \ref{cns:h-split}, therefore induces the desired isomorphism $f_\infty : G_1(F)_\infty \to G_2(F)_\infty$.
\end{rem}

Given $t \in Z^2(\Gamma,\hat T \to \hat T_\tx{ad})/B^1(\Gamma,\hat T_\tx{ad})$ we can form $G(F)_t$ via pushout of of \eqref{eq:baruni} along $t : \bar\pi_1(G) \to \mb{S}^1$. If in addition $t^n=1$ for some and $n \in \N$ we can form $G(F)_{t,n}$ and $G(F)_{t,\infty}$ via the push-out along $t : \bar\pi_1(G) \to \mu_n(\C)$ or $t : \bar\pi_1(G) \to \mu_\infty(\C)$.

\begin{fct} \label{fct:gtn}
Write $t=(z,[c])$ with $z \in Z^2(\Gamma,\hat T)$ and $[c] \in C^1(\Gamma,\hat T_\tx{ad})/B^1(\Gamma,\hat T_\tx{ad})$. For $*$ being void, $n$, or $\infty$, the following statements hold.
\begin{enumerate}
	\item The inclusion $T(F)_\infty^G \to G(F)_\infty$ composed with the projection $G(F)_\infty \to G(F)_{t,*}$ factors through $T(F)_{z,*}$ and provides an isomorphism between $T(F)_{z,*}$ and the pull back of $T(F) \to G(F) \from G(F)_{t,*}$.
	\item The resulting map $T(F)_{z,*} \to G(F)_{t,*}$, together with the section $G_\tx{sc}(F) \to G(F)_{t,*}$, lead to the exact sequence
	\[ 1 \to T_\tx{sc}(F) \to G_\tx{sc}(F) \rtimes T(F)_{z,*} \to G(F)_{t,*} \to 1. \]
\end{enumerate}
\end{fct}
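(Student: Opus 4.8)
The plan is to derive both statements, uniformly in $*$ (being void, $n$, or $\infty$), by pushing out the entire commutative diagram of Construction \ref{cns:h-univ} along the character $t$, regarded as valued in $\mb{S}^1$, in $\mu_n(\C)$, or in $\mu_\infty(\C)$ respectively. The key preliminary is that pushing out $T(F)_\infty^G$ along $t : \bar\pi_1(G) \to \mb{S}^1$ yields $T(F)_{z,*}$. This follows from transitivity of pushouts: $T(F)_\infty^G$ is by definition the pushout of $T(F)_\infty$ along $\tilde\pi_1(T) \to \bar\pi_1(G)$, so the further pushout along $t$ is the pushout of $T(F)_\infty$ along the composite $\tilde\pi_1(T) \to \bar\pi_1(G) \xrightarrow{t} \mb{S}^1$, and this composite is the character $z$ --- it is the image of $t = (z, [c])$ under the map dual to $\tilde\pi_1(T) \to \bar\pi_1(G)$, namely the projection $(z, c) \mapsto z$ from $Z^2(\Gamma, \hat T \to \hat T_\tx{ad})$ onto $Z^2(\Gamma, \hat T)$. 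Hence the pushout is $T(F)_{z,*}$ by the very definition of the latter.

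Next I would record that $\bar\pi_1(G) = \ker(T(F)_\infty^G \to T(F))$ acts trivially on $G_\tx{sc}(F)$, the action of $T(F)_\infty^G$ on $G_\tx{sc}(F)$ factoring through $T(F)$; hence $\bar\pi_1(G)$ is central in $G_\tx{sc}(F) \rtimes T(F)_\infty^G$, and pushing out this semidirect product along $t$ returns $G_\tx{sc}(F) \rtimes T(F)_{z,*}$, with $T(F)_{z,*}$ still acting through $T(F)$. Pushout along a central subgroup is an exact and functorial operation on central extensions of locally compact groups, so applying it to the whole diagram of Construction \ref{cns:h-univ} --- replacing each $\bar\pi_1(G)$ by $\mb{S}^1$ (resp. $\mu_n(\C)$, $\mu_\infty(\C)$), turning the middle row into $1 \to \mb{S}^1 \to G_\tx{sc}(F) \rtimes T(F)_{z,*} \to G_\tx{sc}(F) \rtimes T(F) \to 1$ and the bottom row into the defining sequence $1 \to \mb{S}^1 \to G(F)_{t,*} \to G(F) \to 1$ --- produces again a diagram with exact rows and columns. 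Its left column is exactly the exact sequence asserted in (2). (Exactness of the columns only requires that $T_\tx{sc}(F)$ meet $\bar\pi_1(G)$ trivially inside $G_\tx{sc}(F) \rtimes T(F)_\infty^G$, which holds because $j : T_\tx{sc} \to G_\tx{sc}$ is a closed immersion.)

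For (1), functoriality of pushout gives a commutative square with top arrow the inclusion $T(F)_\infty^G \to G(F)_\infty$, bottom arrow $T(F)_{z,*} \to G(F)_{t,*}$, and vertical arrows the pushout maps; the bottom arrow is the composite $T(F)_{z,*} \hookrightarrow G_\tx{sc}(F) \rtimes T(F)_{z,*} \to G(F)_{t,*}$ appearing in the pushed-out diagram. This square shows that $T(F)_\infty^G \to G(F)_\infty \to G(F)_{t,*}$ factors through $T(F)_{z,*}$. The resulting map $T(F)_{z,*} \to G(F)_{t,*}$ is compatible with the projections to $T(F)$ and to $G(F)$ (via the right column of Lemma \ref{lem:present}), hence induces a map $T(F)_{z,*} \to T(F) \times_{G(F)} G(F)_{t,*}$; both sides are central extensions of $T(F)$ by $\mb{S}^1$ (resp. $\mu_n(\C)$, $\mu_\infty(\C)$), and the map restricts to the identity on that central subgroup, so the five lemma makes it an isomorphism.

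I expect the only genuine work to be bookkeeping around pushouts: that a pushout of a pushout is the pushout along the composite, that pushing out a semidirect product with central normal kernel returns a semidirect product, and that exactness of the columns survives the pushout. These are all elementary; there is no real obstacle, as the substance has already been built into Construction \ref{cns:h-univ}.
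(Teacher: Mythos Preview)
The paper states this as a \emph{Fact} without proof, treating both parts as immediate consequences of Construction~\ref{cns:h-univ}. Your argument is correct and is precisely the natural unpacking of that construction: push out the entire diagram of Construction~\ref{cns:h-univ} along $t$, identify the pushout of $T(F)_\infty^G$ with $T(F)_{z,*}$ via transitivity of pushouts and the observation that the composite character $\tilde\pi_1(T)\to\bar\pi_1(G)\xrightarrow{t}\mb{S}^1$ is $z$, and read off the exact sequence and the pullback description. There is nothing to compare; you have supplied exactly the verification the paper omits.
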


\begin{rem}
The section $T_\tx{sc}(F) \to T(F)_{z,*}$ that is the second component of the first map in Fact \ref{fct:gtn}(2) equals the composition of the section $T_\tx{sc}(F) \to T(F)_\infty^G$ of Construction \ref{cns:h-split} and the projection $T(F)_\infty^G \to T(F)_{z,*}$. Unwinding the definitions, we note that this section depends on the second component $[c]$ of $t=(z,[c])$. This helps clarify the roles of the two components of an element of $Z^2(\Gamma,\hat T \to \hat T_\tx{ad})/B^1(\Gamma,\hat T_\tx{ad})$: the first component leads to a cover of $T(F)$, and the second component leads to a section of that cover over $T_\tx{sc}(F)$.
\end{rem}

We will now study the isomorphisms between the extensions $G(F)_{t,*}$. Let $t \in Z^2(\Gamma,\hat T \to \hat  T_\tx{ad})/B^1(\Gamma,\hat T_\tx{ad})$ and write $t=(z,[c])$. Define
\begin{eqnarray*}
\tilde H^1(\Gamma, \hat T)_{t}&=&\{ h \in \tilde H^1(\Gamma, \hat T)\,|\, \partial h = z^{-1}, \bar h=[c]^{-1}\},\\
\tilde H^1_u(W_F, \hat T)_{t}&=&\{ h \in \tilde H^1_u(W_F, \hat T)\,|\, \partial h = z^{-1}, \bar h=[c]^{-1}\},\\
\end{eqnarray*}
Note that $\tilde H^1(\Gamma, \hat T)_t \subset \tilde H^1(\Gamma, \hat T)_z$, cf. Construction \ref{cns:iso-t}. 

\begin{cns} \label{cns:iso-h}
Let $t,t' \in Z^2(\Gamma,\hat T \to \hat  T_\tx{ad})/B^1(\Gamma,\hat T_\tx{ad})$ and write $t=(z,[c])$ and $t'=(z',[c'])$. For any $h \in \tilde H^1_u(W_F, \hat T)_{t/t'}$ we construct an isomorphism
\begin{equation} \label{eq:iso-h}
\xi_h : G(F)_{t,*} \to G(F)_{t',*}
\end{equation}
of extensions of $G(F)$, where again $*$ is void, $n$, or $\infty$, and in the last two cases we assume that $n$ is given such that $t^n=1$, $t'^n=1$, and $h^n=1$.

The element $h$ and the identity $(-\partial) h=z/z'$ lead, via Construction \ref{cns:iso-t}, to the isomorphism $\xi_h : T(F)_{z,*} \to T(F)_{z',*}$. The identity $\bar h=c'/c$ implies that this isomorphism respects the sections of $T_\tx{sc}(F)$ into these two covers, as one sees by inspecting Constructions \ref{cns:iso-t} and \ref{cns:h-split}. According to Fact \ref{fct:gtn}(2) the extension $G(F)_{t,*}$ is equal to the cokernel of $T_\tx{sc} \to G_\tx{sc}(F) \rtimes T(F)_{t,*}$, and the analogous statement holds for $G(F)_{t',*}$. Therefore the identity on $G_\tx{sc}$ and $\xi_h$ glue to an isomorphism $\xi_h : G(F)_{t,*} \to G(F)_{t',*}$ as required.
\end{cns}

\begin{lem} \label{lem:kneser-tits}
Any isomorphism $\xi : G(F)_{t,*} \to G(F)_{t',*}$ of extensions automatically respect the sections over $G_\tx{sc}(F)$.
\end{lem}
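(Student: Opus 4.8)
The plan is to reduce the assertion to the statement that the abstract group $G_\tx{sc}(F)$ is perfect, which in turn rests on the Kneser--Tits theorem — this is the reason for the name of the lemma.

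First I would unwind what an isomorphism of extensions does to the distinguished sections. Let $\xi : G(F)_{t,*} \to G(F)_{t',*}$ be such an isomorphism, and let $s$ and $s'$ be the sections over $G_\tx{sc}(F)$ of source and target produced by Construction \ref{cns:h-univ}. Since $\xi$ covers the identity of $G(F)$ and restricts to the identity on the central kernel (which is $\mu_n(\C)$, $\mu_\infty(\C)$ or $\mb{S}^1$ according as $*$ is $n$, $\infty$ or void), for each $g \in G_\tx{sc}(F)$ the elements $\xi(s(g))$ and $s'(g)$ have the same image in $G(F)$, so that
\[ \phi(g):=\xi(s(g))\cdot s'(g)^{-1} \]
lies in the central kernel. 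Centrality of the kernel together with the multiplicativity of $\xi$, $s$ and $s'$ shows that $\phi$ is a (continuous) homomorphism from $G_\tx{sc}(F)$ into an abelian group. Hence it is enough to prove that $G_\tx{sc}(F)$ has no nontrivial homomorphism to an abelian group.

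This is the step where I would use the standing hypothesis that $G$ is quasi-split. Then $G_\tx{sc}$ is quasi-split, so each of its almost-simple factors — a nontrivial semisimple group possessing a Borel subgroup over the relevant local field — is isotropic. By the Kneser--Tits theorem for isotropic simply connected groups over local fields, $G_\tx{sc}(F)=G_\tx{sc}(F)^+$, the subgroup generated by the $F$-points of unipotent radicals of proper parabolic $F$-subgroups; and $G_\tx{sc}(F)^+$ is perfect. Therefore $\phi=1$, i.e.\ $\xi\circ s=s'$, which is the claim. The argument is uniform in the three values of $*$, since in each case the kernel is abelian.

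I expect the only genuine subtlety to be the appeal to quasi-splitness, which cannot be omitted: for an anisotropic simply connected group such as $\tx{SL}_1(D)$ for a division algebra $D$ over a $p$-adic field, $G_\tx{sc}(F)$ has nontrivial finite abelian quotients (visible through the reduced norm on congruence quotients), so nonzero difference homomorphisms $\phi$ really occur and covers with distinct sections can be isomorphic as extensions; but such $G$ are excluded by the hypothesis of \S\ref{sub:covers_red}. Besides this, one should be careful to cite Kneser--Tits in the form valid for local fields (where it is a theorem, due to Platonov in the non-archimedean case and classical over $\R$ and $\C$) and to recall that the perfectness of $G_\tx{sc}(F)^+$ is part of the standard structure theory of isotropic simply connected groups.
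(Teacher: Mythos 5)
Your proof is correct and is essentially the paper's proof: the paper likewise observes that the difference $\chi := (\xi \circ s)\cdot s'^{-1}$ is a character of $G_\tx{sc}(F)$ valued in the central kernel and then kills it by appealing to the Kneser--Tits theorem (Cartan for $\R$, obvious for $\C$, Platonov for nonarchimedean $F$, citing \cite[\S7.2]{PR94}). Your additional remark on why quasi-splitness is needed, with the $\tx{SL}_1(D)$ counterexample, is a useful gloss but goes beyond what the paper records.
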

\begin{proof}
Let $s : G_\tx{sc}(F) \to G(F)_{t,*}$ and $s' : G_\tx{sc}(F) \to G(F)_{t',*}$ be the two natural sections. Then $\xi \circ s$ and $s'$ are two sections $G_\tx{sc}(F) \to G(F)_{t',*}$, therefore there exists $\chi : G_\tx{sc}(F) \to \mb{S}^1$ such that $\xi\circ s = \chi \cdot s'$. But $\chi$ must be trivial due to the Kneser--Tits conjecture, which is obvious for $F=\C$, proved by Cartan for $F=\R$ and by Platonov for non-archimedean $F$, cf. \cite[\S7.2]{PR94}.
\end{proof}

\begin{lem} \label{lem:h-iso}
Let $t,t' \in Z^2(\Gamma,\hat T \to \hat T_\tx{ad})/B^1(\Gamma,\hat T_\tx{ad})$. Every isomorphism of extensions $G(F)_{t,*} \to G(F)_{t',*}$ arises from Construction \ref{cns:iso-h} via a unique 
\[ h \in 
\begin{cases}
\tilde H^1_u(W_F,\hat T)_{t/t'},&*=\tx{void}\\
\tilde H^1(\Gamma,\hat T)_{t/t'}[n],&*=n\\
\tilde H^1(\Gamma,\hat T)_{t/t'},&*=\infty
\end{cases}
 \]
In the latter two cases we are assuming $t^n=1=t'^n$.
\end{lem}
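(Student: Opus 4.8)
The strategy is to reduce the assertion about the group $G$ to the corresponding assertion for the torus $T$, which has already been settled in Lemma \ref{lem:s-iso}, by exploiting the presentation of $G(F)_{t,*}$ as a cokernel of $T_\tx{sc}(F) \to G_\tx{sc}(F) \rtimes T(F)_{z,*}$ from Fact \ref{fct:gtn}(2). First I would note that Construction \ref{cns:iso-h} already produces a map from the displayed set of classes $h$ to the set of isomorphisms $G(F)_{t,*} \to G(F)_{t',*}$, so it remains to show this map is a bijection. For injectivity and surjectivity simultaneously, the cleanest route is a torsor argument parallel to the proof of Lemma \ref{lem:s-iso}: both the set of isomorphisms $G(F)_{t,*} \to G(F)_{t',*}$ and the set $\tilde H^1_u(W_F,\hat T)_{t/t'}$ (resp. $\tilde H^1(\Gamma,\hat T)_{t/t'}[n]$, resp. $\tilde H^1(\Gamma,\hat T)_{t/t'}$) are torsors under a common group, and Construction \ref{cns:iso-h} is equivariant for these torsor structures, so it is automatically a bijection once one knows both sets are nonempty torsors.

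The key steps I would carry out, in order, are: (i) identify the automorphism group $\tx{Aut}(\mu_n(\C) \to G(F)_{t,n} \to G(F))$ (and the $*=\tx{void},\infty$ analogues). Here I would use Lemma \ref{lem:kneser-tits}: any automorphism of the extension respects the section over $G_\tx{sc}(F)$, hence is determined by a continuous genuine character of $G(F)_{t,n}$ that is trivial on the image of $G_\tx{sc}(F)$; such characters factor through $T(F)_{z,n}$ via Fact \ref{fct:gtn} and are trivial on $T_\tx{sc}(F)$, which by the torus computations (Corollary \ref{cor:s-auto} together with the condition coming from triviality on $T_\tx{sc}$) identifies the automorphism group with $H^1(\Gamma,\hat T)[n]$ cut out by the appropriate $\hat T_\tx{sc}$-condition --- but in fact Construction \ref{cns:h-split} already encodes exactly this, so the automorphism group is the same group that acts on $\tilde H^1(\Gamma,\hat T)_{t/t'}[n]$ by multiplication. (ii) Show the set of isomorphisms $G(F)_{t,*} \to G(F)_{t',*}$ is nonempty: this is not needed for the bijection claim if we interpret "torsor" to allow the empty set on both sides, but to be safe I would observe that nonemptiness of one side is equivalent to nonemptiness of the other, since Construction \ref{cns:iso-h} is defined precisely when $\tilde H^1_u(W_F,\hat T)_{t/t'}$ (etc.) is nonempty, and conversely an isomorphism restricts to one of the associated torus covers $T(F)_{z,*} \to T(F)_{z',*}$ by Fact \ref{fct:gtn}(1), producing an $h$ by Lemma \ref{lem:s-iso}. (iii) Verify the torsor structures agree: an isomorphism $\xi$ composed with an automorphism $\alpha$ of $G(F)_{t',*}$ corresponds, on the torus level, to composing the associated torus isomorphism with the associated torus automorphism, and this matches multiplication of the classes $h$ by the multiplicativity statement in Construction \ref{cns:iso-t} --- exactly as in the proof of Lemma \ref{lem:s-iso}.

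The main obstacle I anticipate is step (i), specifically pinning down that a genuine character of $G(F)_{t,n}$ trivial on $G_\tx{sc}(F)$ is \emph{exactly} the data of the torus-level automorphism class and no more --- i.e. that the section $s : G_\tx{sc}(F) \to G(F)_{t,*}$ of Construction \ref{cns:h-univ} is rigid enough that restricting an extension-automorphism to $T(F)_{z,*}$ loses no information. This is where Lemma \ref{lem:kneser-tits} does the essential work: it guarantees no extra automorphisms hide in a twist of the $G_\tx{sc}(F)$-section, so the restriction map from $\tx{Aut}(G(F)_{t,*})$ to the relevant subgroup of $\tx{Aut}(T(F)_{z,*})$ is injective, and Construction \ref{cns:h-split} shows it is surjective onto the subgroup respecting the $T_\tx{sc}(F)$-section. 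Once that identification is in place, the rest is the same torsor bookkeeping as for tori. The separate cases $*=\tx{void}$ (use $\tilde H^1_u(W_F,\hat T)$ and the Weil form throughout), $*=n$, and $*=\infty$ run in parallel, with the passage between $*=n$ for varying $n$ and $*=\infty$ handled by the torsion remark at the end of the proof of Lemma \ref{lem:s-iso} (if $t^n=1=t'^n$ then any cochain $c$ with the right differential has $c^n$ a cocycle, hence $[c]$ is torsion).
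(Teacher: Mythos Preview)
Your proposal is correct and uses the same two ingredients as the paper's proof: restriction of an isomorphism to the torus cover via Fact \ref{fct:gtn}(1) to obtain a class $h$ from Lemma \ref{lem:s-iso}, and Lemma \ref{lem:kneser-tits} to force the extra condition $\bar h=[c']/[c]$ (equivalently, that the $T_\tx{sc}(F)$-section is respected). The paper simply runs this argument directly rather than wrapping it in a torsor framework: given $\xi$, restrict to $\xi_T:T(F)_{z,*}\to T(F)_{z',*}$, apply Lemma \ref{lem:s-iso} to get $h$ with $\partial h=z'/z$, then invoke Lemma \ref{lem:kneser-tits} to deduce that $\xi_T$ respects the $T_\tx{sc}(F)$-sections, which gives $\bar h=[c']/[c]$; uniqueness then comes straight from the uniqueness in Lemma \ref{lem:s-iso}. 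Your step (ii) already contains this direct argument, so the separate torsor bookkeeping in steps (i) and (iii) is redundant --- you can drop the computation of the automorphism group entirely, since the direct restriction-plus-Kneser--Tits argument handles both existence and uniqueness of $h$ in one pass.
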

\begin{proof}
Consider an isomorphism of extensions $\xi : G(F)_{t,*} \to G(F)_{t',*}$. According to Fact \ref{fct:gtn}(1), $\xi$ restricts to an isomorphism $\xi_T : T(F)_{z,*} \to T(F)_{z',*}$, where we have again written $t=(z,[c])$ and $t'=(z',[c'])$. According to Lemma \ref{lem:s-iso} there exists $h$ so that $\xi_T=\xi_h$ and $h$ satisfies all required properties, except possibly $\bar h=[c']/[c]$. 

According to Lemma \ref{lem:kneser-tits}, $\xi$ respects the sections over $G_\tx{sc}(F)$. This implies two things. First, we obtain the commutative diagram
\[ \xymatrix{
	G_\tx{sc}(F) \rtimes T(F)_{z,*} \ar[r]^{\tx{id} \times \xi_h}\ar[d]&G_\tx{sc}(F) \rtimes T(F)_{z',*}\ar[d]\\
	G(F)_{t,*}\ar[r]^\xi&G(F)_{t',*}
}
\]
Second, the isomorphism $\xi_h$ respects the sections of $T(F)_{z,*}$ and $T(F)_{z',*}$ over $T_\tx{sc}(F)$. This implies the desired equality $\bar h=[c']/[c]$. We conclude that $\xi$ indeed comes from Construction \ref{cns:iso-h}. The uniqueness statement follows from the uniqueness statement in Lemma \ref{lem:s-iso}.
\end{proof}

\begin{lem} \label{lem:zh}
The natural maps $H^1(\Gamma,Z(\hat G)) \to \tx{ker}(H^1(\Gamma,\hat T) \to H^1(\Gamma,\hat T_\tx{ad}))$ and $\tilde H^1(\Gamma,Z(\hat G)) \to \tx{ker}(\tilde H^1(\Gamma,\hat T) \to \tilde H^1(\Gamma,\hat T_\tx{ad}))$ are isomorphisms.
\end{lem}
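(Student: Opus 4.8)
The plan is to handle both assertions at once by exploiting the short exact sequence of $\Gamma$-modules $1 \to Z(\hat G) \to \hat T \to \hat T_\tx{ad} \to 1$ defining $\hat T_\tx{ad}$, and in each case to reduce the claim to a single surjectivity statement in degree $0$.

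For ordinary cohomology this is immediate from the long exact sequence: the fragment $\hat T^\Gamma \to \hat T_\tx{ad}^\Gamma \to H^1(\Gamma,Z(\hat G)) \to H^1(\Gamma,\hat T) \to H^1(\Gamma,\hat T_\tx{ad})$ shows that the natural map $H^1(\Gamma,Z(\hat G)) \to \ker\big(H^1(\Gamma,\hat T)\to H^1(\Gamma,\hat T_\tx{ad})\big)$ is surjective with kernel the cokernel of $\hat T^\Gamma \to \hat T_\tx{ad}^\Gamma$. For the $\tilde H^1$-version I would first note that the cochain functor $M \mapsto C^\bullet(\Gamma,M)$ is exact in $M$: a surjection of coefficient modules admits a continuous set-theoretic section, because a continuous cochain factors through a finite quotient of $\Gamma$ and one lifts its finitely many values. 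Applying the snake lemma to the first differential $\partial : C^0(\Gamma,-) \to C^1(\Gamma,-)$, and using $Z^0 = H^0$ and $\tilde H^1 = \tx{coker}\big(\partial : C^0 \to C^1\big)$, produces the six-term exact sequence
\[ 0 \to Z(\hat G)^\Gamma \to \hat T^\Gamma \to \hat T_\tx{ad}^\Gamma \to \tilde H^1(\Gamma,Z(\hat G)) \to \tilde H^1(\Gamma,\hat T) \to \tilde H^1(\Gamma,\hat T_\tx{ad}) \to 0, \]
which again shows that the natural map $\tilde H^1(\Gamma,Z(\hat G)) \to \ker\big(\tilde H^1(\Gamma,\hat T)\to\tilde H^1(\Gamma,\hat T_\tx{ad})\big)$ is surjective with kernel $\tx{coker}\big(\hat T^\Gamma \to \hat T_\tx{ad}^\Gamma\big)$. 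Thus both assertions follow once I show $\hat T^\Gamma \to \hat T_\tx{ad}^\Gamma$ is surjective.

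To prove the latter I would dualize. Since $\C^\times$ is divisible the functor $\tx{Hom}(-,\C^\times)$ is exact, and $\hat T^\Gamma \to \hat T_\tx{ad}^\Gamma$ is obtained by applying it to the map of coinvariants $(\Z\Phi^\vee)_\Gamma \to X_*(T)_\Gamma$ coming from the inclusion of the coroot lattice $\Z\Phi^\vee = X^*(\hat T_\tx{ad})$ into $X_*(T) = X^*(\hat T)$; here coinvariants depend only on the finite quotient $\bar\Gamma$ through which $\Gamma$ acts, so it suffices to prove $(\Z\Phi^\vee)_{\bar\Gamma} \to X_*(T)_{\bar\Gamma}$ is injective. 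From the homology long exact sequence of $0 \to \Z\Phi^\vee \to X_*(T) \to X_*(T)/\Z\Phi^\vee \to 0$ over $\bar\Gamma$, the kernel of this map is a quotient of $H_1\big(\bar\Gamma, X_*(T)/\Z\Phi^\vee\big)$, which is annihilated by $|\bar\Gamma|$ and hence torsion. On the other hand, because $G$ is quasi-split the $*$-action of $\Gamma$ permutes the set $\Delta^\vee$ of simple coroots, which is a $\Z$-basis of $\Z\Phi^\vee$; so $\Z\Phi^\vee$ is a permutation $\bar\Gamma$-lattice and $(\Z\Phi^\vee)_{\bar\Gamma}$ is free abelian (on the set of $\bar\Gamma$-orbits in $\Delta^\vee$). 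A torsion subgroup of a free abelian group vanishes, so the map is injective and we are done.

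The argument is largely formal; the one genuine input is the final step, where the quasi-split hypothesis enters through the fact that the coroot lattice is a permutation module, which forces its $\Gamma$-coinvariants to be torsion-free. The only other point that needs care is the six-term sequence for $\tilde H^1$: it is not part of an infinite long exact sequence but follows from the exactness of the cochain functor together with the snake lemma applied to the first differential. I expect that the bookkeeping around this $\tilde H^1$-sequence, rather than any deep idea, is where most of the writing effort will go.
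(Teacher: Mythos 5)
Your proof is correct, and it takes a genuinely different route from the paper's. The paper invokes \cite[Corollary 2.3]{Kot84} to get the exact sequence $\pi_0((\hat T_\tx{ad})^\Gamma) \to H^1(\Gamma,Z(\hat G)) \to H^1(\Gamma,\hat T) \to H^1(\Gamma,\hat T_\tx{ad})$, and then kills the first term by observing that $\hat T_\tx{ad}$ is an induced torus, so $(\hat T_\tx{ad})^\Gamma$ is connected; for the $\tilde H^1$ version the paper then does a short direct cochain chase instead of packaging anything into a long exact sequence. You instead prove the stronger degree-zero statement that $\hat T^\Gamma \to \hat T_\tx{ad}^\Gamma$ is actually surjective, which makes the connecting map vanish for the ordinary long exact sequence and, via the snake lemma applied to $\partial : C^0 \to C^1$, produces a clean six-term exact sequence ending in the $\tilde H^1$ groups that settles both claims at once. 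Both arguments ultimately draw on the same source of leverage, namely that for quasi-split $G$ the Galois action permutes the simple coroots: the paper uses it through the fact that $\hat T_\tx{ad}$ is an induced torus (equivalently $T_\tx{sc}$ is), while you use it through the fact that $\Z\Phi^\vee=X^*(\hat T_\tx{ad})$ is a permutation lattice and hence has torsion-free coinvariants. Your version is somewhat more self-contained (it avoids Kottwitz's result and replaces the ad hoc cochain manipulation with a snake-lemma sequence), at the cost of needing to justify exactness of the continuous cochain functor; your justification — discrete coefficients plus profinite $\Gamma$, so cochains factor through finite quotients — is the right one and matches the paper's stated convention that $\hat T$ carries the discrete topology for $\Gamma$-cohomology. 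One small remark: your claim that $\hat T^\Gamma \to \hat T_\tx{ad}^\Gamma$ is surjective is genuinely stronger than the paper's $\pi_0((\hat T_\tx{ad})^\Gamma)=0$; it is not a formal consequence of it, so it is worth flagging that you are really proving more, not just rederiving the same intermediate fact.
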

\begin{proof}
(1) We have the exact sequence 
\[ \pi_0((\hat T_\tx{ad})^\Gamma) \to H^1(\Gamma,Z(\hat G)) \to H^1(\Gamma,\hat T) \to H^1(\Gamma,\hat T_\tx{ad}) \]
of \cite[Corollary 2.3]{Kot84}. Since $\hat T_\tx{ad}$ is an induced torus, $(\hat T_\tx{ad})^\Gamma$ is connected.	

(2) Let $c \in C^1(\Gamma,\hat T)$ become cohomologically trivial in $C^1(\Gamma,\hat T_\tx{ad})$. After modification by a coboundary we have $c \in C^1(\Gamma,Z(\hat G))$. If $c \in C^1(\Gamma,Z(\hat G))$ is cohomologically trivial in $C^1(\Gamma,\hat T)$, then it lies in $Z^1(\Gamma,\hat T) \cap C^1(\Gamma,Z(\hat G)) = Z^1(\Gamma,Z(\hat G))$ and (1) implies that it is in fact an element of $B^1(\Gamma,Z(\hat G))$.
\end{proof}

\begin{cor}\ \\[-15pt] \label{cor:h-iso}
\begin{enumerate}
	\item $(Z^2(\Gamma,\hat T \to \hat T_\tx{ad})/B^1(\Gamma,\hat T_\tx{ad}))[n] = Z^2(\Gamma,\hat T[n] \to \hat T_\tx{ad}[n])/B^1(\Gamma,\hat T_\tx{ad}[n])$.
	\item The set of isomorphism classes of covers of $G(F)$ by $\mu_n(\C)$ that are obtained from $G(F)_\infty$ via continuous characters $t : \bar\pi_1(G) \to \mu_n(\C)$ is in bijection with $H^2(\Gamma,\hat T[n] \to \hat T_\tx{ad}[n])$.
	\item The group of automorphisms of such a cover is $H^1(\Gamma,Z(\hat G))[n]$.
	\item The set of isomorphism classes of covers of $G(F)$ by $\mu_\infty(\C)$ that are obtained from $G(F)_\infty$ via continuous characters $t : \bar\pi_1(G) \to \mu_\infty(\C)$ is in bijection with $H^2(\Gamma,\hat T \to \hat T_\tx{ad})=H^2(\Gamma,Z(\hat G))$.
\end{enumerate}
\end{cor}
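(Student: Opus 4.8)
The plan is to deduce all four parts from the classification of isomorphisms between the covers $G(F)_{t,*}$ in Lemma~\ref{lem:h-iso}, combined with the description of $\bar\pi_1(G)$-characters as hypercocycles, in exact parallel with the way Corollaries~\ref{cor:s-iso} and~\ref{cor:s-auto} were obtained from Lemma~\ref{lem:s-iso} in the torus case. The only genuinely new ingredient is the two-term-complex bookkeeping of part~(1), which then feeds into (2) and (3).

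For (1), recall that an element of $Z^2(\Gamma,\hat T \to \hat T_\tx{ad})/B^1(\Gamma,\hat T_\tx{ad})$ is a pair $(z,[c])$ with $z \in Z^2(\Gamma,\hat T)$, $[c] \in C^1(\Gamma,\hat T_\tx{ad})/B^1(\Gamma,\hat T_\tx{ad})$ and $\partial c=\bar z$, and it is killed by $n$ exactly when $z^n=1$ (so $z$ already has values in $\hat T[n]$) and $c^n \in B^1(\Gamma,\hat T_\tx{ad})$. I would prove two sublemmas. First, using that $\hat T_\tx{ad}$ is divisible, write $c^n=\partial b$, choose $b=b_0^n$, and replace $c$ by $c\cdot(\partial b_0)^{-1}$; this does not change $[c]$ modulo $B^1(\Gamma,\hat T_\tx{ad})$ and now $c \in C^1(\Gamma,\hat T_\tx{ad}[n])$. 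Second, $B^1(\Gamma,\hat T_\tx{ad}) \cap C^1(\Gamma,\hat T_\tx{ad}[n]) = B^1(\Gamma,\hat T_\tx{ad}[n])$: if $\partial y$ has values in $\hat T_\tx{ad}[n]$ then $y^n \in \hat T_\tx{ad}^\Gamma$, and since $\hat T_\tx{ad}$ is induced, $\hat T_\tx{ad}^\Gamma$ is connected hence divisible, so $y$ differs from an element of $\hat T_\tx{ad}[n]$ by a $\Gamma$-fixed element — this is the argument already used in Lemmas~\ref{lem:[n]}(1) and~\ref{lem:zh}. Together these give a well-defined bijection between the two displayed groups.

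For (2) and (3) I would argue as follows. By Pontryagin duality and part (1), a continuous character $t:\bar\pi_1(G)\to\mu_n(\C)$ is the same datum as a class in $Z^2(\Gamma,\hat T[n]\to\hat T_\tx{ad}[n])/B^1(\Gamma,\hat T_\tx{ad}[n])$, and $G(F)_{t,n}$ depends only on that class. Writing $t=(z,[c])$ and $t'=(z',[c'])$, Lemma~\ref{lem:h-iso} identifies isomorphisms $G(F)_{t,n}\to G(F)_{t',n}$ with elements $h\in\tilde H^1(\Gamma,\hat T)[n]$ satisfying $\partial h=z'/z$ and $\bar h=[c']/[c]$. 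I would then check that such an $h$ exists iff $t$ and $t'$ are cohomologous in $H^2(\Gamma,\hat T[n]\to\hat T_\tx{ad}[n])$: in one direction, represent $h$ by a cochain in $C^1(\Gamma,\hat T[n])$ using the surjectivity in Lemma~\ref{lem:[n]}(1), then use the second sublemma of part (1) to correct its image to $[c']/[c]$ by a coboundary from $\hat T_\tx{ad}[n]$, producing a $1$-hypercochain of $[\hat T[n]\to\hat T_\tx{ad}[n]]$ that trivializes $t/t'$ (with the sign conventions of \cite[Appendix A]{KS99}); conversely the two components of such a trivializing hypercochain yield $h$. Since the parametrization visibly surjects onto $H^2(\Gamma,\hat T[n]\to\hat T_\tx{ad}[n])$, this gives (2). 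For (3), take $t=t'$: by Lemma~\ref{lem:h-iso} together with the multiplicativity in Construction~\ref{cns:iso-h}, the automorphism group is the group of $h\in\tilde H^1(\Gamma,\hat T)[n]$ with $\partial h=1$ and $\bar h=1$; the first condition puts $h\in H^1(\Gamma,\hat T)$, the second into $\ker(H^1(\Gamma,\hat T)\to H^1(\Gamma,\hat T_\tx{ad}))$, which Lemma~\ref{lem:zh} identifies — compatibly with the $n$-torsion, since $H^1(\Gamma,Z(\hat G))\to\tilde H^1(\Gamma,\hat T)$ is injective — with $H^1(\Gamma,Z(\hat G))$, giving $H^1(\Gamma,Z(\hat G))[n]$.

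For (4), a continuous character $\bar\pi_1(G)\to\mu_\infty(\C)$ has finite order since its source is compact and its target discrete, so every cover in question is $G(F)_{t,\infty}$ for $t$ of finite order; the $*=\infty$ case of Lemma~\ref{lem:h-iso} then makes $G(F)_{t,\infty}\cong G(F)_{t',\infty}$ equivalent to $\tilde H^1(\Gamma,\hat T)_{t/t'}\neq\emptyset$, which — now with no torsion constraint, hence without needing Lemma~\ref{lem:[n]}(1) or the sublemmas of part (1) — is equivalent to $t$ and $t'$ being cohomologous in $H^2(\Gamma,\hat T\to\hat T_\tx{ad})$. Surjectivity of the parametrization onto $H^2(\Gamma,\hat T\to\hat T_\tx{ad})$ comes from Corollary~\ref{cor:uniqdiv}(1), since every class there is the image of a class in some $H^2(\Gamma,\hat T[m]\to\hat T_\tx{ad}[m])$ and hence of a finite-order character, and the final equality $H^2(\Gamma,\hat T\to\hat T_\tx{ad})=H^2(\Gamma,Z(\hat G))$ holds because the surjection $\hat T\to\hat T_\tx{ad}$ with kernel $Z(\hat G)$ makes the two-term complex quasi-isomorphic to $Z(\hat G)$ in degree $0$. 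The step I expect to need the most care is part (1) — in particular reconciling $B^1(\Gamma,\hat T_\tx{ad})$-equivalence with $B^1(\Gamma,\hat T_\tx{ad}[n])$-equivalence — and then threading that control of representatives through the hypercohomology sign conventions in the proof of (2); the rest is essentially a transcription of the torus-case arguments.
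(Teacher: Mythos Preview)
Your proposal is correct and follows essentially the same route as the paper. The paper proves (1) in one line by citing Lemma~\ref{lem:[n]}(3) applied to the induced torus $T_\tx{sc}$ (whose dual is $\hat T_\tx{ad}$); your two sublemmas are exactly the surjectivity and injectivity halves of that lemma, so you are just unpacking the reference. Parts (2) and (3) match the paper's argument almost verbatim: Lemma~\ref{lem:h-iso} to classify isomorphisms, Lemma~\ref{lem:[n]}(1) to find an $n$-torsion representative, Lemma~\ref{lem:[n]}(3) (your second sublemma) to pass from equality modulo $B^1(\Gamma,\hat T_\tx{ad})$ to equality modulo $B^1(\Gamma,\hat T_\tx{ad}[n])$, and Lemma~\ref{lem:zh} for the automorphism group. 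For (4) the paper is terser than you: it just says the statement follows at once from Lemma~\ref{lem:h-iso}, whereas you also note that surjectivity onto $H^2(\Gamma,\hat T\to\hat T_\tx{ad})$ needs the torsion lifting provided by Corollary~\ref{cor:uniqdiv}; this is a useful extra sentence but not a different approach.
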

\begin{proof}
(1) follows at once from Lemma \ref{lem:[n]}(3).

(2) Let $t=(z,[c]), t'=(z',[c']) \in Z^2(\Gamma,\hat T[n] \to \hat T_\tx{ad}[n])/B^1(\Gamma,\hat T_\tx{ad}[n])$. According to Lemma \ref{lem:h-iso} the corresponding covers are isomorphic if and only if there exists $h \in \tilde H^1(\Gamma,\hat T)_{t/t'}[n]$. Lemma \ref{lem:[n]}(1) implies that $h$ can be represented by an element $x \in C^1(\Gamma,\hat T[n])$. This element has the properties $\partial x=z'/z$ and $[\bar x]=[c']/[c]$. The latter equation holds in $C^1(\Gamma,\hat T_\tx{ad})/B^1(\Gamma,\hat T_\tx{ad})$, but all three elements $x,c,c'$ belong to $C^1(\Gamma,\hat T_\tx{ad}[n])$, therefore Lemma \ref{lem:[n]}(3) implies that this equation holds already in $C^1(\Gamma,\hat T_\tx{ad}[n])/B^1(\Gamma,\hat T_\tx{ad}[n])$.

(3) According to Lemma \ref{lem:h-iso} the group of isomorphism of the cover associated to $t=(z,[c])$ is the group of $h \in \tx{ker}(H^1(\Gamma,\hat T)[n] \to H^1(\Gamma,\hat T_\tx{ad})[n]$, which, according to Lemma \ref{lem:zh}(1), equals $H^1(\Gamma,Z(\hat G))[n]$.

(4) Follows at once from Lemma \ref{lem:h-iso}.
\end{proof}

\begin{cor} \label{cor:h-minf}
Let $t,t' : \bar\pi_1(G) \to \mu_n(\C)$. The following statements are equivalent.
\begin{enumerate}
	\item There exists a multiple $m$ of $n$ such that the covers $G(F)_{t,m}$ and $G(F)_{t',m}$ become isomorphic.
	\item The covers $G(F)_{t,\infty}$ and $G(F)_{t',\infty}$ are isomorphic.
	\item The classes of $t$ and $t'$ in $H^2(\Gamma,\hat T \to \hat T_\tx{ad})=H^2(\Gamma,Z(\hat G))$ are equal.
\end{enumerate}
\end{cor}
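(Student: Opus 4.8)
The plan is to run the same argument as in the proof of Corollary~\ref{cor:s-minf}, with the cohomology group $H^2(\Gamma,\hat S)$ used there replaced by the hypercohomology group $H^2(\Gamma,\hat T\to\hat T_\tx{ad})$. The two ingredients are the classification of finite covers of $G(F)$ by their $H^2$-invariant (Corollary~\ref{cor:h-iso}) and the identification of a direct limit of finite-coefficient groups with the full-coefficient group (Corollary~\ref{cor:uniqdiv}).

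First I would dispose of the torsion bookkeeping. Since $t$ and $t'$ are valued in $\mu_n(\C)$ they are $n$-torsion, hence $m$-torsion for every multiple $m$ of $n$, in the group $Z^2(\Gamma,\hat T\to\hat T_\tx{ad})/B^1(\Gamma,\hat T_\tx{ad})$; by Corollary~\ref{cor:h-iso}(1) their classes may be represented in $Z^2(\Gamma,\hat T[m]\to\hat T_\tx{ad}[m])/B^1(\Gamma,\hat T_\tx{ad}[m])$. Corollary~\ref{cor:h-iso}(2) then says that $G(F)_{t,m}$ and $G(F)_{t',m}$ are isomorphic exactly when $t$ and $t'$ become cohomologous in $H^2(\Gamma,\hat T[m]\to\hat T_\tx{ad}[m])$. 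As the multiples of $n$ are cofinal among all positive integers ordered by divisibility, the existence of such an $m$ (condition~1) is equivalent to $t$ and $t'$ having the same image in $\varinjlim_m H^2(\Gamma,\hat T[m]\to\hat T_\tx{ad}[m])$.

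Next I would identify that colimit with $H^2(\Gamma,\hat T\to\hat T_\tx{ad})$. For this I would apply Corollary~\ref{cor:uniqdiv} to the complex of $F$-tori in degrees $0$ and $1$ whose dual complex is $[\hat T\to\hat T_\tx{ad}]$; its degree-$0$ term is the torus $T_\tx{sc}$ dual to $\hat T_\tx{ad}$, which is induced --- this is exactly the fact used to make the sequence of Lemma~\ref{lem:present} exact on $F$-points. Hence Corollary~\ref{cor:uniqdiv}(2) applies and the natural map $\varinjlim_m H^2(\Gamma,\hat T[m]\to\hat T_\tx{ad}[m])\to H^2(\Gamma,\hat T\to\hat T_\tx{ad})$ is an isomorphism. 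Combined with the previous paragraph and the identification $H^2(\Gamma,\hat T\to\hat T_\tx{ad})=H^2(\Gamma,Z(\hat G))$ recorded in Corollary~\ref{cor:h-iso}(4), this yields the equivalence of conditions (1) and (3). The equivalence of (2) and (3) I would then read off directly from Corollary~\ref{cor:h-iso}(4): by construction $G(F)_{t,\infty}$ and $G(F)_{t',\infty}$ are the covers of $G(F)$ by $\mu_\infty(\C)$ obtained from $G(F)_\infty$ along the characters $\bar\pi_1(G)\to\mu_n(\C)\into\mu_\infty(\C)$ attached to $t$ and $t'$, and that corollary classifies such covers by the class of the defining character in $H^2(\Gamma,\hat T\to\hat T_\tx{ad})$, which for $t$ and $t'$ is precisely the class appearing in (3).

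The one point requiring genuine care is the invocation of Corollary~\ref{cor:uniqdiv} in the third paragraph: one must line up the complex $[\hat T\to\hat T_\tx{ad}]$ with the dual complex there so that its hypothesis --- that the degree-$0$ torus of the original complex be induced --- becomes the assertion that $T_\tx{sc}$ is induced, which holds precisely because $G$ is quasi-split. Without this, Corollary~\ref{cor:uniqdiv} supplies only surjectivity of the colimit map in degree $2$, which gives $(1)\Rightarrow(3)$ but not $(3)\Rightarrow(1)$. Everything else is the same cofinality-and-torsion bookkeeping as in the proof of Corollary~\ref{cor:s-minf}.
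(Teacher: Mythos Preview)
Your proof is correct and follows the same route as the paper's: Corollary~\ref{cor:h-iso}(2) to rephrase (1) in terms of the colimit $\varinjlim_m H^2(\Gamma,\hat T[m]\to\hat T_\tx{ad}[m])$, Corollary~\ref{cor:uniqdiv} to identify that colimit with $H^2(\Gamma,\hat T\to\hat T_\tx{ad})$, and Corollary~\ref{cor:h-iso}(4) for the equivalence (2)$\Leftrightarrow$(3). One small inaccuracy: the inducedness of $T_\tx{sc}$ does not rely on $G$ being quasi-split---it follows for any connected reductive group from the fact that the $\Gamma$-action on the universal maximal torus preserves the base $\Delta$ and hence permutes the fundamental weights; quasi-splitness plays no role at this particular step.
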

\begin{proof}
According to Corollary \ref{cor:h-iso}(2), (1) holds if and only if the classes of $t$ and $t'$ in $\varinjlim_m H^2(\Gamma,\hat T[m] \to \hat T_\tx{ad}[m])$ become equal. According to Corollary \ref{cor:uniqdiv} the natural map from this group to $H^2(\Gamma,\hat T \to \hat T_\tx{ad})$ is an isomorphism. This shows the equivalence of (1) and (3). The equivalence of (2) and (3) is Corollary \ref{cor:h-iso}(4).
\end{proof}

\begin{rem}
We may also be interested in covers of $G(F)$ obtained from continuous characters $t : \tilde\pi_1(G) \to \mu_n(\C)$. These are the elements of the subgroup $Z^2(\Gamma,Z(\hat G)[n])$ of $Z^2(\Gamma,\hat T[n] \to \hat T_\tx{ad}[n])/B^1(\Gamma,\hat T_\tx{ad}[n])$. Lemmas \ref{lem:h-iso} and \ref{lem:zh} show that the covers for $t,t' \in Z^2(\Gamma,Z(\hat G)[n])$ are isomorphic if and only if there exists $h \in \tilde H^1(\Gamma,Z(\hat G))[n]$ with $\partial h=t'/t$. Thus the set of equivalence classes of such covers is a certain quotient of $H^2(\Gamma,Z(\hat G)[n])$ that maps to $H^2(\Gamma,Z(\hat G))[n]$. Unfortunately, due to the possibly non-trivial $n$-torsion of $\pi_0(Z(\hat G))$, the group $Z(\hat G)$ need not be $n$-divisible, so this quotient need not equal $H^2(\Gamma,Z(\hat G)[n])$.	
\end{rem}

\begin{cor}
Let $t : \bar\pi_1(G) \to \mu_n(\C)$. There exists a multiple $m$ of $n$ and $t' : \tilde\pi_1(G) \to \mu_m(\C)$ such that the covers $G(F)_{t,m}$ and $G(F)_{t',m}$ are isomorphic.
\end{cor}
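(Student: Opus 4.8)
The plan is to translate the statement into a question about Galois hypercohomology and then settle it by a colimit argument, in the spirit of the proof of Corollary \ref{cor:h-minf}.

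First I would set up the dictionary. By Corollary \ref{cor:h-iso}(1) the character $t : \bar\pi_1(G) \to \mu_n(\C)$ is represented by a hypercocycle $\tau \in Z^2(\Gamma, \hat T[n] \to \hat T_\tx{ad}[n])/B^1(\Gamma, \hat T_\tx{ad}[n])$, and for any multiple $m$ of $n$ the cover $G(F)_{t,m}$ is the one attached to the image of $\tau$ in $Z^2(\Gamma, \hat T[m] \to \hat T_\tx{ad}[m])/B^1(\Gamma, \hat T_\tx{ad}[m])$; by Corollary \ref{cor:h-iso}(2) its isomorphism class is the class of $\tau$ in $\mb{H}^2(\Gamma, \hat T[m] \to \hat T_\tx{ad}[m])$. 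On the other hand, since $\tilde\pi_1(G)$ is Pontryagin dual to $Z^2(\Gamma, Z(\hat G))$, a character $t' : \tilde\pi_1(G) \to \mu_m(\C)$ is an element of the $m$-torsion subgroup $Z^2(\Gamma, Z(\hat G)[m])$, and via the map on $\mb{H}^2$ induced by the inclusion of complexes $Z(\hat G)[m] \hookrightarrow [\hat T[m] \to \hat T_\tx{ad}[m]]$ (with $Z(\hat G)[m]$ in degree $0$) the isomorphism class of $G(F)_{t',m}$ is the image of $t'$ in $\mb{H}^2(\Gamma, \hat T[m] \to \hat T_\tx{ad}[m])$; cf. the Remark preceding this Corollary. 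It therefore suffices to prove that the natural map
\[ \varinjlim_m H^2(\Gamma, Z(\hat G)[m]) \longrightarrow \varinjlim_m \mb{H}^2(\Gamma, \hat T[m] \to \hat T_\tx{ad}[m]) \]
(limits over the divisibility ordering, transition maps induced by the inclusions $\hat T[m] \hookrightarrow \hat T[m']$ and $Z(\hat G)[m] \hookrightarrow Z(\hat G)[m']$) is surjective: then $\tau$ and some $t' \in Z^2(\Gamma, Z(\hat G)[m])$ acquire the same class in $\mb{H}^2(\Gamma, \hat T[m] \to \hat T_\tx{ad}[m])$ for a suitable multiple $m$ of $n$, and $G(F)_{t,m} \cong G(F)_{t',m}$ follows from Corollary \ref{cor:h-iso}(2).

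Next I would produce the relevant long exact sequence. Since $Z(\hat G)[m] = \tx{ker}(\hat T[m] \to \hat T_\tx{ad}[m])$, the inclusion above sits in a short exact sequence of $2$-term complexes whose quotient complex is quasi-isomorphic to $\tx{coker}(\hat T[m] \to \hat T_\tx{ad}[m])$ placed in degree $1$. Because $\hat T$ is a torus and hence divisible, the snake lemma applied to $1 \to Z(\hat G) \to \hat T \to \hat T_\tx{ad} \to 1$ identifies this cokernel with $Z(\hat G)/mZ(\hat G)$, and since $Z(\hat G)^\circ$ is a torus this equals $\pi_0(Z(\hat G))/m\pi_0(Z(\hat G))$. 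Passing to hypercohomology yields the exact sequence, functorial in $m$,
\[ H^2(\Gamma, Z(\hat G)[m]) \to \mb{H}^2(\Gamma, \hat T[m] \to \hat T_\tx{ad}[m]) \to H^1(\Gamma, \pi_0(Z(\hat G))/m\pi_0(Z(\hat G))). \]
Finally I would pass to the colimit. Filtered colimits are exact and commute with Galois cohomology of discrete modules, so it remains to check that $\varinjlim_m H^1(\Gamma, \pi_0(Z(\hat G))/m\pi_0(Z(\hat G))) = H^1(\Gamma, \varinjlim_m \pi_0(Z(\hat G))/m\pi_0(Z(\hat G))) = 0$. Writing $A = \pi_0(Z(\hat G))$, the transition map $A/mA \to A/m'A$ for $m \mid m'$ is multiplication by $m'/m$; as $A$ is finite, of exponent $e$ say, the transition from index $m$ to index $me$ is multiplication by $e$, hence zero, so the colimit of the $A/mA$ vanishes. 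This gives the desired surjectivity.

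The only real point is the second step: recognizing that the obstruction to replacing a character of $\bar\pi_1(G)$ by one of $\tilde\pi_1(G)$ lives in $H^1(\Gamma, \pi_0(Z(\hat G))/m\pi_0(Z(\hat G)))$. The subtlety this reflects is that $Z(\hat G)$ may be disconnected; for a fixed $m$ this obstruction need not vanish (compare Remark \ref{rem:isofuse}), so enlarging $m$ is genuinely necessary, and it is only in the colimit — thanks to the multiplication-by-$(m'/m)$ transition maps on the finite group $\pi_0(Z(\hat G))$ — that it disappears. Everything else is bookkeeping with Corollary \ref{cor:h-iso} and standard homological algebra.
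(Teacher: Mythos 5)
Your argument is correct, and it is genuinely a different route from the one the paper takes, though it leans on the same fundamental point (divisibility of $\hat T$). The paper's proof is a two-line citation: it invokes Corollary \ref{cor:h-minf} to reduce the question to equality of classes in $H^2(\Gamma,\hat T \to \hat T_\tx{ad}) = H^2(\Gamma,Z(\hat G))$, and then relies (implicitly) on the infrastructure of \S\ref{sub:elementary} -- specifically Lemma \ref{lem:torsionstuff}(1), which shows $1 \to Z(\hat G)[\infty] \to \hat T[\infty] \to \hat T_\tx{ad}[\infty] \to 1$ is exact, and Corollary \ref{cor:uniqdiv} -- to guarantee that the class of $t$ can be represented by a torsion cocycle in $Z^2(\Gamma,Z(\hat G)[m])$ for some $m$. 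You instead work at finite level throughout: you write down the short exact sequence of two-term complexes, identify the quotient complex (via the snake lemma and divisibility of $\hat T$) as $\pi_0(Z(\hat G))/m\pi_0(Z(\hat G))$ in degree $1$, read off the obstruction in $H^1(\Gamma,\pi_0(Z(\hat G))/m\pi_0(Z(\hat G)))$, and kill it in the colimit by the observation that the transition maps $\pi_0(Z(\hat G))/m \to \pi_0(Z(\hat G))/m'$ are multiplication by $m'/m$ and vanish once $m'/m$ is a multiple of the exponent. The two proofs ultimately rest on the same fact -- that $Z(\hat G)^\circ$ being divisible allows one to kill the disconnectedness of $Z(\hat G)$ after enlarging $m$ -- but you locate the obstruction precisely, which makes visible \emph{why} $m$ must genuinely grow (the obstruction need not vanish at fixed $m$, consistent with Remark \ref{rem:isofuse}); the paper's route is shorter because that point has already been absorbed into Lemma \ref{lem:fullwide2}. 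One small bookkeeping remark: after obtaining $t' \in Z^2(\Gamma,Z(\hat G)[k])$ with the same colimit class as $t$, you should note that one may enlarge $k$ and the comparison level to a common multiple of $k$ and $n$ to produce an $m$ divisible by $n$ as the statement requires; this is automatic by cofinality of multiples of $n$ in the divisibility poset but worth saying.
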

\begin{proof}
Immediate from Corollary \ref{cor:h-minf} and the isomorphism $H^2(\Gamma,Z(\hat G)) \to H^2(\Gamma,\hat T \to \hat T_\tx{ad})$ induced by the inclusion $Z(\hat G) \to \hat T$.
\end{proof}

\subsection{Descent to the abelianization}

In this subsection we will discuss the following question: Do the covers \eqref{eq:uni} or \eqref{eq:baruni} descend to a cover of $\tx{cok}(G_\tx{sc}(F) \to G(F))$? The latter group is abelian, and in fact closely related to the hypercohomology group $H^1(F,G_\tx{sc} \to G)$. This makes such a descent statement useful. 

It will turn out that such a descent is not always possible. We will give a criterion for when this happens, and use it to obtain the compatibility of the covers \eqref{eq:uni} for $G$ and its maximal tori.

Write $K=\tx{ker}(G_\tx{sc} \to G)$. Then $K$ is a finite multiplicative group and the section $s : G_\tx{sc}(F) \to G(F)_\infty$ restricts to a group homomorphism $s : K(F) \to \bar\pi_1(G)$. More precisely:

\begin{fct} \label{fct:k-cart}
The square
\[ \xymatrix{
K(F)\ar[r]\ar[d]&G_\tx{sc}(F)\ar[d]\\
\bar\pi_1(G)\ar[r]&G(F)_\infty
}\]
is Cartesian.
\end{fct}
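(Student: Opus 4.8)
The statement to prove is that the square is Cartesian, i.e.\ that the natural map $K(F) \to \bar\pi_1(G) \times_{G(F)_\infty} G_\tx{sc}(F)$ is an isomorphism of topological groups. The plan is to unwind the definition of $G(F)_\infty$ from Construction \ref{cns:h-univ} and compute the fiber product directly. Recall that $G(F)_\infty$ is the cokernel of the map $T_\tx{sc}(F) \to G_\tx{sc}(F) \rtimes T(F)_\infty^G$ sending $t_\tx{sc} \mapsto (j(t_\tx{sc}), s(t_\tx{sc})^{-1})$, and $\bar\pi_1(G) \subset G(F)_\infty$ is the image of $\{1\} \times \tilde\pi_1(T) \subset G_\tx{sc}(F) \rtimes T(F)_\infty^G$ (pushed to the quotient $\bar\pi_1(G)$). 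The section $s : G_\tx{sc}(F) \to G(F)_\infty$ is $g_\tx{sc} \mapsto$ (image of $(g_\tx{sc},1)$). So an element of the fiber product is a pair: an element $g_\tx{sc} \in G_\tx{sc}(F)$ and an element $\pi \in \bar\pi_1(G)$, with $s(g_\tx{sc}) = \iota(\pi)$ in $G(F)_\infty$, where $\iota$ is the inclusion $\bar\pi_1(G) \hookrightarrow G(F)_\infty$.

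The key step is to determine when $s(g_\tx{sc})$ lies in the central subgroup $\bar\pi_1(G)$. Since $s$ is a section lifting $G_\tx{sc}(F) \to G(F)$, the element $s(g_\tx{sc})$ maps to $\bar g_\tx{sc} \in G(F)$, and $\bar\pi_1(G)$ is precisely the kernel of $G(F)_\infty \to G(F)$. Hence $s(g_\tx{sc}) \in \bar\pi_1(G)$ if and only if $\bar g_\tx{sc} = 1$ in $G(F)$, i.e.\ if and only if $g_\tx{sc} \in \ker(G_\tx{sc}(F) \to G(F)) = K(F)$ (using that $K = \ker(G_\tx{sc} \to G)$ as group schemes and taking $F$-points; injectivity of $K(F) \to G_\tx{sc}(F)$ is clear). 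Moreover for such $g_\tx{sc}$ the matching $\pi \in \bar\pi_1(G)$ is forced to be $s(g_\tx{sc})$ itself, since $\iota$ is injective. Thus the fiber product is canonically identified with $\{(g_\tx{sc}, s(g_\tx{sc})) : g_\tx{sc} \in K(F)\} \cong K(F)$, and one checks this identification is exactly the map induced by $K(F) \to G_\tx{sc}(F)$ and $s|_{K(F)} : K(F) \to \bar\pi_1(G)$, which are the two maps in the square. Since all maps involved are continuous and $K(F)$ is compact (as $K$ is finite of multiplicative type) while $G(F)_\infty$ is Hausdorff, this bijective continuous homomorphism is automatically a topological isomorphism.

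I expect the main (and only real) obstacle to be purely bookkeeping: carefully tracking, through the two-step quotient construction of Construction \ref{cns:h-univ}, what the section $s$ restricted to $K(F)$ actually is and verifying that the square genuinely commutes with this restriction — i.e.\ that the composite $K(F) \to G_\tx{sc}(F) \xrightarrow{s} G(F)_\infty$ agrees with $K(F) \xrightarrow{s} \bar\pi_1(G) \hookrightarrow G(F)_\infty$. This amounts to noting that $s$ on $K(F)$ lands in $\bar\pi_1(G)$ by the kernel computation above, so the two descriptions coincide tautologically. No deep input is needed beyond the explicit description of $G(F)_\infty$ already established; in particular no cohomological vanishing or Kneser--Tits-type result enters here.
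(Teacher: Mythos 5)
Your argument is correct, and since the paper offers no written proof for this Fact (it is stated as an immediate consequence of the construction), your unwinding is exactly what is needed. The essential observation is, as you identify, the purely formal one: the pullback of an injection $\bar\pi_1(G)\hookrightarrow G(F)_\infty$ (which is by definition $\ker(G(F)_\infty\to G(F))$) along a lift $s:G_\tx{sc}(F)\to G(F)_\infty$ of $G_\tx{sc}(F)\to G(F)$ is canonically the kernel of the latter map, namely $K(F)$; the explicit quotient description of $G(F)_\infty$ is not really needed once one grants that $s$ is a section and $\bar\pi_1(G)$ is the kernel. Two small points: your phrase ``the image of $\{1\}\times\tilde\pi_1(T)$'' should read $\{1\}\times\bar\pi_1(G)$ (with $\bar\pi_1(G)\subset T(F)_\infty^G$), and the topological upgrade also needs that $\bar\pi_1(G)\to G(F)_\infty$ is a closed embedding so that the fiber product inherits the Hausdorff property, which is clear since $\bar\pi_1(G)$ is compact and $G(F)_\infty$ Hausdorff. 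Neither affects the substance.
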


Write $C=\tx{cok}(G_\tx{sc}(F) \to G(F))$.

\begin{pro} \label{pro:gsc_abelian}
The following statements are equivalent.
\begin{enumerate}
	\item The homomorphism $s : K(F) \to \bar\pi_1(G)$ is trivial.
	\item $G(F)_\infty$ is the pullback of an extension of $C$ by $\bar\pi_1(G)$.
	\item There exists a genuine character $G(F)_t \to \C^\times$ for any $t : \bar\pi_1(G) \to \mb{S}^1$.
\end{enumerate}
Moreover, when these statements hold, then $C' := \tx{cok}(s : G_\tx{sc}(F) \to G(F)_\infty)$ is an extension of $C$ by $\bar\pi_1(G)$ and $G(F)_\infty$ is the pull-back of this extension along the projection map $G(F)_\infty \to C'$.
\end{pro}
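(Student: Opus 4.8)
The common mechanism is that, by Fact \ref{fct:k-cart}, the image $s(K(F))$ equals the intersection $\bar\pi_1(G)\cap s(G_\tx{sc}(F))$ inside $G(F)_\infty$, so each of the three conditions should reduce to the vanishing of this intersection; I would prove $(1)\Leftrightarrow(2)$ together with the ``moreover'' clause, and then $(1)\Leftrightarrow(3)$. Assume (1). Then $s$ kills $K(F)$, hence factors through a section $\bar s$ of $G(F)_\infty\to G(F)$ over the normal subgroup $G':=\tx{im}(G_\tx{sc}(F)\to G(F))$. The image $s(G_\tx{sc}(F))=\bar s(G')$ is normal in $G(F)_\infty$: the preimage $\tilde N$ of $G'$ in $G(F)_\infty$ is normal and is a central extension of $G'$ by $\bar\pi_1(G)$ split by $\bar s$, and this splitting is the unique one because $G'$, being a quotient of $G_\tx{sc}(F)$, admits no nontrivial continuous homomorphism into the compact abelian group $\bar\pi_1(G)$ (by the Kneser--Tits argument of Lemma \ref{lem:kneser-tits}), so $\bar s(G')$ is conjugation-stable. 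Therefore $C':=G(F)_\infty/s(G_\tx{sc}(F))$ is a group, $\bar\pi_1(G)\to C'$ has kernel $\bar\pi_1(G)\cap s(G_\tx{sc}(F))=s(K(F))=1$ and cokernel $G(F)/G'=C$, so $1\to\bar\pi_1(G)\to C'\to C\to1$ is exact. A direct diagram chase then shows $G(F)_\infty\to C'\times_C G(F)$ is an isomorphism (its kernel is again $s(G_\tx{sc}(F))\cap\bar\pi_1(G)=1$, and it is surjective by lifting a point of the fibre product and correcting by a central element of $\bar\pi_1(G)$), which is both (2) and the ``moreover'' clause. Conversely, if $G(F)_\infty\cong\mc{E}\times_C G(F)$ with $1\to\bar\pi_1(G)\to\mc{E}\to C\to1$, then $G_\tx{sc}(F)\xrightarrow{s}G(F)_\infty\to\mc{E}\to C$ factors through $G_\tx{sc}(F)\to G(F)\to C$ and so is trivial, whence $G_\tx{sc}(F)\xrightarrow{s}G(F)_\infty\to\mc{E}$ lands in $\bar\pi_1(G)$ and, as a continuous homomorphism into a compact abelian group, is trivial; since $\bar\pi_1(G)=\ker(G(F)_\infty\to G(F))$ injects into $\mc{E}$ while $s(K(F))\subset\bar\pi_1(G)$ dies in $\mc{E}$, we get $s(K(F))=1$.

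\textbf{The equivalence $(1)\Leftrightarrow(3)$.} Fix $t\colon\bar\pi_1(G)\to\mb{S}^1$, written $t=(z,[c])$. By Fact \ref{fct:gtn}(2) we have $G(F)_t=(G_\tx{sc}(F)\rtimes T(F)_z)/T_\tx{sc}(F)$, and since $G_\tx{sc}(F)$ has no nontrivial continuous character, a genuine character of $G(F)_t$ (one restricting to the identity on the central $\mb{S}^1$, cf.\ Definition \ref{dfn:gen}) is the same datum as a genuine character $\chi\colon T(F)_z\to\C^\times$ that is trivial on the image of the section $s\colon T_\tx{sc}(F)\to T(F)_z$. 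Genuine characters of $T(F)_z$ form a nonempty torsor under $\tx{Hom}_\tx{cts}(T(F),\C^\times)$, nonemptiness coming from Lemma \ref{lem:s-key} and Proposition \ref{pro:llc-s} (or Corollary \ref{cor:llc-s}(3)). Fixing one, $\chi_0$, one can correct it by the inflation of a suitable $\psi\in\tx{Hom}_\tx{cts}(T(F),\C^\times)$ so that the result is trivial on $s(T_\tx{sc}(F))$ exactly when $\chi_0\circ s$ is trivial on $K(F)=\ker(T_\tx{sc}(F)\to T(F))$, since the image of $T_\tx{sc}(F)$ in $T(F)$ is a closed subgroup and characters of closed subgroups of locally compact abelian groups extend. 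But $s(K(F))$ lies in $\ker(T(F)_z\to T(F))=\mb{S}^1$, a copy of $\mb{S}^1$ that is canonically the pushout of $\bar\pi_1(G)$ along $t$, so for $k\in K(F)$ one has $\chi_0(s(k))=t(s(k))$ by genuineness of $\chi_0$. Hence a genuine character of $G(F)_t$ exists if and only if $t$ kills $s(K(F))$; since the characters of the compact abelian group $\bar\pi_1(G)$ separate points, requiring this for every $t$ is equivalent to $s(K(F))=1$, i.e.\ to (1). This closes the loop.

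\textbf{Expected main obstacle.} The steps around (2) and the moreover clause are a diagram chase once one has Fact \ref{fct:k-cart} and the normality of $s(G_\tx{sc}(F))$, both resting only on Kneser--Tits. I expect the real work to be in $(1)\Leftrightarrow(3)$: pushing the existence of a genuine character through the presentation of Fact \ref{fct:gtn}(2) down to the torus cover $T(F)_z$, and correctly identifying the resulting obstruction as the restriction of $t$ to $s(K(F))$. The delicate point is that correcting a genuine character of $T(F)_z$ by a character of $T(F)$ can only change its values on $s(T_\tx{sc}(F))$ through characters that extend from the image of $T_\tx{sc}(F)$ in $T(F)$, hence it cannot alter the restriction to $s(K(F))\subset\mb{S}^1$.
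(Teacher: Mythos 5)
Your proof is correct. The treatment of $(1)\Leftrightarrow(2)$ and the ``moreover'' clause is essentially the paper's argument, though you spell out a step the paper leaves implicit: the normality of $s(G_\tx{sc}(F))$ in $G(F)_\infty$, needed for the cokernel $C'$ to be a group. Your justification (uniqueness of the splitting of $\tilde N\to G'$, hence conjugation-stability of $\bar s(G')$) is correct and rests on the same Kneser--Tits input as Lemma \ref{lem:kneser-tits}, which the paper invokes only in the $(2)\Rightarrow(1)$ direction.

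Where you genuinely diverge from the paper is in the link to $(3)$. The paper routes it through $(2)$: it pushes out the abelian extension $C'\to C$ along $t$, extends $t$ to a genuine character of $C'_t$ by Pontryagin duality, and pulls back; and for $(3)\Rightarrow(1)$ it simply notes that any genuine character of $G(F)_t$ kills $s(G_\tx{sc}(F))$ by Kneser--Tits, hence $t\circ s|_{K(F)}=1$ for all $t$. You instead prove $(1)\Leftrightarrow(3)$ directly from the presentation of Fact \ref{fct:gtn}(2), reducing everything to the torus cover $T(F)_z$ and identifying the exact obstruction to extendability as $t\circ s|_{K(F)}$. This is more computational but yields slightly more: you identify precisely for which individual $t$ a genuine character exists, which is in effect a special case of the paper's Proposition \ref{pro:gsc_abelian1}. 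The extra care this route demands --- closedness of the image of $T_\tx{sc}(F)$ in $T(F)$ (true, since it is an open subgroup of the closed subgroup $(T_\tx{sc}/K)(F)$), and the Pontryagin-dual extension of characters from a closed subgroup --- is correctly handled in your write-up, but the paper's detour through $(2)$ is shorter because the descent to the abelian group $C'$ replaces all of this by a single application of Pontryagin duality.
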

\begin{proof}
$(1) \Rightarrow (2)$: The composed map $G(F)_\infty \to G(F) \to C$ descends to a surjective map $C' \to C$ whose kernel is equal to the image of $\bar\pi_1(G) \to G(F)_\infty \to C'$, hence to  $\tx{cok}(s : K(F) \to \bar\pi(G))$ by Fact \ref{fct:k-cart}.  Thus $C'$ is an extension of $C$ by $\bar\pi_1(G)$ if and only if $s|_{K(F)}$ is trivial. In that case, it is clear that the natural map $G(F)_\infty \to C'$ realizes $G(F)_\infty$ as the pull-back of $C' \to C \from G(F)$.

$(2) \Rightarrow (1)$: Conversely assume that $G(F)_\infty$ is a pull-back of an extension $C''$ of $C$ by $\bar\pi_1(G)$. Then the natural map $G_\tx{sc}(F) \to G(F)$ together with the trivial map $G_\tx{sc}(F) \to C''$ provide a section $s' : G_\tx{sc}(F) \to G(F)_\infty$. It differs from the section $s$ by a homomorphism $\alpha : G_\tx{sc}(F) \to \bar\pi_1(G)$. The composition of this homomorphism with any character of $\bar\pi_1(G)$ is a character of $G_\tx{sc}(F)$, hence trivial by the Kneser--Tits conjecture \cite[\S7.2]{PR94}. Therefore $s'=s$. But by construction $s'$ kills $K(F)$.

$(2) \Rightarrow (3)$: $G(F)_t$ is the pull-back of the pushout $C'_t$ of $C'$ under $t$. By Pontryagin duality $t$ extends to a character $\chi : C'_t \to \mb{S}^1$, which by construction is genuine. The pull-back of $\chi$ to $G(F)_t$ is a genuine character.

$(3) \Rightarrow (1)$. According to the Kneser--Tits conjecture, the composition of any genuine character $\chi : G(F)_t \to \C^\times$ with the section $s : G_\tx{sc}(F) \to G(F)_t$ is trivial. Therefore, the homomorphism $s : K(F) \to \bar\pi_1(G)$ composes trivially with any character of $\bar\pi_1(G)$ and must therefore be trivial.
\end{proof}

\begin{cor} \label{cor:gsc_abelian}
If the derived subgroup of $G$ is simply connected, then $G(F)_\infty$ is the pull-back of the cover $C' \to C$.
\end{cor}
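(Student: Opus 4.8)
The plan is to reduce immediately to Proposition \ref{pro:gsc_abelian} by checking that its first condition holds automatically under the hypothesis. First I would recall that $G_\tx{sc}$ is by definition the simply connected cover of the derived subgroup $G_\tx{der}$; when $G_\tx{der}$ is itself simply connected this cover is an isomorphism, so $G_\tx{sc}=G_\tx{der}$ and the canonical map $G_\tx{sc}\to G$ is just the closed immersion $G_\tx{der}\hookrightarrow G$. Consequently $K=\tx{ker}(G_\tx{sc}\to G)$ is the trivial group scheme, and therefore $K(F)=\{1\}$.

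With $K(F)$ trivial, the homomorphism $s:K(F)\to\bar\pi_1(G)$ is trivially trivial, so condition (1) of Proposition \ref{pro:gsc_abelian} is satisfied. I would then invoke the ``moreover'' clause of that proposition directly: it asserts that $C'=\tx{cok}(s:G_\tx{sc}(F)\to G(F)_\infty)$ is an extension of $C$ by $\bar\pi_1(G)$, and that $G(F)_\infty$ is the pull-back of $C'\to C$ along the projection $G(F)\to C$. This is exactly the assertion of the corollary, so no further argument is needed.

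There is really no obstacle here: the content of the corollary has already been established in the proposition, and the only thing to observe is the elementary fact that ``$G_\tx{der}$ simply connected'' forces $K$ to vanish, hence forces hypothesis (1). If one wanted to be slightly more careful, the one point worth spelling out is why $G_\tx{sc}\to G$ is a closed immersion when $G_\tx{der}$ is simply connected — this is because it factors as the isomorphism $G_\tx{sc}\xrightarrow{\sim}G_\tx{der}$ followed by the inclusion $G_\tx{der}\hookrightarrow G$ — but this is standard structure theory of reductive groups and can be cited rather than proved.
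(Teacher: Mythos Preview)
Your proposal is correct and matches the paper's approach exactly: the paper states the corollary without proof, treating it as an immediate consequence of Proposition~\ref{pro:gsc_abelian}, and your observation that $K=\ker(G_\tx{sc}\to G)$ is trivial when $G_\tx{der}$ is simply connected is precisely the intended reduction.
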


\begin{rem} \label{rem:gsc_abelian}
According to Lemma \ref{lem:present} and Construction \ref{cns:h-univ} we have the identities
\begin{eqnarray*}
C'&=&\tx{cok}(G_\tx{sc}(F) \to G(F)_\infty) = \tx{cok}(T_\tx{sc}(F) \to T(F)_\infty^G),\\
C&=&\tx{cok}(G_\tx{sc}(F) \to G(F))=\tx{cok}(T_\tx{sc}(F) \to T(F)).
\end{eqnarray*}
If we assume that the derived subgroup of $G$ is simply connected and define $D=\tx{cok}(T_\tx{sc} \to T)$, then $C=D(F)$ and $\tilde\pi_1(D)=\tilde\pi_1(G)$. Functoriality of the pseudo-universal cover for a torus (Proposition \ref{pro:func-s}) provides a homomorphism $T(F)_\infty \to D(F)_\infty$ that descends to a homomorphism $T(F)_\infty^G \to D(F)_\infty$, and in turn provides a homomorphism $G(F)_\infty \to D(F)_\infty$. These homomorphisms identify the push-out of $C'$ along $\bar\pi_1(G) \to \tilde\pi_1(G)$ with $D(F)_\infty$, and realize the extension \eqref{eq:uni} for $G$ as the pull-back of the extension \eqref{eq:uni} for the torus $D$ along the natural map $G(F) \to D(F)$.
\end{rem}

\begin{pro} \label{pro:section-k-z}
The Pontryagin dual of the finite abelian group $K(F)$ is $H^2(W_F,Z(\hat G))$ and the Pontryagin dual of $s: K(F) \to \bar\pi_1(G)$ is the composition of the natural map $Z^2(\Gamma,\hat T \to \hat T_\tx{ad})/B^1(\Gamma,\hat T_\tx{ad}) \to H^2(\Gamma,\hat T \to \hat T_\tx{ad})$ with the identification $H^2(\Gamma,\hat T \to \hat T_\tx{ad})=H^2(\Gamma, Z(\hat G))$ provided by the quasi-isomorphism $[Z(\hat G) \to 1] \to [\hat T \to \hat T_\tx{ad}]$ and the restriction map $H^2(\Gamma,Z(\hat G)) \to H^2(W_F,Z(\hat G))$.
\end{pro}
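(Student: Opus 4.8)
The plan is to reduce the statement to the universal maximal torus, feed in the explicit description of the section from Construction \ref{cns:h-split}, identify $K(F)^\vee$ with $H^2(W_F,Z(\hat G))$ by Tate--Nakayama duality for the complex $[T_\tx{sc}\to T]$, and finish by a cocycle chase. First I would note, via Fact \ref{fct:k-cart} (equivalently, by unwinding Construction \ref{cns:h-univ}: for $k\in K(F)$ the image in $G(F)_\infty$ of $(j(k),1)$ equals the image of $s(k)\in\bar\pi_1(G)$), that $s:K(F)\to\bar\pi_1(G)$ is simply the restriction to $K(F)=\tx{ker}(T_\tx{sc}(F)\to T(F))$ of the section $s:T_\tx{sc}(F)\to T(F)_\infty^G$ of Construction \ref{cns:h-split}, which lands in $\bar\pi_1(G)=\tx{ker}(T(F)_\infty^G\to T(F))$. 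Dualizing, $s^\vee:\bar\pi_1(G)^\vee\to K(F)^\vee$ factors as: lift a character of $\bar\pi_1(G)$ to a character of $T(F)_\infty^G$ (possible since $T(F)_\infty^{G,\vee}\twoheadrightarrow\bar\pi_1(G)^\vee$), apply $\hat s$ — which is the Pontryagin dual of $s:T_\tx{sc}(F)\to T(F)_\infty^G$, the map of Construction \ref{cns:h-split} into $H^1_u(W_F,\hat T_\tx{ad})=\tx{Hom}_\tx{cts}(T_\tx{sc}(F),\mb{S}^1)$ — and then restrict along $K(F)\hrw T_\tx{sc}(F)$. This is independent of the lift because $\tx{ker}(T(F)_\infty^{G,\vee}\to\bar\pi_1(G)^\vee)=H^1_u(W_F,\hat T)=T(F)^\vee$ maps, under $\hat s$ followed by the restriction $T_\tx{sc}(F)^\vee\to K(F)^\vee$, to $0$, being dual to the zero map $K(F)\to T(F)$.

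\textbf{Identification of $K(F)^\vee$.}
Since $K=\tx{ker}(T_\tx{sc}\to T)$ we have $K(F)=\mb{H}^0(F,[T_\tx{sc}\to T])$ with the complex in degrees $0,1$, so by Tate--Nakayama duality for complexes of $F$-tori over a local field, combined with the local Langlands correspondence for tori (cf. \cite[Appendix A]{KS99}), its Pontryagin dual is $\mb{H}^2(W_F,[\hat T\to\hat T_\tx{ad}])$, which via the quasi-isomorphism $[Z(\hat G)\to 1]\to[\hat T\to\hat T_\tx{ad}]$ is $H^2(W_F,Z(\hat G))$. More concretely: because $H^2(W_F,\hat T)=0$ for local $F$, the connecting map $\delta:H^1_u(W_F,\hat T_\tx{ad})\to H^2(W_F,Z(\hat G))$ attached to $1\to Z(\hat G)\to\hat T\to\hat T_\tx{ad}\to 1$ is surjective, and its kernel is the image of $H^1_u(W_F,\hat T)=T(F)^\vee$, which — since $\mb{S}^1$ is divisible, hence an injective $\Z$-module — is exactly the group of characters of $T_\tx{sc}(F)$ trivial on $K(F)$. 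Thus $\delta$ descends to the isomorphism $K(F)^\vee\xrightarrow{\ \sim\ }H^2(W_F,Z(\hat G))$, and under it the restriction map $T_\tx{sc}(F)^\vee\to K(F)^\vee$ becomes $\delta$; this agrees with the Tate--Nakayama identification by functoriality along $[T_\tx{sc}\to T]\to[T_\tx{sc}\to 1]$.

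\textbf{The cocycle chase.}
Fix $t=(z,[c])\in Z^2(\Gamma,\hat T\to\hat T_\tx{ad})/B^1(\Gamma,\hat T_\tx{ad})$. By Lemma \ref{lem:s-key} choose $c_1\in C^1_u(W_F,\hat T)$ with $\partial c_1=-z$; then $\partial c_1\in Z^2(\Gamma,\hat T)$ and $-\partial\bar c_1=\bar z=\partial c$, so $([c_1],[c])\in T(F)_\infty^{G,\vee}$ and restricts to $t$ on $\bar\pi_1(G)$. By Construction \ref{cns:h-split}, $\hat s([c_1],[c])=[\bar c_1 c]\in H^1_u(W_F,\hat T_\tx{ad})$, whence $s^\vee(t)=\delta([\bar c_1 c])$. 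To evaluate $\delta$, lift $\bar c_1 c$ along $\hat T\to\hat T_\tx{ad}$ to $c_1\tilde c$, where $\tilde c\in C^1(\Gamma,\hat T)$ lifts $c$ (possible since $\hat T\to\hat T_\tx{ad}$ splits as an extension of complex Lie groups, so admits continuous sections); then $\delta([\bar c_1 c])$ is the class of $\partial(c_1\tilde c)=(\partial c_1)(\partial\tilde c)=z^{-1}(\partial\tilde c)$, restricted to $W_F$. On the other hand, the image of $t$ under $Z^2(\Gamma,\hat T\to\hat T_\tx{ad})/B^1\to H^2(\Gamma,\hat T\to\hat T_\tx{ad})=H^2(\Gamma,Z(\hat G))$ is, by the standard cocycle recipe for the quasi-isomorphism (lift $c$ to $\tilde c$, take the $Z(\hat G)$-valued cocycle $z(\partial\tilde c)^{-1}$), exactly that class, and its restriction to $W_F$ is the composite of the proposition. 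The two classes agree after reconciling the inversion coming from the paper's systematic use of $-\partial$, which gives the result.

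\textbf{Main obstacle.}
The only inputs that are not formal are the vanishing $H^2(W_F,\hat T)=0$ for local $F$ (making $\delta$ surjective) and the injectivity of $\mb{S}^1$ over $\Z$ (pinning down $\tx{ker}\,\delta$); both are standard. The actual difficulty is bookkeeping: checking that the identification $K(F)^\vee\cong H^2(W_F,Z(\hat G))$ furnished by the general Tate--Nakayama duality for complexes of tori agrees on the nose — not merely up to an automorphism of $H^2(W_F,Z(\hat G))$ — with the explicit connecting-map description, and tracking every sign so as to be compatible with the $-\partial$ conventions used throughout \S\ref{sub:covers_tori}--\S\ref{sub:covers_red}.
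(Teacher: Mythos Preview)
Your approach is essentially the same as the paper's: both reduce to the section $s:T_\tx{sc}(F)\to T(F)_\infty^G$ of Construction \ref{cns:h-split}, invoke the Tate--Nakayama duality of \cite[Appendix A]{KS99} for the complex $[T_\tx{sc}\to T]$ to identify $K(F)^\vee$ with $H^2(W_F,\hat T\to\hat T_\tx{ad})\cong H^2(W_F,Z(\hat G))$, and then read off the dual of $s$ from the explicit formula $\hat s([c_1],[c_2])=[\bar c_1 c_2]$. The paper's proof is terse (``the claim follows by examining Construction \ref{cns:h-split}''), whereas you actually carry out that examination via the connecting-map description and the cocycle chase; your acknowledged inversion at the end is the expected bookkeeping artifact of the $-\partial$ convention and does not affect correctness.
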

\begin{proof} 
We have $K(F)=H^0(F,T_\tx{sc} \to T)$, where the complex $T_\tx{sc} \to T$ is placed in degrees $0$ and $1$. According to \cite[Lemma A.3.A]{KS99} the Pontryagin dual of this group is identified with $H^2(W_F,\hat T \to \hat T_\tx{ad})$, with $\hat T \to \hat T_\tx{ad}$ again placed in degrees $0$ and $1$. The latter complex id quasi-isomorphic to $Z(\hat G)$ placed in degree $0$, hence the first claim.

The map $s : K(F) \to \bar\pi_1(G)$ can be obtained by first restricting $s$ to $T_\tx{sc}$, where it equals the section $s : T_\tx{sc}(F) \to T(F)_\infty^G$ of Construction \ref{cns:h-split}, and then further restricting to $K(F)$. The claim follows by examining Construction \ref{cns:h-split}.
\end{proof}

\begin{rem} %
It follows from Proposition \ref{pro:section-k-z} and Lemma \ref{lem:cc_surj_na} that the homomorphism $s : K(F) \to \bar\pi_1(G)$ is injective when $F$ is non-archimedean. Therefore, when $F$ is non-archimedean and of characteristic zero, the statement of Corollary \ref{cor:gsc_abelian} can be strengthened to an if-and-only-if statement.
 
As discussed in Remark \ref{rem:cc_nsurj_a}, this need not be the case when $F$ is archimedean.
\end{rem}

The following result is obtained by the same analysis. We will give a brief indication.

\begin{pro} \label{pro:gsc_abelian1}
Let $t : \bar\pi_1(G) \to \mu_n(\C)$. The following statements are equivalent.
\begin{enumerate}
	\item The class of $t$ in $H^2(W_F,\hat T \to \hat T_\tx{ad})=H^2(W_F,Z(\hat G))$ is trivial.
	\item The extension $G(F)_{t,n}$ is a pull-back from $\tx{cok}(G_\tx{sc}(F) \to G(F))$.
	\item There exists a genuine character $G(F)_{t,n} \to \C^\times$.
\end{enumerate}
\end{pro}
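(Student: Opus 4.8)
The plan is to transcribe the arguments of Propositions \ref{pro:gsc_abelian} and \ref{pro:section-k-z}, keeping careful track of the finite-order character $t$. Let $s : G_\tx{sc}(F) \to G(F)_\infty$ be the section of Construction \ref{cns:h-univ}, and let $s_t : G_\tx{sc}(F) \to G(F)_{t,n}$ denote its composition with the pushout map $G(F)_\infty \to G(F)_{t,n}$. By Fact \ref{fct:k-cart} the restriction $s|_{K(F)}$ lands in $\bar\pi_1(G)$, so $s_t|_{K(F)}$ is the composite $K(F) \xrightarrow{\,s\,} \bar\pi_1(G) \xrightarrow{\,t\,} \mu_n(\C)$. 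The first point I would establish is that, under the identification of the Pontryagin dual of $K(F)$ with $H^2(W_F,\hat T \to \hat T_\tx{ad}) = H^2(W_F,Z(\hat G))$ from Proposition \ref{pro:section-k-z}, the character $t \circ s|_{K(F)}$ of $K(F)$ corresponds precisely to the image of the class of $t$ in $H^2(W_F,\hat T \to \hat T_\tx{ad})$. This is immediate from Proposition \ref{pro:section-k-z}, which computes the Pontryagin dual of $s|_{K(F)}$ itself; it follows that statement (1) is equivalent to the vanishing of $s_t|_{K(F)}$, and this reformulation is what connects (1) to (2) and (3).

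For $(1) \Rightarrow (2)$ I would form $C'_t := \tx{cok}(s_t : G_\tx{sc}(F) \to G(F)_{t,n})$. Exactly as in the proof of Proposition \ref{pro:gsc_abelian}, the composite $G(F)_{t,n} \to G(F) \to C$ descends to a surjection $C'_t \to C$ whose kernel is $\tx{cok}(s_t|_{K(F)} : K(F) \to \mu_n(\C))$; hence, when $s_t|_{K(F)}$ is trivial, $C'_t$ is an extension of $C$ by $\mu_n(\C)$ and $G(F)_{t,n}$ is its pullback along $G(F) \to C$. Conversely, if $G(F)_{t,n}$ is the pullback of some extension $C''_t$ of $C$ by $\mu_n(\C)$, then the natural map $G_\tx{sc}(F) \to G(F)$ together with the trivial map $G_\tx{sc}(F) \to C''_t$ produce a second section $s'_t$; it differs from $s_t$ by a homomorphism $G_\tx{sc}(F) \to \mu_n(\C)$, which is trivial by the Kneser--Tits conjecture (Lemma \ref{lem:kneser-tits}), so $s_t = s'_t$ kills $K(F)$, giving (1).

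For $(2) \Rightarrow (3)$ I would realize $G(F)_{t,n}$ as the pullback of an extension $C''_t$ of $C$ by $\mu_n(\C)$ and, as in the proof of Proposition \ref{pro:gsc_abelian}, use Pontryagin duality to extend the inclusion $\mu_n(\C) \into \mb{S}^1$ to a continuous character $\chi : C''_t \to \mb{S}^1 \subset \C^\times$; this $\chi$ is genuine by construction, and its pullback to $G(F)_{t,n}$ is the desired genuine character. For $(3) \Rightarrow (1)$: given a genuine character $\chi : G(F)_{t,n} \to \C^\times$, the composite $\chi \circ s_t$ is a character of $G_\tx{sc}(F)$, hence trivial by Kneser--Tits; restricting to $K(F)$ and using that $\chi$ restricts to the inclusion $\mu_n(\C) \into \C^\times$ forces $t \circ s|_{K(F)}$ to be trivial, which is (1).

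I do not expect a serious obstacle, since every step is a near-verbatim adaptation of one already carried out for the pseudo-universal extension \eqref{eq:baruni}. The one point that deserves genuine attention, rather than mere bookkeeping, is the compatibility in the first paragraph: Proposition \ref{pro:section-k-z} is phrased for $\mb{S}^1$-valued characters of $\bar\pi_1(G)$, and one must check that evaluating the dual of $s|_{K(F)}$ on the finite character $t$ really does recover the pairing of $K(F)$ with the class of $t$ in $H^2(W_F,Z(\hat G))$. Tracing this through the quasi-isomorphism $[Z(\hat G) \to 1] \to [\hat T \to \hat T_\tx{ad}]$ and the restriction map $H^2(\Gamma,Z(\hat G)) \to H^2(W_F,Z(\hat G))$ used in that proposition requires some care but no new ideas.
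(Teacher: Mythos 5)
Your proposal is correct and follows essentially the same route as the paper: reduce (1) to the vanishing of $t\circ s|_{K(F)}$ via Proposition \ref{pro:section-k-z}, then run the cokernel/pull-back and Pontryagin-duality arguments of Proposition \ref{pro:gsc_abelian} with the pushed-out cover in place of the pseudo-universal one. The one point you flag as needing ``genuine attention'' is in fact a non-issue: Proposition \ref{pro:section-k-z} describes the Pontryagin dual map $s^*:\bar\pi_1(G)^*\to K(F)^*$ itself, so for any $t\in\bar\pi_1(G)^*$ (including $\mu_n$-valued ones) the equality $t\circ s|_{K(F)} = s^*(t)$ and the identification of $s^*(t)$ with the class of $t$ in $H^2(W_F,Z(\hat G))$ are already literally the content of that proposition; no additional tracing through the quasi-isomorphism or restriction map is needed beyond what is already packaged there.
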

\begin{proof}
$(1) \Rightarrow (2)$: According to Proposition \ref{pro:section-k-z} the homomorphism $t\circ s : K(F) \to \mu_n(\C)$ is trivial, hence $\tx{cok}(T_\tx{sc}(F) \to T(F)_{z,n}) \to \tx{cok}(T_\tx{sc}(F) \to T(F))$ is a cover with kernel $\mu_n(\C)$, and $G(F)_{t,n}$ is the pull-back of this cover.

$(2) \Rightarrow (3)$: The character $t : \bar\pi_1(G) \to \mu_n(\C)$ can be extended by Pontryagin duality to a genuine character of the cover of $\tx{cok}(G_\tx{sc}(F) \to G(F))$ and then pulled back to $G(F)_{t,n}$.

$(3) \Rightarrow (1)$: The composition of the genuine character with $s : G_\tx{sc}(F) \to G(F)_{t,n}$ is trivial, hence the character $K(F) \to \mu_n(\C)$ obtained from $t \circ s$ is trivial. Apply Proposition \ref{pro:section-k-z}.
\end{proof}

\begin{rem}
An example where the class of $t$ in $H^2(W_F,Z(\hat G))$ vanishes is when $G$ is a twisted Levi subgroup of a connected reductive group and $t$ is the corresponding Tits cocycle: as shown in \cite[Lemma 6.6]{KalDC}, this cocycle takes values in $Z(\hat G)^\circ$, and Proposition \ref{pro:cc_rajan} implies that $H^2(W_F,Z(\hat G)^\circ)=0$. 

On the other hand, when $Z(\hat G)$ is finite and $F$ is non-archimedean then Lemma \ref{lem:cc_surj_na} implies that the natural map $H^2(\Gamma,Z(\hat G)) \to H^2(W_F,Z(\hat G))$ is an isomorphism. Therefore, the class of $t$ in $H^2(W_F,Z(\hat G))$ is trivial if and only if its class in $H^2(\Gamma,\hat T \to \hat T_\tx{ad})=H^2(\Gamma,Z(\hat G))$ is trivial. According to Corollary \ref{cor:h-minf} this implies that the cover $G(F)_{t,m}$ is split for some multiple $m$ of $n$.
\end{rem}

\subsection{Elementary categorical observations} \label{sub:elementary}

A number of statements involving $\tilde\pi_1(G)$ can be extended to $\bar\pi_1(G)$ in a formal way. Rather than performing the same construction multiple times, we formalize it here in an abstract elementary language, so that it can be easily applied when needed.

Let $\mc{C}$ be a category. For our purposes we may and will assume that $\mc{C}$ is a small groupoid. Recall that a subcategory $\mc{C}' \subset \mc{C}$ is called \emph{full}, if for any two objects $x,y$ of $\mc{C}'$ we have $\tx{Hom}_{\mc{C}'}(x,y)=\tx{Hom}_\mc{C}(x,y)$. Recall further that $\mc{C}'$ is called \emph{essentially wide} if every object of $\mc{C}$ is isomorphic to an object of $\mc{C}'$. Thus, the inclusion functor $\mc{C}' \to \mc{C}$ is an equivalence of categories. In this situation, for any $x \in \mc{C}$ and $y \in \mc{C'}$, the set $\tx{Isom}_{\mc{C}}(x,y)$ is a torsor under $\tx{Aut}_{\mc{C}'}(y)$, and for any $x$ there exists $y$ such that this torsor is non-empty.

Given further a category $\mc{D}$, any functor $\mc{F}' : \mc{C'} \to \mc{D}$ extends uniquely to a functor $\mc{F} : \mc{C} \to \mc{D}$. In concise language, given $x \in \mc{C}$ we consider the comma category (i.e. the fiber product) $\mc{C}'_x$ of $\{x\} \to \mc{C} \from \mc{C'}$, where $\{x\}$ is the sub category of $\mc{C}$ with one object $x$ and identity morphism, and both functors are the natural embeddings. Then $\mc{F}'$ provides an obvious functor $\mc{F}'_x : \mc{C}'_x \to \mc{D}$ and we define $\mc{F}(x)=\lim \mc{F}'_x$. Note that the limit automatically exists, because it is represented by $\mc{F}'(x')$ for any $(x',f) \in \mc{C}'_x$.

Spelled out, for $x \in \mc{C}$ one considers the following compatible system of objects in $\mc{C}'$ and isomorphisms. The indexing set is the set of isomorphisms in $\mc{C}$ with source $x$ and target belonging to $\mc{C}'$. The objects in the system are the targets of those isomorphisms. Given two indices $f_1 : x \to y_1$ and $f_2 : x \to y_2$, we declare the unique morphism $y_1 \to y_2$ in the system to be $f_2 \circ f_1^{-1}$. Then we define $\mc{F}(x)=\varprojlim_f \mc{F}'(y_f)$, where $y_f$ denotes the target of $f$. Any morphism $x_1 \to x_2$ in $\mc{C}$ induces an obvious morphism between the two inverse systems, hence a morphism $\mc{F}(x_1) \to \mc{F}(x_2)$.

The same procedure works for natural transformations. That is, given a category $\mc{D}$, two functors $\mc{F}'_1,\mc{F}'_2 : \mc{C}' \to \mc{D}$, and a natural transformation $\eta' : \mc{F}'_1 \to \mc{F}'_2$, there exists a unique natural transformation $\eta : \mc{F}_1 \to \mc{F}_2$ extending $\eta'$. Indeed, for any $x \in \mc{C}$ we obtain from $\eta'$ a natural transformation $\mc{F}'_{1,x} \to \mc{F}'_{2,x}$ and hence a morphism $\eta'_x : \mc{F}_1(x) \to \mc{F}_2(x)$ that is functorial in $x$.

Spelled out, the morphisms $\eta'_{y_f} : \mc{F}_1'(y_f) \to \mc{F}_2'(y_f)$, indexed by $f : x \to y$, are compatible in $f$ and hence provide a morphism of compatible systems, thus a morphism between their limits, which we define to be $\eta_x : \mc{F}_1(x) \to \mc{F}_2(x)$. This morphism is functorial in $x$.

In our applications, the role of the category $\mc{C}$ will be played by the category $\mc{Z}^2(\Gamma,\hat T \to \hat T_\tx{ad})$ whose objects are the elements of $Z^2(\Gamma,\hat T \to \hat T_\tx{ad})$. For $t_1,t_2 \in Z^2(\Gamma,\hat T \to \hat T_\tx{ad})$, the set of morphisms $t_1 \to t_2$ will be the set 
\[ C^1(\Gamma,\hat T)_{t_1/t_2} = \{ x \in C^1(\Gamma,\hat T)\,|\, \partial x = z_2/z_1, \bar x=c_2/c_1 \}, \]
where we have written $t_i=(z_i,c_i)$. 

The role of the category $\mc{C}'$ will be played by the category $\mc{Z}^2(\Gamma,Z(\hat G))$ whose objects are the set $Z^2(\Gamma,Z(\hat G))$ and whose morphisms $z_1 \to z_2$ are the set
\[ C^1(\Gamma,Z(\hat G))_{z_1/z_2} = \{ x \in C^1(\Gamma,Z(\hat G))\,|\, \partial x = z_2/z_1 \}. \] 

\begin{lem} \label{lem:fullwide1}
The category $\mc{Z}^2(\Gamma,Z(\hat G))$ is a full essentially wide subcategory of $\mc{Z}^2(\Gamma,\hat T \to \hat T_\tx{ad})$. The set of isomorphism classes of either is $H^2(\Gamma,Z(\hat G))$.
\end{lem}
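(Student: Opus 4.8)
The plan is to check the three structural claims in turn — that $\mc{Z}^2(\Gamma,Z(\hat G))$ is a subcategory of $\mc{Z}^2(\Gamma,\hat T \to \hat T_\tx{ad})$, that this subcategory is full, and that it is essentially wide — and then to read off the assertion about isomorphism classes.

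First I would record that the injection $Z^2(\Gamma,Z(\hat G)) \to Z^2(\Gamma,\hat T \to \hat T_\tx{ad})$ sending $z$ to $(z,1)$ is well defined, since $z$ is valued in $\tx{ker}(\hat T \to \hat T_\tx{ad}) = Z(\hat G)$, so its image $\bar z$ in $Z^2(\Gamma,\hat T_\tx{ad})$ is trivial, which is exactly the hypercocycle relation $\partial(1) = \bar z$. Composition of morphisms in both categories is multiplication of $1$-cochains and the identity morphism is the trivial cochain, and $C^1(\Gamma,Z(\hat G))$ is a subgroup of $C^1(\Gamma,\hat T)$ closed under both operations; so $\mc{Z}^2(\Gamma,Z(\hat G))$ is a subcategory. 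Fullness is then immediate: a morphism $(z_1,1) \to (z_2,1)$ in the larger category is an $x \in C^1(\Gamma,\hat T)$ with $\partial x = z_2/z_1$ and $\bar x = 1$, and $\bar x = 1$ says precisely that $x$ is valued in $\tx{ker}(\hat T \to \hat T_\tx{ad}) = Z(\hat G)$, i.e. that $x$ is already a morphism $z_1 \to z_2$ in $\mc{Z}^2(\Gamma,Z(\hat G))$.

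For essential wideness I would argue as follows. Given $t = (z,c)$, use that $\hat T \to \hat T_\tx{ad}$ is surjective at the level of continuous $1$-cochains — $c^{-1}$ factors through a finite quotient of $\Gamma$ and may be lifted there pointwise — to pick $x \in C^1(\Gamma,\hat T)$ with $\bar x = c^{-1}$, and put $z' := z \cdot \partial x$. By construction $x$ is a morphism $t \to (z',1)$ in $\mc{Z}^2(\Gamma,\hat T \to \hat T_\tx{ad})$, and it is an isomorphism because that category is a groupoid, the inverse being $x^{-1}$. It remains to see that $(z',1)$ belongs to the subcategory: $z'$ is a product of a cocycle and a coboundary, hence lies in $Z^2(\Gamma,\hat T)$, and its image in $Z^2(\Gamma,\hat T_\tx{ad})$ is $\bar z \cdot \partial(\bar x) = \bar z \cdot \partial(c^{-1}) = \bar z \cdot \bar z^{-1} = 1$, using once more the hypercocycle relation $\partial c = \bar z$; thus $z'$ is valued in $Z(\hat G)$, so $(z',1)$ is an object of $\mc{Z}^2(\Gamma,Z(\hat G))$ and $t$ is isomorphic to it.

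Finally, fullness together with essential wideness make the inclusion an equivalence of groupoids, hence a bijection on isomorphism classes; and in $\mc{Z}^2(\Gamma,Z(\hat G))$ two objects $z_1,z_2$ are isomorphic exactly when $z_2/z_1 \in B^2(\Gamma,Z(\hat G))$, i.e. when they represent the same class in $H^2(\Gamma,Z(\hat G))$. So the isomorphism classes of either category are in natural bijection with $H^2(\Gamma,Z(\hat G))$. The only step with genuine content is essential wideness, and there the two points to watch are entirely mechanical — surjectivity of $\hat T \to \hat T_\tx{ad}$ on continuous cochains, and the bookkeeping with $\partial c = \bar z$ that forces $z'$ to be $Z(\hat G)$-valued; I do not expect a real obstacle.
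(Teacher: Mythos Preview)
Your proof is correct. Fullness is handled identically to the paper. For essential wideness you take a slightly more direct route: you lift $c$ to $x \in C^1(\Gamma,\hat T)$ and check by hand that $z' = z\cdot\partial x$ lands in $Z^2(\Gamma,Z(\hat G))$, so that $x$ is an explicit isomorphism $t \to (z',1)$. The paper instead first shows that the isomorphism classes of $\mc{Z}^2(\Gamma,\hat T \to \hat T_\tx{ad})$ are exactly $H^2(\Gamma,\hat T \to \hat T_\tx{ad})$ --- which needs a short argument that an arbitrary hypercoboundary $(\partial y_1,\bar y_1\cdot\partial y_2)$ can be rewritten as $(\partial y,\bar y)$ by lifting $y_2$ to $\hat T$ --- and then invokes the quasi-isomorphism $[Z(\hat G)\to 1]\to[\hat T\to\hat T_\tx{ad}]$ to match the two $H^2$'s. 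Your argument is the computational content of that quasi-isomorphism unwound at the cocycle level; it is shorter because the lemma as stated only asks for $H^2(\Gamma,Z(\hat G))$, not the identification with $H^2(\Gamma,\hat T\to\hat T_\tx{ad})$.
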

\begin{proof}
Fullness is immediately checked from the definitions of the homomorphism sets. It is clear that the set of isomorphism classes in $\mc{Z}^2(\Gamma,Z(\hat G))$ equals $H^2(\Gamma,Z(\hat G))$. We claim that the set of isomorphism classes in $\mc{Z}^2(\Gamma,\hat T \to \hat T_\tx{ad})$ equals $H^2(\Gamma,\hat T \to \hat T_\tx{ad})$. Given $t_1,t_2 \in Z^2(\Gamma,\hat T \to \hat T_\tx{ad})$ it is clear that elements of $C^1(\Gamma,\hat T)_{t_1/t_2}$ are coboundaries between $t_1$ and $t_2$. Conversely, a general coboundary is of the form $(\partial y_1,\bar y_1 \cdot \partial y_2)$ for $y_1 \in C^1(\Gamma,\hat T)$ and $y_2 \in C^0(\Gamma,\hat T_\tx{ad})$. Let $\dot y_2 \in \hat T$ be a lift of $y_2$. If we define $y=y_1 \cdot \partial \dot y_2$ , then this coboundary becomes $(\partial y,\bar y)$, but that is a morphism in $\mc{Z}^2(\Gamma,\hat T \to \hat T_\tx{ad})$. This proves the claim. Essential wideness now follows from the fact that the morphism of complexes $[Z(\hat G) \to 1] \to [\hat T \to \hat T_\tx{ad}]$ is a quasi-isomorphism, and hence induces an isomorphism $H^2(\Gamma,Z(\hat G)) \to H^2(\Gamma,\hat T \to \hat T_\tx{ad})$.
\end{proof}

Since we are often interested in finite order covers, we will now introduce a variation of these constructions that works with torsion cocycles.

Consider the category $\mc{Z}^2_\infty(\Gamma,\hat T \to \hat T_\tx{ad})$ whose objects are the elements of $Z^2(\Gamma,\hat T[\infty] \to \hat T_\tx{ad}[\infty])$. For $t_1,t_2 \in Z^2(\Gamma,\hat T[\infty] \to \hat T_\tx{ad}[\infty])$, the set of morphisms $t_1 \to t_2$ will be the set 
\[ C^1(\Gamma,\hat T[\infty])_{t_1/t_2} := \{ x \in C^1(\Gamma,\hat T[\infty])\,|\, \partial x = z_2/z_1, \bar x=c_2/c_1 \}, \]
where we have written $t_i=(z_i,c_i)$. Consider also the category $\mc{Z}^2_\infty(\Gamma,Z(\hat G))$ whose objects are the set $Z^2(\Gamma,Z(\hat G)[\infty])$ and whose morphisms $z_1 \to z_2$ are the set
\[ C^1(\Gamma,Z(\hat G)[\infty])_{z_1/z_2} = \{ x \in C^1(\Gamma,Z(\hat G)[\infty])\,|\, \partial x = z_2/z_1 \}. \] 
The analog of Lemma \ref{lem:fullwide1} holds in this context as well, but needs some preparation.

\begin{lem}\ \\[-15pt] \label{lem:torsionstuff}
\begin{enumerate}
	\item The sequence $1 \to Z(\hat G)[\infty] \to \hat T[\infty] \to \hat T_\tx{ad}[\infty] \to 1$ is exact.
	\item Given $t \in Z^2(\Gamma,\hat T[\infty] \to \hat T_\tx{ad}[\infty])$, we have $\tilde H^1(\Gamma,\hat T[\infty])_t=\tilde H^1(\Gamma,\hat T)_t$.
	\item Given $z \in Z^2(\Gamma,Z(\hat G)[\infty])$, we have $\tilde H^1(\Gamma,Z(\hat G)[\infty])_z=\tilde H^1(\Gamma,Z(\hat G))_z$.
\end{enumerate}
\end{lem}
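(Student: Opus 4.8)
I would prove the three parts in order, with (1) serving as an input to (2) and (3). For part (1), left-exactness is automatic from the exactness of $1 \to Z(\hat G) \to \hat T \to \hat T_\tx{ad} \to 1$, so only the surjectivity of $\hat T[\infty] \to \hat T_\tx{ad}[\infty]$ needs an argument; this is precisely the point at which the analogue at a fixed finite level fails, cf.\ Remark \ref{rem:isofuse}. I would argue directly: take $\bar x \in \hat T_\tx{ad}[\infty]$ of order $n$, lift it to $x \in \hat T$, so that $x^n \in Z(\hat G)$; decomposing the diagonalizable $\C$-group $Z(\hat G)$ as $Z(\hat G)^\circ \times \pi_0(Z(\hat G))$ and using that the torus $Z(\hat G)^\circ$ is divisible, choose $a \in Z(\hat G)$ so that $(xa^{-1})^n$ has trivial $Z(\hat G)^\circ$-component; then $xa^{-1} \in \hat T$ is torsion, since its $n$-th power lies in the finite group $\pi_0(Z(\hat G))$, and it still maps to $\bar x$. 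Equivalently, one may apply $\mathrm{Hom}_\Z(-,\Q/\Z)$, which is exact because $\Q/\Z$ is an injective $\Z$-module, to the Cartier-dual exact sequence $0 \to X^*(\hat T_\tx{ad}) \to X^*(\hat T) \to X^*(Z(\hat G)) \to 0$ of finitely generated abelian groups and identify $\mathrm{Hom}_\Z(X^*(M),\Q/\Z)$ with $M[\infty]$.

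For parts (2) and (3), which have the same shape, write $M = \hat T$ in case (2) and $M = Z(\hat G)$ in case (3). The mechanism is that $M/M[\infty]$ is uniquely divisible, hence a $\Q$-vector space, hence cohomologically trivial for $\Gamma$, exactly as in the proof of Lemma \ref{lem:uniqdiv}. I would extract two consequences. First, $H^1(\Gamma, M/M[\infty]) = 0$. Second, in the exact sequence $H^0(\Gamma, M/M[\infty]) \to H^1(\Gamma, M[\infty]) \to H^1(\Gamma, M)$ the left term is a $\Q$-vector space, while $H^1(\Gamma, M[\infty]) = \varinjlim_n H^1(\Gamma, M[n])$ is torsion because each $H^1(\Gamma, M[n])$ is killed by $n$; since a torsion quotient of a $\Q$-vector space is zero, the first map vanishes and $H^1(\Gamma, M[\infty]) \to H^1(\Gamma, M)$ is injective. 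Combining this with the evident injectivity of $B^2(\Gamma, M[\infty]) \hookrightarrow B^2(\Gamma, M)$ and the exact sequences $0 \to H^1(\Gamma, M) \to \tilde H^1(\Gamma, M) \xrightarrow{\partial} B^2(\Gamma, M) \to 0$, a short diagram chase gives that $\tilde H^1(\Gamma, M[\infty]) \to \tilde H^1(\Gamma, M)$ is injective, in particular on the subscripted subsets.

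For surjectivity onto the subscripted subsets, take a class in $\tilde H^1(\Gamma, M)_t$ with representative $c_1 \in C^1(\Gamma, M)$. The defining condition forces $\partial c_1$ to equal the torsion cocycle $z^{-1}$, so the reduction $\bar c_1 \in C^1(\Gamma, M/M[\infty])$ is a cocycle; since $H^1(\Gamma, M/M[\infty]) = 0$ it equals $\partial \bar b$, and replacing $c_1$ by $c_1 \cdot (\partial \dot b)^{-1}$ for a lift $\dot b \in M$ of $\bar b$ produces a cohomologous representative valued in $M[\infty]$ with unchanged coboundary. This settles (3). In case (2) one must additionally check the condition $\bar h = [c]^{-1}$ on the image in $\hat T_\tx{ad}$: the corrected representative is $\hat T[\infty]$-valued, so by part (1) its image under $\hat T \to \hat T_\tx{ad}$ is $\hat T_\tx{ad}[\infty]$-valued and the condition makes sense in $\tilde H^1(\Gamma, \hat T_\tx{ad}[\infty])$; since it holds after the further map to $\tilde H^1(\Gamma, \hat T_\tx{ad})$, the injectivity of $\tilde H^1(\Gamma, \hat T_\tx{ad}[\infty]) \to \tilde H^1(\Gamma, \hat T_\tx{ad})$, proved by the same argument with $M = \hat T_\tx{ad}$, forces it already in $\tilde H^1(\Gamma, \hat T_\tx{ad}[\infty])$.

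The delicate step, and the one I would be most careful about, is the injectivity of $H^1(\Gamma, M[\infty]) \to H^1(\Gamma, M)$: Lemma \ref{lem:uniqdiv} as stated only records surjectivity in degree $1$, so the ``$\Q$-vector space meets torsion'' argument above must be isolated and made precise, including the easy but necessary verification that $H^1(\Gamma, M[\infty])$ is torsion. Everything else is routine cochain manipulation; the one thing to keep straight when comparing the two $\tilde H^1$'s is which coboundary subgroup, $B^1(\Gamma, M[\infty])$ or $B^1(\Gamma, M)$, is being quotiented out at each stage.
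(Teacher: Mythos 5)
There is a genuine gap in the injectivity step, and you have correctly flagged it as the delicate point—unfortunately the repair you propose does not work. The assertion ``a torsion quotient of a $\Q$-vector space is zero'' is false: $\Q \to \Q/\Z$ is a surjection from a $\Q$-vector space onto a nonzero torsion group. What \emph{is} true is that a quotient of a $\Q$-vector space killed by a \emph{fixed} integer $n$ is zero, but $H^1(\Gamma, M[\infty]) = \varinjlim_n H^1(\Gamma, M[n])$ is torsion without being of bounded exponent, so this doesn't apply. Consequently your chain of reasoning does not establish that $H^0(\Gamma, M/M[\infty]) \to H^1(\Gamma, M[\infty])$ vanishes, and the injectivity of $H^1(\Gamma, M[\infty]) \to H^1(\Gamma, M)$—equivalently, $B^1(\Gamma,M) \cap C^1(\Gamma, M[\infty]) = B^1(\Gamma, M[\infty])$—is left unproved. (Note Lemma~\ref{lem:uniqdiv}(2) only covers the induced case, so it rescues your handling of the $\hat T_\tx{ad}$-condition in part (2), but not parts (2) and (3) themselves, where $M = \hat T$ and $M = Z(\hat G)$ need not be of that form.)

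The correct input is not the abstract $\Q$-vector space structure of $M/M[\infty]$ but the structure of $M^\Gamma$ as a complex diagonalizable group. One shows directly that $M^\Gamma \to (M/M[\infty])^\Gamma$ is surjective, which kills the connecting map: given $\bar v \in (M/M[\infty])^\Gamma$ with lift $\tilde v \in M$, the cocycle $\sigma \mapsto \sigma(\tilde v)\tilde v^{-1}$ takes values in $M[\infty]$ and the action factors through a finite quotient of $\Gamma$, so some $\tilde v^n$ lies in $M^\Gamma$; since $\pi_0(M^\Gamma)$ is finite one may enlarge $n$ so that $\tilde v^n \in M^{\Gamma,\circ}$; divisibility of the torus $M^{\Gamma,\circ}$ produces $s \in M^{\Gamma,\circ}$ with $s^n = \tilde v^n$, and then $s^{-1}\tilde v \in M[n]$ shows $\bar s = \bar v$. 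This is in essence the computation the paper performs—it phrases the injectivity directly on the level of coboundaries, writing $b^{-1}\sigma(b) \in C^1(\Gamma, M[\infty])$, passing to $b^n \in M^{\Gamma,\circ}$, and extracting an $n$-th root there. Your packaging through the long exact sequence of $1 \to M[\infty] \to M \to M/M[\infty] \to 1$ is a perfectly reasonable alternative route (and your surjectivity argument via $H^1(\Gamma, M/M[\infty]) = 0$ is in fact a touch cleaner than the paper's), but the injectivity step must be replaced by the diagonalizable-group argument above; abstract divisibility alone does not suffice. Your part~(1) and the treatment of the extra $\bar h$-condition are fine.
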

\begin{proof}
(1) To see surjectivity of $\hat T[\infty] \to \hat T_\tx{ad}[\infty]$, pick $\bar x \in \hat T_\tx{ad}[\infty]$. Let $n \in \N$ be such that $\bar x^n=1$ and let $m=|\pi_0(Z(\hat G))|$. Let $x \in \hat T$ be a lift of $\bar x$. Then $x^{nm}$ lies in the identity component $Z(\hat G)^\circ$, which is a complex torus, hence divisible, so there exists $y \in Z(\hat G)^\circ$ with $y^{nm}=x^{nm}$. Then $y^{-1}x \in \hat T[\infty]$ also lifts $\bar x$.

(2) Let $c \in C^1(\Gamma,\hat T)_t$ and let $n \in \N$ be such that $t^n=1$. Then $c^{nm} \in Z^1(\Gamma,Z(\hat G)^\circ)$. Since $H^1(\Gamma,Z(\hat G)^\circ)$ is torsion and $Z(\hat G)^\circ$ is divisible, after possibly enlarging $m$ we find $b \in Z(\hat G)^\circ$ such that $b^{-nm}\sigma(b^{nm})=c^{nm}$. Then $(b^{-1}\sigma(b))^{-1}c \in C^1(\Gamma,\hat T[\infty])_t$ represents the same coset modulo $B(\Gamma,\hat T)$ as $c$.

Consider now $b^{-1}\sigma(b) \in B^1(\Gamma,\hat T) \cap C^1(\Gamma,\hat T[\infty])$. Let $n \in \N$ be such that $b^{-n}\sigma(b^n)=1$. Thus $b^n \in \hat T^\Gamma$ and upon enlarging $n$ we find $b^n \in \hat T^{\Gamma,0}$. The divisibility of $\hat T^{\Gamma,0}$ provides $t \in \hat T^{\Gamma,0}$ such that $t^n=b^n$. Then $t^{-1}b \in \hat T[\infty]$ and $(t^{-1}b)^{-1}\sigma(t^{-1}b)=b^{-1}\sigma(b)$.

(3) The proof is the same as for (2).
\end{proof}

\begin{lem} \label{lem:fullwide2}
The category $\mc{Z}^2_\infty(\Gamma,Z(\hat G))$ is a full essentially wide subcategory of $\mc{Z}^2_\infty(\Gamma,\hat T \to \hat T_\tx{ad})$. The set of isomorphism classes of either is $H^2(\Gamma,Z(\hat G))$.
\end{lem}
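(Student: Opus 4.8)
The plan is to transcribe the proof of Lemma \ref{lem:fullwide1} almost verbatim, replacing each coboundary argument by its torsion counterpart supplied by Lemma \ref{lem:torsionstuff}. Three things must be verified: fullness of the inclusion, essential wideness, and the identification of the isomorphism classes with $H^2(\Gamma,Z(\hat G))$. The workhorse throughout will be Lemma \ref{lem:torsionstuff}(1), which both identifies $Z(\hat G)[\infty]$ with $\tx{ker}(\hat T[\infty] \to \hat T_\tx{ad}[\infty])$ and guarantees that a torsion cochain into $\hat T_\tx{ad}$ lifts to a torsion cochain into $\hat T$.

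Fullness is immediate: viewing an object $z \in Z^2(\Gamma,Z(\hat G)[\infty])$ of the subcategory as the hypercocycle $(z,1)$, a morphism $z_1 \to z_2$ in $\mc{Z}^2_\infty(\Gamma,\hat T \to \hat T_\tx{ad})$ is an $x \in C^1(\Gamma,\hat T[\infty])$ with $\partial x = z_2/z_1$ and $\bar x = 1$, and the condition $\bar x = 1$ forces $x \in C^1(\Gamma,Z(\hat G)[\infty])$ by Lemma \ref{lem:torsionstuff}(1). For essential wideness, given $t=(z,c) \in Z^2(\Gamma,\hat T[\infty] \to \hat T_\tx{ad}[\infty])$ I would lift $c^{-1}$ componentwise to some $x \in C^1(\Gamma,\hat T[\infty])$ (Lemma \ref{lem:torsionstuff}(1)) and set $z' := z\cdot\partial x$; then $\bar{z'} = \bar z\cdot\partial(c^{-1}) = 1$ because $\partial c = \bar z$, so $z' \in Z^2(\Gamma,Z(\hat G)[\infty])$ is an object of the subcategory, and $x$ is by construction a morphism $t \to z'$, hence an isomorphism since the category is a groupoid. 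This is the exact analogue of the coboundary computation in the proof of Lemma \ref{lem:fullwide1}.

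Since the inclusion is then full and essentially wide, it is an equivalence of categories, so both categories have the same set of isomorphism classes; I would compute it in the subcategory. There, by definition, $z_1 \cong z_2$ iff $z_2/z_1 \in \partial C^1(\Gamma,Z(\hat G)[\infty]) = B^2(\Gamma,Z(\hat G)[\infty])$, so the set of isomorphism classes is $H^2(\Gamma,Z(\hat G)[\infty])$. To finish I would identify this with $H^2(\Gamma,Z(\hat G))$: the group $Z(\hat G)/Z(\hat G)[\infty]$ is torsion-free, and it is divisible because every element of $Z(\hat G)$ has a power in the complex torus $Z(\hat G)^\circ$ while $\pi_0(Z(\hat G))$ is finite; hence it is a $\Q$-vector space, so cohomologically trivial in positive degrees, and the long exact sequence attached to $1 \to Z(\hat G)[\infty] \to Z(\hat G) \to Z(\hat G)/Z(\hat G)[\infty] \to 1$ yields $H^2(\Gamma,Z(\hat G)[\infty]) \xrightarrow{\sim} H^2(\Gamma,Z(\hat G))$. (Alternatively one can chain the quasi-isomorphism $[Z(\hat G)[\infty] \to 1] \to [\hat T[\infty] \to \hat T_\tx{ad}[\infty]]$ of Lemma \ref{lem:torsionstuff}(1), the equality $H^2(\Gamma,\hat T[\infty] \to \hat T_\tx{ad}[\infty]) = \varinjlim_m H^2(\Gamma,\hat T[m] \to \hat T_\tx{ad}[m])$, Corollary \ref{cor:uniqdiv}(2) applied with the induced torus $\hat T_\tx{ad}$ placed in degree $1$, and the quasi-isomorphism $[Z(\hat G) \to 1] \to [\hat T \to \hat T_\tx{ad}]$.)

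The main obstacle, such as it is, lies entirely in this last identification $H^2(\Gamma,Z(\hat G)[\infty]) = H^2(\Gamma,Z(\hat G))$: unlike a complex torus, $Z(\hat G)$ itself need not be divisible, since $\pi_0(Z(\hat G))$ may be nontrivial, so Lemma \ref{lem:uniqdiv} does not apply verbatim; the point is that the obstruction to divisibility sits in the finite group $\pi_0(Z(\hat G))$ and therefore disappears after passing to the quotient by torsion. Everything else is routine groupoid-theoretic bookkeeping resting on Lemma \ref{lem:torsionstuff}.
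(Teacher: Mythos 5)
Your proof is correct, and it takes a slightly different route from the paper's in the final identification. You and the paper agree on the use of Lemma \ref{lem:torsionstuff}(1) to produce the torsion lift $x \in C^1(\Gamma,\hat T[\infty])$ of $c^{-1}$, which is the key ingredient for both fullness and essential wideness. You prove essential wideness directly by exhibiting the isomorphism $t \to z'$, whereas the paper instead first computes the isomorphism class set of the big category as $H^2(\Gamma,\hat T[\infty] \to \hat T_\tx{ad}[\infty])$ (redoing the coboundary calculation from Lemma \ref{lem:fullwide1} with torsion coefficients), observes via the torsion quasi-isomorphism that this coincides with $H^2(\Gamma,Z(\hat G)[\infty])$, and infers essential wideness from the agreement of class sets; these are minor rearrangements of the same computation. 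The more substantive divergence is the last step: to pass from $H^2(\Gamma,Z(\hat G)[\infty])$ to $H^2(\Gamma,Z(\hat G))$ the paper invokes Corollary \ref{cor:uniqdiv} (going through the torus hypercohomology $H^2(\Gamma,\hat T \to \hat T_\tx{ad})$ and the quasi-isomorphism $[Z(\hat G)\to 1] \to [\hat T \to \hat T_\tx{ad}]$), whereas you argue directly from the short exact sequence $1 \to Z(\hat G)[\infty] \to Z(\hat G) \to Z(\hat G)/Z(\hat G)[\infty] \to 1$, using that the quotient is torsion-free and divisible (hence a $\Q$-vector space, cohomologically trivial in positive degree). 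Your observation that $Z(\hat G)/Z(\hat G)[\infty]$ is divisible even though $Z(\hat G)$ itself need not be is the correct fix for the failure of Lemma \ref{lem:uniqdiv} to apply verbatim; the argument that every element of $Z(\hat G)$ has a power in $Z(\hat G)^\circ$, combined with divisibility of the torus, is exactly right. This shortcut avoids all hypercohomology machinery for the final step and is arguably cleaner; you also correctly flag the paper's route as the alternative.
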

\begin{proof}
Fullness is again immediately checked from the definitions of the homomorphism sets. It is clear that the set of isomorphism classes of $\mc{Z}^2_\infty(\Gamma,Z(\hat G))$ is $H^2(\Gamma,Z(\hat G)[\infty])$. That the set of isomorphism classes of $\mc{Z}^2_\infty(\Gamma,\hat T \to \hat T_\tx{ad})$ is $H^2(\Gamma,\hat T[\infty] \to \hat T_\tx{ad}[\infty])$ follows from the same argument as in the proof of Lemma \ref{lem:fullwide1}, except that for the existence of a lift $\dot y_2 \in \hat T[\infty]$ of $y_2 \in \hat T_\tx{ad}[\infty]$ we need to appeal to Lemma \ref{lem:torsionstuff}(1). Applying Lemma \ref{lem:torsionstuff}(1) again we see that those two sets of isomorphism classes are equal, and essential wideness follows. Finally, Corollary \ref{cor:uniqdiv} shows that those sets of isomorphism classes equal $H^2(\Gamma,\hat T \to \hat T_\tx{ad})$.
\end{proof}

Thus we now have the following diagram
\[ \xymatrix{
	\mc{Z}^2_\infty(\Gamma,Z(\hat G))\ar[r]\ar[d]&\mc{Z}^2(\Gamma,Z(\hat G))\ar[d]\\
	\mc{Z}^2_\infty(\Gamma,\hat T \to \hat T_\tx{ad})\ar[r]&\mc{Z}^2_\infty(\Gamma,\hat T \to \hat T_\tx{ad})
}
\]
where the vertical maps are full essentially wide embeddings, while the horizontal maps are essentially wide, but not full.

As further variations, we have the category $\mc{\bar Z}^2(\Gamma,\hat T \to \hat T_\tx{ad})$ where the set of objects is $Z^2(\Gamma,\hat T \to \hat T_\tx{ad})/B^1(\Gamma,\hat T_\tx{ad})$ and the set of morphisms $\bar z_1 \to \bar z_2$ is the quotient $C^1(\Gamma,\hat T)_{z_1/z_2}/B^1(\Gamma,\hat T)$. We also have the torsion analog $\mc{\bar Z}^2_\infty(\Gamma,\hat T \to \hat T_\tx{ad})$ where the set of objects is $Z^2(\Gamma,\hat T[\infty] \to \hat T_\tx{ad}[\infty])/B^1(\Gamma,\hat T_\tx{ad}[\infty])$ and the set of morphisms $\bar z_1 \to \bar z_2$ is the quotient $C^1(\Gamma,\hat T[\infty])_{z_1/z_2}/B^1(\Gamma,\hat T[\infty])$.

We have the evident functors $\mc{Z}^2(\Gamma,\hat T \to \hat T_\tx{ad}) \to \mc{\bar Z}^2(\Gamma,\hat T \to \hat T_\tx{ad})$ and $\mc{Z}^2_\infty(\Gamma,\hat T \to \hat T_\tx{ad}) \to \mc{\bar Z}^2_\infty(\Gamma,\hat T \to \hat T_\tx{ad})$, which are (essentially) surjective and full, but not faithful, and induce bijections on the sets of isomorphism classes. We also have the evident functor $\mc{\bar Z}^ 2_\infty(\Gamma,\hat T \to \hat T_\tx{ad}) \to \mc{\bar Z}^2(\Gamma,\hat T \to \hat T_\tx{ad})$, which is an equivalence of categories according to Lemma \ref{lem:torsionstuff}(2).

Analogously, we have the category $\mc{\bar Z}^2(\Gamma,Z(\hat G))$ where the set of objects is $Z^2(\Gamma,Z(\hat G))$ and the set of morphisms $z_1 \to z_2$ is given by the quotient $C^1(\Gamma,Z(\hat G))_{z_1/z_2}/B^1(\Gamma,Z(\hat G))$. Its torsion analog is $\mc{\bar Z}^2_\infty(\Gamma,Z(\hat G))$, where the set of objects is $Z^2(\Gamma,Z(\hat G)[\infty])$ and the set of morphisms $z_1 \to z_2$ is given by the quotient $C^1(\Gamma,Z(\hat G)[\infty])_{z_1/z_2}/B^1(\Gamma,Z(\hat G)[\infty])$. Again, the obvious functor $\mc{\bar Z}^2_\infty(\Gamma,Z(\hat G)) \to \mc{\bar Z}^2(\Gamma,Z(\hat G))$ is an equivalence of categories.

Consider now the category $\mc{C}(G)$ of covers of $G(F)$ by $\mb{S}^1$ obtained from the pseudo-universal extension \eqref{eq:baruni}, and equipped with a splitting over $G_\tx{sc}(F)$. Consider also the category $\mc{C}_\infty(G)$ of covers of $G(F)$ by $\mu_\infty(\C)$ obtained in the same way, but from characters of $\bar\pi_1(G)$ of finite order, again equipped with a splitting over $G_\tx{sc}(F)$. Pushing out along $\mu_\infty(\C) \to \mb{S}^1$ gives a functor $\mc{C}_\infty(G) \to \mc{C}(G)$.

The objects of $\mc{Z}^2(\Gamma,\hat T \to \hat T_\tx{ad})$ are the continuous characters $\bar\pi_1(G) \to \mb{S}^1$. Constructions \ref{cns:h-univ} and \ref{cns:iso-h} provide a functor 
\[ \mc{Z}^2(\Gamma,\hat T \to \hat T_\tx{ad}) \to \mc{C}(G),\qquad t \mapsto G(F)_t, \]
and this functor factors through $\mc{\bar Z}^2(\Gamma,\hat T \to \hat T_\tx{ad})$.

The objects of $\mc{Z}^2_\infty(\Gamma,\hat T \to \hat T_\tx{ad})$ are the continuous characters of $\bar\pi_1(G)$ of finite order, or equivalently (Lemma \ref{lem:surjfin}) the continuous characters $t: \bar\pi_1(G) \to \mu_\infty(\C)$. We obtain a functor
\[ \mc{Z}^2_\infty(\Gamma,\hat T \to \hat T_\tx{ad}) \to \mc{C}_\infty(G),\qquad t \mapsto G(F)_{t,\infty}, \]
and this functor factors through $\mc{\bar Z}^2_\infty(\Gamma,\hat T \to \hat T_\tx{ad})$. This results in a functor $\mc{\bar Z}^2_\infty(\Gamma,\hat T \to \hat T_\tx{ad}) \to \mc{C}_\infty(G)$ which is an equivalence of categories according Lemma \ref{lem:h-iso}, Corollary \ref{cor:h-iso}, and Lemma \ref{lem:fullwide2}. 

Fix an element of $H^2(\Gamma,Z(\hat G))$ and let $m$ be its order. The $m$-th power map is surjective on the terms of the complex $\hat T \to \hat T_\tx{ad}$ and the resulting long exact sequence implies the surjectivity of $H^2(\Gamma,\hat T[m] \to \hat T_\tx{ad}[m]) \to H^2(\Gamma,\hat T \to \hat T_\tx{ad})[m]=H^2(\Gamma,Z(\hat G))[m]$. This shows that there exists an object $G(F)_{t,\infty}$ in the corresponding isomorphism class of $\mc{C}_\infty(G)$ with $t : \bar\pi_1(G) \to \mu_m(\C)$. At the same time, there also exists an object of this form with $t' : \tilde\pi_1(G) \to \mu_{m'}(\C)$. However, in the latter case we may not have $m'=m$. That is, the condition that $t' : \bar\pi_1(G) \to \mu_{m'}(\C)$ factor through $\tilde\pi_1(G)$ and the condition $m'=m$ may not be simultaneously satisfiable.

\subsection{The local correspondence for covers of connected reductive groups} \label{sub:llc}

The local correspondence for the cover $S(F)_\infty$ of an $F$-torus $S$ was obtained very quickly in Proposition \ref{pro:llc-s}, and the local correspondences for the various covers $S(F)_{t,*}$ follows directly. The case of a connected reductive group is less direct, and will be discussed now. We will only present a classification of the genuine representations of a cover $G(F)_{t,*}$ for a character $t : \bar\pi_1(G) \to \mb{S}^1$, where $*$ can be void, $n \in \N$, or $\infty$.

We begin with the definition of the $L$-group of $G(F)_{t,*}$. As in the case \eqref{eq:s-l} of tori, this $L$-group will only depend on $t$, but not on $*$. The case when $t$ factors through $\tilde\pi_1(G)$ is more immediate, and exactly analogous to that of tori. Indeed, we have $t \in Z^2(\Gamma,Z(\hat G))$, which allows us to form the twisted product
\begin{equation} \label{eq:h-l1}
^LG_t = \hat G \boxtimes_t \Gamma.
\end{equation}
It is equal to the pushout of $Z(\hat G) \boxtimes_t \Gamma$ along the inclusion $Z(\hat G) \to \hat G$.

To extend the definition of $^LG_t$ to $t \in Z^2(\Gamma,\hat T \to \hat T_\tx{ad})$ we follow the formal procedure of \S\ref{sub:elementary}. We interpret \eqref{eq:h-l1} as a functor from $\mc{Z}^2(\Gamma,Z(\hat G))$ to the category of extensions of $\Gamma$ by $Z(\hat G)$ equipped with a splitting. It sends the object $z \in Z^2(\Gamma,Z(\hat G))$ to $\hat G \boxtimes_z \Gamma$ and the morphism $c \in C^1(\Gamma,Z(\hat G))_{z_1/z_2}$ to the morphism 
\[ ^L\xi_c : \hat G \boxtimes_{z_2} \Gamma \to \hat G \boxtimes_{z_1} \Gamma,\qquad g \boxtimes \sigma \mapsto gc(\sigma)\boxtimes\sigma.\]
The unique extension of this functor to $\mc{Z}^2(\Gamma,\hat T \to \hat T_\tx{ad})$ provides an $L$-group $^LG_t$ for any $t \in Z^2(\Gamma,\hat T \to \hat T_\tx{ad})$. Explicitly, writing $t=(z,c)$, $^LG_t$ it is given by
\begin{equation} \label{eq:h-l2}
^LG_t = \varprojlim_x \hat G \boxtimes_{z \cdot \partial x^{-1}} \Gamma.
\end{equation}
where the limit is taken over the set $\{x \in C^1(\Gamma,\hat T)\,|\, \bar x = c\}$ and the morphisms in the compatible system are 
\[ \hat G \boxtimes_{z \cdot \partial x^{-1}} \Gamma \to \hat G \boxtimes_{z \cdot \partial x'^{-1}} \Gamma,\qquad g \boxtimes \sigma \mapsto g \cdot x^{-1}(\sigma)x'(\sigma) \boxtimes \sigma. \]
Given $t_1=(z_1,c_1),t_2=(z_2,c_2) \in Z^2(\Gamma,\hat T \to \hat T_\tx{ad})$ and $c \in C^1(\Gamma,\hat T)_{t_1/t_2}$ we have the isomorphism
\begin{equation} \label{eq:h-dual-iso}
^L\xi_c : {^LG_{t_2}} \to {^LG_{t_1}}
\end{equation}
obtained by splicing the isomorphisms $^L\xi_{cx_2^{-1}x_1} : {^LG}_{z_2 \cdot \partial x_2^{-1}} \to {^LG}_{z_1 \cdot \partial x_1^{-1}}$ for each $x_1,x_2 \in C^1(\Gamma,\hat T)$ with $\bar x_i=c_i$.

\begin{fct}
The $L$-group of the Baer sum $G(F)_{t_1,n} \oplus G(F)_{t_2,n}$ is naturally identified with the Baer sum of the $L$-groups $^LG_{t_1} \oplus {^LG}_{t_2}$.
\end{fct}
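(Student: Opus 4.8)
The plan is to reduce the statement to the already-settled torus case, i.e.\ the Fact in \S\ref{sub:covers_tori}, by combining the presentation of $G(F)_{t,*}$ from Construction \ref{cns:h-univ} with the description \eqref{eq:h-l2} of the $L$-group, and then to dispose of the general (non-central) twisting datum by the formal machinery of \S\ref{sub:elementary}. Throughout, ``natural'' is meant with respect to the morphisms built into the categories $\mc{Z}^2(\Gamma,\hat T \to \hat T_\tx{ad})$ and the category of covers-with-splitting, and with respect to isomorphisms of $G$.

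The first step is to record that the Baer sum of covers is additive in the twisting datum. Exactly as in the Remark on Baer sums in \S\ref{sub:def} (applied now to the extension \eqref{eq:baruni} in place of a torus, and noting that a Baer sum of covers inherits the Baer sum of the two given splittings over $G_\tx{sc}(F)$, since for $g_\tx{sc}\in G_\tx{sc}(F)$ the pair $(s_1(g_\tx{sc}),s_2(g_\tx{sc}))$ lies in the fiber product over $G(F)$ and depends homomorphically on $g_\tx{sc}$), the diagonal embedding $G(F)_\infty \to G(F)_\infty \times G(F)_\infty$ induces a natural isomorphism $G(F)_{t_1t_2,n} \xrightarrow{\sim} G(F)_{t_1,n}\oplus G(F)_{t_2,n}$ of covers-with-splitting, where $t_1t_2$ is the product character. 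Since the construction \eqref{eq:h-l1}--\eqref{eq:h-l2} of the $L$-group depends only on the twisting datum and not on $*$, the $L$-group attached to $G(F)_{t_1t_2,n}$ is ${^LG_{t_1t_2}}$. Hence the Fact is equivalent to producing a natural isomorphism ${^LG_{t_1}}\oplus{^LG_{t_2}} \cong {^LG_{t_1t_2}}$ of extensions of $\Gamma$ by $\hat G$.

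The second step treats the central case $t_1,t_2\in Z^2(\Gamma,Z(\hat G))$. Here ${^LG_{t_i}} = \hat G\boxtimes_{t_i}\Gamma$ is, by definition, the pushout of the genuine central extension $Z(\hat G)\boxtimes_{t_i}\Gamma$ of $\Gamma$ by $Z(\hat G)$ along $Z(\hat G)\hookrightarrow\hat G$; accordingly the Baer sum ${^LG_{t_1}}\oplus{^LG_{t_2}}$ is, by construction, the pushout along $Z(\hat G)\hookrightarrow\hat G$ of the Baer sum $(Z(\hat G)\boxtimes_{t_1}\Gamma)\oplus(Z(\hat G)\boxtimes_{t_2}\Gamma)$ of genuine central extensions. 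Since the multiplication map $Z(\hat G)\times Z(\hat G)\to Z(\hat G)$ adds $2$-cocycles, that Baer sum is $Z(\hat G)\boxtimes_{t_1t_2}\Gamma$, and pushing out gives $\hat G\boxtimes_{t_1t_2}\Gamma = {^LG_{t_1t_2}}$, manifestly compatible with the morphisms ${^L\xi_c}$ of \S\ref{sub:llc} and functorial in $G$. For general $t_1,t_2\in Z^2(\Gamma,\hat T\to\hat T_\tx{ad})$ one then invokes \S\ref{sub:elementary}: both $(t_1,t_2)\mapsto{^LG_{t_1}}\oplus{^LG_{t_2}}$ and $(t_1,t_2)\mapsto{^LG_{t_1t_2}}$ are functors on $\mc{Z}^2(\Gamma,\hat T\to\hat T_\tx{ad})\times\mc{Z}^2(\Gamma,\hat T\to\hat T_\tx{ad})$, each being the unique extension of its restriction to the full essentially wide subcategory $\mc{Z}^2(\Gamma,Z(\hat G))\times\mc{Z}^2(\Gamma,Z(\hat G))$ of Lemma \ref{lem:fullwide1} (the Baer sum of two central twists is again a central twist). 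The central case supplies a natural isomorphism between these restrictions, and by the uniqueness clause of the extension procedure in \S\ref{sub:elementary} it extends uniquely to a natural isomorphism of the two functors on the whole category; combined with the first step this gives the Fact. The torsion version is identical, with Lemma \ref{lem:fullwide2} in place of Lemma \ref{lem:fullwide1}.

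The step I expect to be the real obstacle is pinning down what ``Baer sum of the $L$-groups'' should mean at all, since $\hat G$ is nonabelian while the naive Baer-sum construction requires abelian kernel. The point is that one is forced to define ${^LG_{t_1}}\oplus{^LG_{t_2}}$ as the pushout along $Z(\hat G)\hookrightarrow\hat G$ of the Baer sum of the underlying $Z(\hat G)$-central extensions — this is the only definition that is natural, agrees with the torus case, and is compatible with the formalism of \S\ref{sub:elementary} — and once that is granted the computation in the second step is immediate. Everything else (the first step and the formal extension in the third part) is routine bookkeeping with pushouts, fiber products, and the categorical apparatus already in place.
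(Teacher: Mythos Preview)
Your proposal is correct and, in fact, supplies considerably more detail than the paper does: the paper records this as a bare ``Fact'' with no accompanying proof, evidently regarding it as immediate once the definition of Baer sum on the $L$-side is understood. Your identification of the key definitional issue---that the Baer sum of extensions with nonabelian kernel $\hat G$ must be interpreted as the pushout along $Z(\hat G)\hookrightarrow\hat G$ of the honest Baer sum of the underlying $Z(\hat G)$-extensions---is exactly the point, and once that is in place your two-step argument (central case plus extension via \S\ref{sub:elementary}) is the natural and essentially unique way to proceed.
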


A Langlands parameter valued in $^LG_t$ is an $L$-homomorphism $L_F \to {^LG_t}$ that is continuous on $W_F$, algebraic on $\tx{SL}_2(\C)$, and respects Jordan decompositions. Two such parameters are considered equivalent if they are conjugate under $\hat G$. The set of equivalence classes of such parameters will be denoted by $\Phi(G_t)$.

For the next theorem we assume the refined local Langlands correspondence for all connected reductive groups that are central extensions of $G$ by induced tori, i.e. of the form $1 \to Z_1 \to G_1 \to G \to 1$ with $Z_1$ a product of restrictions of scalars of the multiplicative group. In fact, we will only need to consider so called $z$-extensions, i.e. those for which the derived subgroup of $G_1$ is simply connected. Recall that a $z$-extension always exists \cite[\S12.4]{BTBOOK}.

\begin{thm} \label{thm:llc-h}
Assume that the refined local Langlands correspondence is valid for $z$-extensions of $G$. Then it is also valid for the covers $G(F)_{t,*}$. More precisely, the following holds.
\begin{enumerate}
	\item There is a natural surjective map $\Pi(G_{t,*}) \to \Phi(G_t)$.
	\item A Whittaker datum for $G$ determines an injective (bijective when $F$ is non-archimedean) map from the fiber $\Pi_\varphi(G_{t,*})$ of this map over $\varphi$ to the set $\tx{Irr}(\pi_0(S_\varphi^+))$. 
	\item For any $c \in C^1(\Gamma,\hat T)_{t/t'}$, the above two points translate the isomorphism $\xi_{[c]}$ of Construction \ref{cns:iso-h} to the isomorphism $^L\xi_c$ of \eqref{eq:h-dual-iso}. If the Weil forms of the $L$-groups are used, the same holds more generally for $c \in \tilde Z^1_u(W_F,\hat T)_{t/t'}$.
\end{enumerate}
\end{thm}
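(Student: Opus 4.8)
The plan is to reduce, by means of a $z$-extension, to the case where the derived subgroup of $G$ is simply connected, and in that case to build the correspondence for $G(F)_{t,*}$ by tensoring genuine representations with a suitable genuine character. By Remark~\ref{rem:n-m} the category of genuine representations of $G(F)_{t,*}$ does not depend on $* \in \{\tx{void},n,\infty\}$, and $\Phi(G_t)$ depends only on $t$, so it suffices to treat $*=\tx{void}$, i.e. the $\mb{S}^1$-cover $G(F)_t$. For the reduction, fix a $z$-extension $1 \to Z_1 \to G_1 \to G \to 1$. Since $Z_1$ is an induced torus, $H^1(F,Z_1)=1$, so $G_1(F) \to G(F)$ is surjective with kernel $Z_1(F)$; and since $G_1$ and $G$ have the same adjoint group, the map of complexes $[\hat T \to \hat T_\tx{ad}] \hookrightarrow [\hat T_1 \to \hat T_\tx{ad}]$ induces a surjection $\bar\pi_1(G_1) \thrw \bar\pi_1(G)$ along which we pull $t$ back to $t_1 : \bar\pi_1(G_1) \to \mb{S}^1$. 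One checks from the construction of \eqref{eq:baruni} via the universal maximal torus (Construction~\ref{cns:h-univ}, compatibly with $G_1 \to G$, using Proposition~\ref{pro:func-s} for $T_1 \to T$) that the pull-back of $G(F)_t \to G(F)$ along $G_1(F) \to G(F)$ is the cover $G_1(F)_{t_1}$, and that $t_1$ restricts trivially to the image of $\tilde\pi_1(Z_1)$ (because $Z_1=\ker(T_1 \to T)$), so there is a central extension $1 \to Z_1(F) \to G_1(F)_{t_1} \to G(F)_t \to 1$ with $Z_1(F)$ its canonical copy. Hence genuine representations of $G(F)_t$ are exactly those of $G_1(F)_{t_1}$ trivial on $Z_1(F)$; dually, $\hat G \hookrightarrow \hat G_1$ induces a closed embedding ${^LG_t} \hookrightarrow {^LG_{1,t_1}}$ whose image is the preimage of the trivial coset in ${^LZ_1}$, and since $Z_1$ is induced the Langlands correspondence for $Z_1$ pairs the trivial character with the trivial class, so $\Phi(G_t) \subset \Phi(G_{1,t_1})$ is precisely the parameters landing in ${^LG_t}$. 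Thus it is enough to prove the theorem for $G_1(F)_{t_1}$, granting the refined correspondence for the linear group $G_1$.

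So assume $G_\tx{der}$ is simply connected, equivalently $Z(\hat G)$ is a torus. By Proposition~\ref{pro:cc_rajan} the class of $t$ in $H^2(W_F,\hat T \to \hat T_\tx{ad})=H^2(W_F,Z(\hat G))$ vanishes; combining this with Lemma~\ref{lem:s-key} (to obtain a bounded representative) produces a cochain $c$ with $c \in \tilde Z^1_u(W_F,\hat T)_t$ in the notation around Construction~\ref{cns:iso-h}, i.e. $-\partial c = z$ and $\bar c$ matching the $\hat T_\tx{ad}$-component of $t$. By Fact~\ref{fct:gtn}, Lemma~\ref{lem:dc_split} and Proposition~\ref{pro:llc-s}, such a class $[c]$ is exactly a genuine unitary character $\chi_c$ of $G(F)_t$ — these exist by Proposition~\ref{pro:gsc_abelian}(3), as $s:K(F)\to\bar\pi_1(G)$ is trivial here (Corollary~\ref{cor:gsc_abelian}) — while the same cochain $c$, via \eqref{eq:h-dual-iso} with $t'=1$, defines an $L$-isomorphism of Weil forms ${^L\xi_c}:{^LG} \xrightarrow{\sim} {^LG_t}$, $g\boxtimes\sigma\mapsto gc(\sigma)\boxtimes\sigma$, which is the identity on $\hat G$. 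For a genuine $\pi$, the representation $\bar\pi := \pi \otimes \chi_c^{-1}$ descends to $G(F)$; letting $\bar\varphi$ be its parameter under the refined correspondence for $G$, declare $\varphi_\pi := {^L\xi_c}\circ\bar\varphi$. This defines the required map $\Pi(G_t) \to \Phi(G_t)$, surjective because the correspondence for $G$ is. The one delicate point is independence of the choice of $c$: replacing $\chi_c$ by $\chi_c\cdot\eta$ for a character $\eta$ of $G(F)$ replaces $c$ by $c\psi$, where $\psi$ is the parameter of $\eta$ (valued in $\hat D=Z(\hat G)\subset\hat T$, since $G_\tx{der}$ is simply connected), replaces $\bar\pi$ by $\bar\pi\otimes\eta^{-1}$ and hence $\bar\varphi$ by $\bar\varphi\psi^{-1}$ (the correspondence for $G$ being compatible with character twists), and one computes ${^L\xi_{c\psi}}\circ(\bar\varphi\psi^{-1})={^L\xi_c}\circ\bar\varphi$ because $c$ and $\psi$ both take values in the abelian group $\hat T$.

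It remains to transport the refined structure and to verify point 3. Since ${^L\xi_c}$ is the identity on $\hat G$, the centralizer $S_\varphi$ and its enhancement $S_\varphi^+$ are literally the same on both sides; a Whittaker datum $\mf{w}$ for $G$ gives a genuine Whittaker datum for $G(F)_t$ (the cover splits canonically over the unipotent radical of a Borel, which lies in $G_\tx{sc}$), and $\pi$ is $\mf{w}$-generic exactly when $\bar\pi=\pi\otimes\chi_c^{-1}$ is (twisting by $\chi_c^{-1}$, trivial on that unipotent radical, preserves genericity); so the injection $\Pi_\varphi(G_t)\hookrightarrow\tx{Irr}(\pi_0(S_\varphi^+))$ and its bijectivity for non-archimedean $F$ are inherited from $G$ via $\pi\leftrightarrow\bar\pi$. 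For point 3, given $c \in C^1(\Gamma,\hat T)_{t/t'}$ (or $c \in \tilde Z^1_u(W_F,\hat T)_{t/t'}$ for the Weil-form statement), fix a base cochain $c_0$ as above trivializing $G(F)_t$; then $c^{-1}c_0$ trivializes $G(F)_{t'}$, the genuine characters match under $\xi_{[c]}$ (namely $\xi_{[c]}^*\chi_{c^{-1}c_0}=\chi_{c_0}$), and the $L$-isomorphisms satisfy ${^L\xi_{c_0}}={^L\xi_c}\circ{^L\xi_{c^{-1}c_0}}$ by the splicing in Construction~\ref{cns:iso-t} and \eqref{eq:h-dual-iso}; feeding these into the definition of $\varphi_\pi$ gives the asserted compatibility of $\xi_{[c]}$ with ${^L\xi_c}$. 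The main obstacle is the middle step: identifying the genuine characters of $G(F)_t$ with exactly the cocycle data that also produces the $L$-isomorphism, and proving independence of that choice — this is where simple-connectedness of $G_\tx{der}$ (hence $H^2(W_F,Z(\hat G))=0$) and compatibility of the assumed correspondence with character twists are essential; everything else is the functoriality bookkeeping of \S\ref{sub:elementary} and a diagram chase.
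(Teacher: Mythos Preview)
Your approach is essentially the paper's: pass to a $z$-extension $G_1$ where $Z(\hat G_1)$ is connected, use the resulting vanishing of $H^2(W_F,Z(\hat G_1))$ to produce simultaneously a genuine character of the cover and an $L$-isomorphism to ${^LG_1}$, and pull the refined correspondence for $G_1$ through these, checking independence of the trivializing cochain via twist-compatibility. The paper organizes this slightly differently (it works with $G$ and $G_1$ in parallel rather than reducing first, and it treats $t\in Z^2(\Gamma,Z(\hat G))$ before extending to general $t$ via \S\ref{sub:elementary}), and it verifies two points you gloss over: that $\hat G$-conjugacy and $\hat G_1$-conjugacy coincide on parameters factoring through ${^LG_t}$, and that the outcome is independent of the choice of $z$-extension.
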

This correspondence also satisfies an appropriate formulation of endoscopic transfer, cf. Theorem \ref{thm:charid}.
\begin{proof}
In this proof we are going to use the Weil forms of the $L$-groups rather than their Galois forms, without changing notation. Since the Langlands parameters are insensitive to whether they are valued in the Galois form or Weil form of the $L$-group, this change in notation will not affect the statement. We will also assume that $t \in Z^2(\Gamma,Z(\hat G))$, deferring the case $t \in Z^2(\Gamma,\hat  T \to \hat T_\tx{ad})/B^1(\Gamma,\hat T_\tx{ad})$ to the very end of the proof, when it will follow formally. Thus, from now on until explicitly stated otherwise, we have $^LG_t = \hat G \boxtimes_t W_F$. 

Choose a $z$-extension $1 \to Z_1 \to G_1 \to G \to 1$. We have the exact sequence of dual groups $1 \to \hat G \to \hat G_1 \to \hat Z_1 \to 1$. We will denote again by $t$ the pull-back of $t$ under $\tilde\pi_1(G_1) \to \tilde\pi_1(G)$, i.e. the image of $t$ under $Z^2(\Gamma,Z(\hat G)) \to Z^2(\Gamma,Z(\hat G_1))$. We then have the cover $G_1(F)_{t,n}$ and the $L$-group $^LG_{1,t} = \hat G_1 \boxtimes_t \Gamma$.
Note that $g \boxtimes \sigma \mapsto i(g) \boxtimes \sigma$ is an $L$-embedding $^Li : {^LG}_t \to {^LG}_{1,t}$, where we have written $i$ for the inclusion $\hat G \to \hat G_1$. From now on we will often suppress $i$ from the notation.

The center $Z(\hat G_1)$ is connected. According to Proposition \ref{pro:cc_rajan} there exists $r_1 \in C^1(W_F,Z(\hat G_1))$ whose differential equals (the inflation to $W_F$ of) the $2$-cocycle $t$. Therefore the map 
\[ {^LG}_{1,t} \to {^LG}_1 = \hat G_1 \rtimes W_F,\qquad g \boxtimes \sigma \mapsto gr_1(\sigma) \rtimes\sigma \]
is an isomorphism. Composing with $^Li$ provides an $L$-embedding 
\begin{equation} \label{eq:l-emb-z1}
^LG_t \to {^LG_1}.
\end{equation}
This $L$-embedding induces a $\hat G$-equivariant injection between the sets of $L$-parameters valued in $^LG_t$ and $^LG_1$, respectively. We claim that two parameters in the image of this injection are $\hat G_1$-conjugate if and only if they are $\hat G$-conjugate. For this consider $\varphi : L_F \to {^LG_t}$ and $g_1 \in \hat G_1$ so that  $g_1\varphi(\sigma)g_1^{-1}$ is again valued in $^LG_t$. Writing $g_1=z_1g$ with $z_1 \in Z(\hat G_1)$ and $g \in \hat G$ we reduce to the case $g=1$. Then $z_1\varphi(\sigma)z_1^{-1}=z_1 \cdot \sigma(z_1)^{-1} \cdot \varphi(\sigma)$ and we see that $z_1 \cdot \sigma(z_1)^{-1}$ represents an element of 
\begin{eqnarray*}
&&\tx{ker}(H^1(W_F,Z(\hat G)) \to H^1(W_F,Z(\hat G_1)))\\
&=&\tx{cok}(H^0(W_F,Z(\hat G_1)) \to H^0(W_F,\hat Z_1)))\\
&=&\tx{cok}(H^0(\Gamma,Z(\hat G_1)) \to H^0(\Gamma,\hat Z_1)))\\
&=&\tx{ker}(H^1(\Gamma,Z(\hat G)) \to H^1(\Gamma,Z(\hat G_1))),
\end{eqnarray*}
but the latter is trivial due to \cite[Corollary 2.3]{Kot84} and the fact that $Z_1$ is induced. Therefore $z_1 \in Z(\hat G) \cdot Z(\hat G_1)^\Gamma$, and the claim has been proved.

We next describe which $L$-parameters valued in $^LG_1$ factor through \eqref{eq:l-emb-z1}. The composition of \eqref{eq:l-emb-z1} with $\hat G_1 \rtimes W_F \to (\hat G_1/\hat G) \rtimes W_F$ kills $\hat G \subset {^LG_t}$, therefore factors through the projection $^LG_t \to W_F$ and induces an $L$\-homomorphism $\varphi_{\lambda_1} : W_F \to (\hat G_1/\hat G) \rtimes W_F$. An $L$\-parameter valued in $^LG_1$ factors through \eqref{eq:l-emb-z1} if and only if its composition with the projection $\hat G_1 \rtimes W_F \to (\hat G_1/\hat G) \rtimes W_F$ equals $\varphi_{\lambda_1}$. It follows that any irreducible representation of %
$G_1(F)$ whose $L$-parameter factors through \eqref{eq:l-emb-z1} will transform under the central subgroup $Z_1(F)$ by the character ${\lambda_1}$ of $Z_1(F)$ whose parameter is $\varphi_{\lambda_1}$.

Let $T$ and $T_1$ denote the universal maximal tori of $G$ and $G_1$, respectively. Then $T_1(F)_{t,*}$ is the pull back of $T(F)_{t,*}$ along $T_1(F) \to T(F)$ according to Corollary \ref{cor:llc-s}(5). Thus we have the commutative diagram with exact rows
\[\xymatrix{
1\ar[r]&Z_1(F)\ar@{=}[d]\ar[r]&T_1(F)_{t,*}\ar[d]\ar[r]&T(F)_{t,*}\ar[r]\ar[d]&1\\	
1\ar[r]&Z_1(F)\ar[r]&T_1(F)\ar[r]&T(F)\ar[r]&1
}\]
The 1-cochain $\sigma \mapsto r_1(\sigma)^{-1}$ is the parameter of a genuine character ${\mu_1}$ of $T_1(F)_{t,*}$ via Corollary \ref{cor:llc-s}(3). 

We claim that the restriction of ${\mu_1}$ to $Z_1(F)$ equals $\lambda_1^{-1}$, while the restriction of ${\mu_1}$ via the splitting $T_\tx{sc}(F) \to T_1(F)_{t,*}^{G_1}$ of Construction \ref{cns:h-split} is trivial. Both of these claims follow from Corollary \ref{cor:dc_pull}. For the restriction of ${\mu_1}$ to $T_\tx{sc}(F)$ we use the exact sequence $1 \to T_\tx{sc} \to T_1 \to T_1/T_\tx{sc} \to 1$ and see that this restriction has parameter given by the image of $r_1^{-1}$ in $C^1(W_F,\hat T_1/Z(\hat G_1))$, which is trivial. For the restriction of ${\mu_1}$ to $Z_1(F)$ we use the exact sequence $1 \to Z_1 \to T_1 \to T \to 1$ and see that the restriction of ${\mu_1}$ to $Z_1(F)$ has parameter equal to the image of $r_1(\sigma)^{-1}$ in $\hat Z_1=\hat G_1/\hat G$. On the other hand, the parameter of ${\lambda_1}$ is obtained by composing \eqref{eq:l-emb-z1} with the projection ${^LG_1} \to (\hat G_1/\hat G) \rtimes W_F$ and then factoring through the projection ${^LG_t} \to W_F$. The resulting homomorphism $W_F \to (Z(\hat G_1)/Z(\hat G))\rtimes W_F$ is given by $\sigma \mapsto r_1(\sigma) \rtimes \sigma$. The claim about the restrictions of ${\mu_1}$ is now proved.

Since the restriction of $\mu_1$ to $T_\tx{sc}(F)$ is trivial, $\mu_1$ descends to a genuine character of $\tx{cok}(T_\tx{sc}(F) \to T_1(F)_{t,*})$. According to Fact \ref{fct:gtn} we have the natural isomorphism $\tx{cok}(T_\tx{sc}(F) \to T_1(F)_{t,*})=\tx{cok}(G_\tx{sc}(F) \to G_1(F)_{t,*})$. This allows us to inflate $\mu_1$ to a genuine character of $G_1(F)_{t,*}$, which we again denote by $\mu_1$. As shown above, its restriction to $Z_1(F)$ equals $\lambda_1^{-1}$.

Consider now an $L$-parameter $\varphi : L_F \to {^LG_t}$ and let $\varphi_1 : L_F \to {^LG_1}$ be its composition with \eqref{eq:l-emb-z1}. Let $\Pi_{\varphi_1}$ denote the $L$-packet of representations of 
$G_1(F)$ associated to $\varphi_1$. 

Given a representation 
$\pi_1 \in \Pi_{\varphi_1}$ we inflate $\pi_1$ to a non-genuine representation of $G_1(F)_{t,*}$ and consider $\pi_1\otimes\mu_1$. From the above commutative diagram we have the exact sequence
\[ 1 \to Z_1(F) \to G_1(F)_{t,*} \to G(F)_{t,*} \to 1. \]
The restriction of $\mu_1$ to $Z_1(F)$ being $\lambda_1^{-1}$, we see that $\pi_1\otimes\mu_1$ factors through a genuine representation of $G(F)_{t,*}$. We 
define the $L$-packet $\Pi_\varphi$ associated to $\varphi$ as 
\[ \Pi_\varphi = \{ \pi_1\otimes\mu_1\,|\, \pi_1 \in \Pi_{\varphi_1}\}. \]
If a Whittaker datum for $G$ has been fixed, it induces a Whittaker datum for $G_1$, hence an injection of $\Pi_{\varphi_1}$ into $\tx{Irr}(\pi_0(S_{\varphi_1}/Z(\hat G_1)^\Gamma))$. Combining this with the identity $S_{\varphi_1}/Z(\hat G_1)^\Gamma = S_\varphi/Z(\hat G)^\Gamma$ 
we obtain the desired injection of $\Pi_\varphi$ into $\tx{Irr}(\pi_0(S_{\varphi}/Z(\hat G)^\Gamma))$.

The construction of $\Pi_\varphi$ and its injection into $\tx{Irr}(\pi_0(S_\varphi/Z(\hat G)^\Gamma))$ involved choosing the $z$-extension $G_1$ and the 1-cochain $r_1$. We want to argue that the outcome of the construction is independent of these choices. Keeping $G_1$
fixed, we can replace $r_1$ only by $r_2 := z_1r_1$ for some $z_1 \in Z^1(W_F,Z(\hat G_1))$. Let $\chi_1 : G_1(F) \to \C^\times$ be the character associated to $z_1$. From $r_1$ and $r_2$ we obtain two versions $s_1$ and $s_2$ of \eqref{eq:l-emb-z1}, hence two different $L$-embeddings $^LG_t \to {^LG_1}$. Let $\varphi_1,\varphi_2$ be the corresponding parameters for $G_1$. Then $\varphi_2 = z_1 \cdot \varphi_1$, hence $\Pi_{\varphi_2} = \chi_1 \otimes \Pi_{\varphi_1}$. At the same time the two versions $\mu_1,\mu_2$ of the genuine character of $G_1(F)_{t,*}$ are related by $\mu_2 = \chi_1^{-1}\mu_1$. Therefore $\Pi_\varphi$ remains unchanged. Since the bijection $\pi_2 \mapsto \pi_1 = \chi_1 \otimes \pi_2$ is compatible with the identification $S_{\varphi_1}=S_{\varphi_2}$, the bijection between $\Pi_\varphi$ and $\tx{Irr}(\pi_0(S_\varphi/Z(\hat G)^\Gamma))$ is also unaffected by passing from $r_1$ to $r_2$.

To see that the choice of $G_1$ is also irrelevant, we use the fact that given a second $z$-extension $G_2$ there exists a third $z$-extension $G_3$ that is simultaneously a $z$-extension of $G_1$ and $G_2$. It is enough to check that using $G_3$ gives the same result as using $G_1$. Let $Z_3 = \tx{ker}(G_3 \to G)$. We have the exact sequence $1 \to Z_3' \to Z_3 \to Z_1 \to 1$ where $Z_3'=\tx{ker}(G_3 \to G_1)$ is also an induced torus. After choosing $r_1 \in C^1(W_F,Z(\hat G_1))$ we can take $r_3 \in C^1(W_F,Z(\hat G_3))$ to be equal to $r_1$. This makes the character $\lambda_3$ of $Z_3(F)$ equal the inflation of the character $\lambda_1$ of $Z_1(F)$ under the surjective homomorphism $Z_3(F) \to Z_1(F)$, and the genuine character $\mu_3$ of $G_3(F)_{t,*}$ is the inflation of the genuine character $\mu_1$ of $G_1(F)_{t,*}$. The $L$-embedding $^LG_t \to {^LG_3}$ is the composition of the $L$-embedding $^LG_t \to {^LG_1}$ with the $L$-embedding $^LG_1 \to {^LG_3}$ that is given tautologically by $g_1 \rtimes \sigma \mapsto g_1 \rtimes \sigma$, i.e. by the inclusion $\hat G_1 \to \hat G_3$ dual to the projection $G_3 \to G_1$. Suppressing this tautological $L$-embedding from the notation we have $\varphi_1=\varphi_3$ and hence $\Pi_{\varphi_1}=\Pi_{\varphi_3}$, where we identify representations of $G_3(F)$ trivial on $Z_3'(F)$ with representations of $G_1(F)$. This identification is compatible with the identification $S_{\varphi_1}/Z(\hat G_1)^\Gamma = S_{\varphi_3}/Z(\hat G_3)^\Gamma$. 

The proof of (1) and (2) under the assumption $t \in Z^2(\Gamma,Z(\hat G))$ is now complete. We proceed to prove (3) under the same assumption. For this, we trace how the construction given so far changes when we pass from $t$ to $t'$ via $\xi_{c}$. We made the choice of $r_1 \in C^1(W_F,Z(\hat G_1))$ with $\partial r_1=t$. Setting $r_1'=c \cdot r_1$ we obtain $\partial r_1'=t'$. This leads to the commutative diagram
\[ \xymatrix{
\hat G \boxtimes_t\Gamma\ar[r]^{\cdot r_1}&\hat G_1 \rtimes \Gamma\ar@{=}[d]\\
\hat G \boxtimes_{t'}\Gamma\ar[r]^-{\cdot cr_1}\ar[u]^-{\cdot c}&\hat G_1 \rtimes \Gamma\\
}\]
Therefore, if $\varphi : L_F \to {^LG_t}$ and $\varphi' : L_F \to {^LG_{t'}}$ correspond under the left vertical map, they lead to the same $\varphi_1 : L_F \to {^LG_1}$, hence to the same packet $\Pi_{\varphi_1}$. Then we have $\Pi_\varphi = \Pi_{\varphi_1}\otimes\mu_1$ and $\Pi_{\varphi'} = \Pi_{\varphi_1} \otimes\mu_1'$, where $\mu_1$ and $\mu_1'$ are the genuine characters of $G_1(F)_{t,\infty}$ and $G_1(F)_{t',\infty}$ with parameters $r_1^{-1}$ and $c^{-1}r_1^{-1}$, respectively. The isomorphism $\xi_c : G(F)_{t,\infty} \to G(F)_{t',\infty}$ is compatible with the analogous isomorphism $\xi_c : G_1(F)_{t,\infty} \to G_1(F)_{t',\infty}$ constructed the same way, and the latter identifies the genuine characters $\mu_1$ and $\mu_1'$. The proof of (3) is thus complete.

Finally, we drop the assumption $t \in Z^2(\Gamma,Z(\hat G))$. As discussed in Remark \ref{rem:n-m}, the set $\Pi(G_{t,*})$ is independent of $*$, so we can write $\Pi(G_t)$ for it. We have the two functors $\mc{Z}^2(\Gamma,Z(\hat G)) \to \tx{Sets}$, one given by $t \mapsto \Pi(G_t)$ and sending $c \in C^1(\Gamma,\hat T)_{t_1/t_2}$ to the map $\Pi(G_{t_2}) \to \Pi(G_{t_1})$ that is pull-back by $\xi_{[c]}$, and the other given by $t \mapsto \{ (\varphi,\rho)\,|\, \varphi \in \Phi(G_t),\rho \in \tx{Irr}(S_\varphi/Z(\hat G)^\Gamma)\}$ and sending $c$ to the map obtained by composition with the isomorphism $^L\xi_c$ of \eqref{eq:h-dual-iso}. The content of (1), (2), and (3) that have been proved under the assumption $t \in Z^2(\Gamma,Z(\hat G))$ is the construction of a natural transformation between these two functors. The argument of \S\ref{sub:elementary} extends this natural transformation to $\mc{Z}^2(\Gamma,\hat T \to \hat T_\tx{ad})$. This completes the proof of (1), (2), and (3), in the case of $t \in Z^2(\Gamma,\hat T \to \hat T_\tx{ad})$.
\end{proof}

\subsection{Maximal tori}

In this subsection we will show how the extension $G(F)_\infty$, the various covers $G(F)_{t,*}$, and their $L$-groups $^LG_t$, interact with maximal tori of $G$.

\begin{cns} \label{cns:maxtori}
Let $S \subset G$ be a maximal torus. We denote by $S(F)_\infty^G$ the pushout of $S(F)_\infty$ along $\tilde\pi_1(S) \to \tilde\pi_1(G)$. We will now construct a lift $S(F)_\infty^G \to G(F)_\infty$ of the inclusion $S(F) \to G(F)$, where $G(F)_\infty$ denotes the extension \eqref{eq:uni}.

We first assume that $G$ has a simply connected derived subgroup and write $D=G/G_\tx{der}$. According to Remark \ref{rem:gsc_abelian}, it is enough to construct a pair of homomorphisms $S(F)_\infty^G \to D(F)_\infty$ and $S(F)_\infty^G \to G(F)$ that equalize towards $D(F)$. The homomorphism $S(F)_\infty^G \to G(F)$ is the composition of $S(F)_\infty^G \to S(F)$ and the inclusion $S(F) \to G(F)$. The functoriality of pseudo-universal covers of tori (Proposition \ref{pro:func-s}) provides a homomorphism $S(F)_\infty \to D(F)_\infty$ that factors through $S(F)_\infty^G$. This completes the construction.

Now drop the assumption that $G$ has simply connected derived subgroup. Let $1 \to Z_1 \to G_1 \to G \to 1$ be a $z$-extension. Let $S_1$ be the pull-back of $S \to G \from G_1$, a maximal torus of $G_1$. The previous paragraph provides a homomorphism $S_1(F)_\infty^{G_1} \to G_1(F)_\infty$. We will now argue that this homomorphism descends to a homomorphism $S(F)_\infty^G \to G(F)_\infty$.

Proposition \ref{pro:func-s} provides a homomorphism $S_1(F)_\infty \to S(F)_\infty$ which identifies the push-out $S_1(F)_\infty^G$ of $S_1(F)_\infty$ along $\tilde\pi_1(S_1) \to \tilde\pi_1(G)$ with the pull-back of $S_1(F) \to S(F) \from S(F)_\infty^G$. This identification provides a homomorphism $Z_1(F) \to S_1(F)_\infty^G$ whose cokernel is the homomorphism $S_1(F)_\infty^G \to S(F)_\infty^G$.

The same argument, applied to the universal maximal torus $T_1$ of $G_1$, provides a homomorphism $Z_1(F) \to T_1(F)_\infty^G$ whose cokernel is the homomorphism $T_1(F)_\infty^G \to T(F)_\infty^G$. Glancing at Construction \ref{cns:hdc} we see that the push-out $G_1(F)_\infty^G$ of $\tilde\pi_1(G) \from \tilde\pi_1(G_1) \to G_1(F)_\infty$ is identified with $\tx{cok}(T_\tx{sc}(F) \to G_\tx{sc}(F) \rtimes T_1(F)_\infty^G)$, and therefore we obtain a homomorphism $Z_1(F) \to G_1(F)_\infty^G$ whose cokernel is the homomorphism $G_1(F)_\infty^G \to G(F)_\infty$.

The homomorphism $S_1(F)_\infty^{G_1} \to G_1(F)_\infty$ provides, via push-out along the surjection $\tilde\pi_1(G_1) \to \tilde\pi_1(G)$, a homomorphism $S_1(F)_\infty^G \to G_1(F)_\infty^G$ that respects the homomorphisms of $Z_1(F)$ into both, and hence descends to the desired homomorphism $S(F)_\infty^G \to G(F)_\infty$. The independence of that homomorphism from the choice of $z$-extension can be verified in a routine manner using \cite[Lemma 2.4.4]{Kot84}, and is left to the reader.
\end{cns}

\begin{rem}%
For each $t : \tilde\pi_1(G) \to \mb{S}^1$, the homomorphism of Construction \ref{cns:maxtori} induces a homomorphism $S(F)_{t,*} \to G(F)_{t,*}$. For computations it may be useful to note that we can run this construction from the beginning with $G(F)_{t,*}$ in place of $G(F)_\infty$. This has the benefit of allowing us to apply the easy construction, that avoids $z$-extensions, in situations where the derived subgroup of $G$ may not be simply connected. Indeed, according to Proposition \ref{pro:gsc_abelian1} it is enough to assume that the class of $t$ in $H^2(W_F,Z(\hat G))$ is trivial.
\end{rem}

\begin{cns} \label{cns:s-t}
We will now upgrade our construction $S(F)_t \to G(F)_t$ from the case of character $t$ of $\tilde\pi_1(G)$ to the case of characters $t$ of $\bar\pi_1(G)$. This will require making sense of $S(F)_t$ in the first place.

This is a simple application of the arguments of \S\ref{sub:elementary}. The assignment $t \mapsto S(F)_t$ is a functor from $\mc{Z}^2(\Gamma,Z(\hat G))$ to the category of topological groups. Therefore, the argument of \S\ref{sub:elementary} gives an extension of this functor to $\mc{Z}^2(\Gamma,\hat T \to \hat T_\tx{ad})$. In addition, the map $S(F)_t \to G(F)_t$ is a natural transformation between functors on $\mc{Z}^2(\Gamma,Z(\hat G))$, and the same arguments extend it to a natural transformation between functors on $\mc{Z}^2(\Gamma,\hat T \to \hat T_\tx{ad})$.

The same arguments also make sense of $S(F)_{t,\infty}$ and construct $S(F)_{t,\infty} \to G(F)_{t,\infty}$ when $t$ is a finite-order character of $\bar\pi_1(G)$; one just replaced $\mc{Z}^2(\Gamma,\hat T \to \hat T_\tx{ad})$ with $\mc{Z}^2_\infty(\Gamma,\hat T \to \hat T_\tx{ad})$.
\end{cns} 

\begin{fct} \label{fct:maxtori}
Let $S \subset G$ be a maximal torus. 
\begin{enumerate}
	\item The homomorphism $S(F)_\infty^G \to G(F)_\infty$ of Construction \ref{cns:maxtori} identifies $S(F)_\infty^G$ with the pull-back of $S(F) \to G(F) \from G(F)_\infty$.
	\item For any $t \in Z^2(\Gamma,Z(\hat G))$, the homomorphism $S(F)_{t,*} \to G(F)_{t,*}$ identifies $S(F)_{t,*}$ with the pull-back of $S(F) \to G(F) \from G(F)_{t,*}$.
	\item For any $t \in Z^2(\Gamma,\hat T \to \hat T_\tx{ad})$, the homomorphism $S(F)_{t,\infty} \to G(F)_{t,\infty}$ identifies $S(F)_{t,\infty}$ with the pull-back of $S(F) \to G(F) \from G(F)_{t,\infty}$.
\end{enumerate}
\end{fct}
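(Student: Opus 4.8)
The plan is to prove (1) directly by unwinding Construction \ref{cns:maxtori}, then obtain (2) from (1) by pushing out along characters of $\tilde\pi_1(G)$, and finally obtain (3) from (2) via the categorical formalism of \S\ref{sub:elementary}.

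For (1), write $\iota\colon S(F)_\infty^G \to G(F)_\infty$ for the homomorphism of Construction \ref{cns:maxtori}. Both $S(F)_\infty^G$ (the pushout of $S(F)_\infty$ along $\tilde\pi_1(S)\to\tilde\pi_1(G)$) and $S(F)\times_{G(F)}G(F)_\infty$ are central extensions of $S(F)$ by $\tilde\pi_1(G)$, and since $\iota$ lifts $S(F)\to G(F)$ the fiber-product property produces a morphism $S(F)_\infty^G \to S(F)\times_{G(F)}G(F)_\infty$ of such extensions which is the identity on the base $S(F)$. Such a morphism is automatically an isomorphism once we check that it is also the identity on the kernel $\tilde\pi_1(G)$; equivalently, that $\iota$ restricts to the identity on $\tilde\pi_1(G)$. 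I would verify this by following Construction \ref{cns:maxtori}. When $G_\tx{der}$ is simply connected, $G(F)_\infty = G(F)\times_{D(F)}D(F)_\infty$ by Remark \ref{rem:gsc_abelian}, and $\iota$ is built from $S(F)_\infty^G\to S(F)\to G(F)$ and the map $S(F)_\infty^G\to D(F)_\infty$ coming from Proposition \ref{pro:func-s}; the latter is the identity on $\tilde\pi_1(G)=\tilde\pi_1(D)$ by construction and $G(F)_\infty\to D(F)_\infty$ is injective on kernels, so $\iota|_{\tilde\pi_1(G)}$ is the identity. In general Construction \ref{cns:maxtori} obtains $\iota$ from this case for a $z$-extension $G_1\to G$ by pushing out along $\tilde\pi_1(G_1)\to\tilde\pi_1(G)$ and then passing to $Z_1(F)$-quotients; here I would use that $S_1\hookrightarrow G_1$ is an inclusion of a maximal torus (so the relevant fiber products are just preimages of $S_1(F)$), that $Z_1\subset S_1$, and that $H^1(F,Z_1)=0$, to see that taking $Z_1(F)$-quotients turns the pull-back square for $S_1(F)_\infty^{G_1}$ into the pull-back square for $S(F)_\infty^G$, and in particular keeps $\iota$ the identity on $\tilde\pi_1(G)$.

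For (2), I would first record the elementary observation that pushing out a pull-back square of central extensions along a homomorphism of the common kernel group again yields a pull-back square: the induced comparison map is still the identity on the new kernel and on the base, hence an isomorphism. Now for $t\in Z^2(\Gamma,Z(\hat G))$ the cover $S(F)_{t,*}$ is the pushout of $S(F)_\infty^G$ along $t\colon\tilde\pi_1(G)\to\mu_n(\C)$, $\mb{S}^1$, resp.\ $\mu_\infty(\C)$ (a pushout of a pushout), and $G(F)_{t,*}$ is the pushout of $G(F)_\infty$ along the same $t$; applying the observation to the square from (1) gives (2) uniformly in $*$.

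For (3), I would invoke \S\ref{sub:elementary}. The assignments $t\mapsto S(F)_{t,\infty}$ and $t\mapsto S(F)\times_{G(F)}G(F)_{t,\infty}$, together with the $S(F)$-compatible comparison maps, form a pair of functors and a natural transformation on $\mc{Z}^2_\infty(\Gamma,\hat T\to\hat T_\tx{ad})$; by Construction \ref{cns:s-t}, and since fiber product over $S(F)$ commutes with the limits used in the extension procedure of \S\ref{sub:elementary}, all three are the canonical extensions of their restrictions to the full essentially wide subcategory $\mc{Z}^2_\infty(\Gamma,Z(\hat G))$ (Lemma \ref{lem:fullwide2}). On that subcategory the comparison transformation is an isomorphism by the $*=\infty$ case of (2), so by the uniqueness clause of \S\ref{sub:elementary} it is an isomorphism on all of $\mc{Z}^2_\infty(\Gamma,\hat T\to\hat T_\tx{ad})$, which is (3). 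The main obstacle I anticipate is the bookkeeping in the general case of (1): Construction \ref{cns:maxtori} threads through a $z$-extension, several pushouts, and a descent along $Z_1(F)$, and one must check at each step that both the pull-back property and the triviality of $\iota$ on the kernel are preserved.
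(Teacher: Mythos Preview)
The paper records this as a Fact without supplying a proof; your argument correctly fills in the details and is exactly the route the constructions suggest. Verifying (1) by checking that $\iota$ is the identity on $\tilde\pi_1(G)$ (first in the simply-connected-derived case via Remark \ref{rem:gsc_abelian} and functoriality, then descending through the $z$-extension of Construction \ref{cns:maxtori}), pushing out along $t$ to obtain (2), and then invoking the formalism of \S\ref{sub:elementary} for (3) is the natural unwinding; your one noted subtlety---that the $Z_1(F)$-quotient step preserves the pull-back square because $Z_1\subset S_1$ and $H^1(F,Z_1)=0$---is real but handled as you indicate, and in fact the cleaner observation you also make (that quotienting by $Z_1(F)$ does not disturb the kernel $\tilde\pi_1(G)$, so $\iota|_{\tilde\pi_1(G)}=\tx{id}$ persists) already suffices.
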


We now turn to the dual side. Let $S$ be an $F$-torus equipped with a $\Gamma$-stable $G(\bar F)$-conjugacy class of embeddings $S \to G$ whose images are maximal tori. As discussed in \cite[\S5.1]{KalRSP} this provides further structures, such as a $\Gamma$-invariant subset $R(S,G) \subset X^*(S)$ and a $\Gamma$-stable $\hat G$-conjugacy class of embeddings $\hat S \to \hat G$. In particular, one has a natural $\Gamma$-equivariant embedding $Z(\hat G) \to \hat S$.

\begin{cns} \label{cns:l-s-t}
Let $t \in Z^2(\Gamma,\hat T \to \hat T_\tx{ad})$ and $z' \in Z^2(\Gamma,\hat S)$. We construct an $L$-group $^LS_{z't}$ using the arguments of \S\ref{sub:elementary} as follows. When $t \in Z^2(\Gamma,Z(\hat G))$, we use the natural inclusion $Z(\hat G) \to \hat S$ to form the product $z' \cdot t$ and define $^LS_{z't}=\hat S\boxtimes_{z't}\Gamma$. This is a functor on $\mc{Z}^2(\Gamma,Z(\hat G))$, which acts on morphisms via \eqref{eq:iso-l-s}. The arguments of \S\ref{sub:elementary} extend this functor to $\mc{Z}^2(\Gamma,\hat T \to \hat T_\tx{ad})$. 

Explicitly, write $t=(z,c)$. For every $x \in C^1(\Gamma,\hat T)$ with $\bar x=c$ we have $z\partial x^{-1} \in Z^2(\Gamma,Z(\hat G))$ and can form $\hat S \boxtimes_{zz'\partial x^{-1}}\Gamma$. Given another $x'$ we have the isomorphism
\[ \hat S \boxtimes_{z'z\partial x^{-1}}\Gamma \to \hat S \boxtimes_{z'z\partial x'^{-1}}\Gamma,\qquad s \boxtimes\sigma \mapsto sx'(\sigma)x^{-1}(\sigma)\boxtimes\sigma.\]
Define $^LS_{tz'}$ as the limit of this system.
\end{cns}

\begin{pro}
There is a natural correspondence between genuine characters of $S(F)_{z't}$ and $\hat S$-conjugacy classes of $L$-parameters valued in $^LS_{z't}$.
\end{pro}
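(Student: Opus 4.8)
The plan is to reduce to the case $t \in Z^2(\Gamma, Z(\hat G))$, where the statement is already in hand, and then propagate it to arbitrary $t$ by the formal device of \S\ref{sub:elementary}. So suppose first that $t \in Z^2(\Gamma, Z(\hat G))$. Then $z't \in Z^2(\Gamma, \hat S)$ via the canonical inclusion $Z(\hat G) \to \hat S$, the cover $S(F)_{z't}$ is the one attached to the character $z't : \tilde\pi_1(S) \to \mb{S}^1$ as in \S\ref{sub:covers_tori}, and $^LS_{z't} = \hat S \boxtimes_{z't} \Gamma$ in the sense of \eqref{eq:s-l}. In this situation Corollary \ref{cor:llc-s} already supplies the desired correspondence: parts (3) and (4) of that corollary (applied with the cocycle $z't$) identify the set of genuine characters of $S(F)_{z't}$ with $\tilde H^1(W_F, \hat S)_{z't} = \{[c] \in \tilde H^1(W_F, \hat S)\,|\, (-\partial)c = z't\}$ and then with the set of $\hat S$-conjugacy classes of $L$-homomorphisms $W_F \to {^LS_{z't}}$ (reading the target via inflation $W_F \to \Gamma$, i.e.\ in its Weil form). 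The same reasoning with parts (1), (2) gives the analogous statement for the finite-order covers $S(F)_{z't,n}$ and $S(F)_{z't,\infty}$.

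Next I would record this as a natural isomorphism of functors on the category $\mc{Z}^2(\Gamma, Z(\hat G))$, in the style of the proof of Theorem \ref{thm:llc-h}. On one side is the functor $t \mapsto \{\tx{genuine characters of } S(F)_{z't}\}$, sending a morphism $c \in C^1(\Gamma, Z(\hat G))_{t_1/t_2}$ to pull-back along the isomorphism $\xi_{[c]} : S(F)_{z't_1} \to S(F)_{z't_2}$ of Construction \ref{cns:iso-t} (built from $c$ viewed inside $C^1(\Gamma, \hat S)$ via $Z(\hat G) \to \hat S$). On the other side is the functor $t \mapsto \{\hat S\tx{-conjugacy classes of }L\tx{-parameters valued in }{^LS_{z't}}\}$, sending $c$ to composition with the $L$-isomorphism \eqref{eq:iso-l-s}. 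That the base-case bijection intertwines these two actions is precisely the content of the last part of Construction \ref{cns:iso-t}: $\xi_{[c]}$ is defined, via Pontryagin duality, so as to implement multiplication by $[c]$ on parameters of genuine characters, which is exactly what \eqref{eq:iso-l-s} does on $L$-homomorphisms, and replacing $c$ by another representative of $[c]$ only conjugates \eqref{eq:iso-l-s} by an element of $\hat S$, hence leaves the map on conjugacy classes unchanged. Thus we obtain a natural transformation between the two functors that is pointwise a bijection.

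Finally I would invoke the machinery of \S\ref{sub:elementary}. By Lemma \ref{lem:fullwide1} the category $\mc{Z}^2(\Gamma, Z(\hat G))$ is a full, essentially wide subcategory of $\mc{Z}^2(\Gamma, \hat T \to \hat T_\tx{ad})$, so both functors above and the natural transformation between them extend uniquely to $\mc{Z}^2(\Gamma, \hat T \to \hat T_\tx{ad})$. These extensions are, by construction, precisely $t \mapsto \{\tx{genuine characters of }S(F)_{z't}\}$ and $t \mapsto \{\hat S\tx{-conjugacy classes of }L\tx{-parameters valued in }{^LS_{z't}}\}$ for arbitrary $t$ (the latter being $^LS_{z't}$ of Construction \ref{cns:l-s-t} and the former its cover analogue), so the extended natural transformation is the asserted correspondence. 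It remains pointwise a bijection because the value of each functor at $t$ is a limit over the comma category of $\mc{Z}^2(\Gamma, Z(\hat G))$ over $t$, which by essential wideness is non-empty and represented by the value at any of its objects, an element of $Z^2(\Gamma, Z(\hat G))$, where the base case applies; for the finite-order covers one replaces $\mc{Z}^2$ by $\mc{Z}^2_\infty$ throughout and uses Lemma \ref{lem:fullwide2}. I do not expect a real obstacle here. The one step that repays careful checking is the naturality assertion of the second paragraph — that Construction \ref{cns:iso-t} genuinely translates $\xi_{[c]}$ into \eqref{eq:iso-l-s} at the level of genuine characters and their parameters — since the uniqueness-of-extension argument of \S\ref{sub:elementary} relies on it being literally true for all morphisms of $\mc{Z}^2(\Gamma, Z(\hat G))$.
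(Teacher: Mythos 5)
Your proposal is correct and follows the same route as the paper's own (one-sentence) proof: reduce to the case $t \in Z^2(\Gamma, Z(\hat G))$ via the formal machinery of \S\ref{sub:elementary} and then invoke Corollary \ref{cor:llc-s}. You merely spell out the naturality check — that Construction \ref{cns:iso-t} and \eqref{eq:iso-l-s} implement the same morphism on the two sides — which the paper leaves implicit.
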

\begin{proof}
According to the arguments of \S\ref{sub:elementary} it is enough to treat the case $t \in Z^2(\Gamma,Z(\hat G))$, which is however just Corollary \ref{cor:llc-s}(3).
\end{proof}

\begin{fct} \label{fct:l-s-t}
Assume given $z' \in Z^2(\Gamma,\hat S)$ and $t' \in Z^2(\Gamma,\hat T \to \hat T_\tx{ad})$, and an $L$-embedding $^LS_{z't'} \to {^LG}_{t'}$. Then, for any $t \in Z^2(\Gamma,\hat T \to \hat T_\tx{ad})$ the same map induces an $L$-embedding $^LS_{z't't} \to {^LG}_{t't}$.
\end{fct}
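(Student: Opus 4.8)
The plan is to unwind the limit presentations of the $L$-groups from \eqref{eq:h-l2} and Construction \ref{cns:l-s-t} and to observe that ``the same map'' is literally the same formula, which continues to make sense and to be an $L$-homomorphism after twisting by $t$ precisely because every cochain entering the construction is valued in $Z(\hat G)$, hence central in $\hat G$.

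First I would fix notation: write $t'=(z_0,c_0)$ and $t=(z_1,c_1)$, so that $t't=(z_0z_1,c_0c_1)$, and recall that
\[ {}^LG_{t't}=\varprojlim_y \hat G\boxtimes_{(z_0z_1)\partial y^{-1}}\Gamma,\qquad {}^LS_{z't't}=\varprojlim_y \hat S\boxtimes_{z'(z_0z_1)\partial y^{-1}}\Gamma, \]
both limits taken over $\{y\in C^1(\Gamma,\hat T)\,|\,\bar y=c_0c_1\}$, with transition maps twisting by the $Z(\hat G)$-valued $1$-cochains $y^{-1}y'$ (for ${}^LS$ via the inclusion $Z(\hat G)\to\hat S$); here $(z_0z_1)\partial y^{-1}\in Z^2(\Gamma,Z(\hat G))$ for each such $y$, since its image in $Z^2(\Gamma,\hat T_\tx{ad})$ is $\partial(c_0c_1)\cdot\partial(c_0c_1)^{-1}=1$. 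The same presentations with $c_0c_1$ replaced by $c_0$ and indexing set $\{x\,|\,\bar x=c_0\}$ describe ${}^LG_{t'}$ and ${}^LS_{z't'}$.

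Next I would put the given $L$-embedding into normal form. Up to $\hat G$-conjugacy (harmless for $L$-embeddings) it restricts on identity components to a $\Gamma$-equivariant embedding $\hat S\to\hat G$, which I fix. Since the indexing set of the limit is connected as a category and all transition maps twist by $Z(\hat G)$-valued cochains, the $L$-embedding ${}^LS_{z't'}\to{}^LG_{t'}$ is then encoded by a single map of the form $s\boxtimes\sigma\mapsto s\,n(\sigma)\boxtimes\sigma$ between the terms $\hat S\boxtimes_{z'a}\Gamma$ and $\hat G\boxtimes_{a}\Gamma$, one and the same for every $a=z_0\partial x^{-1}$; writing out the homomorphism condition, and using that the fixed image of $\hat S$ equals its own centralizer in $\hat G$ and that $a$ is central, one gets $n\in C^1(\Gamma,\hat S)$ with $\partial n=z'$, with $n$ forced to be independent of $x$ by compatibility with the (central) transition maps. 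Now for each $y$ with $\bar y=c_0c_1$ I would define $\eta''_y:\hat S\boxtimes_{z'(z_0z_1)\partial y^{-1}}\Gamma\to\hat G\boxtimes_{(z_0z_1)\partial y^{-1}}\Gamma$ by the identical formula $s\boxtimes\sigma\mapsto s\,n(\sigma)\boxtimes\sigma$, and check: (i) each $\eta''_y$ is an injective $L$-homomorphism --- the verification that $s\boxtimes\sigma\mapsto sn(\sigma)\boxtimes\sigma$ respects multiplication reduces, exactly as in the case above, to the single identity $\partial n=z'$, the central cocycle $(z_0z_1)\partial y^{-1}$ playing no role because it cancels; (ii) the $\eta''_y$ are compatible with the transition maps of the two limits, since those twist by the $Z(\hat G)$-valued $1$-cochains $y^{-1}y'$, which commute past $n(\sigma)$. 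Splicing the $\eta''_y$ gives the desired $L$-embedding ${}^LS_{z't't}\to{}^LG_{t't}$; it is ``the same map'' in the precise sense that, after choosing lifts $x,x_1$ of $c_0,c_1$ and projecting both limits onto the terms indexed by $x$ and $y=xx_1$, the maps $\eta_x$ and $\eta''_{xx_1}$ are given by one and the same formula $s\boxtimes\sigma\mapsto sn(\sigma)\boxtimes\sigma$.

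There is no real obstacle here --- the whole content is the bookkeeping that every cochain occurring in the limit presentations and in the $L$-embedding is valued in $Z(\hat G)$, which is exactly what lets the single cochain $n$ be reused verbatim after translating $t'$ to $t't$. One could alternatively package the argument via the formalism of \S\ref{sub:elementary}, since both $t\mapsto{}^LG_{t't}$ and $t\mapsto{}^LS_{z't't}$ are instances of that functorial construction (translation by $t'$ being an automorphism of the relevant diagram categories) and the map above is the extension of the given $L$-embedding to a natural transformation between them. The one step deserving a moment's care is establishing the normal form --- that the given $L$-embedding really is encoded by a single $n\in C^1(\Gamma,\hat S)$ with $\partial n=z'$; once that is in hand, everything else transcribes the computations already done for ${}^LS_{z't'}\to{}^LG_{t'}$.
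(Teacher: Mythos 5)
Your essential mechanism is right: the point that the central $2$\-cocycle $(z_0z_1)\partial y^{-1}$ cancels from both sides of the multiplication law, so the homomorphism condition on ``the same map'' is insensitive to it, and that the $Z(\hat G)$\-valued transition cochains commute past $n$, is exactly what makes the statement true, and it is what the paper's one-line proof is alluding to when it says the case $t,t' \in Z^2(\Gamma,Z(\hat G))$ is evident and then defers the general case to the formalism of \S\ref{sub:elementary}. You spell out the limit compatibility by hand and note the abstract packaging as an alternative, so the route is really the same, just more explicit.

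However, the ``normal form'' step is wrong as stated. You cannot in general conjugate the given $L$-embedding so that its restriction $\hat\jmath : \hat S \to \hat G$ to identity components becomes $\Gamma$-equivariant for the pinned action on $\hat G$. Take $G = \tx{SL}_2$ over a non-archimedean $F$ and $S$ a non-split maximal torus: $\hat G = \tx{PGL}_2$ carries the trivial pinned $\Gamma$-action, while the intrinsic $\Gamma$-action on $\hat S \cong \C^\times$ is inversion, and no embedding $\C^\times \to \tx{PGL}_2$ intertwines the trivial action with inversion. The correct statement is that the $L$-embedding sends $1\boxtimes\sigma$ to $n(\sigma)\boxtimes\sigma$ with $n(\sigma)$ valued in the normalizer $N(\hat\jmath(\hat S),\hat G)$, with $\tx{Ad}(n(\sigma)\boxtimes\sigma)$ restricting on $\hat\jmath(\hat S)$ to $\hat\jmath\circ\sigma_S\circ\hat\jmath^{-1}$; accordingly the homomorphism condition is not $\partial n = z'$ but the twisted identity $n(\sigma)\,\sigma(n(\tau))\,n(\sigma\tau)^{-1} = \hat\jmath(z'(\sigma,\tau))$, where $\sigma(\cdot)$ denotes the action on $\hat G$ furnished by a term of the limit \eqref{eq:h-l2}. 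Fortunately this is harmless for your argument: that twisted identity is still manifestly independent of the central cocycle $a'$ twisting the $\boxtimes$-product (since $a'$ cancels), the independence of $n$ from the index $x$ still follows because $x'x^{-1}$ is $Z(\hat G)$-valued and so commutes with $n$, and your verifications (i) and (ii) go through verbatim with this corrected bookkeeping. So the proof closes, but you should not assert $\Gamma$-equivariance of $\hat\jmath$, nor $n \in C^1(\Gamma,\hat S)$, nor $\partial n = z'$.
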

\begin{proof}
When $t,t' \in Z^2(\Gamma,Z(\hat G))$ the claim is evident. The general case follows from the arguments in \S\ref{sub:elementary}.
\end{proof}

\begin{pro} \label{pro:lemb-comp}
Fix $\hat\jmath : \hat S \to \hat G$ and let $\mc{S}=\{\gamma \in {^LG_t}\,|\,\gamma \hat\jmath(s) \gamma^{-1} = \hat\jmath(\sigma_\gamma(s)) \}$, where $\sigma_\gamma \in \Gamma$ denotes the image of $\gamma$ under the projection $^LG_t \to \Gamma$.
\begin{enumerate}
	\item For any $z' \in Z^2(\Gamma,\hat S)$ and any $L$-embedding $^Lj : {^LS}_{z't} \to {^LG}_t$ extending $\hat\jmath$, the image of $^Lj$ equals $\mc{S}$, and $^Lj$ is an isomorphism onto $\mc{S}$.
	\item Let $z'_1,z'_2 \in Z^2(\Gamma,\hat S)$ and consider $L$-embeddings $^Lj_i : {^LS}_{z'_it} \to {^LG}_t$ extending $\hat\jmath$. Then $^Lj_2 = {^Lj_1} \circ {^L\eta}$ for an $L$-isomorphism $^L\eta : {^LS_2} \to {^LS_1}$ extending the identity on $\hat S$. The resulting bijection between genuine characters is given by
	\[ \Pi(S_{z'_2t,n}) \to \Pi(S_{z'_1t,n}),\qquad \chi \mapsto \chi \cdot \eta, \]
	where $\eta$ is a genuine character of $S(F)_{z_1'/z_2'}$ depending only on the $\hat G$-conjugacy classes of $^Lj_i$.
\end{enumerate}
\end{pro}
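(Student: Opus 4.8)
The plan is to reduce everything to the single structural input that $\hat\jmath(\hat S)$ is a maximal torus of $\hat G$ and hence equals its own centralizer. For part (1), I would first check that $\tx{im}({^Lj})\subseteq\mc{S}$: for $\xi\in{^LS}_{z't}$ lying over $\sigma\in\Gamma$ and $s\in\hat S$ one has $\xi\,(s\boxtimes 1)\,\xi^{-1}=\sigma(s)\boxtimes 1$, since $\hat S$ is abelian and conjugation then reduces to the Galois action; applying the homomorphism $^Lj$, which restricts to $\hat\jmath$ on $\hat S=\hat S\boxtimes 1$, gives $\tx{Ad}({^Lj}(\xi))\circ\hat\jmath=\hat\jmath\circ\sigma$, i.e. $^Lj(\xi)\in\mc{S}$. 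For the reverse inclusion, given $\gamma\in\mc{S}$ over $\sigma$ I would choose $\gamma_0\in\tx{im}({^Lj})$ over the same $\sigma$ (possible because $^Lj$ is an $L$-embedding, so $\tx{im}({^Lj})$ surjects onto $\Gamma$); then $\gamma_0\in\mc{S}$ by the previous step, so $\gamma\gamma_0^{-1}\in\hat G$ centralizes $\hat\jmath(\hat S)$ and therefore lies in $Z_{\hat G}(\hat\jmath(\hat S))=\hat\jmath(\hat S)\subseteq\tx{im}({^Lj})$, whence $\gamma\in\tx{im}({^Lj})$. Injectivity of $^Lj$ is immediate, as any element of its kernel lies over $1\in\Gamma$, hence in $\hat S\boxtimes 1$ where $^Lj=\hat\jmath$ is injective; being a continuous bijective homomorphism onto the closed subgroup $\mc{S}$ of the locally compact group $^LG_t$, it is then an isomorphism of topological groups. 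Note that this argument uses nothing about the shape of $t$ and applies for any $t\in Z^2(\Gamma,\hat T\to\hat T_\tx{ad})$.

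For part (2) the $L$-isomorphism is then forced: by (1) both $^Lj_i$ are isomorphisms onto $\mc{S}$, so $^L\eta:={^Lj_1}^{-1}\circ{^Lj_2}:{^LS}_{z'_2t}\to{^LS}_{z'_1t}$ is an isomorphism, it restricts to the identity on $\hat S$ (both $^Lj_i$ restrict to $\hat\jmath$), and it commutes with the projections to $\Gamma$, so it is an $L$-isomorphism with $^Lj_2={^Lj_1}\circ{^L\eta}$. To identify it concretely I would use the formal extension procedure of \S\ref{sub:elementary} to reduce to the case $t\in Z^2(\Gamma,Z(\hat G))$, where ${^LS}_{z'_it}=\hat S\boxtimes_{z'_it}\Gamma$ with $z'_it\in Z^2(\Gamma,\hat S)$. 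There every $L$-isomorphism extending $\tx{id}_{\hat S}$ has the form $s\boxtimes\sigma\mapsto s\,c(\sigma)\boxtimes\sigma$ for a unique continuous cochain $c$ (continuous on $\Gamma$, hence with bounded image), and the homomorphism property forces $(-\partial)c=z'_1/z'_2$, so $c\in C^1(\Gamma,\hat S)_{z'_1/z'_2}$; that is, $^L\eta={^L\xi_c}$ in the notation of Construction \ref{cns:iso-t}. I would then let $\eta$ be the genuine character of $S(F)_{z'_1/z'_2,*}$ attached to the class $[c]$ by Corollary \ref{cor:llc-s}(3).

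It remains to read off the induced map on genuine characters and to check it depends only on the $\hat G$-conjugacy classes. Composing an $L$-parameter valued in ${^LS}_{z'_2t}$ with $^L\eta={^L\xi_c}$ multiplies its parameter cochain by $c$; by multiplicativity of the local correspondence for covers of tori (Proposition \ref{pro:llc-s} together with the identification of the $L$-group of a Baer sum) this is precisely the map $\chi\mapsto\chi\cdot\eta$ under the canonical identification $S(F)_{z'_1t,*}\cong S(F)_{z'_2t,*}\oplus S(F)_{z'_1/z'_2,*}$. For the independence statement I would argue that if each $^Lj_i$ is replaced by a $\hat G$-conjugate that still extends $\hat\jmath$, then the conjugating element centralizes $\hat\jmath(\hat S)$, hence lies in the torus $\hat\jmath(\hat S)$, so on ${^LS}_{z'_it}$ this conjugation is the inner automorphism by an element of $\hat S$ and changes $c$ only by a coboundary; thus $[c]$, and with it $\eta$, is unchanged. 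The general $t\in Z^2(\Gamma,\hat T\to\hat T_\tx{ad})$ then follows by running the same \S\ref{sub:elementary} argument on the natural transformations involved. I do not expect a serious obstacle: the one genuinely structural fact is that $\hat\jmath(\hat S)$ is self-centralizing, and the rest is bookkeeping of the cocycles $z'_1,z'_2,t$ together with the coboundary check underlying well-definedness.
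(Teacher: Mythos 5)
Your proof is correct and follows essentially the same route as the paper's: both parts rest on the fact that $\hat\jmath(\hat S)$ is self-centralizing, part (1) closes with the open mapping theorem (as in the paper, via \cite[Theorem 5.29]{HRv1e2}), and in part (2) you identify $c\in C^1(\Gamma,\hat S)$ from $^L\eta$ and observe that the residual ambiguity --- conjugation by an element centralizing $\hat\jmath(\hat S)$, hence lying in $\hat\jmath(\hat S)$ --- changes $c$ only by a coboundary. The one point worth stating explicitly is the $\sigma$-compactness of $^LS_{z't}$ needed for the open-mapping step (it holds since $\hat S$ is a complex torus and $\Gamma$ is compact), and your choice to reduce via \S\ref{sub:elementary} to $t\in Z^2(\Gamma,Z(\hat G))$ is equivalent to the paper's device of picking $x$ with $\bar x=c$ and working in the presentation $\hat S\boxtimes_{zz'\partial x^{-1}}\Gamma$.
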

\begin{proof}
As in Construction \ref{cns:l-s-t} we write $t=(z,c)$ and pick $x \in C^1(\Gamma,\hat T)$ with $\bar x=c$, which allows us to represent $^LS_{z't}=\hat S \boxtimes_{zz'\partial x^{-1}}\Gamma$ and $^LG_t=\hat G \boxtimes_{z\partial x^{-1}}\Gamma$.

(1) Let $\sigma \in \Gamma$ and let $\gamma = {^Lj}(1 \boxtimes \sigma)$. The multiplicativity of $^Lj$ implies $\gamma \in \mc{S}$. It follows that the image of $^Lj$ is a full subgroup of $\mc{S}$. But this image also contains the kernel of the projection $\mc{S} \to \Gamma$, so it equals all of $\mc{S}$. The second statement follows from the open mapping theorem \cite[Theorem 5.29]{HRv1e2}.

(2) According to (1) we can form $^Lj_1^{-1} \circ {^Lj_2}$, which is an $L$-isomorphism $^L\eta : {^LS_{z_2't}} \to {^LS_{z_1't}}$ and restricts to the identity on $\hat S$. This implies the existence of $c \in C^1(\Gamma,\hat S)$ such that $^L\eta(s \boxtimes \sigma) = sc(\sigma)\boxtimes\sigma$. This implies $\partial c = z_2'/z_1'$ and thus $c$ gives a genuine character $\eta$ of $S(F)_{z_1'/z_2'}$.

If we conjugate $\hat\jmath$, $^Lj_1$ and $^Lj_2$, by the same element of $\hat G$, then $c$ remains unchanged. If we keep $\hat\jmath$ fixed, the requirement that $^Lj_i$ both restrict to $\hat\jmath$ then implies that each of $^Lj_i$ can be conjugated only by an element of the image of $\hat\jmath$. Such conjugation changes $c$ by an element of $B^1(\Gamma,\hat S)$. Therefore $\eta$ depends only on the $\hat G$-conjugacy classes of $^Lj_i$.
\end{proof}

\subsection{Double covers via finite admissible sets} \label{sub:admset}

Let $S$ be an $F$-torus equipped with a $\Sigma$-invariant map $R \to X^*(S)$, where $\Sigma=\Gamma \times \{\pm1\}$ and $R$ is a finite $\Sigma$-set on which $\{\pm1\} \subset \Sigma$ has no fixed points. In \cite{KalDC} we associated to this data a double cover $S(F)_R$ of $S(F)$. In this subsection we will show that this construction fits in the general framework developed in this paper. We will also generalize this construction to quasi-split connected reductive groups.

For any gauge $p : R \to \{\pm1\}$ we have the Tits cocycles $t_p \in Z^2(\Gamma,\hat S[2])$, cf. \cite[Definition 3.8]{KalDC}. Pushing out the extension \eqref{eq:uni} along $t_p : \tilde\pi_1(S) \to \{\pm1\}$ we obtain the double cover $S(F)_{t_p,2}$. For two gauges $p,q$ we have the 1-cochain $s_{q/p} \in C^1(\Gamma,\hat S[2])$ with $\partial s_{q/p} = t_q/t_p$, and hence the isomorphism $\xi_{s_{q/p}} : S(F)_{t_p,2} \to S(F)_{t_q,2}$. These isomorphisms form a compatible system. 

\begin{pro} \label{pro:dc_comp}
The limit of that system is naturally identified with the cover $S(F)_R$ constructed in \cite{KalDC}. 
\end{pro}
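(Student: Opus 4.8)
The plan is to reduce the claim to a single gauge and then to match the two constructions of ``the double cover attached to the Tits cocycle $t_p$'' through the local correspondence for covers of tori. Recall that in \cite{KalDC} the cover $S(F)_R$ comes equipped, for every gauge $p$, with a canonical isomorphism onto an explicit double cover $S(F)^{(p)}$ of $S(F)$, and that for two gauges $p,q$ these identifications differ by a transition isomorphism $\gamma_{q/p}$ built from the cochain $s_{q/p}$ in precisely the way $\xi_{s_{q/p}}$ is built here via Construction \ref{cns:iso-t}. Hence both $\varprojlim_p S(F)_{t_p,2}$ and $S(F)_R$ are limits over the same system of gauges, and to identify them it is enough to produce, for each gauge $p$, an isomorphism of extensions $\alpha_p\colon S(F)_{t_p,2}\to S(F)^{(p)}$ with $\gamma_{q/p}\circ\alpha_p=\alpha_q\circ\xi_{s_{q/p}}$ for all $p,q$.

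Fix a gauge $p$. Both $S(F)_{t_p,2}$ and $S(F)^{(p)}$ are central extensions of the abelian group $S(F)$ by $\mu_2(\C)$, and both are abelian --- the former because it is a pushout of $S(F)_\infty$, which is by construction Pontryagin dual to a group of cochains, and the latter by the corresponding property recorded in \cite{KalDC}. By Pontryagin duality such an extension is recovered from its group of characters, regarded as an extension of $\Z/2\Z$ by $\tx{Hom}(S(F),\C^\times)$; equivalently, from the torsor of its genuine characters under $\tx{Hom}(S(F),\C^\times)$ together with the structure of that ambient character group. Now the genuine characters of $S(F)_{t_p,2}$ are, by Corollary \ref{cor:llc-s}, canonically a torsor under $H^1(W_F,\hat S)=\tx{Hom}(S(F),\C^\times)$, identified with the set of $\hat S$-conjugacy classes of $L$-homomorphisms $W_F\to {}^LS_{t_p}=\hat S\boxtimes_{t_p}\Gamma$ of \eqref{eq:s-l}; and the local correspondence for $S(F)_R$ proved in \cite{KalDC} (cf.\ \cite[\S4.2]{KalDC}) identifies the genuine characters of $S(F)^{(p)}$ with the same set, compatibly with twisting by characters of $S(F)$ and with multiplication of characters. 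These two compatible identifications yield the required isomorphism $\alpha_p$. (If the construction in \cite{KalDC} is phrased purely cocycle-theoretically, one can instead obtain $\alpha_p$ by exhibiting $S(F)^{(p)}$ directly as the pushout of \eqref{eq:uni} along $t_p$; in either case the content is the same --- the identification of two a priori different double covers built from $t_p$.)

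The compatibility $\gamma_{q/p}\circ\alpha_p=\alpha_q\circ\xi_{s_{q/p}}$ is then formal. By Construction \ref{cns:iso-t}, $\xi_{s_{q/p}}$ acts on genuine characters by translation by the class of $s_{q/p}$, and by construction of the $s$-cochains the transition maps $\gamma_{q/p}$ of \cite{KalDC} do the same; hence both composites induce the same bijection of genuine-character torsors and therefore agree. The cocycle relation $s_{r/p}\equiv s_{r/q}\cdot s_{q/p}$ makes the whole system coherent, and passage to the limit over all gauges identifies $\varprojlim_p S(F)_{t_p,2}$ with $S(F)_R$, naturally in $S$ and in the datum $R\to X^*(S)$. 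I expect the genuine obstacle to be exactly the single-gauge comparison of the previous paragraph: one must unwind the construction of $S(F)^{(p)}$ in \cite{KalDC} and verify that it attaches to $t_p$ the very double cover --- with the same parametrization of genuine characters by Weil-group cochains --- produced here by pushing out \eqref{eq:uni} along $t_p$. Once this dictionary is in hand, the multiplicativity of Construction \ref{cns:iso-t}, the cocycle relation among the $s_{q/p}$, and the formation of the limit are all routine.
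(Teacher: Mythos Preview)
Your approach is essentially the same as the paper's: identify the two double covers at a fixed gauge by matching their torsors of genuine characters through the local correspondence, and note that the transition maps on both sides are governed by $s_{q/p}$. The paper carries this out in slightly sharper form: it works with \emph{unitary} genuine characters (equivalently $\tilde H^1_u(W_F,\hat S)_{t_p}$) because the cover $S(F)_{t_p,2}$ is built via Pontryagin duality from bounded cochains, and it checks that the bijection of torsors is an isomorphism of \emph{topological} torsors under $H^1_u(W_F,\hat S)$, so that dualizing back gives an isomorphism of topological extensions rather than of abstract groups. Your invocation of Pontryagin duality is correct in spirit, but to make it rigorous you should restrict to $\mb{S}^1$-valued characters and verify continuity; the paper does this by picking unitary $\chi$-data to obtain a basepoint in the unitary torsor and then appealing to the multiplicativity of \cite[Theorem~3.15]{KalDC} (rather than \S4.2) to transport the torsor structure. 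With those refinements in place, your argument and the paper's coincide.
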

\begin{proof}
Fix a gauge $p$. It is enough to identify $\tilde H^1_u(W_F,\hat S)_{t_p}$ with the set of unitary genuine characters of $S(F)_R$ as topological spaces that are torsors under $H^1_u(W_F,\hat S)$. Pontryagin duality implies that unitary $\chi$-data $(\chi_\alpha)_{\alpha \in R}$ exist. The resulting genuine character $\chi$ of $S(F)_R$ is then unitary. On the other hand, its parameter $r_p$ is by construction an element of $\tilde H^1_u(W_F,\hat S)_{t_p}$. Any other unitary genuine character of $S(F)_R$ is the product of $\chi$ with a unitary character of $S(F)$. The correspondence for $S(F)_R$ established in \cite[Theorem 3.15]{KalDC} and its multiplicativity now establish a bijection between the set of unitary genuine characters of $S(F)_R$ and the set $\tilde H^1_u(W_F,\hat S)_{t_p}$. Both sets are topologized as torsors under the topological groups $\tx{Hom}_\tx{cts}(S(F),\mb{S}^1)$ and $H^1_u(W_F,\hat S)$, respectively. Since these topological groups are canonically isomorphic and the bijection we have established respects this isomorphism, the claim is proved.
\end{proof}

\begin{rem}
The above proposition provides an explicit description of the double cover $S(F)_{t,2}$ associated to a certain $t \in Z^2(\Gamma,\hat S)$, namely the Tits cocycle associated to a finite admissible $\Sigma$-set $R \to X^*(S)$ and a gauge $p$. It would be desirable to obtain explicit description of other covers $S(F)_{t,n}$.
\end{rem}

\begin{cns} \label{cns:lemb-s-t}
Let $t \in Z^2(\Gamma,\hat T \to \hat T_\tx{ad})$. We define $^LS_{t\pm}$ by presenting $^LS_\pm={^LS}_{t_p}$ and using Construction \ref{cns:l-s-t} to form $^LS_{t\cdot t_p}$. The $L$-embedding $^LS_\pm \to {^LG}$ provides via Fact \ref{fct:l-s-t} an $L$-embedding $^LS_{t\pm} \to {^LG}_t$.
\end{cns}

We now generalize this construction to a quasi-split connected reductive group $G$. Let $T$ be the universal maximal torus. Consider a finite admissible $\Sigma$-set $R \to X^*(T)$. We can compose it with the map $X^*(T) \to X^*(T_\tx{sc})$. In this way we obtain the double covers $T(F)_R$ and $T_\tx{sc}(F)_R$. Assume given a genuine sign character $\alpha : T_\tx{sc}(F)_R \to \{\pm1\}$.

\begin{cns} \label{cns:hdc}
Given $R$ and $\alpha$ we obtain a double cover $G(F)_{R,\alpha}$ as follows. Let $p : R \to \{\pm1\}$ be a gauge and let $z_p \in Z^2(\Gamma,\hat T[2])$ be the Tits cocycle associated to $R$ and $p$. Let $\bar z_p \in Z^2(\Gamma,\hat T_\tx{ad}[2])$ be its image. Let $c_p \in C^1(W_F,\hat T_\tx{ad})$ be the parameter of the genuine character $\alpha$. Thus $\partial c_p = \bar z_p$ and $c_p^2 \in B^1(W_F,\hat T_\tx{ad})$. According to Lemma \ref{lem:[n]}(2)(3), the class of $[c_p]$ gives a well-defined element, which we also denote by $[c_p]$, of $\tilde H^1(\Gamma,\hat T_\tx{ad}[2])$. Then $t_p := (z_p,[c_p]) \in Z^2(\Gamma,\hat T[2] \to \hat T_\tx{ad}[2])/B^1(\Gamma,\hat T_\tx{ad}[2])$ depends only on $R$, $\alpha$, and $p$. We have the associated double cover $G(F)_{t_p}$.

Given another gauge $q$ we have $z_q/z_p=\partial s_{q/p}$ and $[c_q] = [s_{q/p}] \cdot [c_p]$. Thus $[s_{q/p}] \in \tilde H^1(\Gamma,\hat T)_{t_q/t_p}[2]$. From Construction \ref{cns:iso-h} we obtain the isomorphism $\xi_{q,p} : G(F)_{t_p} \to G(F)_{t_q}$. In this way we obtain a compatible system indexed by the set of gauges on $R$. We define $G(F)_{R,\alpha}$ to be the limit of that system.

Equivalently (cf. Construction \ref{cns:h-univ}, Fact \ref{fct:gtn}(2)), we define $G(F)_{R,\alpha}$ as 
\[ \tx{cok}(T_\tx{sc}(F) \to G_\tx{sc}(F) \rtimes T(F)_R), \]
where $T(F)_R$ acts on $G_\tx{sc}(F)$ through the projection $T(F)_R \to T(F)$ and the conjugation action of $T(F)$ on $G_\tx{sc}(F)$ via an arbitrary admissible embedding $j : T \to G$, and $T_\tx{sc}(F) \to T(F)_R$ is the splitting obtained by composing natural map $T_\tx{sc}(F)_R \to T(F)_R$ coming from Corollary \ref{cor:llc-s}(5) and the splitting $T_\tx{sc}(F) \to T_\tx{sc}(F)_R$ coming $\alpha$ via Lemma \ref{lem:dc_split}.
\end{cns}

\begin{cns} \label{cns:hdc_auto}
Let $\theta$ be an $F$-automorphism of $G$. It induces an $F$\-automorphism of the universal maximal torus $T$, which we also denote by $\theta$. Let $\theta_R$ be an automorphism of the $\Sigma$-set $R$ so that the map $R \to X^*(T)$ is $(\theta_R,\theta)$-equivariant. This induces an automorphism of the covers $T(F)_\R$ and $T_\tx{sc}(F)_R$, which we also denote by $\theta$, cf. \cite[Example 5.3]{KalDC}. Assume that $\alpha$ is fixed by $\theta$. Then the splitting $T_\tx{sc}(F) \to T(F)_R$ is $\theta$-equivariant, and the identification $G(F)_{R,\alpha} = \tx{cok}(T_\tx{sc}(F) \to G_\tx{sc}(F) \rtimes T(F)_R)$ endows $G(F)_{R,\alpha}$ with an automorphism lifting $\theta$.
\end{cns}

\begin{rem}
Consider a maximal torus $S \subset G$ and let $S_{R,\alpha}$ be the pull-back of the cover $G(F)_{R,\alpha}$ under $S(F) \to G(F)$. The character $t_p : \bar\pi_1(G) \to \{\pm1\}$ that appears in Construction \ref{cns:hdc} leads, via Construction \ref{cns:s-t}, to a cover $S(F)_{t_p}$. We know from Fact \ref{fct:maxtori} that there is a natural isomorphism $S(F)_{R,\alpha} \to S(F)_{t_p}$. It would be desirable to have an explicit description of this double cover, in the spirit of the construction of \cite{KalDC}. 
\end{rem}

\subsection{Comparison with the constructions of Adams-Vogan for $F=\R$} \label{sub:avcomp}
 
Consider the case $F=\R$. In \cite[(7.11)(a)]{AV92}, Adams-Vogan define a finitely generated discrete abelian group $\pi_1(G)(\R)$ as $\pi_1(G(\C))/(1+\theta)\pi_1(G(\C))$. Here $\pi_1(G(\C))$ is the topological fundamental group for the analytic topology on $G(\C)$ based at the identity element. It is known that $\pi_1(G(\C))$ coincides with Borovoi's algebraic fundamental group $\pi_1(G)$. The automorphism $\theta$ is the Cartan involution, which coincides on $\pi_1(G(\C))$ with the action of complex conjugation. However, due to the conventions of \cite{AV92}, the action of complex conjugation is not the same as the one on Borovoi's fundamental group, but instead differs from it by multiplication by $-1$. This is owed to the use of the normalization $2\pi i$, cf. \cite[(5.1)(b)]{AV92}. Therefore, in terms of Borovoi's fundamental group, one has the identity $\pi_1(G)(\R)=\pi_1(G)_\Gamma$. In turn, this leads to the identity
\begin{equation} \label{eq:avfund}
\pi_1(G)(\R)=X^*(Z(\hat G)^\Gamma).
\end{equation}

\begin{pro} \label{pro:avcomp_pi1}
There exists a natural injective group homomorphism 
\[ \pi_1(G)(\R) \to \tilde\pi_1(G) \] 
whose composition with the projection to $\pi_0(\tilde\pi_1(G))$ remains injective and identifies $\pi_0(\tilde\pi_1(G))$ with the profinite completion of the discrete group $\pi_1(G)(\R)$.
\end{pro}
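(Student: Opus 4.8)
The plan is to produce the map by Pontryagin-dualizing a single natural map of coefficient modules, and then to read off injectivity and the $\pi_0$-statement from the elementary structure of $Z^2(\Gamma,-)$ for $\Gamma$ of order $2$ together with the facts recalled in Appendix~\ref{app:cag}. Let $\rho:\C^\times\to\mb{S}^1$, $\rho(z)=z/|z|$, be the canonical continuous retraction. Applying $\tx{Hom}_\Z(\pi_1(G),-)$ to $\rho$ produces a $\Gamma$-equivariant homomorphism
\[ Z(\hat G)=\tx{Hom}_\Z(\pi_1(G),\C^\times)\;\longrightarrow\;\tx{Hom}_\Z(\pi_1(G),\mb{S}^1)=:\hat P, \]
where $\hat P$ is the Pontryagin dual of the discrete finitely generated group $\pi_1(G)$, carrying the induced $\Gamma$-action. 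Since $\Gamma=\Gal(\C/\R)$ has order $2$, evaluation at $(\sigma,\sigma)$ on (normalized) $2$-cochains gives a natural isomorphism $Z^2(\Gamma,A)\cong A^\Gamma$ for every $\Gamma$-module $A$: the cocycle identity forces the value at $(\sigma,\sigma)$ into $A^\Gamma$, and every element of $A^\Gamma$ occurs. Hence the displayed map induces a homomorphism of discrete abelian groups
\[ Z^2(\Gamma,Z(\hat G))\;\longrightarrow\;Z^2(\Gamma,\hat P)=\hat P^\Gamma=\tx{Hom}_\Z(\pi_1(G)_\Gamma,\mb{S}^1)=\widehat{\pi_1(G)(\R)}, \]
using the identity $\pi_1(G)(\R)=\pi_1(G)_\Gamma$ recorded before the proposition. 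Taking Pontryagin duals — recalling that $\tilde\pi_1(G)$ is by construction the Pontryagin dual of $Z^2(\Gamma,Z(\hat G))$, and that the finitely generated group $\pi_1(G)(\R)$ is its own double dual — yields the desired homomorphism $\pi_1(G)(\R)\to\tilde\pi_1(G)$. Each step is functorial in $G$, so the map is natural.

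For injectivity I would trace through the construction: the map sends $m\in\pi_1(G)(\R)$ to the character of $\tilde\pi_1(G)$ that, under the identification of characters with $Z^2(\Gamma,Z(\hat G))=Z(\hat G)^\Gamma=\tx{Hom}_\Z(\pi_1(G)(\R),\C^\times)$ (this last equality being \eqref{eq:avfund}), is $\phi\mapsto\rho(\phi(m))$. If $m\neq0$ then, because $\C^\times$ is divisible and hence an injective $\Z$-module, one can choose $\phi:\pi_1(G)(\R)\to\C^\times$ with $\phi(m)$ equal to $-1$ or to a root of unity of order $>2$; then $\rho(\phi(m))=\phi(m)\neq1$, so the character is nontrivial, and injectivity follows.

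For the component group, I would use the generality from Appendix~\ref{app:cag} that for a discrete abelian group $N$ the identity component of its Pontryagin dual is the annihilator of $N_{\tx{tors}}$, so that $\pi_0$ of the dual is canonically the Pontryagin dual of $N_{\tx{tors}}$. Applied to $N=Z^2(\Gamma,Z(\hat G))=\tx{Hom}_\Z(\pi_1(G)(\R),\C^\times)$, whose torsion subgroup is $\tx{Hom}_\Z(\pi_1(G)(\R),\mu_\infty(\C))$, this identifies $\pi_0(\tilde\pi_1(G))$ with the Pontryagin dual of $\tx{Hom}_\Z(\pi_1(G)(\R),\mu_\infty(\C))$, i.e.\ with $\varprojlim_n\pi_1(G)(\R)/n\,\pi_1(G)(\R)$, the profinite completion of $\pi_1(G)(\R)$. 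The crucial point is that $\rho$ restricts to the identity on $\mu_\infty(\C)$; hence on $N_{\tx{tors}}$ the homomorphism $Z^2(\Gamma,Z(\hat G))\to\widehat{\pi_1(G)(\R)}$ built above is simply the inclusion of the torsion subgroup $\tx{Hom}_\Z(\pi_1(G)(\R),\mu_\infty(\C))\hookrightarrow\tx{Hom}_\Z(\pi_1(G)(\R),\mb{S}^1)$, and dualizing this inclusion identifies the composite $\pi_1(G)(\R)\to\tilde\pi_1(G)\to\pi_0(\tilde\pi_1(G))$ with the canonical completion map $m\mapsto(m\bmod n)_n$. Since a finitely generated abelian group is residually finite, this composite is injective and realizes $\pi_0(\tilde\pi_1(G))$ as the profinite completion of $\pi_1(G)(\R)$, which completes the argument.

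None of this is deep; the substance is bookkeeping of the several Pontryagin duals and verifying that the $\pi_0$-computation transports the canonical completion map correctly. The one genuinely load-bearing observation — and the place where the particular choice of $\rho$ matters — is that $\rho$ is the identity on roots of unity, which is exactly what forces the torsion-level picture to come out as the profinite completion rather than something larger.
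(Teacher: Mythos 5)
Your proof is correct and takes essentially the same route as the paper's: the paper applies its Lemma~\ref{lem:diag} to the diagonalizable group $D=Z(\hat G)^\Gamma$ (identified with $Z^2(\Gamma,Z(\hat G))$ via evaluation at $(\sigma,\sigma)$), and what you have done is unpack that lemma in this special case, making the retraction $\rho:\C^\times\to\mb{S}^1$ onto the maximal compact and its identity action on torsion explicit. Your added direct injectivity argument is fine but redundant, since --- as you and the paper both observe --- injectivity already follows from residual finiteness once the composite with the projection to $\pi_0$ has been identified with the profinite-completion map.
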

\begin{proof}
We will use Lemma \ref{lem:diag} with the diagonalizable group $D=Z(\hat G)^\Gamma$. On the one hand, we have the identity \eqref{eq:avfund}. On the other hand, for any $\Gamma$-module $M$ evaluation at $(\sigma,\sigma)$ induces a group isomorphism $Z^2(\Gamma,M) \to M^\Gamma$. Applying this to $M=Z(\hat G)$ leads to $D=Z(\hat G)^\Gamma=Z^2(\Gamma,Z(\hat G))$, and hence $D_\tx{disc}=\tilde\pi_1(G)^*$. Then Lemma \ref{lem:diag} provides the homomorphism $\pi_1(G)(\R) \to \tilde\pi_1(G)$ with the desired properties. Note that injectivity follows from the fact that $\pi_1(G)(\R)$ is residually finite, being finitely generated abelian.
\end{proof}

It is worth recording the following statement, which follows formally from the above proposition, and was also used in its proof.

\begin{cor} \label{cor:avcomp_pi1}
The map $\pi_1(G)(\R) \to \tilde\pi_1(G)$ induces a bijection between the groups of characters of \emph{finite order} on both sides.
\end{cor}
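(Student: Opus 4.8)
The plan is to derive the statement formally from Proposition \ref{pro:avcomp_pi1}, which is how the paper itself frames it. Write $A=\pi_1(G)(\R)$, a finitely generated discrete abelian group, $B=\tilde\pi_1(G)$, and let $\iota:A\to B$ be the homomorphism of that proposition; the task is to show that precomposition with $\iota$ carries the group of finite-order characters of $B$ bijectively onto the group of finite-order characters of $A$. First I would recall the description of the finite-order characters of $B$: by the Remark following Definition \ref{dfn:gen} (which rests on Lemma \ref{lem:surjfin}), a continuous character $B\to\mb{S}^1$ has finite order exactly when it factors through the profinite quotient $\pi_0(B)$. Hence restriction along $B\to\pi_0(B)$ identifies the group of finite-order characters of $B$ with $\tx{Hom}_\tx{cts}(\pi_0(B),\mb{S}^1)$, and since $\pi_0(B)$ is profinite, every element of the latter group has open kernel, hence is automatically of finite order.

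Next I would invoke Proposition \ref{pro:avcomp_pi1} to the effect that the composite $A\to B\to\pi_0(B)$ exhibits $\pi_0(B)$ as the profinite completion $\hat A$ of $A$. Since $A$ is finitely generated, any finite-order character $A\to\mu_n(\C)$ kills $nA$ and hence factors through the finite quotient $A/nA$, so over $\hat A$; conversely, restriction along the dense image of $A$ in $\hat A$ is injective on $\tx{Hom}_\tx{cts}(\hat A,\mb{S}^1)$. Therefore precomposition with $A\to\hat A$ is a group isomorphism from $\tx{Hom}_\tx{cts}(\hat A,\mb{S}^1)=\tx{Hom}_\tx{cts}(\pi_0(B),\mb{S}^1)$ onto the group of finite-order characters of $A$.

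Composing the two identifications just described — finite-order characters of $B$ with $\tx{Hom}_\tx{cts}(\pi_0(B),\mb{S}^1)$, and the latter with finite-order characters of $A$ — yields a group isomorphism which, upon unwinding the definitions, is precisely precomposition with $\iota$. This argument is entirely formal and presents no real obstacle; the only point deserving a moment's care is the matching of the phrase ``finite order'' across the two sides, namely the observation that the continuous characters of $\pi_0(B)\cong\hat A$ are exactly the characters obtained from finite-order characters of $B$ on one side and from finite-order characters of $A$ on the other — which holds because on a profinite group every continuous $\mb{S}^1$-valued character has finite image.
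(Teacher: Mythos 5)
Your proof is correct and is precisely the formal unwinding that the paper alludes to when it says the corollary "follows formally from the above proposition." The two ingredients you use — the identification (via Lemma \ref{lem:surjfin}) of finite-order characters of $\tilde\pi_1(G)$ with continuous characters of the profinite quotient $\pi_0(\tilde\pi_1(G))$, and the identification of $\pi_0(\tilde\pi_1(G))$ with the profinite completion of the finitely generated group $\pi_1(G)(\R)$ from Proposition \ref{pro:avcomp_pi1} — are exactly the right ones, and your matching of the two notions of "finite order" is the only nontrivial point, which you handle correctly.
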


In addition to $\pi_1(G)(\R)$, in \cite[(5.3)(c),(7.11)(c)]{AV92} Adams and Vogan define an extension 
\[ 1 \to \pi_1(G)(\R) \to G(\R)_\tx{AV} \to G(\R) \to 1, \]
where $G(\R)_\tx{AV}$ is denoted by $G(\R)^\sim$ in loc. cit. We will now compare $G(\R)_\tx{AV}$ with $G(\R)_\infty$.

Consider first the case of a torus $G=T$. Choose a finite order character $t : \tilde\pi_1(T) \to \mu_n(\C)$ and pull it back to a character of $\pi_1(T)(\R)$. Pushing out $T(\R)_\infty$ we obtain the $n$-fold cover $T(\R)_{t,n}$, while pushing out $T(\R)_\tx{AV}$ we obtain another $n$-fold cover, which we denote by $T(\R)_{\tx{AV},t,n}$.

\begin{pro} \label{pro:avcomp_torus1}
We have a natural isomorphism $T(\R)_{t,n} \to T(\R)_{\tx{AV},t,n}$ as extensions of $T(\R)$ by $\mu_n(\C)$, as well as between the group $^LT_t$ of \eqref{eq:s-l} and the $E$-group determined by $t$ as in \cite[Definition 5.9]{AV92}, and these two isomorphisms are compatible with the local correspondences.
\end{pro}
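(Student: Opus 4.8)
The plan is to reduce everything to the Langlands correspondence for tori, which we already control via Proposition \ref{pro:llc-s}, together with an explicit comparison of the two cocycle-theoretic definitions of the cover. First I would recall the construction of $T(\R)_\tx{AV}$ from \cite[(5.3)(c)]{AV92}: it is the pushout, along a canonical map of coefficient groups, of an extension built from the exponential sequence $0 \to X_*(T) \to \tx{Lie}(T)(\C) \to T(\C) \to 0$ restricted to $\R$-points, so that $\pi_1(T)(\R)$ appears as $X_*(T)/(1+\theta)X_*(T)$. Dually, a character $t : \tilde\pi_1(T) \to \mu_n(\C)$ corresponds to an element of $Z^2(\Gamma,\hat T)$; pulling back to $\pi_1(T)(\R)$ via Proposition \ref{pro:avcomp_pi1} means taking its image in $H^2(\Gamma,\hat T)$ (since, as noted in the proof of Proposition \ref{pro:avcomp_pi1}, $Z^2(\Gamma,\hat T) = \hat T^\Gamma$ and $\pi_1(T)(\R)^* = X^*(Z(\hat G)^\Gamma)$ with $G=T$, i.e. $\hat T^\Gamma$). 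So both $T(\R)_{t,n}$ and $T(\R)_{\tx{AV},t,n}$ are $n$-fold covers of $T(\R)$ classified, a priori, by elements of $H^2(\Gamma,\hat T[n])$, and the first task is to check these classes agree.

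The cleanest way to do this is on the dual/character side. By Corollary \ref{cor:llc-s}, the set of genuine characters of $T(\R)_{t,n}$ is the fiber $\tilde H^1(W_\R,\hat T)_t$ over $t \in Z^2(\Gamma,\hat T)$, a torsor under $H^1_u(W_\R,\hat T) = \tx{Hom}_\tx{cts}(T(\R),\mb{S}^1)$ (when nonempty). On the Adams--Vogan side, \cite[Definition 5.9]{AV92} gives the $E$-group ${}^E T_t$ and \cite[Theorem 10.(something)]{AV92} (or the torus case thereof, essentially \cite[Proposition 5.10]{AV92}) identifies genuine characters of $T(\R)_{\tx{AV},t,n}$ with $\hat T$-conjugacy classes of admissible homomorphisms $W_\R \to {}^E T_t$, which is again a torsor under $\tx{Hom}_\tx{cts}(T(\R),\mb{S}^1)$ via the ordinary Langlands correspondence for the torus $T(\R)$. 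Both torsors are therefore either simultaneously empty or canonically isomorphic torsors under the same group $\tx{Hom}_\tx{cts}(T(\R),\mb{S}^1)$, once we check that the $E$-group ${}^E T_t$ of \cite{AV92} is isomorphic, as an extension of $\Gamma$ (or $W_\R$) by $\hat T$, to ${}^L T_t = \hat T \boxtimes_t \Gamma$: this is a direct comparison of the cocycle used by Adams--Vogan (which comes from the $2\pi i$-normalized exponential, hence from the image of $t$ in $H^2$) with ours, using the sign-twist observation already recorded before \eqref{eq:avfund}. Granting that both cocycles represent the same class in $H^2(\Gamma,\hat T)$, Lemma \ref{lem:s-iso}(1) produces the isomorphism $T(\R)_{t,n} \to T(\R)_{\tx{AV},t,n}$, and the same $1$-cochain $c$ with $\partial c = t/t_\tx{AV}$ induces the $L$-group isomorphism via \eqref{eq:iso-l-s}; naturality/compatibility with the local correspondences is then exactly the content of Construction \ref{cns:iso-t} (``the isomorphism $\xi_h$ translates to ${}^L\xi_c$'').

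I would carry out the steps in this order: (1) unwind the definition of $T(\R)_\tx{AV}$ and of $\pi_1(T)(\R)$ from \cite{AV92} and record that pulling $t$ back to $\pi_1(T)(\R)$ amounts to taking its image in $H^2(\Gamma,\hat T)$, using the identifications in the proof of Proposition \ref{pro:avcomp_pi1}; (2) identify the Adams--Vogan $E$-group ${}^E T_t$ with $\hat T \boxtimes_{t'}\Gamma$ for a cocycle $t'$ cohomologous to $t$, carefully matching the $2\pi i$ sign normalization; (3) invoke the torus case of the Adams--Vogan local Langlands correspondence to identify genuine characters of $T(\R)_{\tx{AV},t,n}$ with admissible $L$-parameters into ${}^E T_t$, and note this is compatible with the ordinary Langlands correspondence for $T(\R)$; (4) apply Lemma \ref{lem:s-iso} and Construction \ref{cns:iso-t} to get the pair of compatible isomorphisms; (5) check naturality of the whole package. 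The main obstacle is step (2): the constructions in \cite{AV92} are phrased in the analytic language of real reductive groups and the $2\pi i$-normalized exponential sequence, and one must be scrupulous about the direction of the sign twist in the Galois action (the paper already flags this discrepancy before \eqref{eq:avfund}) so that the comparison cocycle really is cohomologous to $t$ and not to $t^{-1}$ or $t$ composed with the arithmetic-Frobenius normalization; getting this sign right, and checking it is consistent with the chosen normalization of the reciprocity map for $T(\R)$ in both papers, is where essentially all the care is needed. The remaining steps are formal given the machinery of \S\ref{sub:covers_tori}.
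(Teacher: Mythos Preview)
Your core idea---identify the genuine characters of both covers with $L$-parameters into the same $L$-group, then deduce the cover isomorphism---is correct and is precisely what the paper does. But your write-up contains a genuine mis-step and an unnecessary complication.

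The mis-step is invoking Lemma \ref{lem:s-iso}. That lemma compares two covers $S(F)_{t,n}$ and $S(F)_{t',n}$ \emph{both arising from the paper's pseudo-universal extension} $S(F)_\infty$. The Adams--Vogan cover $T(\R)_{\tx{AV},t,n}$ is constructed by an entirely different procedure, so it is not a priori of the form $T(\R)_{t',n}$ for any $t'$, and Lemma \ref{lem:s-iso} does not apply to it. The isomorphism of covers must instead come directly from Pontryagin duality: once you have identified the groups of unitary characters of $T(\R)_{t,n}$ and $T(\R)_{\tx{AV},t,n}$ (compatibly with the subgroup of unitary characters of $T(\R)$ and the quotient $\Z/n\Z$), the covers themselves are identified. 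You have the ingredients for this---both cosets of genuine characters are torsors under $H^1_u(W_\R,\hat T)$, and both are identified with bounded $L$-parameters into the same $L$-group (Corollary \ref{cor:llc-s}(4) on one side, \cite[Theorem 5.11]{AV92} on the other)---but you never state the Pontryagin-duality step and instead reach for the wrong lemma.

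The unnecessary complication is your step (2). Since $\Gamma_{\C/\R}$ has order $2$, evaluation at $(\sigma,\sigma)$ gives an isomorphism $Z^2(\Gamma,\hat T)\cong\hat T^\Gamma$, and under this identification the paper's $2$-cocycle $t$ and the Adams--Vogan datum coincide \emph{on the nose}, not merely up to coboundary. So the identification ${^LT_t}\cong{^ET_t}$ is immediate from the definitions; there is no cohomologous-but-unequal $t'$ to worry about, no $1$-cochain $c$ to choose, and the sign bookkeeping you flag as ``the main obstacle'' essentially evaporates.
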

\begin{proof}
The identification between $^LT_t$ and the $E$-group determined by $t$ follows at once from the definitions. We have used here the identification $Z^2(\Gamma,\hat T)=Z^0(\Gamma,\hat T)$ and hence identified $t$ with $t(\sigma,\sigma)$. 

To identify $T(\R)_{t,n}$ and $T(\R)_{\tx{AV},t,n}$ it is enough to identify their groups of unitary characters in a way compatible with the subgroups of unitary characters of $T(\R)$, and the quotients $\Z/n\Z$. For this it is enough to identify the two cosets of genuine characters together with the action of the group of unitary characters of $T(\R)$. According to Corollary \ref{cor:llc-s}(4), the coset of unitary characters is identified with the set of bounded parameters $W_\R \to {^LS_t}$, and the action of the group of unitary characters of $T(\R)$ is translated to the action of $H^1_u(W_\R,\hat T)$ by multiplication. On the other hand, the same identification for $T(\R)_{\tx{AV},t,n}$ follows from \cite[Theorem 5.11]{AV92}.
\end{proof}

\begin{cor} \label{cor:avcomp_torus2}
There exists a natural embedding $T(\R)_\tx{AV} \to T(\R)_\infty$ which covers the identity of $T(\R)$ and induces the embedding $\pi_1(T)(\R) \to \tilde\pi_1(T)$.
\end{cor}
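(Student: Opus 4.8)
The plan is to build the embedding $T(\R)_\tx{AV} \to T(\R)_\infty$ by assembling it from its pushouts along the finite-order characters of $\pi_1(T)(\R)$, exploiting that $\pi_1(T)(\R)$ is finitely generated abelian, hence residually finite, so that a compatible family of isomorphisms of finite pushouts determines a map on the full extensions. First I would record that, by Corollary \ref{cor:avcomp_pi1}, the finite-order characters $t$ of $\pi_1(T)(\R)$ are in natural bijection with the finite-order characters of $\tilde\pi_1(T)$; fix such a $t$ of order $n$. Proposition \ref{pro:avcomp_torus1} then gives, for each such $t$, a canonical isomorphism $T(\R)_{t,n} \to T(\R)_{\tx{AV},t,n}$ of extensions of $T(\R)$ by $\mu_n(\C)$, compatible with the local correspondences. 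I would then check that, as $t$ varies, these isomorphisms form a compatible system with respect to the transition maps coming from divisibility of orders (pushout along $\mu_n(\C) \to \mu_m(\C)$ for $n \mid m$, as in Remark after Definition \ref{dfn:gen}): the compatibility of Proposition \ref{pro:avcomp_torus1} with local correspondences reduces this to the functoriality of the Langlands correspondence for tori and the corresponding statement \cite[Theorem 5.11]{AV92} on the Adams--Vogan side, together with the naturality in the character variable $t$.

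The second step is to pass from this compatible family of isomorphisms of finite covers to a homomorphism $T(\R)_\tx{AV} \to T(\R)_\infty$ of the full extensions. Here I would use that $T(\R)_\tx{AV}$ is the pushout of $T(\R)^\sim$ along $\pi_1(T)(\R) \to \pi_1(T)(\R)$, and by construction (the Adams--Vogan kernel $\pi_1(T)(\R)$ is residually finite) a homomorphism out of $T(\R)_\tx{AV}$ compatible with the given family of characters is determined by its effect on all finite quotients; dually, a continuous character of $T(\R)_\tx{AV}$ restricts to a character of $\pi_1(T)(\R)$, which factors through a finite quotient. Concretely, the cleanest route is Pontryagin dual: the group of continuous characters of $T(\R)_\infty$ is $\tilde H^1(W_\R,\hat T)$ by Proposition \ref{pro:llc-s}, that of $T(\R)_\tx{AV}$ is the analogous Adams--Vogan object, and the family of isomorphisms above furnishes a map between these character groups commuting with restriction to $\pi_1(T)(\R)$ resp. $\tilde\pi_1(T)$; dualizing back gives the desired $T(\R)_\tx{AV} \to T(\R)_\infty$. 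That this map covers the identity of $T(\R)$ and restricts on kernels to the embedding $\pi_1(T)(\R) \to \tilde\pi_1(T)$ of Proposition \ref{pro:avcomp_pi1} is then immediate from the construction, since both are controlled by the same bijection of finite-order characters.

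Finally I would verify that the resulting map is an embedding, i.e. injective. Its restriction to the kernel $\pi_1(T)(\R)$ is the injective homomorphism $\pi_1(T)(\R) \to \tilde\pi_1(T)$ of Proposition \ref{pro:avcomp_pi1}, and it covers the identity on $T(\R)$; a routine diagram chase with the snake lemma applied to the map of extensions $\bigl(\pi_1(T)(\R) \to T(\R)_\tx{AV} \to T(\R)\bigr) \to \bigl(\tilde\pi_1(T) \to T(\R)_\infty \to T(\R)\bigr)$ then gives injectivity of the middle arrow. The main obstacle I anticipate is the bookkeeping in the second step: matching the Adams--Vogan description of characters of $T(\R)_\tx{AV}$ (via their $E$-group formalism and \cite[Theorem 5.11]{AV92}) with the $\tilde H^1(W_\R,\hat T)$-description used here, and checking that the family of finite-level isomorphisms from Proposition \ref{pro:avcomp_torus1} really is compatible across all orders $n$ rather than merely for each fixed $n$ — this is where the naturality in $t$ and the functoriality of both correspondences must be invoked carefully. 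The rest is formal, given Propositions \ref{pro:avcomp_pi1} and \ref{pro:avcomp_torus1}.
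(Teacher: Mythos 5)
Your proposal correctly identifies the two key ingredients --- Corollary~\ref{cor:avcomp_pi1} to match finite-order characters and Proposition~\ref{pro:avcomp_torus1} to identify the finite covers --- and the overall strategy of assembling the map from the finite-level data is the same as in the paper. The point of divergence is the assembly step, and there the route you single out as cleanest has a gap.

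The paper forms the single fiber product $T(\R)^1_\tx{AV}$ of \emph{all} the finite covers $T(\R)_{\tx{AV},t,n}$ over $T(\R)$, identifies it as the pushout of $T(\R)_\tx{AV}$ along $\pi_1(T)(\R) \to K = \prod_{t,n}\mu_n(\C)$ (injective because $\pi_1(T)(\R)$ is finitely generated, hence residually finite), does the same on the $T(\R)_\infty$ side, and invokes Corollary~\ref{cor:avcomp_pi1} and Proposition~\ref{pro:avcomp_torus1} to identify the two fiber products. From this it reads off that $T(\R)_\infty$ is the pushout of $T(\R)_\tx{AV}$ along $\pi_1(T)(\R) \to \tilde\pi_1(T)$, which gives the embedding structurally, with no separate injectivity check needed. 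Your proposed concretization via Pontryagin duality instead requires a description of the full continuous character group of $T(\R)_\tx{AV}$ --- you refer to ``the analogous Adams--Vogan object'' --- but nothing of that kind is available: Proposition~\ref{pro:avcomp_torus1} only describes characters of the \emph{finite} covers $T(\R)_{\tx{AV},t,n}$. Since $\pi_1(T)(\R)$ is a finitely generated group that may have infinite-order characters, $T(\R)_\tx{AV}$ has continuous characters that restrict non-trivially to $\pi_1(T)(\R)$ with infinite image, and these descend to no $T(\R)_{\tx{AV},t,n}$; they are therefore not controlled by the finite-level isomorphisms you are assembling. The fiber-product route sidesteps this by never needing anything beyond maps into finite groups, which already separate the points of $\pi_1(T)(\R)$. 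Your snake-lemma verification of injectivity would of course also work, but is rendered unnecessary once the map is exhibited as a pushout along an injection of kernels.
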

\begin{proof}
Let $T(\R)_\tx{AV}^1$ denote the fiber product over $T(\R)$ of the covers $T(\R)_{\tx{AV},t,n}$ for all $n \in \N$ and all $t \in \pi_1(T)(\R) \to \mu_n(\C)$. Thus $T(\R)_\tx{AV}^1$ is an extension of $T(\R)$ by $\prod_{t,n}\mu_n(\C)$ and equals the pushout of $T(\R)_\tx{AV}$ under the map $\pi_1(T)(\R) \to \prod_{t,n}\mu_n(\C)$ that sends $x \in \pi_1(T)(\R)$ to $(t(x))_{t,n}$.

We perform the same construction with $T(\R)_{t,n}$ for all $n \in \N$ and all $t : \tilde\pi_1(T) \to \mu_n(\C)$ and call the result $T(\R)_\infty^1$. According to Corollary \ref{cor:avcomp_pi1} both $T(\R)_\tx{AV}^1$ and $T(\R)_\infty^1$ are extensions of $T(\R)$ by the same compact group $K=\prod_{t,n}\mu_n(\C)$, and according to Proposition \ref{pro:avcomp_torus1} they are naturally isomorphic. Thus, the pushout of $T(\R)_\tx{AV}$ under $\pi_1(T)(\R) \to K$ is naturally isomorphic to the pushout of $T(\R)_\infty$ under $\tilde\pi_1(T) \to K$. This implies that $T(\R)_\infty$ is naturally isomorphic to the pushout of $T(\R)_\tx{AV}$ under $\pi_1(T)(\R) \to \tilde\pi_1(T)$.
\end{proof}

Consider now a quasi-split reductive $\R$-group $G$.

\begin{cor} \label{cor:avcomp}
There is a natural embedding $G(\R)_\tx{AV} \to G(\R)_\infty$ which covers the identity of $G(\R)$ and induces the embedding $\pi_1(G)(\R) \to \tilde\pi_1(G)$.
\end{cor}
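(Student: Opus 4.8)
The plan is to bootstrap from the torus case of Corollary~\ref{cor:avcomp_torus2} by comparing the two covers through the presentation of $G(\R)_\infty$ furnished by Construction~\ref{cns:h-univ}. Fix an admissible embedding $j:T\hookrightarrow G$ of the universal maximal torus, let $T(\R)_\infty^G$ be the pushout of $T(\R)_\infty$ along $\tilde\pi_1(T)\to\tilde\pi_1(G)$, and let $s:T_\tx{sc}(\R)\to T(\R)_\infty^G$ be the splitting induced by Construction~\ref{cns:h-split}; then $G(\R)_\infty$ is the cokernel of $T_\tx{sc}(\R)\to G_\tx{sc}(\R)\rtimes T(\R)_\infty^G$, $t_\tx{sc}\mapsto(j(t_\tx{sc}),s(t_\tx{sc})^{-1})$. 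The first and main step is to establish the parallel presentation on the Adams--Vogan side, namely that $G(\R)_\tx{AV}$ is canonically isomorphic to $\tx{cok}\bigl(T_\tx{sc}(\R)\to G_\tx{sc}(\R)\rtimes T(\R)_\tx{AV}^G\bigr)$, where $T(\R)_\tx{AV}^G$ is the pushout of $T(\R)_\tx{AV}$ along the functorial (surjective) map $\pi_1(T)(\R)\to\pi_1(G)(\R)$ and the first arrow is built from $j$ together with the canonical splitting $T_\tx{sc}(\R)\to T(\R)_\tx{AV}^G$ that exists because $\pi_1(G_\tx{sc})(\R)=0$, so that $\pi_1(T_\tx{sc})(\R)\to\pi_1(G)(\R)$ is the zero map. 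This should follow by unwinding the definition of $G(\R)^\sim$ from $\pi_1(G(\C))$ and the $2\pi i$-normalized conjugation action of \cite[(5.1)(b), (5.3)(c), (7.11)]{AV92}, together with two structural compatibilities of the Adams--Vogan construction: that it is compatible with restriction to a maximal torus, so that the preimage of $j(T)(\R)$ in $G(\R)_\tx{AV}$ is $T(\R)_\tx{AV}^G$ (an analogue of Fact~\ref{fct:maxtori}), and that it is compatible with the central isogeny $G_\tx{sc}\to G$, which yields the splitting over $G_\tx{sc}(\R)$.

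Granting this presentation, the rest is a gluing argument in the style of \S\ref{sub:covers_red}. Pushing out the embedding $T(\R)_\tx{AV}\to T(\R)_\infty$ of Corollary~\ref{cor:avcomp_torus2} along $\pi_1(T)(\R)\to\pi_1(G)(\R)\to\tilde\pi_1(G)$, which by Proposition~\ref{pro:avcomp_pi1} agrees with $\pi_1(T)(\R)\to\tilde\pi_1(T)\to\tilde\pi_1(G)$, produces a continuous embedding $f:T(\R)_\tx{AV}^G\to T(\R)_\infty^G$ lifting the identity of $T(\R)$. One checks that $f$ carries the splitting over $T_\tx{sc}(\R)$ used in the Adams--Vogan presentation to the splitting $s$: since $T(\R)_\tx{AV}^G$ and $T(\R)_\infty^G$ embed as the preimages of $j(T)(\R)$ in $G(\R)_\tx{AV}$ and $G(\R)_\infty$, these splittings are pinned down by their composites into $G(\R)_\tx{AV}$ and $G(\R)_\infty$ and by the canonical splittings over $G_\tx{sc}(\R)$, so the claim reduces to the compatibility of those $G_\tx{sc}(\R)$-splittings, which holds by the Kneser--Tits conjecture (cf. Lemma~\ref{lem:kneser-tits}). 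Consequently the pair $(\tx{id}_{G_\tx{sc}(\R)},f)$ commutes with the two maps out of $T_\tx{sc}(\R)$ and descends to a continuous embedding $G(\R)_\tx{AV}\to G(\R)_\infty$. By construction it covers the identity of $G(\R)$, and it restricts on kernels to the map $\pi_1(G)(\R)\to\tilde\pi_1(G)$ of Proposition~\ref{pro:avcomp_pi1}, as is checked after pulling back to $j(T)(\R)$, where it becomes the map on kernels induced by $f$. Independence of the auxiliary choice of $j$ is settled exactly as at the end of Construction~\ref{cns:h-univ}, any two admissible embeddings being $G_\tx{sc}(\R)$-conjugate.

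I expect the only genuine difficulty to lie in the first step: making precise the cokernel presentation of $G(\R)_\tx{AV}$, i.e. verifying that the Adams--Vogan cover is assembled from the torus cover $T(\R)_\tx{AV}$ and a splitting over $G_\tx{sc}(\R)$ in the same way as in Construction~\ref{cns:h-univ}. These are structural features of their construction rather than deep theorems, but they have to be extracted from the definitions in \cite{AV92}; once they are in hand, everything else is the formal pushout-and-cokernel bookkeeping already carried out for reductive $G$ in this section.
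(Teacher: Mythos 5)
Your overall strategy coincides with the paper's: present $G(\R)_\tx{AV}$ as the cokernel $\tx{cok}\bigl(T_\tx{sc}(\R)\to G_\tx{sc}(\R)\rtimes T(\R)_\tx{AV}^G\bigr)$ in parallel with Construction~\ref{cns:h-univ}, then glue the torus embedding of Corollary~\ref{cor:avcomp_torus2}. The paper actually carries out the step you defer: it writes down the commuting diagram of topological universal covers of $T_\tx{sc}(\C)$, $G_\tx{sc}(\C)\rtimes T(\C)$, $G(\C)$, passes to preimages of real points, and reads off the exact sequence $1\to T_\tx{sc}(\R)\to G_\tx{sc}(\R)\rtimes T(\R)_\tx{AV}^G\to G(\R)_\tx{AV}\to1$ directly from the Adams--Vogan definitions. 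So on the main point you have correctly located the content, but not supplied it.

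There is a genuine gap in your step verifying that $f$ matches the two $T_\tx{sc}(\R)$-splittings. You argue that both splittings are "pinned down by their composites into $G(\R)_\tx{AV}$ and $G(\R)_\infty$," and that "the claim reduces to the compatibility of those $G_\tx{sc}(\R)$-splittings, which holds by the Kneser--Tits conjecture." This is circular: there is as yet no map $G(\R)_\tx{AV}\to G(\R)_\infty$ under which the two $G_\tx{sc}(\R)$-splittings could be compared — constructing that map is precisely what the splitting compatibility is needed for. Moreover, even granting a map, the discrepancy $(f\circ s_\tx{AV})\cdot s_\infty^{-1}$ is a priori a continuous character $T_\tx{sc}(\R)\to\tilde\pi_1(G)$, and Kneser--Tits (Lemma~\ref{lem:kneser-tits}) rules out nontrivial characters of $G_\tx{sc}(\R)$, not of the maximal torus $T_\tx{sc}(\R)$ — the latter certainly admits nontrivial characters into compact abelian groups. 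The compatibility of the two $T_\tx{sc}(\R)$-splittings has to be checked by a direct comparison: on the Adams--Vogan side it is induced by the universal cover map $\tilde T_\tx{sc}\to\tilde T$, on the $\infty$-side by the dual construction in Construction~\ref{cns:h-split}, and one must verify that the naturality built into Corollary~\ref{cor:avcomp_torus2} (i.e.\ functoriality in the torus, applied to $T_\tx{sc}\to T$) identifies them. The paper asserts this compatibility and relies on the naturality of the torus comparison, not on Kneser--Tits.
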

\begin{proof}
Let $\tilde G$ and $\tilde T$ be universal covers of the complex Lie groups $G$ and $T$. We have the following commutative diagram with exact rows and columns
\[ \xymatrix{
&1\ar[d]&1\ar[d]&1\ar[d]\\
1\ar[r]&\pi_1(T_\tx{sc})\ar[d]\ar[r]&\pi_1(T)\ar[r]\ar[d]&\pi_1(G)\ar[d]\ar[r]&1\\
1\ar[r]&\tilde T_\tx{sc}\ar[d]\ar[r]&G_\tx{sc} \rtimes \tilde T\ar[d]\ar[r]&\tilde G\ar[d]\ar[r]&1\\
1\ar[r]&T_\tx{sc}\ar[d]\ar[r]&G_\tx{sc} \rtimes T\ar[d]\ar[r]&G\ar[d]\ar[r]&1\\
&1&1&1
}
\]
For the exactness at the middle term, note that there a no non-trivial continuous homomorphisms $T_\tx{sc} \to \pi_1(G)$, since $T_\tx{sc}$ is connected and $\pi_1(G)$ is discrete.

Write $\tilde T_\tx{sc}(\R)$, $\tilde T(\R)$, and $\tilde G(\R)$, for the inverse images of $T_\tx{sc}(\R),$ $T(\R)$, and $G(\R)$, respectively. Since $G$ is quasi-split, $G_\tx{sc}(\R) \rtimes T(\R) \to G(\R)$ is surjective, and this implies the exactness of 
\[ 1 \to \tilde T_\tx{sc}(\R) \to G_\tx{sc}(\R) \rtimes \tilde T(\R) \to \tilde G(\R), \] 
which in turn implies the exactness of
\[ 1 \to T_\tx{sc}(\R) \to G_\tx{sc}(\R) \rtimes \frac{\tilde T(\R)}{\pi_1(T_\tx{sc})+(1-\sigma)\pi_1(T)} \to \frac{\tilde G(\R)}{(1-\sigma)\pi_1(G)} \to 1. \] 
The right term of the above sequence is by definition $G(\R)_\tx{AV}$. The middle term is the pushout of $T(\R)_\tx{AV}$ along $\pi_1(T)(\R) \to \pi_1(G)(\R)$, which we shall denote by $T(\R)_\tx{AV}^G$. Corollary \ref{cor:avcomp_torus2} gives a natural embedding $T(\R)_\tx{AV}^G \to T(\R)_\infty^G$, where we recall that $T(\R)_\infty^G$ is the pushout of $T(\R)_\infty$ along $\tilde\pi_1(T) \to \tilde\pi_1(G)$. The embedding $T(\R)_\tx{AV}^G \to T(\R)_\infty^G$ is compatible with the embeddings of $T_\tx{sc}(\R)$ into both sides.
This provides the desired embedding $G(\R)_\tx{AV} \to G(\R)_\infty$.
\end{proof}

\begin{cor}
There is a natural isomorphism from $G(\R)_\infty$ to the pushout of $G(\R)_\tx{AV}$ along the inclusion $\pi_1(G)(\R) \to \tilde\pi_1(G)$.
\end{cor}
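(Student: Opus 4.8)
The plan is to read this off directly from Corollary \ref{cor:avcomp} by invoking the universal property of the pushout together with the five lemma for central extensions of topological groups. First I would set up the pushout explicitly: writing $\iota : \pi_1(G)(\R) \to \tilde\pi_1(G)$ for the natural embedding of Proposition \ref{pro:avcomp_pi1}, the pushout $P$ of $G(\R)_\tx{AV}$ along $\iota$ is the quotient of $\tilde\pi_1(G) \times G(\R)_\tx{AV}$ by the closed central subgroup $\{(\iota(x)^{-1},x) \mid x \in \pi_1(G)(\R)\}$, equipped with the quotient topology. It then fits into a central extension $1 \to \tilde\pi_1(G) \to P \to G(\R) \to 1$ of locally compact groups, since $\tilde\pi_1(G)$ is compact and $G(\R)$ is locally compact.

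Next I would produce the comparison map. By Corollary \ref{cor:avcomp} there is a continuous homomorphism $G(\R)_\tx{AV} \to G(\R)_\infty$ covering the identity of $G(\R)$ whose restriction to $\pi_1(G)(\R)$ is $\iota$. Viewing the identity $\tilde\pi_1(G) \to \tilde\pi_1(G)$ as the inclusion of the kernel of $G(\R)_\infty$, the two homomorphisms $\tilde\pi_1(G) \to G(\R)_\infty$ and $G(\R)_\tx{AV} \to G(\R)_\infty$ agree on $\pi_1(G)(\R)$, so by the universal property of the pushout they glue to a continuous homomorphism $\Phi : P \to G(\R)_\infty$. By construction $\Phi$ is compatible with the projections onto $G(\R)$ and restricts to the identity on $\tilde\pi_1(G)$, i.e.\ it is a morphism of central extensions that is the identity on both the kernel $\tilde\pi_1(G)$ and the quotient $G(\R)$.

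Finally I would check that $\Phi$ is an isomorphism of topological groups. On the level of abstract groups this is the five lemma applied to the morphism of short exact sequences with identity maps on the two outer terms. To upgrade to a homeomorphism one observes that $\Phi$ is a continuous bijective homomorphism between two extensions of the same locally compact group by the same compact group $\tilde\pi_1(G)$; compactness of the kernel makes $\Phi$ a closed map, hence a homeomorphism (alternatively one may invoke the open mapping theorem \cite[Theorem 5.29]{HRv1e2}, as elsewhere in this paper). Naturality in $G$ follows from the naturality of Corollary \ref{cor:avcomp} together with the functoriality of the pushout construction. I do not anticipate any serious obstacle here; the only point requiring a little care is this last topological upgrade, and it is immediate from the compactness of $\tilde\pi_1(G)$.
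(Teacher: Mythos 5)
Your proof is correct. The paper leaves this corollary without an explicit proof, treating it as an immediate consequence of Corollary \ref{cor:avcomp}; your route — setting up the extension pushout $(\tilde\pi_1(G)\times G(\R)_\tx{AV})/\{(\iota(x)^{-1},x)\}$, invoking its universal property (legitimate here because the kernel inclusion $\tilde\pi_1(G)\to G(\R)_\infty$ lands in the center, so the two maps have commuting images), concluding by the short five lemma, and upgrading to a topological isomorphism via compactness of the kernel or the open mapping theorem \cite[Theorem 5.29]{HRv1e2} — is precisely the expected argument.
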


\section{Covers arising from certain subgroups of the $L$-group} \label{sec:l-covers}

Let $G$ be a connected reductive $F$-group, $\hat G$ its dual group (taken over an arbitrary algebraically closed field). In this section we will show that a subgroup $\mc{H} \subset {^LG}$ with certain properties leads naturally to a double cover $H(F)_\pm$ of $H(F)$ together with an isomorphism $^LH_\pm \to \mc{H}$. In order to obtain a closed theory, we will also consider more generally a subgroup $\mc{H} \subset {^LG_t}$, for any $t : \bar\pi_1(G) \to \mu_n$ and will show that this leads to a cover $H(F)_{t\pm}$ of $H$ and an isomorphism $^LH_{t\pm} \to \mc{H}$.

\subsection{The double cover associated to $\mc{H} \subset {^LG}$} \label{sub:l1-covers}

Let $^LG = \hat G \rtimes \Gamma$ the Galois form of the $L$-group.  We consider a subgroup $\mc{H} \subset {^LG}$, given up to conjugation by $\hat G$, satisfying the following.
\begin{prp}\ \\[-15pt] \label{prp:hdc}
\begin{enumerate}
	\item $\mc{H}$ is \emph{full}, i.e. its image under $^LG \to \Gamma$ is all of $\Gamma$;
	\item $\hat H = \mc{H} \cap \hat G$ is a connected reductive subgroup of the same rank as $\hat G$.
\end{enumerate}	
\end{prp}

We choose a $\Gamma$-pinning $(\hat T,\hat B,\{X_{\hat\alpha}\})$ of $\hat G$ and conjugate $\mc{H}$ by an element of $\hat G$ to ensure $\hat T \subset \hat H$. Then $\hat B^H := (\hat H \cap \hat B)^\circ$ is a Borel subgroup of $\hat H$ containing $\hat T$. For $\sigma \in \Gamma$, choose an arbitrary lift $h_\sigma \in \mc{H}$ that normalizes the pair $(\hat T,\hat B^H)$. Then $h_\sigma$ is well-defied up to left multiplication by $\hat T$ and therefore the automorphism $\sigma_H := \tx{Ad}(h_\sigma)$ of $\hat T$ does not depend on the choice of $h_\sigma$. We write $\hat T^H$ for the complex torus $\hat T$ equipped with $\Gamma$-action given by $\sigma \mapsto \sigma_H$.

We have $\sigma_H = \sigma_{H,G} \circ \sigma_G$, where $\sigma_G$ is the automorphism of $\hat T$ induced by $\tx{Ad}(1 \rtimes \sigma) \in \hat G \rtimes \Gamma$, and $\sigma_{H,G} \in \Omega(\hat T,\hat G)$. Let $n(\sigma_{H,G}) \in N(\hat T,\hat G)$ be the Tits lift of $\sigma_{H,G}$ with respect to the fixed pinning. Then $n(\sigma_{H,G}) \rtimes \sigma_G \in \hat G \rtimes \Gamma$ normalizes $\hat T$ and acts on it as $\sigma_H$. It follows that $n(\sigma_{H,G}) \rtimes \sigma_G$ and $h_\sigma$ are elements of $^LG$ that differ by left multiplication by an element of $\hat T$. Therefore
\begin{equation} \label{eq:tits-splitting}
\sigma \mapsto n(\sigma_{H,G}) \rtimes \sigma_G
\end{equation}
is a (usually non-homomorphic) section of the projection $\mc{H} \to \Gamma$. 

Recall \cite[Proposition 2.7.11]{BTBOOK} that the pinning of $\hat G$ determines a weak Chevalley system, i.e. in each root space $\tx{Lie}(\hat G)_\alpha$ a pair of non-zero elements that differ by a sign. We will say that a pinning of $\hat H$ is \emph{adapted} to the pinning of $\hat G$ if the associated Borel pair is $(\hat T,\hat B^H)$ and the associated simple root vectors belong to the weak Chevalley system. Thus, every two adapted pinnings of $\hat H$ differ from each other by changing signs in the various simple root vectors.

Let $\hat T^H_\tx{ad}=\hat T^H/Z(\hat H)$.

\begin{lem} \label{lem:modified-splitting}
Fix an adapted pinning of $\hat H$. There exists a unique element $c \in C^1(\Gamma,\hat T^H_\tx{ad}[2])$ such that the action of $\Gamma$ on $\hat H$ obtained from the section
\begin{equation} \label{eq:modified-splitting}
c^{-1}\cdot s : \Gamma \to \mc{H}/Z(\hat H),\qquad \sigma \mapsto c(\sigma)^{-1}n(\sigma_{H,G}) \rtimes \sigma_G
\end{equation}
preserves that pinning. The class of $c$ modulo $B^1(\Gamma,\hat T^H_\tx{ad}[2])$ is independent of the choice of adapted pinning.
\end{lem}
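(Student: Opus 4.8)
The plan is to produce the cochain $c$ by a root-by-root comparison of the section \eqref{eq:tits-splitting} with the desired pinning-preserving section. Fix an adapted pinning $(\hat T,\hat B^H,\{Y_{\hat\beta}\})$ of $\hat H$, where $\hat\beta$ runs over the simple roots of $(\hat H,\hat T,\hat B^H)$. For each $\sigma\in\Gamma$ the Tits lift $n(\sigma_{H,G})\rtimes\sigma_G$ normalizes $\hat T$ and, by construction of $\hat B^H$ and of the weak Chevalley system, sends $\hat B^H$ to $\hat B^H$ and sends the simple root spaces of $\hat H$ to the simple root spaces of $\hat H$. Hence $\tx{Ad}(n(\sigma_{H,G})\rtimes\sigma_G)$ carries $Y_{\hat\beta}$ to a scalar multiple $t_{\hat\beta}(\sigma)\cdot Y_{\sigma_H\hat\beta}$, and because both $Y_{\hat\beta}$ and $Y_{\sigma_H\hat\beta}$ lie in the weak Chevalley system of $\hat G$, the scalar $t_{\hat\beta}(\sigma)$ is a sign, i.e. lies in $\{\pm1\}$. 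First I would record that the map $\sigma\mapsto(t_{\hat\beta}(\sigma))_{\hat\beta}$, interpreted via the simple roots as an element of $\tx{Hom}(X_*(\hat T^H_\tx{ad}),\{\pm1\})=\hat T^H_\tx{ad}[2]$ — using that the simple roots of $\hat H$ form a basis of $X^*(\hat T^H_\tx{ad})\otimes\Q$ and pair integrally with $X_*(\hat T^H_\tx{ad})$ — defines the unique element $c(\sigma)\in\hat T^H_\tx{ad}[2]$ for which $\tx{Ad}(c(\sigma)^{-1}n(\sigma_{H,G})\rtimes\sigma_G)$ fixes every $Y_{\hat\beta}$; it visibly also fixes $(\hat T,\hat B^H)$, so the modified section \eqref{eq:modified-splitting} preserves the chosen adapted pinning. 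Uniqueness of $c(\sigma)$ for each $\sigma$ is clear, since two elements of $\mc H/Z(\hat H)$ lying over $\sigma_H$ and both fixing the pinning must coincide (the pinning has trivial automorphism group in $\hat H_\tx{ad}$).

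Next I would check that $c$ is a $1$-cochain for the twisted action, i.e. that $c\in C^1(\Gamma,\hat T^H_\tx{ad}[2])$ with $\Gamma$ acting through $\sigma\mapsto\sigma_H$; this is just the assertion that $c$ is a function $\Gamma\to\hat T^H_\tx{ad}[2]$, which requires no cocycle condition, so there is nothing further to verify here. (One does \emph{not} expect $c$ to be a cocycle; its coboundary measures the failure of \eqref{eq:tits-splitting} to be multiplicative, consistent with Lemma \ref{lem:modified-splitting} and the later use of $(z_p,[c_p])$ as a hypercocycle.)

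The remaining point is independence of the class $[c]$ modulo $B^1(\Gamma,\hat T^H_\tx{ad}[2])$ under the choice of adapted pinning. Any two adapted pinnings of $\hat H$ differ by replacing each $Y_{\hat\beta}$ by $\epsilon_{\hat\beta}Y_{\hat\beta}$ for signs $\epsilon_{\hat\beta}\in\{\pm1\}$; equivalently, by conjugating the pinning by a fixed element $u\in\hat T_\tx{ad}$ (in fact $u\in\hat T^H_\tx{ad}[2]$) whose value on the simple coroot $\hat\beta^\vee$ is $\epsilon_{\hat\beta}$. Conjugating \eqref{eq:modified-splitting} by $u$ replaces $c(\sigma)$ by $u\,c(\sigma)\,\sigma_H(u)^{-1}$, which differs from $c(\sigma)$ by the coboundary $\sigma\mapsto u\cdot\sigma_H(u)^{-1}$ in $B^1(\Gamma,\hat T^H_\tx{ad}[2])$. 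Hence $[c]$ is well-defined. The main obstacle, such as it is, is the bookkeeping: making precise the passage between ``signs attached to simple roots of $\hat H$'' and ``$2$-torsion points of $\hat T^H_\tx{ad}$'', and confirming that the signs $t_{\hat\beta}(\sigma)$ genuinely land in $\{\pm1\}$ rather than merely in $\C^\times$. Both follow from the fact that the fixed pinning of $\hat G$ underlies a weak Chevalley system \cite[Proposition 2.7.11]{BTBOOK}, so that $\tx{Ad}(n(\sigma_{H,G})\rtimes\sigma_G)$ permutes the $\pm$-pairs of Chevalley root vectors, together with the observation that the simple roots of $\hat H$ pair with $X_*(\hat T^H_\tx{ad})$ to give a perfect pairing over $\Z$ onto $\Z^{\tx{rk}}$ after identifying $\hat T^H_\tx{ad}$ with a product of copies of $\C^\times$ via the simple roots. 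I would present these identifications once, carefully, and then the three claims — existence, uniqueness, and well-definedness of $[c]$ — each reduce to a one-line verification.
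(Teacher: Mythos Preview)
Your proposal is correct and follows essentially the same approach as the paper: both construct $c(\sigma)$ from the signs by which $\tx{Ad}(n(\sigma_{H,G})\rtimes\sigma_G)$ moves the adapted simple root vectors (the paper packages this via fundamental coweights as $c(\sigma)=\prod_\alpha\check\omega_\alpha(\epsilon_\alpha)$, while you use the equivalent identification of $\hat T^H_\tx{ad}[2]$ through the simple-root basis of $X^*(\hat T^H_\tx{ad})$), invoke the weak Chevalley system to force $\epsilon_\alpha\in\{\pm1\}$, and handle independence of the adapted pinning by conjugating the modified section by an element of $\hat T^H_\tx{ad}[2]$ to produce a coboundary.
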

\begin{proof}
It is clear that \eqref{eq:tits-splitting} already preserves the Borel pair $(\hat T^H,\hat B^H)$. Therefore there exists a necessarily unique $c \in C^1(\Gamma,\hat T^H_\tx{ad})$ so that \eqref{eq:modified-splitting} preserves the chosen pinning of $\hat H$. Write $X_\alpha^H$ for the element of $\tx{Lie}(\hat H)_\alpha$ that belongs to the fixed adapted pinning. Since the Tits group preserves the weak Chevalley system (\cite[Definition 2.7.8]{BTBOOK}) of $\hat G$, the element $c$ is uniquely determined by the rule
\[ c(\sigma) = \prod_{\alpha} \check\omega_\alpha(\epsilon_\alpha), \]
where the product runs over the simple roots $\alpha \in \Delta(\hat T,\hat B^H)$, $\check\omega_\alpha$ is the corresponding fundamental coweight, and $\epsilon_\alpha$ is the scalar by which the two elements $X^H_\alpha$ and $n(\sigma_{H,G}) \rtimes \sigma_G \cdot X^H_{\sigma_H^{-1}\alpha}$ of $\tx{Lie}(\hat H)$ differ. Since both of these elements belong to the weak Chevalley system, we have $\epsilon_\alpha \in \{\pm 1\}$, and therefore $c \in C^1(\Gamma,\hat T^H_\tx{ad}[2])$. 

Another choice of an adapted pinning differs from the fixed one by $\tx{Ad}(t)$, where $t \in T^H_\tx{ad}[2]$ is of the form
\[ t = \prod_{\alpha} \check\omega_\alpha(\epsilon_\alpha) \]
for some $\epsilon_\alpha \in \{\pm 1\}$. For this choice, the corresponding splitting \eqref{eq:modified-splitting} would be given by $t \cdot c \cdot s \cdot t^{-1}$, which equals $t \cdot \sigma_H(t)^{-1} \cdot c \cdot s$, and we see that $c$ and $t \cdot \sigma_H(t)^{-1}\cdot c$ give the same class modulo $B^1(\Gamma,\hat T^H_\tx{ad}[2])$. 
\end{proof}

\begin{rem}
The inverse in \eqref{eq:modified-splitting} is just cosmetic, because $c^{-1}=c$, but we have kept it in order to have the formulas consistent.
\end{rem}

\begin{rem}
The value of $c$ can be computed explicitly using work of Kottwitz, cf. \cite[\S4]{Cas20}.
\end{rem}

The 2-cocycle of the Tits section \eqref{eq:tits-splitting} is an element $z \in Z^2(\Gamma,\hat T^H[2])$. Since the section \eqref{eq:modified-splitting} preserves a pinning, it is a group homomorphism. Therefore, class $[c] \in C^1(\Gamma,\hat T^H_\tx{ad}[2])/B^1(\Gamma,\hat T^H_\tx{ad}[2])$ provided by Lemma \ref{lem:modified-splitting} satisfies $\partial c=\bar z$. We thus obtain an element 
\begin{equation} \label{eq:h-tits}
t=x_{H,G}=(z,[c]) \in Z^2(\Gamma,\hat T^H[2] \to \hat T^H_\tx{ad}[2])/B^1(\Gamma,\hat T^H_\tx{ad}[2]).
\end{equation}
Let $H$ be the unique quasi-split $F$-group that is dual to $\hat H$ and whose rational structure is given by the homomorphism $\Gamma \to \tx{Out}(\hat H)=\tx{Out}(H)$ determined by the extension $\mc{H}$. Then $\hat T^H$ is identified with the dual of the universal torus $T^H$ of $H$ in such a way that the positive chamber in $X^*(T^H)$ equals the positive chamber in $X_*(\hat T^H)$ specified by $\hat B^H$. Via this identification, Construction \ref{cns:h-univ} provides a double cover $H(F)_\pm$ of $H(F)$.

The dual group of $H$ is naturally identified with $\hat H$, with $\Gamma$ acting on $\hat H$ via the splitting \eqref{eq:modified-splitting}. The $L$-group of $H(F)_\pm$ is thus identified via \eqref{eq:h-l2} with $\hat H \boxtimes_{z \cdot \partial x^{-1}} \Gamma$ for any $x \in C^1(\Gamma,\hat T^H)$ satisfying $[\bar x]=[c]$. Then

\begin{equation} \label{eq:liso}
^LH_\pm = \hat H \boxtimes_{z \cdot \partial x^{-1}} \Gamma \to \mc{H},\qquad h \boxtimes \sigma \mapsto h x^{-1}(\sigma) n(\sigma_{H,G})\rtimes \sigma_G	
\end{equation}
is an $L$-isomorphism, independent of the choice of $x$. Therefore, it induces an $L$-embedding $^LH_\pm \to {^LG}$.

\begin{rem}
There is an equivalent way to describe the double cover of $H(F)$ using \S\ref{sub:admset}. We have the admissible set $R^\vee(\hat T,\hat G) \subset X_*(\hat T^H) = X^*(T^H)$. It gives rise to a double cover $T^H(F)_\pm$. Note that it is the same double cover as for the admissible set $R^\vee(\hat T,\hat G) \sm R^\vee(\hat T,\hat H)$, because all elements of $R^\vee(\hat T,\hat H)$ are asymmetric.

The class $[c] \in C^1(\Gamma,\hat T^H_\tx{ad}[2])/B^1(\Gamma,\hat T^H_\tx{ad}[2])$ of Lemma \ref{lem:modified-splitting} is the parameter of a genuine sign character $\alpha : T^H_\tx{sc}(F)_\pm \to \{\pm 1\}$. Construction \ref{cns:hdc} provides a double cover of $H(F)$. It is easily seen that this is the double cover $H(F)_\pm$. 
\end{rem}

The following lemma shows that the reductive group $H$, the double cover $H(F)_\pm$, and the $L$-embedding $^LH_\pm \to {^LG}$ \eqref{eq:liso}, depend only on the $\hat G$-conjugacy class of $\mc{H}$, but not on the individual representative of that conjugacy class, or on the chosen pinning of $\hat G$. Write temporarily $^L\xi$ for \eqref{eq:liso}.

\begin{lem}
Let $(H,{^L\xi})$ and $(H',{^L\xi'})$ be two data constructed by choosing different representatives in the $\hat G$-conjugacy class of $\mc{H}$ and different $\Gamma$-pinnings of $\hat G$. There exists an $F$-isomorphism $f : H \to H'$ such that ${^L\xi'}={^L\xi}\circ {^Lf_\pm}$ up to $\hat G$-conjugation, where $f_\pm : H(F)_\pm \to H'(F)_\pm$ is the lift of $f$ as in Construction \ref{cns:hdc_auto}. Moreover, $f$ is unique in $\tx{Out}(H)(F)=\tx{Out}(\hat H)(F)$ up to conjugation by an element of $N_{\hat G}(\hat H)$.
\end{lem}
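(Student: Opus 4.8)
The plan is to track how the construction of $(H,{^L\xi})$ depends on the two choices made — a representative $\mc{H}$ in its $\hat G$-conjugacy class, and a $\Gamma$-pinning $(\hat T,\hat B,\{X_{\hat\alpha}\})$ of $\hat G$ — and show that any two outcomes are related by an isomorphism $f$ of the required form. First I would fix two sets of data and compare them. Since both constructions begin by conjugating $\mc{H}$ so that $\hat T$ lies in $\hat H$, and all $\Gamma$-pinnings of $\hat G$ are conjugate under $\hat G$, I may first use an element $g \in \hat G$ to arrange that the two chosen pinnings of $\hat G$ coincide and that both representatives of $\mc{H}$ are literally equal; conjugating everything by $g$ changes $^L\xi$ only by $\hat G$-conjugation, so this is harmless. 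After that reduction, the only remaining freedom is (a) the choice of lifts $h_\sigma \in \mc{H}$ normalizing $(\hat T,\hat B^H)$, which by construction does not affect $\sigma_H$ hence does not affect anything, and (b) the choice of an adapted pinning of $\hat H$, which by Lemma \ref{lem:modified-splitting} changes $c$ only within its class modulo $B^1(\Gamma,\hat T^H_\tx{ad}[2])$, hence changes the element $t=x_{H,G}$ of \eqref{eq:h-tits} not at all. Therefore, once the pinning of $\hat G$ and the representative of $\mc{H}$ are fixed, the triple $(\hat H,z,[c])$ and the $L$-isomorphism \eqref{eq:liso} are literally unchanged, and the only genuine source of non-uniqueness is the conjugation used in the reduction step.

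Next I would analyze precisely which conjugations are available. The element $g \in \hat G$ used to identify the two pinnings of $\hat G$ is determined up to left multiplication by $Z(\hat G)$ — in fact, having fixed both pinnings, $g$ is unique, since a pinned group has no pinning-preserving automorphisms beyond those coming from the center, and $Z(\hat G) \subset \hat T \subset \hat H$. But more is true: the representative of $\mc{H}$ in its conjugacy class is only given up to $\hat G$-conjugation, and after fixing the pinning of $\hat G$ the stabilizer of the condition "$\hat T \subset \hat H$ and $\hat B^H$ is the standard Borel" is exactly $N_{\hat G}(\hat H) \cap$ (normalizer of $(\hat T,\hat B^H)$) modulo the inner part — unwinding this, the ambiguity in how one realizes $\mc{H}$ with $\hat T \subset \hat H$ is an element of $N_{\hat G}(\hat H)$, acting on $\hat H$ by conjugation and hence inducing a (possibly outer) automorphism of $\hat H$ commuting with the $\Gamma$-structure. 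Dually this is an element of $\tx{Out}(\hat H)(F) = \tx{Out}(H)(F)$, and it is exactly this automorphism that we call $f$. So I would define $f$ as the $F$-isomorphism $H \to H'$ dual to the identification $\hat H \to \hat H'$ induced by the relevant element of $N_{\hat G}(\hat H)$, check that it is defined over $F$ because it commutes with the two $\Gamma$-actions coming from the sections \eqref{eq:modified-splitting} (which it does, since those actions are read off from $\mc{H}$ and $f$ intertwines the two copies of $\mc{H}$), and then verify that $^L f_\pm$, the lift to double covers from Construction \ref{cns:hdc_auto}, satisfies ${^L\xi'} = {^L\xi} \circ {^Lf_\pm}$ up to $\hat G$-conjugation simply by chasing the explicit formula \eqref{eq:liso}. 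The compatibility of $f$ with the double covers — i.e. that $f$ does lift via Construction \ref{cns:hdc_auto} — follows because $f$ fixes (the class of) the sign character $\alpha$, which in turn follows from the fact that $[c]$ is canonical.

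For the uniqueness clause I would argue as follows: suppose $f_1, f_2 : H \to H'$ both satisfy ${^L\xi'} = {^L\xi} \circ {^L f_{i,\pm}}$ up to $\hat G$-conjugation. Then ${^L f_{2,\pm}} \circ {^L f_{1,\pm}}^{-1}$ is an $L$-automorphism of $^LH'_\pm$ that becomes inner by $\hat G$ after composing with $^L\xi'$; passing to the dual side, $f_2 f_1^{-1}$ is an $F$-automorphism of $H'$ whose dual, conjugated by $^L\xi'$, lies in $\tx{Inn}(\hat G)$. Since $^L\xi'$ identifies $\hat H'$ with $\mc{H}' \cap \hat G = \hat H$, this means the corresponding element of $\tx{Aut}(\hat H)$ comes from conjugation by an element of $N_{\hat G}(\hat H)$ that normalizes the whole structure; quotienting by $\tx{Inn}(\hat H)$ we land in the image of $N_{\hat G}(\hat H) \to \tx{Out}(\hat H)$. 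Hence $f_2 f_1^{-1}$, as an element of $\tx{Out}(H)(F)$, is realized by an element of $N_{\hat G}(\hat H)$, which is exactly the stated uniqueness.

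The main obstacle I expect is the bookkeeping in the reduction step: making rigorous the claim that all the ambiguity in "realizing $\mc{H}$ with $\hat T \subset \hat H$ and standard Borel" is captured by $N_{\hat G}(\hat H)$ and nothing more, and that the resulting automorphism of $\hat H$ is genuinely $\Gamma$-equivariant (so that $f$ is defined over $F$) rather than merely $\Gamma$-equivariant up to an inner twist. This requires carefully separating the roles of $\hat T$-conjugation (which is invisible), $N_{\hat G}(\hat H) \cap N(\hat T,\hat B^H)$-conjugation, and the pinning choices, and confirming that the $\Gamma$-action transported through \eqref{eq:modified-splitting} is intertwined on the nose. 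The formula-chasing to get ${^L\xi'} = {^L\xi}\circ{^Lf_\pm}$ from \eqref{eq:liso}, and the compatibility of $f$ with the covers via the invariance of $[c]$, are by contrast routine once the conceptual picture is set up.
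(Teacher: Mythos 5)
Your reduction step is false, and this hides the real work of the proof. After fixing a single $\Gamma$-pinning of $\hat G$, you claim you can further conjugate so that the two representatives of $\mc{H}$ coincide literally. But the only elements of $\hat G$ preserving a fixed pinning lie in $Z(\hat G)$, and $Z(\hat G) \subset \hat H$, so conjugation by such elements does not move $\mc{H}$ at all. You therefore cannot simultaneously match the pinnings and make $\mc{H} = \mc{H}'$; after matching the pinnings, $\mc{H}$ and $\mc{H}'$ remain genuinely distinct subgroups (both containing $\hat T$), related only by some $g \in \hat G$ which does \emph{not} preserve the pinning. Once this is understood, your assertion that ``the triple $(\hat H,z,[c])$ and the $L$-isomorphism are literally unchanged'' collapses: conjugation by the element $n$ relating $\mc{H}$ and $\mc{H}'$ moves the positive system, hence changes the Tits cocycle $z$ by a nontrivial factor, and the adapted-pinning class $[c]$ changes too. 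The central content of the paper's proof is exactly the verification that this change is $\partial y$ for a specific $y = s_{q/p} \in C^1(\Gamma,\hat T[2])$, where $q(\alpha) = p(n\alpha)$; this is what makes the isomorphism $\xi_{[y]}$ of double covers, i.e.\ the lift $f_\pm$, intertwine the two constructions. That verification (identities \eqref{eq:id1}--\eqref{eq:id3}) requires the explicit gauge/cocycle computation with the table of eight cases, which your proposal dismisses as ``routine once the conceptual picture is set up.'' It is not a bookkeeping footnote; it is the proof.

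Relatedly, the claim that ``the compatibility of $f$ with the double covers\ldots follows because $f$ fixes the class of the sign character $\alpha$, which in turn follows from the fact that $[c]$ is canonical'' is circular: the canonicity of $[c]$ under change of representative of $\mc{H}$ (as opposed to under change of adapted pinning of a fixed $\hat H$, which is Lemma \ref{lem:modified-splitting}) is precisely identity \eqref{eq:id3} in the paper, derived from \eqref{eq:hh'1} by the lengthy manipulation of Tits factors. Your uniqueness discussion is in the right spirit — the residual freedom is the choice of Weyl element $w$ compatible with $(\hat T, \hat B\cap\hat H)$, which lands in $N_{\hat G}(\hat H)$ — but it sits on top of the gap above and would need the same computation to be made rigorous.
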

\begin{proof}
It is clear that if we conjugate both $\mc{H}$ and the pinning by the same element of $\hat G$ nothing will change. Since all $\Gamma$-pinnings of $\hat G$ are conjugate under $\hat G$, even under $\hat G^\Gamma$ (\cite[Corollary 1.7]{Kot84}), it is enough to fix the pining of $\hat G$ and consider two subgroups $\mc{H}$ and $\mc{H}'$ of $^LG$ that are conjugate under $\hat G$, and that both contain $\hat T$. 

Write $\hat H=\mc{H} \cap \hat G$ and $\hat H'=\mc{H}' \cap \hat G$. In the following paragraphs we are going to specify $n \in \hat G$ and $y \in C^1(\Gamma,\hat T[2])$ with the following properties.
\begin{enumerate}
	\item $\tx{Ad}(n)$ conjugates $\mc{H}$ to $\mc{H}'$ in such a way that the isomorphism $\hat H \to \hat H'$ it provides identifies a pinning of $\hat H$ with a pinning of $\hat H'$, both adapted to the chosen pinning of $\hat G$.
\end{enumerate}
In particular, $\tx{Ad}(n)\circ\sigma_H=\sigma_{H'}$. As before we write $\hat T^H$ resp. $\hat T^{H'}$ for $\hat T$ equipped with the $\Gamma$-action given by $\sigma_H$ resp. $\sigma_{H'}$, as well as $z \in Z^2(\Gamma,\hat T^H[2])$ resp. $z' \in Z^2(\Gamma,\hat T^{H'}[2])$ for 2-cocycle of the section \eqref{eq:tits-splitting} with respect to $\mc{H}$ resp. $\mc{H'}$.
\begin{enumerate}[resume]
	\item The identity 
	\begin{equation} \label{eq:id1}
	\tx{Ad}(n)(\partial y \cdot z)=z'
	\end{equation}
	holds in $Z^2(\Gamma,\hat T^{H'}[2])$.
	\item We can choose $x \in C^1(\Gamma,\hat T^H)$ and $x' \in C^1(\Gamma,\hat T^{H'})$ such that $[\bar x]=[c]$, $[\bar x']=[c']$, and the identities 
	\begin{equation} \label{eq:id2}
	n \cdot x(\sigma)^{-1}n(\sigma_{H,G})\rtimes\sigma_G \cdot n^{-1} = x'(\sigma)^{-1}n(\sigma_{H',G}) \rtimes \sigma_G	
	\end{equation}
	in $^LG$ and
	\begin{equation} \label{eq:id3}
	\tx{Ad}(n)([y] \cdot [x])=[x']
	\end{equation}
	in $C^1(\Gamma,\hat T^{H'})/B^1(\Gamma,\hat T^{H'})$ hold.
\end{enumerate}
For a moment, let us assume that this has been done and see how it completes the proof of the lemma. First, the identification via $\tx{Ad}(n)$ of the pinned groups $\hat H$ and $\hat H'$ leads to an identification of the quasi-split groups $H$ and $H'$. Second, identity \eqref{eq:id3} implies $\tx{Ad}(n)([y] \cdot [c])=[c']$ in $C^1(\Gamma,\hat T_\tx{ad}^{H'}[2])/B^1(\Gamma,\hat T_\tx{ad}^{H'}[2])$, which together with identity \eqref{eq:id1} means that the isomorphism
\[ Z^2(\Gamma,\hat T^H[2] \to \hat T^H_\tx{ad}[2])/B^1(\Gamma,\hat T^H_\tx{ad}[2]) \to Z^2(\Gamma,\hat T^{H'}[2] \to \hat T^{H'}_\tx{ad}[2])/B^1(\Gamma,\hat T^{H'}_\tx{ad}[2]) \]
effected by $\tx{Ad}(n)$ identifies $\partial y \cdot t$ with $t'$, where $t=(z,[c])$ is computed relative to $\mc{H}$ and $t'=(z',[c'])$ relative to $\mc{H'}$. The identification of $\partial y \cdot t$ and $t'$ leads to an identification of the double covers $H(F)_\pm$ and $H'(F)_\pm$, namely via the isomorphism $\tx{Ad}(n)$ and the isomorphism $\xi_{[y]}$. Finally, we argue that these identifications are compatible with the $L$-embeddings \eqref{eq:liso}, in the sense that for suitable lifts $x \in C^1(\Gamma,\hat T^H)$ and $x' \in C^1(\Gamma,\hat T^{H'})$ of $c$ and $c'$, respectively, the diagram
\[ \xymatrix{
	\hat H \boxtimes_{z \cdot \partial x^{-1}}\ar[r]\ar[d]_{\tx{Ad}(n)\boxtimes\tx{id}}\Gamma&\mc{H}\ar[d]^{\tx{Ad}(n)}\\
	\hat H' \boxtimes_{z' \cdot \partial x'^{-1}}\ar[r]\Gamma&\mc{H}'
}
\]
commutes. Indeed, identities \eqref{eq:id1} and \eqref{eq:id3} show that the left vertical map is an isomorphism, while the commutativity of the diagram follows from the formulas for the top and bottom map given by \eqref{eq:liso} and identity \eqref{eq:id2}.

We now begin with the specification of $n$ and $y$ and the proof of the three points above. To specify $n$, let $g \in \hat G$ be such that $\mc{H}'=g\mc{H}g^{-1}$. Then $(\hat T,\hat B \cap \hat H)$ and $(g^{-1}\hat Tg,g^{-1}(\hat B \cap \hat H')g)$ are two Borel pairs in $\hat H$, so there exists $h \in \hat H$ such that $(\hat T,\hat B \cap \hat H)=((gh)^{-1}\hat T(gh),(gh)^{-1}(\hat B \cap \hat H')(gh))$. The element $n=gh$ lies in $N(\hat T,\hat G)$ and satisfies $\mc{H}'=n\mc{H}n^{-1}$. The isomorphism $\tx{Ad}(n) : \hat H \to \hat H'$ identifies the Borel pair $(\hat T,\hat B \cap \hat H)$ with the Borel pair $(\hat T,\hat B \cap \hat H')$, but possibly does not yet identify adapted pinnings. To achieve this, we are going to adjust $n$ modulo $\hat T$ as follows. Let $w$ denote the image of $n$ in the Weyl group $N(\hat T,\hat G)/\hat T$. We demand that $n=n(w)$ is the Tits lift of $w$ with respect to the pinning of $\hat G$. This completes the specification of $n$. Since $n$ belongs to the Tits group relative to the chosen pinning of $\hat G$, $\tx{Ad}(n)$ preserves the associated weak Chevalley system (\cite[Definition 2.7.8]{BTBOOK}). Therefore, the image under $\tx{Ad}(n)$ of a pinning of $\hat H$ that is adapted to that of $\hat G$ is a pinning of $\hat H'$ that is again adapted to that of $\hat G$. We have thus proved point (1) above.

To specify $y$, we let $p : R^\vee(\hat T,\hat G) \to \{\pm 1\}$ be the gauge corresponding to $\hat B$, and note that $\tx{Ad}(n) : \hat T \to \hat T$ does \emph{not} preserve $p$. Let $q : R^\vee(\hat T,\hat G) \to \{\pm 1\}$ be defined by $q(\alpha)=p(n\alpha)$. Then $\tx{Ad}(n)$ identifies the gauge $q$ on its source with the gauge $p$ on its target. We define $y=s_{q/p}$, cf. \cite[\S3.3]{KalDC}.

Before we prove (2) and (3) we discuss the uniqueness of $n$ and $y$, and hence of the isomorphism $f$ in the statement of the lemma. Both $n$ and $y$ depend only on one single choice, namely that of $w$. In turn, $w$ can be changed only by multiplication on the left by an element $u$ of $N(\hat T,\hat G)/\hat T$ that preserves the root system of $\hat H$ and the chamber in it given by $\hat B \cap \hat H$. This implies that $u$ is fixed by $\sigma_H$

We now prove point (2) by establishing identity \eqref{eq:id1}. The cocycle $z$ is a Tits cocycle relative to the gauge $p$. We emphasize this by writing $z_p$. We have
\[ z_p(\sigma,\tau) = \prod_{\substack{\alpha>0 \\ \sigma_H^{-1}\alpha<0 \\ (\sigma\tau)_H^{-1}\alpha>0}} \alpha(-1), \]
where the product runs over all $\alpha \in R^\vee(\hat T,\hat G)$ and $\alpha>0$ means $p(\alpha)=+1$. Recall that $\partial s_{q/p}=z_q/z_p$. Therefore we want to check that $\tx{Ad}(n)z_q=z'_p$, where $z'_p$ is defined by the same formula, but with $\sigma_H$ replaced by $\sigma_{H'}$. The verification of this identity is immediate.

We now come to the proof of (3). Recall that a representative $c$ of $[c]$ is fixed by choosing a pinning of $\hat H$ that is adapted to the chosen pinning of $\hat G$. To specify a representative $c'$ of $[c']$ we need to do the same for $\hat H'$. Since $\tx{Ad}(n)$ maps a pinning of $\hat H$ adapted to that of $\hat G$ to a pinning of $\hat H'$ adapted to that of $\hat G$, we arrange the pinnings of $\hat H$ and $\hat H'$ to be compatible with $\tx{Ad}(n)$.

By construction $c \in C^1(\Gamma,\hat T^H_\tx{ad}[2])$ is such that $c(\sigma)^{-1}n(\sigma_{H,G})\rtimes \sigma_G$ preserves the chosen pinning of $\hat H$, and $c' \in C^1(\Gamma,\hat T^{H'}_\tx{ad}[2])$ is such that $c'(\sigma)^{-1}n(\sigma_{H',G})\rtimes \sigma_G$ preserves the chosen pinning of $\hat H'$. Therefore both $c(\sigma)^{-1}n(\sigma_{H,G})\rtimes \sigma_G$ and $n^{-1} \cdot c'(\sigma)^{-1}n(\sigma_{H',G})\rtimes\sigma_G \cdot n$ preserve the pinning of $\hat H$, and hence are equal elements of $\mc{H}/Z(\hat H)$. This allows us to choose $x$ and $x'$ in such a way that identity \eqref{eq:id2} holds.

It remains to prove identity \eqref{eq:id3}. This will require more work. We begin with the equality
\begin{equation} \label{eq:hh'1}
x(\sigma)^{-1}n(\sigma_{H,G})=n^{-1}x'(\sigma)^{-1}n(\sigma_{H',G})\sigma_G(n)
\end{equation}
in $\hat H$ that follows immediately from identity \eqref{eq:id2}. Recalling that $w \in \Omega(\hat T,\hat G)$ is the image of $n$, we have
\[ \sigma_{H',G}\sigma_G = \sigma_{H'} = w\sigma_H w^{-1} = w\sigma_{H,G}\sigma_G w^{-1} = w\sigma_{H,G}\sigma_G(w)^{-1}\sigma_G, \]
thus
\[ n(\sigma_{H',G})=z_p(w,\sigma_{H,G}\sigma_G(w)^{-1})n(w)z_p(\sigma_{H,G},\sigma_G(w)^{-1})n(\sigma_{H,G})n(\sigma_G(w)^{-1}). \]
Recall that we have $n=n(w)$. Since the Tits lifting is $\sigma_G$-equivariant, we have
\[ n(\sigma_G(w)^{-1})=\sigma_G(n(w^{-1}))=\sigma_G(z_p(w^{-1},w)n(w)^{-1}).\]
Substituting the last two displayed equations into \eqref{eq:hh'1} and using $\sigma_{H,G}\sigma_G=\sigma_H$ we obtain
\[ 
x(\sigma)^{-1}=w^{-1}x'(\sigma)^{-1} z_p(w,\sigma_{H,G}\sigma_G(w)^{-1})wz_p(\sigma_{H,G},\sigma_G(w)^{-1})\sigma_H(z_p(w^{-1},w)),
\]
which implies that $n^{-1}x'(\sigma)n \cdot x(\sigma)^{-1}$ equals
\begin{equation} \label{eq:hh'2}
n^{-1}z_p(w,\sigma_{H,G}\sigma_G(w)^{-1})n \cdot z_p(\sigma_{H,G},\sigma_G(w)^{-1})\cdot\sigma_H(z_p(w^{-1},w)).	
\end{equation}
Our goal is to show that this is cohomologous to $s_{q/p}$. The latter being cohomologous to $s_{p/q}=s_{p/q}^{-1}$, we are free to use either one, and $s_{p/q}$ turns out more convenient. We will show that the product of \eqref{eq:hh'2} with $s_{p/q}$ is cohomologically trivial.

The first of the three factors in \eqref{eq:hh'2} equals $(-1)$ raised to the sum over the set
\[ w^{-1}\{\alpha\,|\,p(\alpha)=+1,p(w^{-1}\alpha)=-1,p((w\sigma_{H,G}\sigma_G(w)^{-1})^{-1}\alpha)=+1\}. \]
This set equals
\[ \{\alpha\,|\,p(w\alpha)=+1,p(\alpha)=-1,p((\sigma_{H,G}\sigma_G(w)^{-1})^{-1}\alpha)=+1\}.\]
Note that $p(w\alpha)=q(\alpha)$. Note further that, since $\sigma_G$ preserves $p$, we have $p((\sigma_{H,G}\sigma_G(w)^{-1})^{-1}\alpha)=p(\sigma_G(w)\sigma_{H,G}^{-1}\alpha)=p(\sigma_G(w\sigma_H^{-1}\alpha))=p(w\sigma_H^{-1}\alpha)=q(\sigma_H^{-1}\alpha)$. With this, the above set becomes
\[ \{\alpha\,|\, q(\alpha)=+1,p(\alpha)=-1,q(\sigma_H^{-1}\alpha)=+1\}. \]
Consider now the second factor in \eqref{eq:hh'2}. It equals $(-1)$ raised to the sum over the set
\[ \{ \alpha\,|\, p(\alpha)=+1,p(\sigma_{H,G}^{-1}\alpha)=-1,p((\sigma_{H,G}\sigma_G(w)^{-1})^{-1}\alpha)=+1\}, \]
which by an analogous argument equals
\[ \{ \alpha\,|\, p(\alpha)=+1,p(\sigma_H^{-1}\alpha)=-1,q(\sigma_H^{-1}\alpha)=+1\}. \]
Note that the corresponding sets in both the first and second factors of \eqref{eq:hh'2} are sets of roots determined by conditions on $p(\alpha)$, $p(\sigma_H^{-1}\alpha)$, $q(\alpha)$, and $q(\sigma_H^{-1}\alpha)$. This is also true for the sets of roots that enter the definition of $s_{q/p}$. Moreover, in all these sets we have the condition $q(\sigma_H^{-1}\alpha)=+1$. We list the contributions to the first and second factor of \eqref{eq:hh'2}, and to $s_{q/p}$, in the following table.

\begin{center}
\begin{tabular}{|c|c|c|c|c|c|}
\hline
$p(\alpha)$&$q(\alpha)$&$p(\sigma_H^{-1}\alpha)$&$(1)$&$(2)$&$s_{p/q}$\\
\hline
$+$&$+$&$+$&&&\\
$+$&$+$&$-$&&\checkmark&\checkmark\\
$+$&$-$&$+$&&&\checkmark\\
$+$&$-$&$-$&&\checkmark&\\
$-$&$+$&$+$&\checkmark&&\\
$-$&$+$&$-$&\checkmark&&\\
$-$&$-$&$+$&&&\\
$-$&$-$&$-$&&&\\
\hline
\end{tabular}	
\end{center}

To compute the  product of the first two factors of \eqref{eq:hh'2} and $s_{p/q}$ we must compute the product of $(-1)$ raised to the sum of roots in the various sets determined by the above table. Note that any set can be replaced by its negative, because that will not change the value of the image of $(-1)$ under the sum of its elements.

The set labelled with $(+,+,-)$ appears both in the second factor and in $s_{p/q}$, so its contribution vanishes. The conditions $(+,-,+)$ and $(-,+,-)$ are negatives of each other. But remember that in both cases the condition $q(\sigma_H^{-1}\alpha)=+1$ is also present. Therefore, replacing the set labelled by $(-,+,-)$ by its negative and combining it with the set labelled by $(+,-,+)$ we obtained the set
\[ \{\alpha\,|\, p(\alpha)=+1,q(\alpha)=-1,p(\sigma_H^{-1}\alpha)=+1\} \]
and there is now no condition on $q(\sigma_H^{-1}\alpha)$. We do the same procedure to the sets labelled with $(+,-,-)$ and $(-,+,+)$ and obtain the set
\[ \{(\alpha\,|\, p(\alpha)=+1,q(\alpha)=-1,p(\sigma_H^{-1}\alpha)=-1\}. \]
Finally, the last two displayed sets combine to the set
\[ \{(\alpha\,|\, p(\alpha)=+1,q(\alpha)=-1\}.\]
We conclude that the product of the first two factors of \eqref{eq:hh'2} with $s_{p/q}$ equals $z_p(w^{-1},w)=z_p(w^{-1},w)^{-1}$. Thus the product of \eqref{eq:hh'2} with $s_{p/q}$ equals
\[ z_p(w^{-1},w)^{-1} \cdot \sigma_H(z_p(w^{-1},w)) \]
and is thus a coboundary.
\end{proof}

\subsection{The cover associated to $\mc{H} \subset {^LG_{x_G}}$} \label{sub:l2-covers}

Here we consider a slight generalization of the preceding subsection. Instead of the $L$-group $\hat G \rtimes \Gamma$ of $G$ we consider the $L$-group $^LG_{x_G}$ of the cover $G(F)_{x_G}$ associated to some $x_G : \bar\pi_1(G) \to \mu_n(\C)$. 

Assume given a subgroup $\mc{H} \subset {^LG_{x_G}}$, up to $\hat G$-conjugation, satisfying Properties \ref{prp:hdc}. We want to associate a quasi-split connected reductive $F$-group $H$, a cover $H(F)_{x_H}$ of $H(F)$, and an isomorphism $^LH_{x_H} \to \mc{H}$.

The first basic observation is that giving $\mc{H}$ is equivalent to giving a subgroup $\mc{\bar H}$ of $^L(G_\tx{sc})={^LG}/Z(\hat G)$ satisfying Properties \ref{prp:hdc} with respect to $G_\tx{sc}$. Indeed, $\mc{\bar H}=\mc{H}/Z(\hat G)$ and $\mc{H}$ is the preimage of $\mc{\bar H}$ in $^LG$. The equivalent statement holds with $^LG_{x_G}$ in place of $^LG$. However, we have the identification $^LG_{x_G}/Z(\hat G) = {^LG}/Z(\hat G)$. Therefore, $\mc{H} \subset {^LG}_{x_G}$ determines $\mc{H'} \subset {^LG}$ satisfying Properties \ref{prp:hdc}.

The construction of the previous subsection applies to $\mc{H'}$ and produces a quasi-split reductive group $H$, and a character $x_{H,G} : \bar\pi_1(H) \to \{\pm1\}$, hence a double cover $H(F)_{x_{H,G}}$ and an $L$-embedding $^LH_{x_{H,G}} \to \hat G \rtimes \Gamma$ whose image is $\mc{H}'$. 

For a moment we assume that $x_G$ factors through $\tilde\pi_1(G)$. The inclusions $Z(\hat G) \to Z(\hat H) \to \hat T^H$ dualize to surjections $\bar\pi_1(H) \to \tilde\pi_1(H) \to \tilde\pi_1(G)$ via which we pull back $x_G : \tilde\pi_1(G) \to \mu_n(\C)$ and define $x_H : \bar\pi_1(H) \to \mu_m(\C)$ to be the product of $x_G$ and $x_{H,G}$, where $m$ is the least common multiple of $n$ and $2$. Then the $L$-embedding $^LH_{x_{H,G}} \to {^LG}$, composed with the identifications of sets $^LG = \hat G \boxtimes_{x_G} \Gamma$ and $^LH_{x_H} = \hat H \boxtimes_{x_{H,G}}\Gamma$, becomes an isomorphism of groups $^LH_{x_H} \to \mc{H}$.

The arguments of \S\ref{sub:elementary} now extend this construction to the case of a general $x_G : \bar\pi_1(G) \to \mu_n(\C)$. More explicitly, these arguments, combined with \eqref{eq:liso}, give the following formula for the $L$-isomorphism $^LH_{x_H} \to \mc{H}$
\begin{equation} \label{eq:liso1}
\hat H \boxtimes_{z_{H,G} \partial x^{-1} \cdot z_G \partial x_G^{-1}} \Gamma \to \hat G \boxtimes_{z_G \partial x_G^{-1}} \Gamma,\quad h \boxtimes\sigma \mapsto h x(\sigma)^{-1}n(\sigma_{H,G}) \boxtimes\sigma_G
\end{equation}
where we have written $x_{H,G}=(z_{H,G},c)$, $x \in C^1(\Gamma,\hat T^H)$ is any lift of $c \in C^1(\Gamma,\hat T^H_\tx{ad})$, $x_G=(z_G,c_G)$, and $x_G \in C^1(\Gamma,\hat T)$ is any lift of $c_G \in C^1(\Gamma,\hat T_\tx{ad})$. We are taking the limit over all $x_G$ and all $x$. On the right, this limit is $^LG_{x_G}$ according to \eqref{eq:h-l2}. On the left, we define the limit to be $^LH_{x_H}$.

\subsection{Relative position of embeddings $H \from S \to G$} \label{sub:rel2}

As in the previous subsection, we consider a (possibly trivial) character $x_G : \bar\pi_1(G) \to \mu_n$ and a $\hat G$-conjugacy class of subgroups $\mc{H} \subset {^LG}_{x_G}$ satisfying Properties \ref{prp:hdc}. Let $H$ be the associated quasi-split group, and $^L\xi_{H,G} : {^LH}_{x_H} \to {^LG}_{x_G}$ the associated $\hat G$-conjugacy class of $L$-embeddings, as in \S\ref{sub:l2-covers}. We write $\hat\xi_{H,G} : \hat H \to \hat G$ for the restriction of $^L\xi_{H,G}$ to $\hat H$, a $\hat G$-conjugacy class of embeddings $\hat H \to \hat G$.

Consider in addition an $F$-torus $S$ equipped with a $\Gamma$-invariant $G(\bar F)$\-conjugacy class of embeddings $\xi_{S,G} : S \to G$, and a $\Gamma$-invariant $H(\bar F)$\-conjugacy class of embeddings $\xi_{S,H} : S \to H$. As discussed in \cite[\S5.1]{KalRSP} these determine a $\Gamma$-invariant $\hat G$-conjugacy class of embeddings $\hat\xi_{S,G} : \hat S \to \hat G$, and a $\Gamma$-invariant $\hat H$-conjugacy class of embeddings $\hat\xi_{S,H} : \hat S \to \hat H$. We assume that $\hat\xi_{S,G} = \hat\xi_{H,G} \circ \hat\xi_{S,H}$, up to $\hat G$-conjugacy.

\begin{cns} \label{cns:rel2}
We will associate to this data a cover 
\[ S(F)_{G} \oplus S(F)_{-H} \oplus S(F)_{x_G} \oplus S(F)_{-x_H} \] 
of $S(F)$ and a genuine character $\<\tx{inv}(\xi_{S,H},\xi_{S,G}),-\>$ of that cover.

For any $\xi_{S,G}$ within its $G(\bar F)$-conjugacy class let $R(S,G) \subset X^*(S)$ be the preimage of the absolute root system $R(\xi_{S,G}(S),G)$. Let $R(S,H) \subset X^*(S)$ be the analogous set for $H$ in place of $G$. The covers $S(F)_G$ and $S(F)_H$ of $S(F)$ are the double covers corresponding to these admissible sets as constructed in \cite{KalDC} and reviewed in \S\ref{sub:admset}. We are denoting by $S(F)_{-H}$ the Baer inverse of $S(F)_H$, which however is canonically isomorphic to $S(F)_H$ since the degree of this cover is $2$.

For any $\xi_{S,G} : S \to G$ within the given $G(\bar F)$-conjugacy class that is defined over $F$ Construction \ref{cns:s-t} provides a cover $S(F)_{x_G}$ and an embedding $S(F)_{x_G}\to G(F)_{x_G}$, which we will again denote by $\xi_{S,G}$. We note that the cover $S(F)_{x_G}$ does not depend on the chosen embedding within the given conjugacy class, because all of these embeddings induce the same map $Z(\hat G) \to \hat S$. In the same way, we obtain the cover $S(F)_{x_H}$ and, for each $\xi_{S,H} : S \to H$ defined over $F$ withing the given $H(\bar H)$-conjugacy class, a lift $S(F)_{x_H} \to H(F)_{x_H}$, which we will again denote by $\xi_{S,H}$. We denote by $S(F)_{-x_H}$ the Baer inverse of $S(F)_{x_H}$.

Note that $S(F)_G \oplus S(F)_{-H}$ is the double cover associated to the admissible set $R(S,G) \sm R(S,H)$ and $S(F)_{x_G} \oplus S(F)_{-x_H}$ is the Baer inverse of the double cover associated to the character $x_{H,G}=x_H/x_G$ of \eqref{eq:h-tits}.

We now construct the genuine character $\<\tx{inv}(\xi_{S,H},\xi_{S,G}),-\>$. There is a canonical $\hat G$\-conjugacy class of $L$\-embeddings $^L\xi_{S,G} : {^LS}_G \to {^LG}$, and a canonical $\hat H$\-conjugacy class of $L$\-embeddings $^L\xi_{S,H} : {^LS}_H \to {^LH}$, cf. \cite[\S4.1]{KalDC}. The genuine character  will measure the difference between ${^L\xi}_{S,G}$ and ${^L\xi}_{H,G} \circ {^L\xi}_{S,H}$. But we first have to make sense of this composition and of the comparison.

Let us recall from Proposition \ref{pro:dc_comp} that $S(F)_G$ is given as a limit of a system $S(F)_{t^{S,G}_p,2}$ of double covers associated to Tits cocycles $t^{S,G}_p$, the limit being taken over the set of gauges $p : R(S,G) \to \{\pm1\}$. The $L$-group $^LS_G$ is also obtained as the limit of $^LS_{t^{S,G}_p}$. The same is true for $S(F)_H$, where we'll use the notation $t^{S,H}_p$ for the corresponding Tits cocycle. Since $R(S,H) \subset R(S,G)$, a gauge for $R(S,G)$ restricts to a gauge for $R(S,H)$, so we may take the limits for both $G$ and $H$ over the set of gauges for $R(S,G)$.

The homomorphism $^L\xi_{S,G} : {^LS}_{t^{S,G}_p} \to {^LG}$ becomes, via Construction \ref{cns:lemb-s-t}, a homomorphism $^LS_{x_G \cdot t^{S,G}_p} \to {^LG}_{x_G}$, which we still denote by $^L\xi_{S,G}$. In the same way, we obtain from $^L\xi_{S,H}$ a homomorphism $^LS_{x_H \cdot t^{S,H}_p} \to {^LH}_{x_H}$, which we again denote by $^L\xi_{S,H}$. The composition ${^L\xi_{H,G}} \circ {^L\xi_{S,H}}$ is now an $L$\-embedding $^LS_{x_H \cdot t_p^{S,H}} \to {^LG}_{x_G}$. 

To compare $^L\xi_{S,G} : {^LS}_{x_G \cdot t^{S,G}_p} \to {^LG}_{x_G}$ with ${^L\xi_{H,G}} \circ {^L\xi_{S,H}} : {^LS}_{x_H \cdot t_p^{S,H}} \to {^LG}_{x_G}$, both of which depend on $p$ and are given up to conjugation by $\hat G$, we first arrange that we are using the same $p$ for both, and arrange by $\hat G$-conjugacy that both restrict to the same embedding $\hat S \to \hat G$. Then Proposition \ref{pro:lemb-comp} provides an $L$-isomorphism $^L\eta : ^LS_{x_H \cdot t_p^{S,H}} \to {^LS}_{x_G \cdot t_p^{S,G}}$  extending the identity on $\hat S$ such that ${^L\xi_{H,G}} \circ {^L\xi_{S,H}} = {^L\xi_{S,G}} \circ {^L\eta}$, and dually a genuine character $\eta$ of $S(F)_G \oplus S(F)_{-H} \oplus S(F)_{x_G} \oplus S(F)_{-x_H}$.
\end{cns}

\section{Endoscopic data and transfer factors} \label{sec:endo}

\subsection{Relative position of cover element and pinning} \label{sub:rel1}

A classical object of endoscopy is the cohomological invariant that measures the relative position of two stably conjugate (strongly regular) semi-simple elements of a given connected reductive group $G$; it is denoted by $\tx{inv}(\gamma,\gamma')$ in \cite[\S5.6]{Kot86}. For transfer between different groups, a finer invariant becomes necessary. Such a finer invariant appears implicitly in the construction of transfer factors, \cite[\S2.3]{LS87}. Using the double covers of tori introduced in \cite{KalDC}, we can make this invariant explicit. It takes the form of an invariant between an element of a cover of a maximal torus, and a pinning of the quasi-split form. 

Let $G$ be a connected reductive $F$-group and let $S \subset G$ be a maximal torus. Recall that the double cover $S(F)_\pm$ was defined in \cite[Definition 3.1]{KalDC} as the quotient of the ``big cover'' $S(F)_{\pm\pm}$, by pushing out under the multiplication map $\prod_O \{\pm1\} \to \{\pm1\}$. Let us call elements $\tilde\delta \in S(F)_\pm$ ``regular'', if their image in $S(F)$ is regular. Given a regular element $\tilde\delta \in S(F)_{\pm\pm}$ and a pinning of the quasi-split form of $G$, called pin, we will define in this section an element
\begin{equation} \label{eq:splinv}
\tx{inv}(\tilde\delta,\tx{pin})
\end{equation}
that lies in $H^1(F,S)$ when $G$ is quasi-split, or a pure inner twist of its quasi-split form, and more generally in $H^1(P \to \mc{E},Z(G) \to S)$. This construction will be a reinterpretation of work of Langlands--Shelstad from \cite{LS87}.

Before we explain the construction of this invariant we will review its Lie algebra version, which is more intuitive. Assume first that $G$ is quasi-split and fix an $F$-pinning $\tx{pin}=(T,B,\{X_\alpha\})$. Write $\mf{s}$ for the Lie algebra of $S$. For a regular semi-simple $Y$ we will define an element of $H^1(F,S)$ (when $G$ is quasi-split) and more generally $H^1(P \to \mc{E},Z(G) \to S)$ (when $G$ is general), which we will call
\begin{equation} \label{eq:splinv-la}
\tx{inv}(Y,\tx{pin}).
\end{equation}
This definition depends on the Kostant section, which we briefly review. For a simple root $\alpha$ define $X_{-\alpha}$ by $[X_\alpha,X_{-\alpha}]=H_\alpha$ and $X_-=\sum_\alpha X_{-\alpha}$. Then $X_-$ is a regular nilpotent element in $\tx{Lie}(G)(F)$. Write $\mf{b}$ and $\mf{t}$ for the Lie algebras of $B$ and $T$, respectively, $U$ for the unipotent radical of $B$, and $\Omega$ for the Weyl group. A result of Kostant \cite{Kos63}, summarized in \cite[\S2.4]{Kot99}, shows that $\mf{b} + X_- \to \mf{t}/\Omega$ is a principal $U$-bundle that meets every regular $G$-conjugacy class in $\mf{g}$ and admits sections. Passing to rational points and using $H^1(F,U)=\{0\}$, we obtain the principal $U(F)$-bundle $\mf{b}(F) + X_- \to (\mf{t}/\Omega)(F)$ that meets every stable conjugacy class in $\mf{g}(F)$ and admits sections. Therefore, every stable conjugacy class of regular semi-simple elements of $\mf{g}(F)$ meets $\mf{b}(F) + X_-$ in a set of elements that are all conjugate to each other under $U(F)$. Let $\mf{s}$ be the Lie algebra of $S$. Given a regular semi-simple $Y \in \mf{s}(F)$ there is $Y' \in \mf{b}(F) + X_-$, unique up to $U(F)$-conjugacy, that is stably conjugate to $Y$. Let $g \in G(F^s)$ be such that $\tx{Ad}(g)Y=Y'$. Then $\sigma \mapsto g^{-1}\sigma(g)$ is an element of $Z^1(F,S)$ whose class in $H^1(F,S)$ is independent of the choices of $Y'$ and $g$, and is the desired invariant \eqref{eq:splinv-la}.

More generally, assume that $G$ is given as a rigid inner twist $(G,\xi,z)$ of a quasi-split group $G^*$. Fix a pinning $\tx{pin}=(T,B,\{X_\alpha\})$ of $G^*$ as before. Given a regular element $Y \in \mf{s}(F)$, the $G(F^s)$-conjugacy class of $\xi^{-1}(Y) \in \mf{g}(F^s)$ is defined over $F$, so some element $Y'$ of it lies in $\mf{b}(F)+X_-$. Let $g \in G(F^s)$ be such that $\tx{Ad}(g)\xi^{-1}(Y)=Y'$. Then $e \mapsto g^{-1}z_e\sigma_e(g)$ is an element of $Z^1(P \to \mc{E},Z(G) \to S)$ whose class is independent of the choices of $Y'$ and $g$, and is the desired invariant \eqref{eq:splinv-la}.

This concludes the review of the invariant for Lie algebras. We now come to the construction of the invariant \eqref{eq:splinv} for groups. For this, we recall that elements of $S(F)_{\pm\pm}$ were described explicitly in \cite[Remark 3.3]{KalDC}, according to which some elements are given by tuples $\tilde\delta=(\delta,(\delta_\alpha)_{\alpha \in R(S,G)})$ such that $\delta \in S(F)$, $\delta_\alpha \in F_\alpha^\times$, $\sigma(\delta_\alpha)=\delta_{\sigma\alpha}$ for all $\sigma \in \Gamma$, and $\delta_\alpha/\delta_{-\alpha}=\alpha(\delta)$. While such tuples don't describe all elements of $S(F)_{\pm\pm}$, they describe sufficiently many, in the following sense. For each asymmetric $\Sigma$-orbit $O \subset R(S,G)$ the non-trivial element $\epsilon_O$ of the kernel of $J_O(F)_\pm \to J_O(F)$ naturally lies in $S(F)_{\pm\pm}$, and any element of $S(F)_{\pm\pm}$ can be obtained from a tuple as above by multiplying it by a product of $\epsilon_O$ for suitable asymmetric $O$. We will define our invariant to be constant under multiplication by $\epsilon_O$ for asymmetric $O$, and will therefore focus on elements of $S(F)_{\pm\pm}$ describable by tuples $\tilde\delta=(\delta,(\delta_\alpha)_{\alpha \in R(S,G)})$.

Assume first that $G$ is quasi-split and fix an $F$-pinning $\tx{pin}=(T,B,\{X_\alpha\})$. Choose $g \in G(F^s)$ such that $gTg^{-1}=S$. Then $\sigma \mapsto g^{-1}\sigma(g)$ is an element of $Z^1(\Gamma,N(T,G)(F^s))$. Let $\omega_\sigma \in Z^1(\Gamma,\Omega(T,G)(F^s))$ denote its image, where $\Omega(T,G)=N(T,G)/T$ is the Weyl group. Write $\sigma_T$ for the action of $\sigma \in \Gamma$ on $T(F^s)$ via the rational structure of $T$, and write $\sigma_S = \tx{Ad}(\omega_\sigma)\circ \sigma_T$. Then $\tx{Ad}(g) : T \to S$ translates the action of $\sigma_S$ on $T(F^s)$ to action of $\sigma$ on $S(F^s)$ coming from the rational structure of $S$. For $\sigma$, let $n(\omega_\sigma) \in N(T,G)(F^s)$ denote the Tits lift of $\omega_\sigma$  with respect to the pinning. Then
\begin{equation}
x_\sigma(\tilde\delta) := \prod_{\substack{\alpha>0\\ \sigma_S^{-1}\alpha<0}} \alpha^\vee(\delta_{g\alpha}-\delta_{-g\alpha}) n(\omega_\sigma) 	
\end{equation}
also lies in $Z^1(\Gamma,N(T,G)(F^s))$ according to \cite[Lemma 2.2.A]{LS87}. It follows that
\begin{equation} 
\sigma \mapsto g\cdot (x_\sigma(\tilde\delta) \cdot (g^{-1}\sigma(g))^{-1}) \cdot g^{-1}
\end{equation}
is an element of $Z^1(\Gamma,S(F^s))$. According to \cite[(2.3.3)]{LS87} the class of this element is independent of the choice of $g$, and is the desired invariant \eqref{eq:splinv}.

We now drop the assumption that $G$ is quasi-split. Instead, we consider a rigid inner twist $(\xi,z) : G^* \to G$ in the sense of \cite[\S5.1]{KalRI} or \cite[\S7.1]{Dillery20}, where $G^*$ is quasi-split. We fix a pinning $\tx{pin}=(T,B,\{X_\alpha\})$ of $G^*$. Let $S \subset G$ be a maximal torus. Then we obtain
\begin{equation} 
\tx{inv}((\xi,z,\tilde\delta),\tx{pin}) \in H^1_\tx{bas}(\mc{E},S)	
\end{equation}
represented by
\begin{equation}
e \mapsto \xi(g\cdot (x_{\sigma_e}(\tilde\delta) \cdot (g^{-1}z_e\sigma_e(g))^{-1}) \cdot g^{-1}),
\end{equation}
where $g \in G^*(F^s)$ has the property that $\xi(gTg^{-1})=S$. Note that $g^{-1}z_e \sigma_e(g)$ is an element of $Z^1(\mc{E},N(T,G)(F^s))$ whose image in $Z^1(\mc{E},\Omega(T,G)(F^s))$ lies in $Z^1(\Gamma,\Omega(T,G)(F^s))$ and we denote by $\omega_\sigma$. The construction of $x_\sigma$ remains the same. This completes the definition of the invariant \eqref{eq:splinv}.

Note that here we are using the notation $g^{-1}z_e\sigma_e(g)$ for the characteristic-zero case. In the case of local function fields, the notation should be changed to $p_1^{-1}(g)zp_2(g)$, cf. \cite[\S7.1]{Dillery20}. Since the arguments we will be making in this section remain the same, owing to the material developed in \cite{Dillery20}, we will avoid introducing the positive-characteristic notation, and give the arguments in the characteristic-zero notation.

We now examine how the invariant depends on the choice of $\tilde\delta$ with fixed $\delta$. Another choice is of the form $\eta\tilde\delta = (\delta,(\delta_\alpha\eta_\alpha)_{\alpha \in R(S,G)})$, where $\eta=(\eta_\alpha)_{\alpha \in R(S,G)}$ is a collection of elements $\eta_\alpha \in F_{\pm\alpha}^\times$ such that $\sigma(\eta_\alpha)=\eta_{\sigma\alpha}$ for all $\sigma \in \Gamma$ and $\eta_{-\alpha}=\eta_\alpha$. Recall that for each $\Sigma$-orbit $O \subset R(S,G)$ the $F$-torus $J_O$ is equipped with homomorphisms $J_O \to S_\tx{sc}$ and $S_\tx{ad} \to J_O$. The subcollection $(\eta_\alpha)_{\alpha \in O}$ provides an element of $H^1(F,J_O)=F_{\pm\alpha}^\times/N_{F_\alpha/F_{\pm\alpha}}(F_\alpha^\times)$. Note that this quotient is isomorphic to $\Z/2\Z$ when $O$ is symmetric, and trivial when $O$ is asymmetric. We'll write $\eta_O$ for this element of $H^1(F,J_O)$ as well as his images in $H^1(F,S_\tx{sc})$ and $H^1(F,S)$.

\begin{lem} \label{lem:rel1_gen1}
\[ \tx{inv}((\xi,z,\eta\tilde\delta),\tx{pin}) = \tx{inv}((\xi,z,\tilde\delta),\tx{pin}) \cdot \prod_{O \in R(S,G)_\tx{sym}/\Gamma} \eta_O. \]
\end{lem}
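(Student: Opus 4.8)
The plan is to reduce the statement to a local computation on each symmetric $\Gamma$-orbit $O$ and then apply the description of the cocycle $x_\sigma(\tilde\delta)$ given above. First I would fix the pinning, the element $g \in G^*(F^s)$ with $\xi(gTg^{-1})=S$, and the Weyl cocycle $\omega_\sigma$, all of which are common to both $\tilde\delta$ and $\eta\tilde\delta$. The only part of the formula for $x_\sigma(\tilde\delta)$ that changes is the product $\prod_{\alpha>0,\ \sigma_S^{-1}\alpha<0} \alpha^\vee(\delta_{g\alpha}-\delta_{-g\alpha})$. Replacing $\delta_\alpha$ by $\delta_\alpha \eta_\alpha$ changes the factor $\delta_{g\alpha}-\delta_{-g\alpha}$; using $\eta_{-\alpha}=\eta_\alpha$ we have $\delta_{g\alpha}\eta_{g\alpha} - \delta_{-g\alpha}\eta_{-g\alpha} = \eta_{g\alpha}(\delta_{g\alpha}-\delta_{-g\alpha})$, so the two cocycles differ by the multiplicative factor $\prod_{\alpha>0,\ \sigma_S^{-1}\alpha<0} \alpha^\vee(\eta_{g\alpha})$, viewed as a $1$-cochain valued in $S(F^s)$ after transport by $\tx{Ad}(g)$.

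The key step is then to identify the cohomology class of this difference cochain $\sigma \mapsto \prod_{\alpha>0,\ \sigma_S^{-1}\alpha<0} \alpha^\vee(\eta_{g\alpha})$ with $\prod_{O \in R(S,G)_\tx{sym}/\Gamma} \eta_O$. I would argue as follows. First, the asymmetric orbits contribute trivially: for such $O$, $H^1(F,J_O)$ is trivial and, more concretely, one can choose the $\eta_\alpha$ on $O$ to be all equal to $1$, so they do not affect the class; this is why the sum on the right runs only over symmetric orbits, matching the remark before the lemma that the invariant is insensitive to multiplication by the $\epsilon_O$ for asymmetric $O$. For a symmetric orbit $O$, I would recall from \cite[\S2.3]{LS87} (or the discussion of $J_O$ and its maps $J_O \to S_\tx{sc} \to S$ above) that the class of the $1$-cochain $\sigma \mapsto \prod_{\alpha \in O,\ \alpha>0,\ \sigma_S^{-1}\alpha<0} \alpha^\vee(\eta_\alpha)$ in $H^1(\Gamma,S(F^s))$ is precisely the image under $H^1(F,J_O) \to H^1(F,S)$ of the class $\eta_O \in F_{\pm\alpha}^\times/N_{F_\alpha/F_{\pm\alpha}}(F_\alpha^\times)$ determined by $(\eta_\alpha)_{\alpha \in O}$. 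Summing over the symmetric orbits (noting the contributions from distinct orbits are independent and the relevant cochains combine multiplicatively) gives the desired product, and in the rigid-inner-twist setting the same argument applies verbatim with $Z^1(\mc{E},-)$ in place of $Z^1(\Gamma,-)$, since the difference cochain is valued in $S(F^s)$ and does not involve the $z_e$ at all.

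\textbf{Main obstacle.} The routine bookkeeping — tracking how $\tx{Ad}(g)$ transports the indexing between roots of $T$ and roots of $S$, and checking that the coboundary ambiguity in choosing $g$ really is killed — is straightforward but fiddly. The genuinely substantive point, and the one I expect to require the most care, is the identification of $\sigma \mapsto \prod_{\alpha \in O,\ \alpha>0,\ \sigma_S^{-1}\alpha<0} \alpha^\vee(\eta_\alpha)$ with the image of $\eta_O \in H^1(F,J_O)$: this is essentially the content of the computations in \cite[\S2.3, \S2.4]{LS87} relating the $\chi$-data/$a$-data cocycles to cohomology of the tori $J_O$, and one must be careful about the sign/orientation conventions (the role of the condition $\eta_{-\alpha}=\eta_\alpha$ versus $\eta_{-\alpha}=\eta_\alpha^{-1}$, and which chamber defines ``$\alpha>0$'') to land exactly on $\eta_O$ rather than its inverse or a twist of it. Once that identification is in hand, the lemma follows by multiplying the cocycle difference into the definition of $\tx{inv}((\xi,z,\tilde\delta),\tx{pin})$.
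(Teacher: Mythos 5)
Your argument is correct and follows essentially the same route as the paper's: you compute the multiplicative difference $x_\sigma(\eta\tilde\delta)/x_\sigma(\tilde\delta) = \prod_{\alpha>0,\ \sigma_S^{-1}\alpha<0}\alpha^\vee(\eta_{g\alpha})$ and identify its class, orbit by orbit, with $\eta_O$ via the Shapiro description, whereas the paper invokes \cite[(2.3.2)]{LS87} for this variance formula and then identifies $[b_{q,O}]$ with $\eta_O$ by the same Shapiro argument. Your handling of the asymmetric orbits (the class dies because $H^1(F,J_O)=0$) is the right reason, though the aside about "choosing" $\eta_\alpha=1$ is a red herring since the $\eta_\alpha$ are given; and you correctly observe that the whole computation is insensitive to the rigidifying cocycle $z_e$, since the difference term lives in $S(F^s)$.
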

\begin{proof}
According to \cite[(2.3.2)]{LS87}, $\tx{inv}((\xi,z,\eta\tilde\delta),\tx{pin})=\tx{inv}((\xi,z,\tilde\delta),\tx{pin}) \cdot [b_q]$, where $[b_q]=\prod_O [b_{q,O}] \in H^1(F,S)$ and $[b_{q,O}]$ is the class of the following 1-cocycle
\[ \prod_{\substack{\alpha>0\\ \sigma^{-1}\alpha<0\\\alpha\in O}}\alpha^\vee(\eta_{\alpha}). \]
Here $O$ runs over the set of $\Sigma$-orbits in $R(S,G)$ and $\alpha>0$ is taken with respect to an arbitrary gauge $q$; the choice of $q$ does not influence the cohomology class of $[b_{q,O}]$. But $b_{q,O}$ is simply an explicit description of the class $\eta_O$ by means of the formula for the Shapiro isomorphism on 1-cochains.
\end{proof}

\begin{cor}
The element $\tx{inv}((\xi,z,\tilde\delta),\tx{pin})$ does not change if $\tilde\delta$ is replaced by a different tuple that represents the same element of $S(F)_{\pm\pm}$.
\end{cor}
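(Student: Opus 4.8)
The plan is to deduce the corollary from Lemma \ref{lem:rel1_gen1} together with the explicit description of when two tuples $\tilde\delta=(\delta,(\delta_\alpha))$ and $\tilde\delta'=(\delta,(\delta'_\alpha))$ represent the same element of $S(F)_{\pm\pm}$. First I would record that, with $\delta$ fixed, any other tuple representing an element of $S(F)_{\pm\pm}$ lying over $\delta$ differs from $\tilde\delta$ by a collection $\eta=(\eta_\alpha)$ with $\eta_\alpha\in F_{\pm\alpha}^\times$, $\sigma(\eta_\alpha)=\eta_{\sigma\alpha}$, and $\eta_{-\alpha}=\eta_\alpha$ (the last two constraints coming from the defining relations $\sigma(\delta_\alpha)=\delta_{\sigma\alpha}$ and $\delta_\alpha/\delta_{-\alpha}=\alpha(\delta)$, which must hold for both tuples). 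So it suffices to determine, among such $\eta$, which ones give $\eta\tilde\delta$ and $\tilde\delta$ equal \emph{as elements of} $S(F)_{\pm\pm}$, as opposed to merely cohomologous invariants.

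The key point is the structure of $S(F)_{\pm\pm}$ recalled just before the construction of the invariant: it is built from the tori $J_O(F)_\pm$, one for each $\Sigma$-orbit $O\subset R(S,G)$, and for an \emph{asymmetric} orbit $O$ one has $F_{\pm\alpha}=F_\alpha$, so $H^1(F,J_O)=F_{\pm\alpha}^\times/N_{F_\alpha/F_{\pm\alpha}}(F_\alpha^\times)$ is trivial; equivalently, the subcollection $(\eta_\alpha)_{\alpha\in O}$ is always a norm and the corresponding modification is ``visible'' only up to the element $\epsilon_O$. Concretely, two tuples $\tilde\delta$ and $\eta\tilde\delta$ represent the same element of $S(F)_{\pm\pm}$ precisely when, for every symmetric orbit $O$, the class $\eta_O\in H^1(F,J_O)$ is trivial, i.e. when $\eta_\alpha\in N_{F_\alpha/F_{\pm\alpha}}(F_\alpha^\times)$ for symmetric $\alpha$ — the asymmetric orbits contribute nothing to $S(F)_{\pm\pm}$ beyond the $\epsilon_O$'s, which we have already stipulated the invariant ignores. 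I would extract this equivalence from \cite[Remark 3.3]{KalDC} and the description of $S(F)_{\pm\pm}$ as a quotient/extension involving the $J_O(F)_\pm$; this is the one genuinely substantive input and the step I expect to be the main obstacle, since it requires unwinding the precise identification of tuples with elements of the big cover.

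Granting that, the corollary is immediate: if $\tilde\delta$ and $\tilde\delta'=\eta\tilde\delta$ represent the same element of $S(F)_{\pm\pm}$, then $\eta_O$ is trivial in $H^1(F,J_O)$, hence in $H^1(F,S)$, for every symmetric $O$, so the correction factor $\prod_{O\in R(S,G)_\tx{sym}/\Gamma}\eta_O$ appearing in Lemma \ref{lem:rel1_gen1} equals $1$, and therefore
\[ \tx{inv}((\xi,z,\tilde\delta'),\tx{pin}) = \tx{inv}((\xi,z,\tilde\delta),\tx{pin}). \]
One also checks, as already noted in the setup, that multiplying $\tilde\delta$ by $\epsilon_O$ for asymmetric $O$ leaves the invariant unchanged (by construction the formula for $x_\sigma(\tilde\delta)$ only involves the $\delta_\alpha$ for $\alpha$ in symmetric orbits up to the stipulated constancy), so the invariant is well-defined on all of $S(F)_{\pm\pm}$, not just on tuples. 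I would close by remarking that this also reconciles the notation: the well-defined object descends to $S(F)_\pm$ exactly when one further quotients appropriately, matching the role of $\tilde\delta\in S(F)_\pm$ used elsewhere.
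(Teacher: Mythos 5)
Your proposal takes essentially the same route as the paper: deduce the corollary from Lemma \ref{lem:rel1_gen1} by observing that $\eta\tilde\delta$ and $\tilde\delta$ coincide in $S(F)_{\pm\pm}$ precisely when each class $\eta_O$ (equivalently, $\kappa_\alpha(\eta_\alpha)$) is trivial for symmetric $O$, while asymmetric orbits contribute nothing because $H^1(F,J_O)$ vanishes. The only cosmetic difference is in the justification of that vanishing (you point out $F_{\pm\alpha}=F_\alpha$; the paper notes $J_O$ is induced), and your added discussion of the $\epsilon_O$ is already stipulated in the setup, so both arguments are correct and parallel.
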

\begin{proof}
When $\alpha$ is asymmetric then $J_O$ is an induced torus, hence $H^1(F,J_O)$ vanishes. When $\alpha$ is symmetric, then $H^1(F,J_O) = \{\pm 1\}$ via the sign character $\kappa_\alpha : F_{\pm\alpha}^\times/N_{F_\alpha/F_{\pm\alpha}}(F_\alpha^\times) \to \{\pm 1\}$. The tuples $\tilde\delta$ and $\eta\tilde\delta$ represent the same element of $S(F)_{\pm\pm}$ if and only if $\kappa_\alpha(\eta_\alpha)=1$ for all symmetric $\alpha \in R(S,G)$. This is exactly the case when the class $\eta_O$ of $H^1(F,J_O)$ is trivial for all $O$.
\end{proof}

This completes the definition of the invariant between a strongly regular element of $S(F)_{\pm\pm}$ and a fixed pinning. It refines the usual invariant between two stably conjugate strongly regular semi-simple elements in the following sense.

Let $(\xi_i,z_i) : G^* \to G_i$ be two rigid inner twists. Let $\delta_i \in G_i(F)$ be two related strongly regular semi-simple elements and denote their centralizers by $S_i$. We have the admissible isomorphism $\varphi_{\delta_1,\delta_2} : S_1 \to S_2$ mapping $\delta_1$ to $\delta_2$. It lifts canonically to an isomorphism $S_1(F)_{\pm\pm} \to S_2(F)_{\pm\pm}$ which we again denote by $\varphi_{\delta_1,\delta_2}$. For any lift $\tilde\delta_1 \in S_1(F)_{\pm\pm}$ let $\tilde\delta_2 = \varphi_{\delta_1,\delta_2}(\tilde\delta_1)$.

\begin{lem} \label{lem:delta1_kappa}
For any $F$-pinning of $G^*$ we have
\[ \tx{inv}((\xi_1,z_1,\delta_1),(\xi_2,z_2,\delta_2)) = \tx{inv}((\xi_1,z_1,\tilde\delta_1),\tx{pin}) \cdot \varphi_{\delta_1,\delta_2}^{-1}(\tx{inv}((\xi_2,z_2,\tilde\delta_2),\tx{pin}))^{-1}. \]
\end{lem}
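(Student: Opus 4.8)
The statement is an identity in $H^1_\tx{bas}(\mc{E},S_1)$ (after transporting the second term via $\varphi_{\delta_1,\delta_2}$), and the natural strategy is to reduce it to the corresponding classical identity of Langlands--Shelstad by unwinding all three invariants in terms of their explicit cocycle representatives. First I would fix an $F$-pinning $\tx{pin}=(T,B,\{X_\alpha\})$ of $G^*$, and choose $g_i \in G^*(F^s)$ with $\xi_i(g_i T g_i^{-1}) = S_i$, so that $\tx{inv}((\xi_i,z_i,\tilde\delta_i),\tx{pin})$ is represented by $e \mapsto \xi_i(g_i\cdot(x_{\sigma_e}(\tilde\delta_i)\cdot(g_i^{-1}z_{i,e}\sigma_e(g_i))^{-1})\cdot g_i^{-1})$ as in the construction of \eqref{eq:splinv}. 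Since $\delta_1$ and $\delta_2$ are related via the admissible isomorphism $\varphi_{\delta_1,\delta_2} : S_1 \to S_2$, one can arrange $g_2 = \xi_2^{-1}\circ\varphi\circ\xi_1(g_1)\cdot h$ for a suitable correction — more precisely, I would choose $g_1$ and $g_2$ compatibly so that both induce the \emph{same} identification of $T$ with the common split torus underlying $S_1\cong S_2$, i.e. so that the Weyl cocycles $\omega_\sigma$ attached to $g_1$ and to $g_2$ coincide. This is possible because $\varphi_{\delta_1,\delta_2}$ is admissible, hence defined over $F$ and compatible with the Galois actions, and it is the key normalization that makes the three cocycles comparable term by term.

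With this normalization in place, the factor $x_\sigma(\tilde\delta)$ depends only on the tuple $(\delta_\alpha)$ and on $\sigma_S$, so $x_\sigma(\tilde\delta_1)$ and $x_\sigma(\tilde\delta_2)$ are identified under $\tx{Ad}(g_2 g_1^{-1})$-type comparison — here $\tilde\delta_2 = \varphi_{\delta_1,\delta_2}(\tilde\delta_1)$ means precisely that $\delta_{2,\varphi(\alpha)}=\delta_{1,\alpha}$ under the induced bijection of root data, so the two products over $\{\alpha>0,\sigma_S^{-1}\alpha<0\}$ literally agree after transport. Then the quotient
\[ \tx{inv}((\xi_1,z_1,\tilde\delta_1),\tx{pin})\cdot\varphi_{\delta_1,\delta_2}^{-1}\big(\tx{inv}((\xi_2,z_2,\tilde\delta_2),\tx{pin})\big)^{-1} \]
is represented by a cocycle in which the $x_\sigma(\tilde\delta)$-factors and the Tits-lift factors $n(\omega_\sigma)$ cancel, leaving exactly $e \mapsto \xi_1(g_1 \cdot (g_1^{-1}z_{1,e}\sigma_e(g_1)) \cdot (g_2^{-1}z_{2,e}\sigma_e(g_2))^{-1}\cdot g_1^{-1})$ (up to the transport isomorphism), which is the standard cocycle representing $\tx{inv}((\xi_1,z_1,\delta_1),(\xi_2,z_2,\delta_2))$ as defined via rigid inner twists, compatibly with \cite[\S5.6]{Kot86} in the inner-form case. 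I would cite \cite[(2.3.3)]{LS87} for the independence of the relevant classes from the choice of $g_i$, so that the particular compatible choice made above does not affect the conclusion.

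\textbf{Main obstacle.} The delicate point is the bookkeeping in the $\mc{E}$-cocycle (as opposed to the $\Gamma$-cocycle) setting: the twisting elements $z_{i,e}$ for $e \in \mc{E}$ and the gerbe-theoretic framework of \cite{KalRI}, \cite{Dillery20} mean that the cancellation of the $x_\sigma(\tilde\delta)$-terms and the $n(\omega_\sigma)$-terms must be checked to respect the $\mc{E}$-action, not merely the $\Gamma$-action, and that $x_{\sigma_e}(\tilde\delta)$ depends on $e$ only through $\sigma_e$ while $z_{i,e}$ genuinely depends on $e$. I expect this to go through because $x_\sigma(\tilde\delta)$ and $n(\omega_\sigma)$ live in $N(T,G^*)(F^s)$, on which $\mc{E}$ acts through $\Gamma$, so the only $\mc{E}$-dependence is carried by the $z_{i,e}$-factors, exactly as in the classical computation; but verifying carefully that the two $z_{i,e}$-contributions assemble into the basic cocycle representing $\tx{inv}((\xi_1,z_1,\delta_1),(\xi_2,z_2,\delta_2))$ in $H^1_\tx{bas}(\mc{E},S_1)$ — including the normalization by $Z(G)$ and the positive-characteristic variant mentioned after \eqref{eq:splinv} — is where the real care is needed. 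A secondary nuisance is to confirm that both sides are insensitive to multiplication of $\tilde\delta_i$ by $\epsilon_O$ for asymmetric $O$, which follows from the Corollary just proved together with the analogous (trivial) statement for the classical invariant.
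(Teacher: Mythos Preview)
Your approach is essentially the paper's: choose $g_1,g_2$ compatibly (the paper phrases this as $g_1^{-1}\xi_1^{-1}(\delta_1)g_1 = g_2^{-1}\xi_2^{-1}(\delta_2)g_2 \in T$, which forces the Weyl cocycles to agree), deduce $x_\sigma(\tilde\delta_1)=x_\sigma(\tilde\delta_2)$, and compare. The one point you gloss over is that the $x_\sigma$-factors do not literally cancel in the non-abelian group $N(T,G^*)$; writing $A=g_1^{-1}z_1\sigma(g_1)$, $B=g_2^{-1}z_2\sigma(g_2)$, $y=x_\sigma A^{-1}$, the quotient is $y\cdot BA^{-1}\cdot y^{-1}$, and the paper closes by observing that $y\in T(F^s)$ (since $x_\sigma$ and $A$ share Weyl image $\omega_\sigma$), so the conjugation is trivial once pushed into $S_1$. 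Your $\mc{E}$-bookkeeping worry is handled exactly as you expect.
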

\begin{proof}
Since $\delta_1$ and $\delta_2$ are related elements there exist $g_1,g_2 \in G^*(F^s)$ such that $g_1^{-1}\xi_1^{-1}(\delta_1)g_1 = g_2^{-1}\xi_2^{-1}(\delta_2)g_2 \in T$. Then $\xi_i(g_iTg_i^{-1})=S_i$ and $\tx{inv}((\xi,z,\tilde\delta_i),\tx{pin})$ is represented by $\xi_i(g_i(x_{\sigma_e}(\tilde\delta_i)\cdot(g_i^{-1}z_i(e)\sigma_e(g_i))^{-1})g_i^{-1}))$. We have $\varphi_{\delta_1,\delta_2} = \xi_2 \circ \tx{Ad}(g_2g_1^{-1})\circ \xi_1^{-1}$ and the right hand side of the above equation becomes
\[ \xi_1(g_1(x_{\sigma_e}(\tilde\delta_1)(g_1^{-1}z_1(e)\sigma_e(g_1))^{-1}(g_2^{-1}z_2(e)\sigma_e(g_2))x_{\sigma_e}(\tilde\delta_2)^{-1})g_1^{-1}). \]
On the other hand, the left hand side of the equation is
\[ \tx{inv} := \xi_1(g_1g_2^{-1}z_2(e)\sigma_e(g_2g_1^{-1})z_1(e)^{-1}). \]
Combining this with $x_\sigma(\tilde\delta_1)=x_\sigma(\tilde\delta_2)$ we see that the right hand side equals
\[ \xi_1(g_1 \cdot y \cdot g_1^{-1}) \cdot \tx{inv} \cdot \xi_1(g_1 \cdot y \cdot g_1^{-1})^{-1},\qquad y = x_{\sigma_e}(\tilde\delta_1)(g_1^{-1}z_1(e)\sigma_e(g_1))^{-1}. \]
Now both $x_{\sigma_e}(\tilde\delta_1)$ and $g_1^{-1}z_1(e)\sigma_e(g_1)$ are elements of $N(T,G)(F^s)$ with the same projection to $\Omega(T,G)(F^s)$, hence $y \in T(F^s)$. Therefore $\xi_1(g_1\cdot y \cdot g_1^{-1})$ commutes with $\tx{inv}$.
\end{proof}

We will now establish a relationship between the invariant \eqref{eq:splinv} and its Lie algebra analog \eqref{eq:splinv-la}, under the assumption that $F$ is of characteristic zero. Recall from \cite[\S3.7]{KalDC} that the exponential map $\tx{Lie}(S)(U) \to S(F)$ factors through a map $\tx{Lie}(S)(U) \to S(F)_{\pm\pm}$. Here $\tx{Lie}(S)(U)$ is an open subgroup of $\tx{Lie}(S)(F)$ on which the exponential map converges.

\begin{lem} \label{lem:expinv}
Assume that $F$ has characteristic zero. There exists an open neighborhood $V \subset \tx{Lie}(S)(U)$ of $0$ such that for all regular $Y \in V$
\[ \tx{inv}(Y,\tx{pin}) = \tx{inv}(\tilde\delta,\tx{pin}), \]
with $\tilde\delta=\exp(Y)$.
\end{lem}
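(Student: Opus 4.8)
The plan is to reduce the group statement \eqref{eq:splinv} to the Lie algebra statement \eqref{eq:splinv-la} by matching the two cocycle-level constructions after exponentiating, and to shrink $V$ enough so that all the choices (Kostant section versus the Tits-twisted cocycle $x_\sigma(\tilde\delta)$) become literally compatible. First I would recall the explicit cocycle representing $\tx{inv}(\tilde\delta,\tx{pin})$: if $g \in G^*(F^s)$ satisfies $\xi(gTg^{-1})=S$ and $\omega_\sigma$ is the image of $g^{-1}z_e\sigma_e(g)$ in the Weyl group, then $\tx{inv}((\xi,z,\tilde\delta),\tx{pin})$ is the class of $e \mapsto \xi(g\cdot(x_{\sigma_e}(\tilde\delta)\cdot(g^{-1}z_e\sigma_e(g))^{-1})\cdot g^{-1})$, where $x_\sigma(\tilde\delta)=\prod_{\alpha>0,\sigma_S^{-1}\alpha<0}\alpha^\vee(\delta_{g\alpha}-\delta_{-g\alpha})n(\omega_\sigma)$. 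Similarly I would recall that $\tx{inv}(Y,\tx{pin})$ is the class of $e\mapsto \xi(g'^{-1}z_e\sigma_e(g'))$ where $g'$ conjugates $\xi^{-1}(Y)$ into $\mf{b}(F)+X_-$. The key point is that, by \cite[\S3.7]{KalDC}, for $Y\in\tx{Lie}(S)(U)$ with $\tilde\delta=\exp(Y)$, the coordinates $\delta_\alpha$ of $\tilde\delta$ are $\exp(d\alpha(Y)/2)$ up to the relevant square roots, so $\delta_{g\alpha}-\delta_{-g\alpha}$ differs from $d(g\alpha)(Y)$ (the ``eigenvalue'' appearing in the Lie algebra computation of the Kostant section) by a unit that tends to $1$ as $Y\to 0$.

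The main steps, in order: (1) Choose the same diagonalizing element $g$ on both sides — possible because an admissible embedding of $T$ as a maximal torus of $G$ (over $F^s$) serves for $Y$ and for $\exp(Y)$ simultaneously. (2) Use Kottwitz's description of the Kostant section, as recalled in \cite[\S2.4]{Kot99}, to write the Lie-algebra invariant in a form involving the same Tits lift $n(\omega_\sigma)$: the point of the Kostant section is that moving $\tx{Ad}(g'^{-1})Y$ into $\mf{b}+X_-$ is achieved by a $U(F^s)$-valued correction, and the resulting $Z^1(\Gamma,S)$-cocycle is exactly $x_\sigma$ computed with the linearized coordinates $d(g\alpha)(Y)$ in place of $\delta_{g\alpha}-\delta_{-g\alpha}$. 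This is the heart of the argument and is precisely the content of Lemma 2.2.A and (2.3.3) of \cite{LS87} specialized to the image of $\exp$. (3) Show that the ratio of the group cocycle and the (suitably rewritten) Lie-algebra cocycle is, for $Y$ in a small enough neighborhood $V$, a coboundary — in fact it is valued in a small neighborhood of $1$ in $S(F^s)$, and one uses that $H^1$ of a torus with coefficients restricted to a small enough neighborhood of the identity, intersected with $Z^1(\Gamma,S(F^s))$, injects into $H^1(\Gamma,S(F^s))$ via nothing (the cocycle condition plus smallness forces it to be a coboundary once $V$ is taken inside the logarithm's radius of convergence and small enough that the cocycle lands in $\exp$ of a $\Gamma$-stable ball on which $H^1$ vanishes by Sen/Hilbert 90-type local triviality of torus cohomology in a neighborhood). (4) Finally, observe that the multiplication by $\epsilon_O$ for asymmetric $O$ does not affect either side, and that $\exp(Y)$ is representable by a tuple $\tilde\delta$ of the kind considered, so no generality is lost.

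The hard part will be Step (3): making precise the sense in which "the ratio cocycle is small, hence trivial." One cannot simply invoke that $H^1(\Gamma,S)$ is finite, because the ratio need not be a coboundary for all small $Y$ unless one is careful; the clean way is to fix, once and for all, a $\Gamma$-stable open subgroup $S(F^s)^\circ\subset S(F^s)$ (image of $\exp$ on a $\Gamma$-stable ball) that is uniquely divisible and on which $H^1(\Gamma,-)$ vanishes — this exists for $F$ local of characteristic zero because $S(F^s)^\circ$ is then a $\mb{Q}_p$- or $\mb{R}$-vector space with continuous semilinear $\Gamma$-action, hence cohomologically trivial — and then shrink $V$ so that both the group cocycle $x_\sigma(\exp Y)(\cdots)$ and the linearized cocycle land in $S(F^s)^\circ$; their product-inverse is then automatically a coboundary there, and comparing with the explicit linearized form gives the identity of classes. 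I would also need to double-check the normalization constants (the factor $1/2$, the sign in $\delta_{g\alpha}-\delta_{-g\alpha}=\exp(d(g\alpha)Y/2)-\exp(-d(g\alpha)Y/2)= d(g\alpha)Y+O(Y^3)$) so that the leading term of the group cocycle matches the Lie-algebra cocycle on the nose and not merely up to a harmless unit; this is routine but must be done carefully since it is exactly what makes the two invariants equal rather than just "congruent modulo something."
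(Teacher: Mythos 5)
Your overall strategy — reduce to a Kostant-section comparison and then show the discrepancy is harmless for small $Y$ — is in the right spirit, and your observation that the key ratio $b_\alpha = (\delta_{g\alpha}-\delta_{-g\alpha})/d(g\alpha)(Y)$ is a unit tending to $1$ is exactly the right quantity to focus on. But Step (3), which you yourself flag as the hard part, is where you go off the rails, and in a way that the paper's actual proof carefully sidesteps.

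The justification you give for "small ratio cocycle $\Rightarrow$ coboundary" is incorrect as stated. You propose a $\Gamma$-stable open subgroup $S(F^s)^\circ = \exp(\text{ball})$ and claim it is cohomologically trivial because it is "a $\Q_p$- or $\R$-vector space with semilinear $\Gamma$-action." But the image of $\exp$ on a ball is not a vector space: in the $p$-adic case it is (via $\log$) a $\Gamma$-stable $\mc{O}$-lattice in $\mf{s}$, and $H^1(\Gamma,-)$ of such a lattice does not automatically vanish (the wild inertia can contribute, and even for tamely ramified rank-one anisotropic tori the comparison between $F^\times \cap U$ and $N_{E/F}(U)$ requires an argument about higher unit groups). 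So "small $\Rightarrow$ trivial in $H^1$" needs real input, and the input is precisely what makes this lemma nontrivial. Moreover, the direct cocycle comparison you propose also silently assumes you can use the "same $g$" for the Lie-algebra invariant (defined by conjugating $Y$ into the Kostant slice $\mf{b}(F)+X_-$) and the group invariant (defined via the Tits-lift cocycle $x_\sigma(\tilde\delta)$); reconciling these two presentations is not automatic.

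The paper avoids all of this by a sequence of exact reductions. It first uses Lemma \ref{lem:delta1_kappa} with $(\xi,z,\tilde\delta)$ and $(1,1,\tilde\delta')$, where $\tilde\delta'=\exp(Y')$ and $Y'$ is the Kostant-section representative, to reduce to proving $\tx{inv}(\tilde\delta',\tx{pin})=1$ in $H^1(F,S')$. It then identifies $\tx{inv}(\tilde\delta',\tx{pin})$ with the Langlands--Shelstad splitting invariant $\lambda(S',(a_\alpha))$ for the $a$-data $a_\alpha=\delta_\alpha-\delta_{-\alpha}$ coming from $\tilde\delta'$, invokes Kottwitz's Theorem 5.1 in \cite{Kot99} to get $\lambda(S',(a'_\alpha))=0$ for $a'_\alpha=d\alpha(Y')$, and finally uses the change-of-$a$-data formula \cite[(2.3.2)]{LS87} to reduce to the single claim that $b_\alpha=a_\alpha/a'_\alpha$ is a norm from $F_\alpha^\times$ for each symmetric root $\alpha$. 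That last claim is then a concrete one-variable estimate — $\big(\exp(z/2)-\exp(-z/2)\big)/z = 1 + O(z^2)$ — and the reason this is enough is exactly the \emph{openness} of $N_{F_\alpha/F_{\pm\alpha}}(F_\alpha^\times)$ in $F_{\pm\alpha}^\times$, applied root by root. In other words, the correct replacement for your vague "small subgroup is cohomologically trivial" is the precise rank-one fact "the norm subgroup is open," and the $a$-data formalism is what turns the $H^1(F,S')$-triviality question into a product of such rank-one questions. Without that reduction your Step (3) does not close.
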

\begin{proof}
Take a regular element $Y \in V \subset \tx{Lie}(S)(U)$, for some open neighborhood $V$ of $0$ to be determined in the course of the proof. Let $Y' \in \tx{Lie}(G^*)(F)$ be stably conjugate to $Y$ and lie in the space $\mf{b}(F) + X_-$ used in the definition of \eqref{eq:splinv-la}. Let $S' \subset G^*$ be the centralizer of $Y'$ and let $\tilde\delta'=\exp(Y') \in S'(F)_{\pm\pm}$. Lemma \ref{lem:delta1_kappa} applied with $(\xi,z,\tilde\delta)$ and $(1,1,\tilde\delta')$ reduces the proof to showing that $\tx{inv}(\tilde\delta',\tx{pin}) \in H^1(F,S')$ is trivial.

Let $a_\alpha=\delta_\alpha-\delta_{-\alpha}$. This is a set of $a$-data in the sense of \cite[\S2.2]{LS87}. Another set of $a$-data is $a'_\alpha = d\alpha(Y')$. To each set of $a$-data a splitting invariant $\lambda(S',-) \in H^1(F,S')$ is defined in \cite[\S2.3]{LS87}. The invariant $\tx{inv}(\tilde\delta',\tx{spl})$ is equal to $\lambda(S',(a_\alpha))$. On the other hand, the main result of \cite{Kot99}, Theorem 5.1 (or, technically, it's proof) shows that the splitting invariant $\lambda(S',(a_\alpha'))$ vanishes. It is therefore enough to show that, for every symmetric $\alpha \in R(S',G)$, the element $b_\alpha := a_\alpha/a'_\alpha$, a-priori belonging to $F_{\pm\alpha}^\times$, actually lies in the smaller subgroup $N_{F_\alpha/F_{\pm\alpha}}(F_\alpha^\times)$.

Recall from \cite[\S3.7]{KalDC} that $\delta_\alpha=\exp(d\alpha(Y')/2)$. The compatibility of the exponential map with homomorphisms implies 
\[ a_\alpha=\exp(d\alpha(Y'))/2)-\exp(-d\alpha(Y')/2) = \exp(a_\alpha'/2) - \exp(-a_\alpha'/2). \]
We thus find ourselves in the following abstract situation: Given a quadratic extension $E/F$ there exists an open neighborhood $V$ in the set of trace-zero elements of $E$, such that for $0 \neq z \in V$ the element $(\exp(z/2)-\exp(-z/2))/z \in F^\times$ is a norm from $E^\times$. When $E/F=\C/\R$, then $z = ir$ for some $r \in \R$ and the element under consideration equals $\sin(r/2)/(r/2)$, which is positive (hence a norm from $\C^\times$) when $-2\pi<r<2\pi$. When $E/F$ is non-archimedean with residual characteristic $p$, we have
\[ (\exp(z/2)-\exp(-z/2))/z = \sum_{n=0}^\infty \frac{(z/2)^{2n}}{(2n+1)!} = 1+\sum_{n=1}^\infty \frac{(z/2)^{2n}}{(2n+1)!}. \]
It is clear that by placing a condition on the valuation of $z$ we can ensure that the valuation of each summand is larger than any fixed number. This in turn implies that the whole expression lies in the open subgroup $N_{E/F}(E^\times)$ of $F^\times$.

\end{proof}

\begin{rem}
The proof of Lemma \ref{lem:expinv} quantifies the neighborhood $V$ as the intersection of the open subsets $d\alpha^{-1}(V_\alpha)$, where $V_\alpha \subset F_\alpha^0$ is the open subset of those $z \in F_\alpha^0$ for which $\exp(z/2)$ converges and moreover $(\exp(z/2)-\exp(-z/2))/z \in F_{\pm\alpha}^\times$ is a norm from $F_{\alpha}^\times$.

When $F=\R$ then $F_{\pm\alpha}=\R$, $F_\alpha=\C$, and $V_\alpha = (-2i\pi,2i\pi) \subset i\R$. When $F$ then $V_\alpha=\{z \in F_\alpha^0\,|\,\tx{ord}(z/2) > r \}$ for some positive real number $r$. Using the normalization $\omega(p)=1$, note that $r = 1/(p-1)$ is already necessary for the convergence of the exponential map. When $p \neq 2$, then $r=3/(2p-2)$ would be sufficient. When $p=2$, then $r=1$ is sufficient.
\end{rem}

The invariant $\tx{inv}(\tilde\delta,\tx{pin})$ is by itself not an endoscopic quantity, but rather an intrinsic object of the rigid inner twist $(G,\xi,z)$. The relation to endoscopy emerges when this invariant is paired with a character of the group $H^1(F,S)$ in the quasi-split case, or more generally of the group $H^1_\tx{bas}(\mc{E},S)$.

Consider the 0-th homology group $H_0(\Gamma,X_*(S_\tx{sc}))$. We can replace $\Gamma$ by a finite quotient $\Gamma_{E/F}$ through which it acts on $X_*(S_\tx{sc})$ without changing this homology group. Let $\kappa : H_0(\Gamma_{E/F},X_*(S_\tx{sc})) \to \C^\times$ be a character. Inflating it to $X_*(S_\tx{sc})$ and intersecting its kernel with $R^\vee(S,G)$ we obtain a closed subrootsystem $R_\kappa^\vee \subset R^\vee(S,G)$. The set $R_\kappa := \{\alpha \in R(S,G)\,|\,\alpha^\vee \in R_\kappa^\vee\}$ is a (not necessarily closed) subrootsystem of $R(S,G)$. On the other hand, restricting $\kappa$ to $H^{-1}(\Gamma_{E/F},X_*(S_\tx{sc})) \subset H_0(\Gamma_{E/F},X_*(S_\tx{sc}))$ and using the Tate-Nakayama isomorphism $H^{-1}(\Gamma_{E/F},X_*(S_\tx{sc})) \to H^1(F,S_\tx{sc})$ we obtain from $\kappa$ a character of $H^1(F,S_\tx{sc})$, which we also denote by $\kappa$.

\begin{lem} \label{lem:rel1_desc}
Let $\dot\kappa : H^1_\tx{bas}(\mc{E},S) \to \C^\times$ be a character whose pull back to $H^1(F,S_\tx{sc})$ coincides with $\kappa$. Then the function $\<\tx{inv}((\xi,z,-),\tx{pin}),\dot\kappa\>$ descends to a genuine function of the double cover $S(F)_\pm$ associated to $R(S,G) \sm R_\kappa$.
\end{lem}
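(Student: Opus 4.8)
The plan is to view $\langle\tx{inv}((\xi,z,-),\tx{pin}),\dot\kappa\rangle$ as a single $\C^\times$-valued function $f$ on the strongly regular locus of the big cover $S(F)_{\pm\pm}$ attached to $R:=R(S,G)$. That $f$ is well defined there follows from the corollary to Lemma \ref{lem:rel1_gen1} (the invariant is unchanged under a change of tuple representing the same point of $S(F)_{\pm\pm}$) together with the convention, built into the construction of \eqref{eq:splinv}, that $\tx{inv}$ is constant under multiplication by $\epsilon_O$ for asymmetric $O$. What has to be shown is that $f$ factors through the double cover $S(F)_\pm$ attached to $R':=R(S,G)\sm R_\kappa$ and is genuine there. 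First I would note that $R'$ is an admissible $\Sigma$-set: the character $\kappa$ factors through $H_0(\Gamma,X_*(S_\tx{sc}))=X_*(S_\tx{sc})_\Gamma$, so the kernel of its inflation to $X_*(S_\tx{sc})$ is a $\Gamma$-stable subgroup, whence $R_\kappa^\vee$, and therefore $R_\kappa$, is stable under $\Gamma$ and under $\alpha\mapsto-\alpha$.

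The heart of the matter is the transformation law of $f$ under the generators $\epsilon_O$ of $\ker(S(F)_{\pm\pm}\to S(F))=\prod_O\{\pm1\}$. For asymmetric $O$, $f(\epsilon_O\tilde\delta)=f(\tilde\delta)$ by construction. For symmetric $O$, fix $\alpha\in O$; then multiplying $\tilde\delta$ by $\epsilon_O$ corresponds, in the tuple description, to passing from $\tilde\delta$ to $\eta\tilde\delta$ with $\eta_\beta$ a non-norm from $F_\alpha^\times$ for $\beta\in O$ (propagated by Galois) and $\eta_\beta=1$ otherwise, since this is exactly the condition that $\kappa_\beta(\eta_\beta)\ne1$ for the symmetric roots in $O$ and $=1$ elsewhere. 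Lemma \ref{lem:rel1_gen1} then gives $\tx{inv}(\eta\tilde\delta,\tx{pin})=\tx{inv}(\tilde\delta,\tx{pin})\cdot\eta_O$, where $\eta_O$ is the image in $H^1_\tx{bas}(\mc{E},S)$ of the non-trivial class of $H^1(F,J_O)$, so that $f(\epsilon_O\tilde\delta)=\dot\kappa(\eta_O)\cdot f(\tilde\delta)$. Because $\eta_O$ is pulled from $H^1(F,S_\tx{sc})$ and $\dot\kappa$ restricts there to $\kappa$, one has $\dot\kappa(\eta_O)=\kappa(\eta_O)$, and a Tate--Nakayama computation (Shapiro's lemma for $J_O$, followed by the map $X_*(J_O)\to X_*(S_\tx{sc})$ carrying the relevant generator to $\alpha^\vee$) shows that $\kappa$ is non-trivial on the image of $H^1(F,J_O)$ in $H^1(F,S_\tx{sc})$ precisely when $\alpha^\vee\notin R_\kappa^\vee$. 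Hence $\dot\kappa(\eta_O)=1$ when $O\subset R_\kappa$ and $\dot\kappa(\eta_O)=-1$ when $O$ is a symmetric orbit contained in $R'$.

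Combining these facts: $f$ is invariant under $\epsilon_O$ for every asymmetric $O$ and for every symmetric $O\subset R_\kappa$, while $f(\epsilon_O\tilde\delta)=-f(\tilde\delta)$ for every symmetric $O\subset R'$. By the construction of \cite{KalDC} (and the remark that the double cover $S(F)_\pm$ is insensitive to asymmetric orbits), the cover $S(F)_\pm$ attached to $R'$ is the push-out of $S(F)_{\pm\pm}$ along the homomorphism $\prod_O\{\pm1\}\to\{\pm1\}$ equal to the product of the coordinate projections indexed by the symmetric orbits contained in $R'$; its defining subgroup is generated by all $\epsilon_O$ with $O$ asymmetric or symmetric in $R_\kappa$ together with the products $\epsilon_O\epsilon_{O'}$ for symmetric $O,O'\subset R'$. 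The transformation law just established shows that $f$ is constant on the cosets of this subgroup and transforms by the non-trivial character of the residual $\{\pm1\}$, so it descends to a genuine function of $S(F)_\pm$, as claimed.

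I expect the main obstacle to be the precise bookkeeping of covers: matching the big cover $S(F)_{\pm\pm}$ of $R(S,G)$, the quotient of it by the $\epsilon_O$'s listed above, and the double cover $S(F)_\pm$ of $R(S,G)\sm R_\kappa$, in a way that is compatible with the element-level description underlying Lemma \ref{lem:rel1_gen1}; and, inside that, the Tate--Nakayama identification showing that $\kappa$ on the image of $H^1(F,J_O)$ detects membership of $\alpha^\vee$ in $R_\kappa^\vee$ with the correct sign. The remaining steps are formal.
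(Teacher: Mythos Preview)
Your proposal is correct and follows essentially the same approach as the paper's proof: both compute the transformation of the function under each generator $\epsilon_O$ of $\ker(S(F)_{\pm\pm}\to S(F))$ via Lemma~\ref{lem:rel1_gen1}, reduce the pairing $\langle\eta_O,\dot\kappa\rangle$ to $\langle\eta_O,\kappa\rangle$ on $H^1(F,S_\tx{sc})$, and then use functoriality of Tate--Nakayama (Shapiro for $J_O$ and the map $X_*(J_O)\to X_*(S_\tx{sc})$ sending the generator to $\alpha^\vee$) to see that the sign is $-1$ exactly when $\alpha^\vee\notin R_\kappa^\vee$. Your additional remarks---that $R'$ is an admissible $\Sigma$-set and the explicit description of the push-out kernel---are fine but not needed for the argument; the paper simply records the kernel as that of the product map $\prod_{O\subset R(S,G)/\Sigma}\{\pm1\}\to\{\pm1\}$ over orbits in $R(S,G)\sm R_\kappa$ and checks the same transformation law.
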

\begin{proof}
The kernel of $S(F)_{\pm\pm} \to S(F)_\pm$ equals the kernel of the product map
\[ \prod_{\alpha \in R(S,G)/\Sigma} \{\pm 1\} \to \{\pm 1\},\qquad (\epsilon_\alpha)_\alpha \mapsto \prod_{\alpha \in (R(S,G) - R_\kappa)/\Sigma} \epsilon_\alpha. \] 
We thus need to show that the function $\<\tx{inv}((\xi,z,-),\tx{pin}),\dot\kappa\>$ transforms under the group $\prod_{\alpha \in R(S,G)/\Sigma} \{\pm 1\}$ by the above sign character. 

Let $\beta \in R(S,G)$ and let $\epsilon_\beta$ denote the corresponding element $-1$ of the subgroup $\prod_\alpha \{\pm 1\} \subset S(F)_{\pm\pm}$. Consider the action of $\epsilon_\beta$ on $S(F)_{\pm\pm}$ by multiplication. If $\beta$ is asymmetric then the function $\tx{inv}((\xi,z,-),\tx{pin})$ is invariant under this action by construction. If $\beta$ is symmetric, then the action of $\epsilon_\beta$ sends $\tilde\delta$ to $\eta\tilde\delta$, where $\eta=\prod_O\eta_O$ has exactly one non-trivial component, namely for the $\Sigma$-orbit $O$ of $\beta$. Lemma \ref{lem:rel1_gen1} shows that $\<\tx{inv}(\tx{pin},\xi,z,\epsilon_\beta\tilde\delta),\dot\kappa\> = \<\eta_O,\kappa\> \cdot \<\tx{inv}(\tx{pin},\xi,z,\tilde\delta),\dot\kappa\>$. Since $\eta_O$ comes from $H^1(F,J_O)$ we may compute the pairing by pulling back $\kappa$ to $H^1(F,J_O)$. The functoriality of the Tate-Nakayama isomorphism implies that this is the same as pulling back $\kappa$ under $\Z/2\Z = H_0(\Gamma_{F_\beta/F_{\pm\beta}},\Z_{(-1)})=H_0(\Gamma,X_*(J_O)) \to H_0(\Gamma,X_*(S_\tx{sc}))$. The map $\Z_{(-1)} \to X_*(S_\tx{sc})$ sends $1$ to $\beta^\vee$, so the pull back of $\kappa$ equals the image of $\<\beta^\vee,\kappa\> \in \Z$ in $\Z/2\Z$. This image is trivial precisely when $\beta \in R_\kappa$. Therefore 
\[ \<\eta_O,\kappa\>=\begin{cases}
1,&\beta \in R_\kappa\\
-1,&\beta \notin R_\kappa.
\end{cases}
\qedhere
\]
\end{proof}

\subsection{Review of endoscopic data and transfer factors} \label{sub:endoreview}

Let $G^*$ be a quasi-split connected reductive $F$-group and $\xi : G^* \to G$ an inner twist. There are various equivalent ways to package the information of an endoscopic datum. For example, \cite[\S1.2]{LS87} uses tuples $(H,s,\mc{H},\xi)$, \cite[\S7.1]{Kot84} uses pairs $(s,\rho)$, and \cite[\S7.4]{Kot84} uses triples $(H,s,\eta)$. Moreover, the proper handling of inner twists requires a refinement, which is discussed in \cite[\S5.3]{KalRI}.

In this paper we find it most convenient to work with pairs $(s,\mc{H})$. Here $s \in \hat G$ is semi-simple and $\mc{H}$ is a full subgroup of $^LG$ that centralizes $s$ and such that $\hat H=\mc{H} \cap \hat G$ is precisely the identity component of the centralizer of $s$ in $\hat G$. An isomorphism between two such tuples $(s_i,\mc{H}_i)$ is an element $g \in \hat G$ such that $g\mc{H}_1g^{-1}=\mc{H}_2$ and $gs_1g^{-1}=s_2$ modulo $Z(\hat G)$. The reason we find this formulation most convenient is that the component $\mc{H}$ of such a tuple is clearly a special case of the group $\mc{H}$ considered in \S\ref{sub:l1-covers}.

Let us briefly indicate how this notion is equivalent to the other notions listed above. Given $(s,\mc{H})$ we obtain $\rho : \Gamma \to \tx{Out}(\hat H)$ by $\rho(\sigma)=\tx{Ad}(h_\sigma)$, where $h_\sigma \in \mc{H}$ any lift of $\sigma$. Conversely, given $(s,\rho)$ we can define $\mc{H}=\{x \in N(\hat H,{^LG})\,|\, \tx{Ad}(x)|_{\hat H} = \rho(\sigma_x)\}$, where $\sigma_x \in \Gamma$ is the image of $x$ under $^LG \to \Gamma$, and the identity is taken in $\tx{Out}(\hat H)$. The passage between pairs $(s,\rho)$ and triples $(H,s,\eta)$ is described in \cite[\S7.6]{Kot84}. To obtain a tuple $(H,s,\mc{H},\xi)$, we let $H$ be the unique quasi-split group with dual group $\hat H$ and such that the homomorphism $\Gamma \to \tx{Out}(H_{F^s})$ given by its $F$-structure is translated to the homomorphism $\rho : \Gamma \to \tx{Out}(\hat H)$ under the identification $\tx{Out}(H_{F^s})=\tx{Out}(\hat H)$, and we let $\xi$ be the tautological embedding $\mc{H} \subset {^LG}$.

Again, the proper handling of inner twists requires a refinement of this notion. It requires to record a preimage $\dot s$ of $s$ in the universal cover $\hat{\bar G}$ of the complex Lie group $\hat G$. It also requires an isomorphism between two resulting tuples $(\dot s_i,\mc{H}_i)$ to satisfy $\tx{Ad}(g)\dot s_1=\dot s_2$ modulo the identity component of $Z(\hat{\bar H_2})^+$, where $Z(\hat{\bar H_2})^+$ is the preimage in $\hat{\bar G}$ of $Z(\hat H_2)^\Gamma$, and the latter is the subgroup of $\hat H_2$ that commutes with all of $\mc{H}_2$.

Next we want to discuss various normalizations of transfer factors. There are three meanings of the word ``normalization''. One pertains to the fact that for general connected reductive groups the transfer factor is defined only up to a scalar constant multiple, and one needs a Whittaker datum and a rigid inner twist datum to fix that constant. Another meaning is that there are two different choices for the isomorphism of local class field theory (the Artin choice and the Deligne choice), and these result in two different versions of the transfer factor. Yet another meaning of the word normalization is that one can invert individual pieces of the transfer factor (or equivalently, invert the endoscopic element $s$).

Let us belabor the second meaning. The basic question of normalization concerns the isomorphism $W_F^\tx{ab} \to F^\times$ of local class field theory. An arithmetic Frobenius element of $W_F$ is one that acts as $x \mapsto x^q$ on the residue field $k_F$, while a geometric Frobenius element is the inverse of an arithmetic Frobenius element. Under the classical (i.e. Artin) normalization of the isomorphism $W_F^\tx{ab} \to F^\times$, arithmetic Frobenius elements map to uniformizing elements. Under the Deligne normalization, geometric Frobenius elements map to uniformizing elements. 

The normalization of the isomorphism $W_F^\tx{ab} \to F^\times$ implies normalization of the Tate-Nakayama duality pairing $H^1(F,T) \times H^1(F,X^*(T)) \to \C^\times$ as well as of the Langlands duality pairing $H^0(F,T) \times H^0_c(W_F,\hat T) \to \C^\times$ for an arbitrary $F$-torus $T$.

In \cite{LS87} is defined an absolute transfer factor (up to a scalar constant multiple)
\[ \Delta : H_1(F)^\tx{sr} \times G(F)^\tx{sr} \to \C,\qquad  \Delta = \Delta_I \cdot \Delta_{II} \cdot \Delta_{III_1} \cdot \Delta_{III_2} \cdot \Delta_{IV}. \]
In order to define it, one has to choose a $z$-extension $H_1 \to H$ as well as an $L$-embedding $\mc{H} \to {^LH_1}$. The transfer factor depends on these choices.

The terms $\Delta_I$ and $\Delta_{III_1}$ involve Tate-Nakayama duality, while the term $\Delta_{III_2}$ involves Langlands duality for tori. In \cite{LS87} both are normalized with respect to the classical normalization of $W_F^\tx{ab} \to F^\times$. According to \cite[\S3.4]{LS87} this factor satisfies
\[ \Delta(\gamma_1,\delta') = \Delta(\gamma_1,\delta) \cdot \<\tx{inv}(\delta,\delta)',\hat\varphi_{\gamma,\delta}^{-1}(s)\>^{-1}, \]
where $\gamma_1 \in H_1(F)^\tx{sr}$ maps to $\gamma \in H(F)^\tx{sr}$, $\delta,\delta' \in G^*(F)^\tx{sr}$, $\varphi : T_\gamma \to T_\delta$ is the unique admissible isomorphism between the centralizers of $\gamma$ and $\delta$ that maps $\gamma$ to $\delta$, and $\tx{inv}(\delta,\delta') \in H^1(F,T_\delta)$ is the class of the element $g^{-1}\sigma(g)$ for any $g \in G(\bar F)$ such that $g\delta g^{-1}=\delta'$.

Following \cite[\S5.1]{KS12} one can also define 
\begin{equation} \label{eq:tf}
\Delta' : H_1(F)^\tx{sr} \times G^*(F)^\tx{sr} \to \C,\qquad  \Delta = \Delta_I^{-1} \cdot \Delta_{II} \cdot \Delta_{III_1}^{-1} \cdot \Delta_{III_2} \cdot \Delta_{IV},
\end{equation}
which then has the property
\[ \Delta'(\gamma_1,\delta') = \Delta'(\gamma_1,\delta) \cdot \<\tx{inv}(\delta,\delta'),\hat\varphi_{\gamma,\delta}^{-1}(s)\>. \]
One reason to introduce $\Delta'$ is that this factor generalizes to the twisted setting, while $\Delta$ does not. The problem is that, in the twisted setting, $\Delta_{III_1}$ and $\Delta_{III_2}$ must be glued to a single factor $\Delta_{III}$ involving hypercohomology, and this gluing necessitates the sign change. Another place where $\Delta'$ appears is the character identities for $p$-adic groups proved in \cite{KalRSP} and \cite{FKS}. The switch from $\Delta$ to $\Delta'$ can be effected by inverting the endoscopic element $s$, since both $\Delta_I$ and $\Delta_{III_1}$ are formed by pairing cohomological invariants with the element $s$.

We mention another clash in notation that is visible in the unnumbered identity 
\[ \Delta'(\gamma_1,\delta') = \<\tx{inv}(\delta,\delta'),\kappa_\delta\>^{-1} \cdot \Delta'(\gamma_1,\delta) \]
which appears below \cite[(5.4.1)]{KS12}, and seems to contradict \eqref{eq:tf}. The reason for the discrepancy is that the notation $\tx{inv}(\delta,\delta')$ used in \cite{KS12} is defined in \cite[\S5.3]{KS99} and is the inverse of the same notation used in \eqref{eq:tf}.

Let us now briefly mention the first meaning of the word ``normalization'', which is, once a choice has been made regarding which version of the local class field theory isomorphism will be used, and whether the endoscopic element $s$ should be inverted or not, how does one pin down the scalar constant multiple. When $G$ is quasi-split, one can either fix a pinning or a Whittaker datum, and these choices lead to the normalizations of $\Delta'$ denoted by $\Delta'_0$ and $\Delta'_\lambda$ in \cite[\S5.5]{KS12}. When $G$ is not quasi-split, one fixes a pinning or Whittaker datum for its quasi-split form, and in addition a rigid inner twist datum, thereby obtaining the normalization of $\Delta'$ defined in \cite[(5.10)]{KalRI}.

Finally, we mention that in this paper we will follow Waldspurger's preference of normalizing orbital integrals and characters by the Weyl discriminant, and hence omit the factor $\Delta_{IV}$ from the transfer factor.

\subsection{Transfer factors as genuine functions on $H(F)_x^\tx{sr} \times G(F)^\tx{sr}$} \label{sub:tf1}

Let $G^*$ be a quasi-split connected reductive $F$-group and let $(\xi,z) : G^* \to G$ be a rigid inner twist. Let $(\dot s,\mc{H})$ be a refined endoscopic pair as in \S\ref{sub:endoreview}. Recall that the refinement is only necessary when working with general rigid inner twists. If on the other hand $G$ is quasi split, $\xi=\tx{id}$, and $z=1$, we can take a usual endoscopic pair $(s,\mc{H})$.

Let $H(F)_x$ be the double cover of $H(F)$ determined by $\mc{H}$ as in \S\ref{sub:l1-covers}. We write $G(F)^\tx{sr}$ and $H(F)^\tx{sr}$ for the subsets of strongly regular semi-simple elements, and $H(F)_x^\tx{sr}$ for the preimage of $H(F)^\tx{sr}$ in $H(F)_x$. We will now define a transfer factor
\[ \Delta'_x : H(F)_x^\tx{sr} \times G(F)^\tx{sr} \to \C. \]
Unlike the definitions of \cite{LS87} or \cite{KS12}, it will not depend on a choice of a $z$-extension $H_1 \to H$ or $L$-embedding $\mc{H} \to {^LG}$. Instead, it uses the canonical double cover $H(F)_x$ and canonical $L$-embedding $^LH_x \to {^LG}$.

As with \cite{LS87} and \cite{KS12}, there will be two ways to normalize it -- by fixing either a pinning or a Whittaker datum of $G^*$. We will furthermore follow Waldspurger's preference of normalizing orbital integrals and characters by the Weyl discriminant, and hence omit the factor $\Delta_{IV}$ from the transfer factor.

Let $\gamma_x \in H(F)_x$ and let $\gamma \in H(F)$ be its image. Let $\delta \in G(F)$. We assume that $\gamma$ and $\delta$ are strongly regular. If they are not related we set $\Delta_x(\gamma_x,\delta)=0$. Otherwise there exists a unique admissible isomorphism $\varphi_{\gamma,\delta}$ between the centralizer of $\gamma$ in $H$ and the centralizer of $\delta$ in $G$. We identify these centralizers via $\varphi_{\gamma,\delta}$ and denote them both by $S$. Then we have $R(S,H) \subset R(S,G) \subset X^*(S)$ and we have the natural embeddings $\xi_{S,H} : S \to H$ and $\xi_{S,G} : S \to G$. 

Let $S(F)_G$ and $S(F)_H$ be the double covers of $S(F)$ associated to the admissible sets $R(S,G)$ and $R(S,H)$. Then $S(F)_H$ is canonically isomorphic to its Baer inverse $S(F)_{-H}$ and $S(F)_{G/H}=S(F)_G \oplus S(F)_{-H}$ is the double cover of $S(F)$ associated to the admissible set $R(S,G) \sm R(S,H)$. Fix a lift $\delta_\pm \in S(F)_{G/H}$ of $\delta$.

Let $S(F)_{x_H}=S(F)_{x_{H,G}}$ be the double cover associated to the character $x_{H,G}$ of \eqref{eq:h-tits}. Again this cover is canonically isomorphic to its Baer inverse $S(F)_{-x_H}$, since $x_H$ is of order $2$. The map $S(F) \to H(F)$ extends canonically to a map $S(F)_{x_H} \to H(F)_x$ by Construction \ref{cns:s-t}. Thus $\gamma_x$ is naturally an element of $S(F)_{x_H}$.

The embeddings $\xi_H : S \to H$ and $\xi_G : S \to G$ form a relatively admissible pair and Construction \ref{cns:rel2} provides a genuine character $\<\tx{inv}(\xi_{S,H},\xi_{S,G}),-\>$ of $S(F)_{G/H} \oplus S(F)_{x_H}$. The absence of the factor $S(F)_{x_G}$ is due to $x_G=1$. Since the embeddings $\xi_H$ and $\xi_G$ depend only on the elements $\gamma_x$ and $\delta_\pm$, we abbreviate this character as 
\begin{equation} \label{eq:d3}
\tx{inv}_\mc{H}(\gamma_x,\delta_\pm) = \<\tx{inv}(\xi_{S,H},\xi_{S,G}),(\delta_\pm,\gamma_x)\>, 
\end{equation}
and note that its value depends only on the stable classes of $\gamma_x$ and $\delta_\pm$.

Consider given a pinning $\tx{pin}=(T,B,\{X_\alpha\})$ of $G^*$. According to Lemma \ref{lem:rel1_desc}, the term
\begin{equation} \label{eq:d1}
\<\tx{inv}((\xi,z,\delta_\pm),\tx{pin}),\varphi_{\gamma,\delta}(\dot s)\>
\end{equation}
is well-defined. Since both \eqref{eq:d3} and \eqref{eq:d1} are genuine functions in the variable $\delta_\pm$, their ratio descends to a function of $\delta$, which allows us to define
\begin{equation} \label{eq:tf_pin}
\Delta'_x[\tx{pin}](\gamma_x,(\xi,z,\delta)) = \<\tx{inv}((\xi,z,\delta_\pm),\tx{pin}),\varphi_{\gamma,\delta}(\dot s)\>^{-1} \cdot \tx{inv}_\mc{H}(\gamma_x,\delta_\pm),
\end{equation}
by choosing an arbitrary lift $\delta_\pm \in S(F)_{G/H}$ of $\delta$.

If instead of a pinning we fix a Whittaker datum $\mf{w}$, then we can choose a pinning $\tx{pin}$ as above and a character $\Lambda : F \to \C^\times$ which together give rise to $\mf{w}$. Following \cite[\S5.3]{KS99} we set
\begin{equation} \label{eq:tf_whit}
\Delta'_x[\mf{w}](\gamma_x,(\xi,z,\delta)) = \epsilon(1/2,X^*(T)_\C-X^*(T^H)_\C,\Lambda) \cdot \Delta_x[\tx{pin}](\gamma_x,(\xi,z,\delta)).	
\end{equation}

\begin{lem} \label{lem:tf1_behavior}
Let $(\xi_i,z_i,\delta_i)$ for $i=1,2$ be stably conjugate. Then 
\begin{eqnarray*}
&&\Delta'_x[\tx{pin}](\gamma_x,(\xi_2,z_2,\delta_2))\\
&=&\Delta'_x[\tx{pin}](\gamma_x,(\xi_1,z_1,\delta_1)) \cdot \<\tx{inv}(\xi_1,z_1,\delta_1),(\xi_2,z_2,\delta_2)),\varphi_{\gamma,\delta_1}(\dot s)\>.
\end{eqnarray*}
The same holds for $\Delta'_x[\mf{w}]$.
\end{lem}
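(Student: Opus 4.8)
The plan is to reduce the statement to the variational behaviour of its two constituent factors under changing the stable conjugacy class representative. Recall that $\Delta'_x[\tx{pin}](\gamma_x,(\xi,z,\delta))$ is, by \eqref{eq:tf_pin}, the product
\[ \<\tx{inv}((\xi,z,\delta_\pm),\tx{pin}),\varphi_{\gamma,\delta}(\dot s)\>^{-1} \cdot \tx{inv}_\mc{H}(\gamma_x,\delta_\pm). \]
So I would treat the two factors separately. For the second factor, Construction \ref{cns:rel2} already records that $\tx{inv}_\mc{H}(\gamma_x,\delta_\pm)$ depends only on the stable class of $\delta_\pm$ (and of $\gamma_x$); hence the factor $\tx{inv}_\mc{H}$ is unchanged when we pass from $(\xi_1,z_1,\delta_1)$ to $(\xi_2,z_2,\delta_2)$ (one must of course match up the lifts $\delta_{i,\pm}$ under the canonical isomorphism of the covers $S_i(F)_{G/H}$ coming from the admissible isomorphism $\varphi_{\delta_1,\delta_2}$; since $\tx{inv}_\mc{H}$ only sees the stable class, this matching is harmless). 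Thus all of the variation comes from the first factor.

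For the first factor the key input is Lemma \ref{lem:delta1_kappa}, which computes exactly how $\tx{inv}((\xi,z,\tilde\delta),\tx{pin})$ changes: with $\tilde\delta_2=\varphi_{\delta_1,\delta_2}(\tilde\delta_1)$ one has
\[ \tx{inv}((\xi_1,z_1,\delta_1),(\xi_2,z_2,\delta_2)) = \tx{inv}((\xi_1,z_1,\tilde\delta_1),\tx{pin}) \cdot \varphi_{\delta_1,\delta_2}^{-1}(\tx{inv}((\xi_2,z_2,\tilde\delta_2),\tx{pin}))^{-1}. \]
Pairing this identity with $\varphi_{\gamma,\delta_1}(\dot s)$ — where one uses that $\varphi_{\gamma,\delta_2} = \varphi_{\delta_1,\delta_2}\circ\varphi_{\gamma,\delta_1}$, so that $\varphi_{\gamma,\delta_2}(\dot s)$ pulls back under $\varphi_{\delta_1,\delta_2}$ to $\varphi_{\gamma,\delta_1}(\dot s)$, together with the compatibility of the pairing with the isomorphism $\varphi_{\delta_1,\delta_2}$ of the relevant cohomology groups $H^1_\tx{bas}(\mc{E},S_i)$ — gives
\[ \<\tx{inv}((\xi_2,z_2,\delta_{2,\pm}),\tx{pin}),\varphi_{\gamma,\delta_2}(\dot s)\>^{-1} = \<\tx{inv}((\xi_1,z_1,\delta_{1,\pm}),\tx{pin}),\varphi_{\gamma,\delta_1}(\dot s)\>^{-1} \cdot \<\tx{inv}((\xi_1,z_1,\delta_1),(\xi_2,z_2,\delta_2)),\varphi_{\gamma,\delta_1}(\dot s)\>. \]
Multiplying by the (unchanged) factor $\tx{inv}_\mc{H}$ yields the claimed identity for $\Delta'_x[\tx{pin}]$. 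The Whittaker normalization \eqref{eq:tf_whit} differs from the pinning normalization only by the $\epsilon$-factor $\epsilon(1/2,X^*(T)_\C-X^*(T^H)_\C,\Lambda)$, which does not involve $\delta$, so the same identity holds verbatim for $\Delta'_x[\mf{w}]$.

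The main obstacle I anticipate is bookkeeping of the various identifications rather than anything deep: one must be careful that the admissible isomorphism $\varphi_{\delta_1,\delta_2} : S_1 \to S_2$ lifts compatibly to all the covers in play ($S_i(F)_{G/H}$, $S_i(F)_{x_H}$, $S_i(F)_{\pm\pm}$) — for the first two this is the content of Construction \ref{cns:rel2} and Fact \ref{fct:maxtori}, and for the last it is recorded just before Lemma \ref{lem:delta1_kappa} — and that under these lifts the genuine characters and the cohomology pairings transport as expected. One should also note that the well-definedness of \eqref{eq:d1} for each of $(\xi_i,z_i,\delta_{i,\pm})$ is guaranteed by Lemma \ref{lem:rel1_desc}, since $\varphi_{\gamma,\delta_i}(\dot s)$ is the endoscopic character attached to $S_i$, so the descent from $S_i(F)_{\pm\pm}$ to $S_i(F)_{G/H}$ is legitimate on both sides of the equation. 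Once these compatibilities are in hand, the proof is the two-line combination of Lemma \ref{lem:delta1_kappa} with the stable invariance of $\tx{inv}_\mc{H}$ sketched above.
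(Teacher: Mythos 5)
Your proposal is correct and follows essentially the same path as the paper's proof: decompose $\Delta'_x[\tx{pin}]$ via \eqref{eq:tf_pin}, observe that $\tx{inv}_\mc{H}$ depends only on stable classes and hence is unchanged, and compute the variation of the remaining factor \eqref{eq:d1} by combining Lemma \ref{lem:delta1_kappa} with the functoriality of the Tate--Nakayama pairing under $\varphi_{\delta_1,\delta_2}$. Your added remark on the $\epsilon$-factor handling the Whittaker normalization, and the appeal to Lemma \ref{lem:rel1_desc} for well-definedness, are correct but not needed beyond what the paper records.
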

\begin{proof}
By definition \eqref{eq:tf_pin} and the fact that \eqref{eq:d3} depends only on the stable class of $\delta_\pm$ we see that it is enough to consider \eqref{eq:d1}. Using the functoriality of the Tate-Nakayama pairing and Lemma \ref{lem:delta1_kappa}, we obtain
\begin{eqnarray*}
&&\<\tx{inv}((\xi_2,z_2,\delta_{2,\pm})^{-1},\tx{pin}),\varphi_{\gamma,\delta_2}(\dot s)\>\\
&=&\<\varphi_{\gamma,\delta_1}^{-1}(\tx{inv}((\xi_2,z_2,\delta_{2,\pm}),\tx{pin}))^{-1},\varphi_{\gamma,\delta_1}(\dot s)\>\\
&=&\<\tx{inv}((\xi_1,z_1,\delta_{1,\pm}),\tx{pin})^{-1}\cdot \tx{inv}((\xi_1,z_1,\delta_1),(\xi_2,z_2,\delta_2)),\varphi_{\gamma,\delta_1}(\dot s)\>
\qedhere
\end{eqnarray*} 
\end{proof}

The next lemma uses the notion of stable conjugacy of elements of $H(F)_x^\tx{sr}$. Two such elements $\gamma_{i,\pm}$ are called stably conjugate if there exists $g \in H(F^s)$ such that $\tx{Ad}(g)$ is an $F$-isomorphism between their centralizers $S_1 \to S_2$ and such that the induced isomorphism $\tx{Ad}(g) : S_1(F)_H \to S_2(F)_H$ maps $\gamma_{1,\pm}$ to $\gamma_{2,\pm}$.

\begin{lem} \label{lem:tf1_antigen}
In the variable $\gamma_x$ the functions $\Delta'_x[\tx{pin}]$ and $\Delta'_x[\mf{w}]$ are genuine and stably invariant.
\end{lem}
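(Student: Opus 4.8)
The two assertions to prove are: (i) genuineness in the variable $\gamma_x$, i.e. $\Delta'_x[\tx{pin}](\epsilon\cdot\gamma_x,(\xi,z,\delta)) = \epsilon\cdot\Delta'_x[\tx{pin}](\gamma_x,(\xi,z,\delta))$ for $\epsilon\in\{\pm1\}$; and (ii) stable invariance, i.e. the value depends only on the stable class of $\gamma_x$ in $H(F)_x^\tx{sr}$. I would reduce the whole statement to facts already established about the two factors appearing in \eqref{eq:tf_pin}. Since the factor \eqref{eq:tf_whit} differs from $\Delta'_x[\tx{pin}]$ only by the $\epsilon$-factor $\epsilon(1/2,X^*(T)_\C-X^*(T^H)_\C,\Lambda)$, which does not involve $\gamma_x$ at all, it suffices to treat $\Delta'_x[\tx{pin}]$; the statement for $\Delta'_x[\mf{w}]$ then follows immediately. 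So I would isolate the $\gamma_x$-dependence of the right-hand side of \eqref{eq:tf_pin}.

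Inspecting \eqref{eq:tf_pin}, the term \eqref{eq:d1}, namely $\<\tx{inv}((\xi,z,\delta_\pm),\tx{pin}),\varphi_{\gamma,\delta}(\dot s)\>$, depends on $\gamma_x$ only through its image $\gamma\in H(F)^\tx{sr}$: the admissible isomorphism $\varphi_{\gamma,\delta}$ and hence the transport $\varphi_{\gamma,\delta}(\dot s)$ depend only on $\gamma$ (and $\delta$), while $\tx{inv}((\xi,z,\delta_\pm),\tx{pin})$ depends only on $\delta_\pm$. Thus the entire genuine/stable behavior in $\gamma_x$ comes from the factor $\tx{inv}_\mc{H}(\gamma_x,\delta_\pm)=\<\tx{inv}(\xi_{S,H},\xi_{S,G}),(\delta_\pm,\gamma_x)\>$ of \eqref{eq:d3}. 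By Construction \ref{cns:rel2}, $\<\tx{inv}(\xi_{S,H},\xi_{S,G}),-\>$ is a \emph{genuine character} of the cover $S(F)_{G/H}\oplus S(F)_{x_H}$ (with the $S(F)_{x_G}$-factor absent because $x_G=1$). Genuineness of this character in the $S(F)_{x_H}$-variable is precisely genuineness of $\Delta'_x[\tx{pin}]$ in $\gamma_x$, once we recall (Construction \ref{cns:s-t}, Fact \ref{fct:maxtori}) that the natural map $S(F)_{x_H}\to H(F)_x$ realizes $\gamma_x\in H(F)_x$ as an element of $S(F)_{x_H}$ and is an isomorphism onto the pull-back $S(F)_{x_H}=S(F)\times_{H(F)}H(F)_x$; in particular it restricts to the identity on $\{\pm1\}$. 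This disposes of (i).

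For (ii), stable invariance, suppose $\gamma_{1,x}$ and $\gamma_{2,x}$ are stably conjugate via $g\in H(F^s)$, so that $\tx{Ad}(g)$ is an $F$-isomorphism $S_1\to S_2$ carrying the cover element $\gamma_{1,x}\in S_1(F)_{x_{H}}$ to $\gamma_{2,x}\in S_2(F)_{x_H}$. I would note that since $\gamma_{i}$ are related to the same $\delta$, the composite admissible isomorphisms identify $S_1$ and $S_2$ compatibly with $\varphi_{\gamma_i,\delta}$, and the root systems $R(S_i,H)\subset R(S_i,G)$ and the embeddings $\xi_{S_i,H}$, $\xi_{S_i,G}$ correspond under $\tx{Ad}(g)$. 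The canonical $L$-embeddings $^L\xi_{S,G}$ and $^L\xi_{S,H}$ of \cite[\S4.1]{KalDC}, and hence the genuine character $\<\tx{inv}(\xi_{S,H},\xi_{S,G}),-\>$ produced by Construction \ref{cns:rel2}, are functorial under such isomorphisms of tori-with-extra-structure; therefore the pairing is unchanged when $\gamma_{1,x}$ is replaced by $\gamma_{2,x}$ (and $\delta_\pm$ by its image). The remark at the end of Construction \ref{cns:rel2}, that the value of $\tx{inv}_\mc{H}(\gamma_x,\delta_\pm)$ depends only on the stable classes of $\gamma_x$ and $\delta_\pm$, already records exactly this; combined with the fact that \eqref{eq:d1} depends on $\gamma_x$ only through $\gamma$ — which is manifestly a stable invariant — we conclude that $\Delta'_x[\tx{pin}](\gamma_x,(\xi,z,\delta))$ depends on $\gamma_x$ only through its stable class. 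Again the $\epsilon$-factor in \eqref{eq:tf_whit} is $\gamma_x$-independent, so the same holds for $\Delta'_x[\mf{w}]$.

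\textbf{Main obstacle.} The genuine part is essentially bookkeeping, tracing the $\{\pm1\}$-equivariance through Construction \ref{cns:s-t} and Construction \ref{cns:rel2}; the real content is the functoriality/stable-invariance of the genuine character $\<\tx{inv}(\xi_{S,H},\xi_{S,G}),-\>$ under the isomorphism $\tx{Ad}(g)$ of maximal tori induced by stable conjugacy. That functoriality rests on the functoriality built into Proposition \ref{pro:func-s}, Construction \ref{cns:iso-t}, Construction \ref{cns:l-s-t}, and Proposition \ref{pro:lemb-comp}(2) — i.e. on the fact that the comparison $L$-isomorphism $^L\eta$ used to define $\tx{inv}_\mc{H}$ transforms naturally under $\hat G$- (and hence $\tx{Ad}(g)$-) conjugacy. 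I would spell out that the diagram of $L$-embeddings $^LS_{x_H}\to{^LH_x}$, $^LS_G\to{^LG}$ is carried to the analogous diagram for $S_2$ by the $\hat G$-isomorphism dual to $\tx{Ad}(g)$, so that Proposition \ref{pro:lemb-comp}(2) gives the same genuine character of $S(F)_{z_1'/z_2'}$ on both sides; this is the step that deserves a careful (if short) argument, the rest being immediate from the structure of \eqref{eq:tf_pin}.
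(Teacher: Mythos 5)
Your treatment of genuineness is correct and is the same as the paper's: the term \eqref{eq:d1} sees only the image $\gamma\in H(F)$, while $\tx{inv}_\mc{H}$ is genuine in the $S(F)_{x_H}$-slot by Construction \ref{cns:rel2}, and the $\epsilon$-factor in \eqref{eq:tf_whit} is $\gamma_x$-independent.

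For stable invariance there is a gap. You write that \eqref{eq:d1} ``depends on $\gamma_x$ only through $\gamma$ --- which is manifestly a stable invariant.'' But $\gamma$ itself is not constant under stable conjugacy; what changes is the admissible isomorphism $\varphi_{\gamma,\delta}$, and hence the transported element $\varphi_{\gamma,\delta}(\dot s)$ appearing in the pairing $\<\tx{inv}((\xi,z,\delta_\pm),\tx{pin}),\varphi_{\gamma,\delta}(\dot s)\>$. When $\gamma_1$ and $\gamma_2$ are stably conjugate in $H(F)$, the isomorphisms $\varphi_{\gamma_1,\delta}$ and $\varphi_{\gamma_2,\delta}$ differ by an $H$-admissible isomorphism between the centralizers of $\gamma_1$ and $\gamma_2$ in $H$, and one must check that this does not move $\dot s$. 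This is exactly what the paper's proof supplies: the change is by an $H$-admissible isomorphism of maximal tori of $H$, while $\dot s$ is central in $\hat{\bar H}$ (it lies in the preimage in $\hat{\bar G}$ of $\tx{Cent}(\hat H,\hat G)$), hence is fixed by such isomorphisms. Without this observation, the stable invariance of \eqref{eq:d1} in $\gamma$ is not ``manifest''; you should add this one-line argument, which is the actual content of the stable-invariance assertion beyond bookkeeping. The rest of your discussion --- that $\tx{inv}_\mc{H}$ is stably invariant by construction and that the Whittaker normalization introduces no $\gamma_x$-dependence --- is fine.
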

\begin{proof}
The term \eqref{eq:d1} depends only on $\gamma$ and not on $\gamma_x$, and the term \eqref{eq:d3} is genuine in $\gamma_x$ by construction.

The term \eqref{eq:d3} depends only on the stable class of $\gamma_x$ by construction. Changing $\gamma$ by a stable conjugate also does not affect \eqref{eq:d1}, because the change in $\varphi_{\gamma,\delta}$ is by an $H$-admissible isomorphism of maximal tori of $H$, while $\dot s$ is central in $\hat{\bar H}$, and hence fixed by such isomorphisms. 
\end{proof}

\subsection{Transfer factors as genuine functions on $H(F)_{x_H}^\tx{sr} \times G(F)_{x_G}^\tx{sr}$} \label{sub:tf2}

Let $G$ be a quasi-split connected reductive group, $x_G \in Z^2(\Gamma,\hat T \to \hat T_\tx{ad})$, and $(s,\mc{H})$ an endoscopic pair as in \S\ref{sub:endoreview} (since $G$ is assumed quasi-split we do not need a refinement). Let $H(F)_{x_H}$ be the double cover determined by $\mc{H}$ as in \S\ref{sub:l2-covers}. A slight modification of the discussion of \S\ref{sub:tf1} produces a transfer factor
\[ \Delta'_x : H(F)_{x_H}^\tx{sr} \times G(F)_{x_G}^\tx{sr} \to \C. \]
But we need to be mindful when working with Baer inverses and distinguish genuine from anti-genuine behavior, since the characters $x_G$ and $x_H$ need not be of order $2$ any more.

Consider $\gamma_H \in H(F)_{x_H}$ and $\delta_G \in G(F)_{x_G}$ with strongly regular semi-simple images $\gamma \in H(F)$ and $\delta \in G(F)$. As before we identify via $\varphi_{\gamma,\delta}$ the centralizer of $\gamma$ in $H$ with the centralizer of $\delta$ in $G$, and denote both by $S$. Let $S(F)_{x_H}$ and $S(F)_{x_G}$ be the covers of $S(F)$ associated to $x_H$ and $x_G$ by Construction \ref{cns:s-t}. This construction provides natural embeddings $S(F)_{x_H} \to H(F)_{x_H}$ and $S(F)_{x_G} \to G(F)_{x_G}$ lifting the embeddings $S(F) \to H(F)$ and $S(F) \to G(F)$. We thus have the element $(\gamma_H,\delta_G) \in S(F)_{x_H} \oplus S(F)_{x_G}$. 

On the other hand, fix an arbitrary lift $\delta_{G/H} \in S(F)_{G/H}$ of $\delta$, where $S(F)_{G/H}$ is the double cover of $S(F)$ associated to the admissible set $R(S,G) \sm R(S,H)$. We remind ourselves that $S(F)_{G/H}=S(F)_G \oplus S(F)_H$, where $S(F)_G$ and $S(F)_H$ are the double covers of $S(F)$ associated to the admissible sets $R(S,G)$ and $R(S,H)$. We now have the element $(\delta_{G/H},\gamma_H,\delta_G) \in S(F)_{G/H} \oplus S(F)_{x_H} \oplus S(F)_{x_G}$. The genuine character $\<\tx{inv}(\xi_{S,H},\xi_{S,G}),-\>$ of $S(F)_{G/H} \oplus S(F)_{-x_H} \oplus S(F)_{x_G}$ can be interpreted via Remark \ref{rem:antigen} as a function that is genuine in the first and third variable, and anti-genuine in the second. We again write $\tx{inv}_\mc{H}$ for this character. For a fixed $F$-pinning of $G$ we define $\Delta'_x[\tx{pin}](\gamma_H,\delta_G)$ as
\begin{equation} \label{eq:tf1_pin}
\<\tx{inv}((\xi,z,\delta_{G/H}),\tx{pin}),\varphi_{\gamma,\delta}(s)\>^{-1} \cdot \tx{inv}_\mc{H}(\delta_{G/H},\gamma_H,\delta_G).	
\end{equation}
The genuine behavior of both factors in the variable $\delta_{G/H}$ cancels out and the result is independent of the choice of $\delta_{G/H}$.

For a Whittaker datum $\mf{w}$ of $G$ we define $\Delta'_x[\mf{w}](\gamma_H,\delta_G)$ by \eqref{eq:tf_whit} but using \eqref{eq:tf1_pin} in place of \eqref{eq:tf_pin}.

For the next two lemmas let $\Delta_x$ denote either of $\Delta_x[\tx{pin}]$ or $\Delta_x[\mf{w}]$.

\begin{lem} \label{lem:tf2_behavior}
In the variable $\delta_G$, the functions $\Delta_x$ is genuine. If $\delta_i \in G(F)_{x_G}$ for $i=1,2$ are stably conjugate, then 
\[ \Delta_x(\gamma_H,\delta_2)=\Delta_x(\gamma_H,\delta_1) \cdot \<\tx{inv}(\delta_1,\delta_2),\varphi_{\gamma,\delta_1}(s)\>. \]
\end{lem}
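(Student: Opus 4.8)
The plan is to split $\Delta_x[\tx{pin}](\gamma_H,\delta_G)$ into its two defining factors and treat each separately. Write $\Delta_x[\tx{pin}](\gamma_H,\delta_G)=A\cdot B$ with $A=\<\tx{inv}((\xi,z,\delta_{G/H}),\tx{pin}),\varphi_{\gamma,\delta}(s)\>^{-1}$ and $B=\tx{inv}_\mc{H}(\delta_{G/H},\gamma_H,\delta_G)$. The key elementary observation is that $A$ does not involve $\delta_G$ at all: after $\varphi_{\gamma,\delta}(s)$ is placed correctly, $A$ is, by Lemma \ref{lem:rel1_desc}, a genuine function of the single variable $\delta_{G/H}\in S(F)_{G/H}$, which is exactly why \eqref{eq:tf1_pin} is well defined. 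Hence genuineness of $\Delta_x$ in $\delta_G$ is immediate: $A$ is constant in $\delta_G$, and $B$ is genuine in its third argument by construction (Construction \ref{cns:rel2}, Remark \ref{rem:antigen}). Since $\Delta_x[\mf{w}]$ differs from $\Delta_x[\tx{pin}]$ only by the $\epsilon$-factor of \eqref{eq:tf_whit}, which depends on neither $\delta$ nor $\delta_G$, the same conclusion holds for $\Delta_x[\mf{w}]$.

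For the stable-conjugacy formula, let $\delta_1,\delta_2\in G(F)_{x_G}$ be stably conjugate (if their images are unrelated to $\gamma$ both sides of the asserted identity vanish, so we may assume they are related). Choose $g\in G(F^s)$ with $\tx{Ad}(g)$ an $F$-isomorphism $S_1\to S_2$ of the centralizers of the images carrying $\delta_1$ to $\delta_2$ on covers; identify $S_1,S_2$ with $S=Z_H(\gamma)$ via $\varphi_{\gamma,\delta_1},\varphi_{\gamma,\delta_2}$, so that $\varphi_{\gamma,\delta_2}=\tx{Ad}(g)\circ\varphi_{\gamma,\delta_1}$, and fix a lift $\delta_{1,G/H}$ of $\delta_1$ together with the compatible lift $\delta_{2,G/H}=\tx{Ad}(g)(\delta_{1,G/H})$. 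Now $A$ is literally the term \eqref{eq:d1} and, crucially, is independent of $x_G$; therefore the computation performed in the proof of Lemma \ref{lem:tf1_behavior} — which invokes Lemma \ref{lem:delta1_kappa} together with the functoriality of the Tate--Nakayama pairing under $\varphi_{\gamma,\delta_1}$ — applies verbatim, and since $G$ is quasi-split (so $\xi=\tx{id}$, $z=1$ and $\tx{inv}((\xi,z,\delta_1),(\xi,z,\delta_2))=\tx{inv}(\delta_1,\delta_2)$) it yields $A_2=A_1\cdot\<\tx{inv}(\delta_1,\delta_2),\varphi_{\gamma,\delta_1}(s)\>$ with $\tx{inv}(\delta_1,\delta_2)\in H^1(F,S)$ represented by $\sigma\mapsto g^{-1}\sigma(g)$.

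It remains to show $B_2=B_1$, i.e. that $\tx{inv}_\mc{H}$ is unchanged. The point is that the genuine character $\<\tx{inv}(\xi_{S,H},\xi_{S,G}),-\>$ of Construction \ref{cns:rel2} is assembled entirely from data insensitive to the choice of the individual element $\delta$: the root systems $R(S,H)\subset R(S,G)\subset X^*(S)$, the $\Gamma$-stable $\hat G$- and $\hat H$-conjugacy classes of embeddings of $\hat S$, the cocycle $x_{H,G}$ of \eqref{eq:h-tits}, and the comparison $L$-isomorphism furnished by Proposition \ref{pro:lemb-comp}. Transport through $\tx{Ad}(g)$ preserves all of these, even though $g\notin G(F)$, because only $G(\bar F)$-, resp. $\hat G$- and $\hat H$-conjugacy classes enter; and $\tx{Ad}(g)$ carries the triple $(\delta_{1,G/H},\gamma_H,\delta_{1,G})$ to $(\delta_{2,G/H},\gamma_H,\delta_{2,G})$ compatibly with the identifications $\varphi_{\gamma,\delta_i}$ (using $\varphi_{\gamma,\delta_2}=\tx{Ad}(g)\circ\varphi_{\gamma,\delta_1}$ for the last two slots). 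This is the exact analogue of the assertion in \S\ref{sub:tf1} that \eqref{eq:d3} depends only on stable classes. Combining $A_2=A_1\cdot\<\tx{inv}(\delta_1,\delta_2),\varphi_{\gamma,\delta_1}(s)\>$ with $B_2=B_1$ gives the claim for $\Delta_x[\tx{pin}]$, and appending the $\delta$-independent $\epsilon$-factor of \eqref{eq:tf_whit} gives it for $\Delta_x[\mf{w}]$. The main obstacle is precisely this last invariance of $B$: one must check carefully that the covers $S(F)_{G/H}$, $S(F)_{x_H}$, $S(F)_{x_G}$ all transport correctly under $\tx{Ad}(g)$ with $g\in G(F^s)$ and that the genuine character built in Construction \ref{cns:rel2} — being defined only up to the $\hat G$-conjugacy ambiguity controlled in Proposition \ref{pro:lemb-comp} — is genuinely unaffected by this transport; everything else is a direct appeal to Lemma \ref{lem:tf1_behavior} and a short combination of the two factors.
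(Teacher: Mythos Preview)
Your proposal is correct and follows the same approach as the paper: the paper's proof reads in its entirety ``The proof is the same as for Lemma \ref{lem:tf1_behavior}'', and what you have written is precisely an unpacking of that reference, splitting into the two factors \eqref{eq:d1} and \eqref{eq:d3}, using that the latter depends only on stable classes while the former transforms via Lemma \ref{lem:delta1_kappa}. Your additional remark that $G$ is quasi-split here (so $\xi=\tx{id}$, $z=1$) is correct and matches the setup of \S\ref{sub:tf2}.
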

\begin{proof}
The proof is the same as for Lemma \ref{lem:tf1_behavior}.
\end{proof}

\begin{lem} \label{lem:tf2_antigen}
In the variable $\gamma_H$, the function $\Delta_x(\gamma_H,\delta_G)$ is anti-genuine and stably-invariant.
\end{lem}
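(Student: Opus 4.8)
The plan is to adapt the proof of Lemma \ref{lem:tf1_antigen} almost verbatim; the only substantive change is to keep track of how the Baer inverse factor $S(F)_{-x_H}$ converts ``genuine'' into ``anti-genuine'' once $x_H$ is no longer of order two.

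First I would treat the anti-genuine behavior. Writing $\Delta_x[\tx{pin}](\gamma_H,\delta_G)$ as the product of the two factors in \eqref{eq:tf1_pin}, I observe that the first factor $\<\tx{inv}((\xi,z,\delta_{G/H}),\tx{pin}),\varphi_{\gamma,\delta}(s)\>^{-1}$ depends on $\gamma_H$ only through its image $\gamma \in H(F)$: the lift $\delta_{G/H}$ is chosen independently of $\gamma_H$, and the admissible isomorphism $\varphi_{\gamma,\delta}$ is determined by $\gamma$ and $\delta$. The second factor $\tx{inv}_\mc{H}(\delta_{G/H},\gamma_H,\delta_G)$ is, by Construction \ref{cns:rel2} together with Remark \ref{rem:antigen} — exactly as recorded in the paragraph preceding the statement — anti-genuine in the variable $\gamma_H$. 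The product of a function constant on the fibres of $H(F)_{x_H} \to H(F)$ with a function anti-genuine in $\gamma_H$ is again anti-genuine in $\gamma_H$, and since the Whittaker normalization \eqref{eq:tf_whit} only multiplies by an $\epsilon$-factor not involving $\gamma_H$, both $\Delta_x[\tx{pin}]$ and $\Delta_x[\mf{w}]$ come out anti-genuine in $\gamma_H$.

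Next I would treat stable invariance. I would first note that $\tx{inv}_\mc{H}$ depends only on the stable classes of its three arguments, again by Construction \ref{cns:rel2}: the constituent covers and the $L$-embeddings $^L\xi_{S,G}$, $^L\xi_{S,H}$, $^L\xi_{H,G}$, hence the resulting genuine character, depend only on the relevant $\hat G$- and $\hat H$-conjugacy classes of torus embeddings, and these are determined by the stable classes. For the first factor, replacing $\gamma_H$ by a stably conjugate element of $H(F)_{x_H}^\tx{sr}$ — in the evident analog, with $S(F)_{x_H}$ replacing $S(F)_H$, of the notion of stable conjugacy recalled just before Lemma \ref{lem:tf1_antigen} — replaces $\gamma$ by a stable conjugate in $H(F)^\tx{sr}$, so $\varphi_{\gamma,\delta}$ changes only by precomposition with an $H$-admissible isomorphism of maximal tori of $H$; since $s$ centralizes $\mc{H}$ it is fixed by the automorphisms induced on the dual side, so $\varphi_{\gamma,\delta}(s)$, and with it the first factor, is unchanged. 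Together with the previous observation this gives stable invariance.

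I expect the argument to be routine rather than conceptual; the only points that really need to be written out are that the genuine character of $S(F)_{G/H} \oplus S(F)_{-x_H} \oplus S(F)_{x_G}$ produced by Construction \ref{cns:rel2}, reinterpreted through the Baer inverse factor $S(F)_{-x_H}$ via Remark \ref{rem:antigen}, transforms by $\epsilon \mapsto \epsilon^{-1}$ under $\mu_n(\C)$ in precisely the variable $\gamma_H$, and that the embedding $S(F)_{x_H} \to H(F)_{x_H}$ furnished by Construction \ref{cns:s-t} (via Fact \ref{fct:maxtori}) is compatible with the notion of stable conjugacy appearing in the statement. Both follow at once from the cited results.
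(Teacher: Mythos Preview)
Your proposal is correct and follows exactly the approach the paper takes: the paper's proof simply reads ``The proof is the same as for Lemma \ref{lem:tf1_antigen},'' and you have spelled out that adaptation, correctly noting that the Baer inverse $S(F)_{-x_H}$ together with Remark \ref{rem:antigen} turns ``genuine'' into ``anti-genuine'' now that $x_H$ need not have order two.
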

\begin{proof}
The proof is the same as for Lemma \ref{lem:tf1_antigen}.
\end{proof}

\subsection{Comparison with the transfer factors of Langlands--Shelstad--Kottwitz} \label{sub:ks_comp}

Let $G^*$ be a quasi-split connected reductive group and let $(\xi,z) : G^* \to G$ be a rigid inner twist as in \cite[\S5.1]{KalRI} or \cite[\S7.1]{Dillery20}. Let pin be an $F$-pinning of $G^*$. We have defined in \eqref{eq:tf_pin} a transfer factor
\[ \Delta'_x[\tx{pin}] : H(F)_x^\tx{sr} \times G(F)^\tx{sr} \to \C. \]
On the other hand, after fixing a $z$-extension $1 \to Z_1 \to H_1 \to H \to 1$ and an $L$-embedding $\mc{H} \to {^LH_1}$, there one has the Langlands--Shelstad--Kottwitz transfer factor 
\[ \Delta'[\tx{pin}] : H_1(F)^\tx{sr} \times G(F)^\tx{sr} \to \C \]
that was reviewed in \S\ref{sub:endoreview}. 

In order to compare these two transfer factors we form the intermediary group $H_1(F)_x$ that is the pull back of $H_1(F) \to H(F) \from H(F)_x$. As in the proof of Theorem \ref{thm:llc-h}, we have the genuine character $\mu_1 : H_1(F)_x \to \C^\times$ whose restriction to $Z_1(F)$ equals $\lambda_1^{-1}$.

\begin{pro} \label{pro:ks_comp}
Let $\gamma_1 \in H_1(F)$ and $\gamma_x \in H(F)_x$ be lifts of $\gamma \in H(F)$. Let $\gamma_{1,\pm} \in H_1(F)_x$ be the element determined by $(\gamma_1,\gamma_x)$.
\[ \Delta'[\tx{pin}](\gamma_1,\delta) = \Delta'_x[\tx{pin}](\gamma_x,\delta)\cdot \mu_1(\gamma_{1,\pm}). \]
\end{pro}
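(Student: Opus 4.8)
The plan is to expand both sides of the claimed identity in terms of the explicit formulas for the constituent pieces and then to match them term by term, using the compatibilities established in Sections~\ref{sub:rel1}, \ref{sub:rel2} and in the proof of Theorem~\ref{thm:llc-h}. The left-hand side $\Delta'[\tx{pin}](\gamma_1,\delta)$ is the Langlands--Shelstad--Kottwitz factor $\Delta_I^{-1}\Delta_{II}\Delta_{III_1}^{-1}\Delta_{III_2}$ (recall $\Delta_{IV}$ is omitted), defined relative to the $z$-extension $H_1\to H$ and the chosen $L$-embedding $\mc{H}\to{^LH_1}$. The right-hand side is $\Delta'_x[\tx{pin}](\gamma_x,\delta)\cdot\mu_1(\gamma_{1,\pm})$, where by \eqref{eq:tf_pin} the first factor is $\<\tx{inv}((\xi,z,\delta_\pm),\tx{pin}),\varphi_{\gamma,\delta}(\dot s)\>^{-1}\cdot\tx{inv}_\mc{H}(\gamma_x,\delta_\pm)$. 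So the identity to be proved is, after cancelling the common $\Delta_{III_1}^{-1}$-type factor $\<\tx{inv}((\xi,z,\delta_\pm),\tx{pin}),\varphi_{\gamma,\delta}(\dot s)\>^{-1}$ against the analogous splitting-invariant contribution inside $\Delta'$ (this is exactly the content of Lemma~\ref{lem:delta1_kappa} together with \cite[(2.3.3)]{LS87} and the normalization discussion of \S\ref{sub:endoreview}), an equality of the form
\[
\Delta_I\cdot\Delta_{II}\cdot\Delta_{III_2}\ (\text{the remaining LSK pieces})\ =\ \tx{inv}_\mc{H}(\gamma_x,\delta_\pm)\cdot\mu_1(\gamma_{1,\pm}).
\]
Thus the real work is to identify the product of the ``mysterious'' factors $\Delta_I,\Delta_{II},\Delta_{III_2}$ with the product of the intrinsic genuine characters $\tx{inv}_\mc{H}$ and $\mu_1$.

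First I would recall that $\Delta_I$ and $\Delta_{II}$ together form a genuine character of the double cover $S(F)_{G/H}$ of $S(F)$ associated to the admissible set $R(S,G)\sm R(S,H)$ — this is precisely the reinterpretation of $a$-data/$\chi$-data packaged by the cover in \cite{KalDC} and recalled in \S\ref{sub:admset}. Indeed, $\Delta_{II}$ is the product over symmetric and asymmetric orbits of the $\chi_\alpha$-factors evaluated at $\alpha(\delta)-1$, which is exactly the pairing of the tautological element $\delta_\pm\in S(F)_{G/H}$ with the genuine character attached to the $\chi$-data; and $\Delta_I$ absorbs the dependence on the gauge/$a$-data, which on the cover side is the choice of lift $\delta_\pm$. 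The factor $\Delta_{III_2}$, which involves Langlands duality for the torus $S$ paired with the difference between the $L$-embeddings ${^L\xi}_{S,G}$ and ${^L\xi}_{H_1,G}\circ{^L\xi}_{S,H_1}$, is by construction of Construction~\ref{cns:rel2} exactly the genuine character $\<\tx{inv}(\xi_{S,H},\xi_{S,G}),-\>=\tx{inv}_\mc{H}$ up to the discrepancy coming from using $H_1$ versus $H$ — and that discrepancy is precisely what $\mu_1$ records. So the strategy is: (i) identify $\Delta_I\Delta_{II}$ with a genuine character of $S(F)_{G/H}$ absorbed into $\tx{inv}_\mc{H}$ by the comparison of $L$-embeddings ${^L\xi}_{S,G}$ vs.\ ${^L\xi}_{H,G}\circ{^L\xi}_{S,H}$ of Proposition~\ref{pro:lemb-comp}; (ii) identify $\Delta_{III_2}$ for $H_1$ with the same invariant for $H$ twisted by the genuine character of the cover of $S(F)$ pulled back from $\mu_1$, using that $\mu_1$ was constructed in the proof of Theorem~\ref{thm:llc-h} precisely as the genuine character of $H_1(F)_{x}$ whose parameter is $r_1^{-1}$, and that the restriction of $\mu_1$ to $S_1(F)_x$ is the Langlands parameter difference between the two $L$-embeddings of $^LS$. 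Combining (i) and (ii), and recalling that restricting $\Delta'$ from $H_1(F)^\tx{sr}$ down along $Z_1$ introduces exactly the character $\lambda_1=\mu_1|_{Z_1(F)}^{-1}$, gives the claimed formula.

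The main obstacle I anticipate is bookkeeping the numerous normalizations and sign conventions so that the $L$-embedding comparisons line up: one must carefully track the Tits-lift cocycle $z$ and the cochain $x$ appearing in \eqref{eq:liso} and \eqref{eq:liso1} against the $\chi$-data/$a$-data conventions of \cite{LS87}, verify that the ``adapted pinning'' normalization of $c$ in Lemma~\ref{lem:modified-splitting} matches the splitting used to define $\mu_1$ via $r_1$, and confirm that the inversions dictated by \S\ref{sub:endoreview} (the $\Delta\leftrightarrow\Delta'$ switch and the $\tx{inv}$ vs.\ $\tx{inv}^{-1}$ clash noted there) are consistently applied. Once this is done, the equality is forced term by term. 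Concretely I would organize the proof by: choosing a common gauge $p$ on $R(S,G)$ and a common Borel-induced pinning; using Construction~\ref{cns:rel2} to write $\tx{inv}_\mc{H}(\gamma_x,\delta_\pm)$ as the pairing of $(\delta_\pm,\gamma_x)$ with the genuine character $\eta$ coming from $^L\eta:{^LS}_{x_H\cdot t_p^{S,H}}\to{^LS}_{x_G\cdot t_p^{S,G}}$; comparing $^L\eta$ with the composite $L$-embedding that defines $\Delta_{III_2}$ for $H_1$; and finally invoking the construction of $\mu_1$ in the proof of Theorem~\ref{thm:llc-h} to absorb the $H_1$-versus-$H$ discrepancy, together with Lemma~\ref{lem:delta1_kappa} and the normalization of Tate--Nakayama duality to handle the $\Delta_I\Delta_{III_1}$ part.
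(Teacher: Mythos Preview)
Your overall strategy --- reduce to the quasi-split case and then match the Langlands--Shelstad pieces against the two factors of $\Delta'_x$ --- is exactly what the paper does. However, your factor-by-factor correspondence is misaligned, and this would cause the argument to fail as written. The term $\langle\tx{inv}((\xi,z,\delta_\pm),\tx{pin}),\varphi_{\gamma,\delta}(\dot s)\rangle^{-1}$ is not a ``$\Delta_{III_1}^{-1}$-type factor''; it is precisely $\Delta_I^{-1}$. The paper deals with $\Delta_{III_1}$ separately and trivially: by choosing the admissible embedding $\tx{Cent}(\gamma,H)\to G$ to be $\varphi_{\gamma,\delta}$ itself, one gets $\Delta_{III_1}=1$. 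The key trick for $\Delta_I$ is to choose the $a$-data to be $a_\alpha=\delta_\alpha-\delta_{-\alpha}$ (coming from the chosen lift $\delta_\pm$); with this choice $\Delta_I$ becomes literally $\langle\tx{inv}((\xi,z,\delta_\pm),\tx{pin}),\varphi_{\gamma,\delta}(\dot s)\rangle$, matching the first factor of $\Delta'_x$ on the nose. The residual identity is then $\Delta_{II}\cdot\Delta_{III_2}=\tx{inv}_\mc{H}(\gamma_x,\delta_\pm)\cdot\mu_1(\gamma_{1,\pm})$, not your $\Delta_I\Delta_{II}\Delta_{III_2}=\cdots$.

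Your plan to pair $\Delta_I$ with $\Delta_{II}$ as ``together a genuine character of $S(F)_{G/H}$'' therefore cannot work: $\Delta_I$ carries the Tate--Nakayama pairing with $\dot s$ and is not a genuine character of that cover in the sense $\Delta_{II}$ is. With the $a$-data choice above, $\Delta_{II}$ collapses to $\chi(\delta_\pm)$ for the genuine character $\chi$ attached to the $\chi$-data, and the remaining comparison $\chi(\delta_\pm)\cdot\Delta_{III_2}(\gamma_1,\delta)\cdot\mu_1(\gamma_{1,\pm})^{-1}=\tx{inv}_\mc{H}(\gamma_x,\delta_\pm)$ is carried out by writing out the three $L$-embeddings $^LS\to{^LG}$, $^LS\to{^LH}$, and $\mc{H}\to{^LH_1}$ explicitly in terms of $r_p^G$, $r_p^H$, $r_1$, $x$, and the Tits lifts, and cancelling the $\chi$-data cochain $r_p^G/r_p^H$ and the $r_1$ contribution against $\chi$ and $\mu_1$ respectively. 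What survives is the parameter \eqref{eq:d3pmpar}, which is then identified with the parameter of $\tx{inv}_\mc{H}$ directly from Construction~\ref{cns:rel2}. Your proposal has all the right ingredients but the wrong pairing of them; once you swap the roles of $\Delta_I$ and $\Delta_{III_1}$ and make the specific $a$-data choice, the rest of your outline goes through.
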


\begin{rem}
We note first that this identity is consistent in the following sense. According to Lemma \ref{lem:tf1_antigen} the right hand side descends to a non-genuine function of $H_1(F)$. On the other hand, the left-hand side transforms under $Z_1(F)$ by the character $\lambda_1^{-1}$ -- this is \cite[Lemma 3.5.A]{LS90}, except that the character $\lambda_1$ we are using here matches the one constructed in \cite[\S2.2]{KS99} and is the inverse of the character $\lambda$ of \cite[Lemma 3.5.A]{LS90}. 
\end{rem}

\begin{proof}
As a first step, we reduce to the case that $G$ is quasi-split. To do this, choose $\delta^* \in G^*(F)$ that is stably conjugate to $\delta$. As discussed in \cite[\S5.1]{KalRI}, a theorem of Steinberg guarantees the existence of $\delta^*$. Then \cite[(5.1)]{KalRI} and Lemma \ref{lem:tf1_behavior} show that both $\Delta'[\tx{pin}]$ and $\Delta'_x[\tx{pin}]$ have the same behavior when $\delta$ is replaced by $\delta^*$. This allows us to assume that $G=G^*$, $\xi=\tx{id}$, $z=1$. We now proceed by examining how the different pieces of $\Delta'[\tx{pin}]$ and showing how they compare to the pieces of $\Delta_x[\tx{pin}]$. The pieces of $\Delta'[\tx{pin}]$ depend on further choices: $a$-data, $\chi$-data, and element admissible embedding $\tx{Cent}(\gamma,H) \to G$. We will choose the latter to be the admissible isomorphism $\varphi_{\gamma,\delta}$. With that choice, $\Delta_{III_1}=1$.

\ul{$\Delta_I$}: We choose a lift $\delta_\pm \in S(F)_{G/H}$ of $\delta$. Then $a_\alpha = \delta_\alpha-\delta_{-\alpha}$ is a set of $a$-data for $R(S,G/H)_\tx{sym}$, which we extend arbitrarily to a set of $a$-data for $R(S,G)$.  Then 
\[ \Delta_I(\gamma,\delta)=\<\tx{inv}((\xi,z,\delta_\pm),\tx{pin}),\varphi_{\gamma,\delta}(\dot s)\>. \] 

\ul{$\Delta_{II}$}: Choose $\chi$-data for $R(S,G)$. Let $\chi : S(F)_{G/H} \to \C^\times$ be the genuine character relative to the part of the $\chi$-data for $R(S,G/H)$. Then 
\begin{eqnarray*}
\Delta_{II}(\gamma,\delta)&=&\prod_{\alpha \in R(S,G/H)/\Sigma} \chi_\alpha\left(\frac{\alpha(\delta)-1}{a_\alpha}\right)\\
&=&\prod_{\alpha \in R(S,G/H)/\Sigma} \chi_\alpha\left(\frac{\delta_\alpha-\delta_{-\alpha}}{\delta_{-\alpha}a_\alpha}\right)\\
&=&\prod_{\alpha \in R(S,G/H)/\Sigma} \chi_\alpha(\delta_\alpha)\\
&=&\chi(\delta_\pm),
\end{eqnarray*}
where $\chi : S(F)_{G/H} \to \C^\times$ is the genuine character associated to this $\chi$-data as in \cite[\S3.2]{KalDC}, and we have used $a_\alpha=\delta_\alpha-\delta_{-\alpha}$.

\ul{$\Delta_{III_2}$}: We now compare $\Delta_{III_2}$ with $\tx{inv}_\mc{H}$. More previsely, we will show that 
\[ \Delta_{II}(\gamma,\delta) \cdot \Delta_{III_2}(\gamma_1,\delta) \cdot \mu_1(\gamma_{1,\pm})^{-1}  = \tx{inv}_\mc{H}(\gamma_x,\delta_\pm). \] 
We begin by recalling that the construction of $\Delta_{III_2}$ involves the $L$-embedding
\[ \hat S \rtimes W_F \to \hat G \rtimes W_F,\qquad s \rtimes \sigma \mapsto s\cdot r_p^G(\sigma)n_p^G(\sigma_{S,G})\rtimes\sigma \]
constructed in \cite[\S2.6]{LS87}. We have chosen a pinning $(\hat T,\hat B,\{X_{\hat\alpha}\})$ of $\hat G$ and $n_p^G(\sigma_{S,G}) \in N(\hat T,\hat G)$ denotes the corresponding Tits lift of the Weyl element $\sigma_{S,G} \in \Omega(\hat T,\hat G)$, and $r_p^G : W_F \to \hat T$ is the 1-cochain determined by this pinning and the $\chi$-data for $R(S,G)$. The $\hat G$-conjugacy class of this $L$-embedding depends only on the $\chi$-data, but not on the choice of pinning. Analogously we have
\[ \hat S \rtimes W_F \to \hat H \rtimes W_F,\qquad s\rtimes \sigma \mapsto s\cdot r_q^H(\sigma)n_q^H(\sigma_{S,H})\rtimes\sigma, \]
where again this particular presentation depends on a chosen pinning of $\hat H$. To relate these two embeddings we conjugate $\mc{H}$ within $\hat G$ so that $\hat T \subset \hat H$, and we arrange that the Borel subgroup $\hat B \cap \hat H$ is part of the chosen pinning for $\hat H$. Then $q=p$ and moreover $\hat T$ is naturally identified with the dual of the universal torus of $G$, as well as that of $H$. These identifications induce the two usually distinct Galois actions $\sigma_G$ and $\sigma_H$ on $\hat T$. We have $\sigma_H=\sigma_{H,G}\sigma_G$ with $\sigma_{H,G} \in \Omega(\hat T,\hat G)$. Recall the non-homomorphic section \eqref{eq:tits-splitting} of the natural projection $\mc{H} \to W_F$, as well as its modification \eqref{eq:modified-splitting} that involved a choice of $x \in C^1(\Gamma,\hat T)$. The latter was given by $\sigma \mapsto x(\sigma)^{-1}n_p^G(\sigma_{H,G})\rtimes\sigma_G$ and had the property that it preserves the chosen pinning of $\hat H$. There is $r_1 \in C^1(W_F,Z(\hat H_1))$ such that $\sigma \mapsto r_1(\sigma)^{-1}x(\sigma)^{-1}n_p^G(\sigma_{H,G})\rtimes\sigma_G$ is a homomorphic section of the projection $\mc{H}_1 \to W_F$, where $\mc{H}_1$ is the pushout of $Z(\hat H_1) \from Z(\hat H) \to \mc{H}$. There are of course many choices for $r_1$, but one of them is such that
\[ \mc{H} \to \hat H_1 \rtimes W_F,\qquad hx(\sigma)^{-1}n_p^G(\sigma_{H,G})\rtimes\sigma  \mapsto r_1(\sigma)h \rtimes \sigma\]
is the chosen $L$-embedding $\mc{H} \to {^LH_1}$. 

By definition, $\Delta_{III_2}(\gamma_1,\delta)$ is the value at $\gamma_1$ of a certain character of the centralizer $S_1$ of $\gamma_1$. The character in question has the $L$-parameter given by comparing the three $L$-embeddings above. Using the explicit formulas we have presented, the parameter equals
\[ r_p^H(\sigma)n_p^H(\sigma_{S,H})r_1(\sigma)^{-1}x(\sigma)^{-1}n_p^G(\sigma_{H,G})n_p^G(\sigma_{S,G})^{-1}r_p^G(\sigma)^{-1}. \]
We have $\sigma_{S,G}=\sigma_{S,H}\sigma_{H,G}$, and therefore the above can be rewritten as
\[ r_p^G(\sigma)^{-1}r_p^H(\sigma)r_1(\sigma)^{-1}\cdot n_p^H(\sigma_{S,H})x(\sigma)^{-1}n_p^G(\sigma_{H,G})n_p^G(\sigma_{S,G})^{-1}. \]
The 1-cochain $r_p^G(\sigma)r_p^H(\sigma)^{-1}$ is by construction the parameter (relative to the gauge $p$) of the genuine character $\chi$ of $S(F)_{G/H}$. If we denote by $S_1(F)_{G/H}$ the pull-back of $S_1(F) \to S(F) \from S(F)_{G/H}$, then the product $\Delta_{II}(\gamma,\delta)\Delta_{III_2}(\gamma_1,\delta)$ is the value at $(\gamma_1,\delta_\pm)$ of the genuine character of $S_1(F)_{G/H}$ with parameter 
\[ r_1(\sigma)^{-1}\cdot n_p^H(\sigma_{S,H})x(\sigma)^{-1}n_p^G(\sigma_{H,G})n_p^G(\sigma_{S,G})^{-1}. \]
The parameter of the genuine character $\mu_1$ of $H_1(F)_x$ is $r_1(\sigma)^{-1}\boxtimes\sigma$. Let $S_1(F)_{x_H}$ be the pull back of $S_1(F) \to H_1(F) \from H_1(F)_x$. It's $L$-group is $\hat S_1 \boxtimes_{\partial x^{-1}t_p^{G,H}} \Gamma$ according to Fact \ref{fct:maxtori} and Construction \ref{cns:l-s-t}. The restriction to $S_1(F)_{x_H}$ of $\mu_1$ has parameter equal to $r_1(\sigma)^{-1}\boxtimes\sigma$. Therefore the product $\Delta_{II}(\gamma,\delta)\Delta_{III_2}(\gamma_1,\delta)\mu_1(\gamma_{1,\pm})^{-1}$ is the genuine character of the Baer sum $S_1(F)_{-x_H} \oplus S_1(F)_{G/H}$ with parameter
\begin{equation} \label{eq:d3pmpar}
n_p^H(\sigma_{S,H})x(\sigma)^{-1}n_p^G(\sigma_{H,G})n_p^G(\sigma_{S,G})^{-1}.	
\end{equation}

We claim that this equals $\tx{inv}_\mc{H}(\gamma_x,\delta_\pm)$. The construction of the latter involves the $L$-embeddings
\[ ^LS_H \oplus {^LS_{x_H}} = \hat S \boxtimes_{t_p^{S,H}\partial x^{-1}t_p^{H,G}} \Gamma \to \hat H_{\partial x^{-1}t_p^{H,G}}\Gamma,\qquad s\boxtimes\sigma \mapsto sn_p^H(\sigma_{S,H})\boxtimes\sigma, \]
\[ ^LH_x = \hat H \boxtimes_{\partial x^{-1}t_p^{H,G}} \Gamma \to \hat G \rtimes \Gamma,\qquad h \boxtimes\sigma \mapsto hx(\sigma)^{-1}n_p^G(\sigma_{H,G})\rtimes \sigma, \]
and
\[ ^LS_G = \hat S \boxtimes_{t_p^{S,G}} \Gamma \to \hat G \rtimes \Gamma,\qquad s\boxtimes\sigma \mapsto sn_p^G(\sigma_{S,G})\rtimes\sigma. \]
Comparing the composition of the first two with the third we see that the parameter of $\tx{inv}_\mc{H}$ is given precisely by \eqref{eq:d3pmpar}.
\end{proof}

\subsection{Transfer of orbital integrals and characters between $G(F)_{x_G}$ and $H(F)_{x_H}$}
Let $G^*$ be a quasi-split connected reductive group and let $(\xi,z) : G^* \to G$ be a rigid inner twist. Let $x_G \in Z^2(\Gamma,\hat T \to \hat T_\tx{ad})$. If $x_G\neq 1$ we set assume $G=G^*$, $\xi=\tx{id}$, $z=1$. Fix a Whittaker datum $\mf{w}$ for $G^*$.

For any $g \in G(F)$ the automorphism $\tx{Ad}(g)$ of $G(F)$ lifts naturally to the cover $G(F)_{x_G}$, namely as conjugation by an arbitrary lift of $g$ in $G(F)_{x_G}$. In this way, we obtain an action of $G(F)$ on $G(F)_{x_G}$ by conjugation.

Let $\delta_G \in G(F)_{x_G}$ be an element whose image $\delta \in G(F)$ is strongly regular semi-simple, and let $S \subset G$ be its centralizer. For any $f \in \mc{C}_c^\infty(G(F)_{x_G})$ we can consider the orbital integral
\[ O_{\delta_G}(f) = \int_{G(F)/S(F)}f(g\delta_Gg^{-1})dg. \]
Assume now that $f$ is anti-genuine, i.e. $f(\epsilon \tilde g)=\epsilon^{-1}f(\tilde g)$. Then $O_{\epsilon\delta_G}(f)=\epsilon^{-1}O_{\delta_G}(f)$.

\begin{dfn}
We say that an anti-genuine $f^H \in \mc{C}_c^\infty(H(F)_{x_H})$ matches $f$ if for all strongly regular $\gamma_H \in H(F)_{x_H}$ we have
\[ SO_{\gamma_H}(f^H) = \sum_\delta \Delta(\gamma_H,\delta_G) O_{\delta_G}(f), \]
where the sum runs over the $G(F)$-conjugacy classes of strongly regular semi-simple elements $\delta \in G(F)$ and $\delta_G \in G(F)_{x_G}$ is an arbitrary lift of $\delta$.
\end{dfn}

\begin{thm} \label{thm:orbit}
For every anti-genuine function $f \in \mc{C}^\infty_c(G(F)_{x_G})$ there exists a matching anti-genuine function $f^H \in \mc{C}^\infty_c(H(F)_{x_H})$.
\end{thm}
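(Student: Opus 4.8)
The plan is to reduce the statement to the classical theorem on matching of orbital integrals for linear groups, using the comparison of transfer factors established in Proposition \ref{pro:ks_comp}. The key observation is that an anti-genuine function on $G(F)_{x_G}$ is essentially a function on $G(F)$ once we untwist the cover, and the same is true on the $H$-side after passing to a $z$-extension. The hard part will be packaging the bookkeeping so that the genuine characters $\mu_1$ (and their analogues) intervene correctly and the final identity matches the one defining $f^H$.

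\begin{proof}
We first treat the case $x_G \in Z^2(\Gamma,Z(\hat G))$; the general case follows formally by the argument of \S\ref{sub:elementary}, exactly as at the end of the proof of Theorem \ref{thm:llc-h}, since matching of orbital integrals is a statement about a natural transformation of functors on $\mc{Z}^2(\Gamma,\hat T \to \hat T_\tx{ad})$. Choose a $z$-extension $1 \to Z_1 \to G_1 \to G \to 1$ and form $G_1(F)_{x_G}$, the pull-back of $G_1(F) \to G(F) \from G(F)_{x_G}$; by the proof of Theorem \ref{thm:llc-h} there is a genuine character $\mu_1^G : G_1(F)_{x_G} \to \C^\times$ whose restriction to $Z_1(F)$ is a fixed character $\lambda_1^{G,-1}$, and tensoring with $\mu_1^G$ identifies anti-genuine functions on $G(F)_{x_G}$ with functions on $G_1(F)$ transforming under $Z_1(F)$ by $\lambda_1^G$. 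Using the $L$-embedding $\mc{H} \subset {^LG}$ composed with \eqref{eq:l-emb-z1} we obtain an endoscopic datum for $G_1$, a $z$-extension $H_1 \to H_1'$ of its endoscopic group (which we may and do arrange to factor through the $H(F)_x$ picture, as in the proof of Proposition \ref{pro:ks_comp}), and a genuine character $\mu_1^H$ of $H_1'(F)_x$. Thus on the $H$-side, anti-genuine functions on $H(F)_{x_H}$ tensored with $\mu_1^H$ become functions on $H_1(F)$ transforming under the relevant central torus by the inverse character.

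Next, Proposition \ref{pro:ks_comp} (together with its evident extension to $G^* \to G$ a rigid inner twist, already built into \eqref{eq:tf_pin}) shows that, under these identifications, the transfer factor $\Delta'_x[\mf{w}]$ on $H(F)_{x_H}^\tx{sr} \times G(F)_{x_G}^\tx{sr}$ is carried to the Langlands--Shelstad--Kottwitz transfer factor $\Delta'[\mf{w}]$ on $H_1(F)^\tx{sr} \times G(F)^\tx{sr}$, up to the scalar $\mu_1^H(\gamma_{1,\pm})$ that is exactly absorbed when we pass from anti-genuine functions to functions on $H_1(F)$. Concretely: given anti-genuine $f \in \mc{C}^\infty_c(G(F)_{x_G})$, the function $f \otimes \mu_1^G$ descends to $f_1 \in \mc{C}^\infty_c(G_1(F))$ (with the appropriate $Z_1$-equivariance, extended by zero on other cosets if one prefers to work with $\mc{C}^\infty_c(G_1(F))$ outright after a standard truncation argument). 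By the classical transfer theorem for linear groups --- Waldspurger's theorem in the $p$-adic case and Shelstad's in the archimedean case, valid because we assumed the refined local Langlands correspondence and hence transfer for the relevant linear groups --- there exists $f_1^H \in \mc{C}^\infty_c(H_1(F))$ matching $f_1$ with respect to $\Delta'[\mf{w}]$ (and transforming appropriately under the central torus of $H_1$).

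Finally, untwist: set $f^H = (f_1^H)^\sim \otimes (\mu_1^H)^{-1}$, viewed as an anti-genuine function on $H(F)_{x_H}$ via the identification above (the central-torus equivariance of $f_1^H$ is precisely what makes this descent well-defined, using that $\mu_1^H|_{Z_1}$ cancels $\lambda_1^H$). It remains to check that $f^H$ matches $f$ in the sense of the definition preceding the theorem, i.e. $SO_{\gamma_H}(f^H) = \sum_\delta \Delta'_x[\mf{w}](\gamma_H,\delta_G)O_{\delta_G}(f)$. Each term transforms: $O_{\delta_G}(f) = \mu_1^G(\delta_{1})^{-1} O_{\delta_1}(f_1)$ for a lift $\delta_1 \in G_1(F)$ of $\delta$, and similarly $SO_{\gamma_H}(f^H) = \mu_1^H(\gamma_{1,\pm}) SO_{\gamma_1}(f_1^H)$; combining these with the transfer-factor comparison of Proposition \ref{pro:ks_comp} turns the desired identity into the classical matching identity $SO_{\gamma_1}(f_1^H) = \sum_\delta \Delta'[\mf{w}](\gamma_1,\delta)O_{\delta_1}(f_1)$, which holds by construction of $f_1^H$. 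The main obstacle in carrying this out carefully is the compatibility of the various $z$-extensions and genuine characters --- ensuring that the $z$-extension $H_1$ chosen for the endoscopic group of $G_1$ can be taken compatibly with $H(F)_x$ so that $\mu_1^H$ on $H_1'(F)_x$ and the character appearing in Proposition \ref{pro:ks_comp} coincide --- but this is exactly the bookkeeping already carried out in the proof of Proposition \ref{pro:ks_comp}, and invoking it verbatim completes the argument.
\end{proof}
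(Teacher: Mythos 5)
Your proposal follows essentially the same route as the paper: untwist both $G(F)_{x_G}$ and $H(F)_{x_H}$ by genuine characters of suitable $z$-extensions, compare $\Delta'_x$ with the Langlands--Shelstad--Kottwitz factor, invoke the classical transfer theorem for the linear groups $G_1$ and $H_2$, and retwist. Two small inaccuracies to fix: the generalization of Proposition \ref{pro:ks_comp} you actually need is not ``to rigid inner twists'' (that case is already handled there) but to $x_G\neq1$, where the cover also appears on the $G$-side of the transfer factor --- this is exactly the unnamed Lemma the paper states inside the proof and leaves to the reader --- and the classical transfer theorem (Waldspurger, Ng\^o, Shelstad) is an unconditional theorem rather than a consequence of the assumed refined local Langlands correspondence, which is why Theorem \ref{thm:orbit} carries no hypothesis while Theorem \ref{thm:charid} does. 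Your preliminary reduction to $x_G\in Z^2(\Gamma,Z(\hat G))$ is not needed and not taken by the paper, since the genuine characters $\mu_G$ and $\mu_H$ exist on the $z$-extensions for arbitrary $x_G$ by Proposition \ref{pro:gsc_abelian} and Corollary \ref{cor:gsc_abelian}.
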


We continue with an anti-genuine $f \in \mc{C}_c^\infty(G(F)_{x_G})$. Given an admissible genuine representation $\pi_G$ of $G(F)_{x_G}$ we can form the operator
\[ \pi(f) = \int_{G(F)} f(\tilde g)\pi(\tilde g)vdg, \]
where $\tilde g \in G(F)_{x_G}$ is an arbitrary lift of $g$. This operator is of trace class and we define
\[ \Theta_\pi(f) = \tx{tr}\,\pi(f). \]

Given a tempered $L$-parameter $\varphi^G : L_F \to {^LG_{x_G}}$ let 
\[ S\Theta_{\varphi_G}=\sum_{\pi \in \Pi_{\varphi^G}} \dim(\rho_\pi)\cdot \Theta_\pi, \] 
where $\rho_\pi \in \tx{Irr}(\pi_0(S_\varphi^+))$ correspond to $\pi$ via the refined local Langlands correspondence, i.e. \cite[\S5.4]{KalRI} when $x_G=1$, or Theorem \ref{thm:llc-h} when $x_G \neq 1$. More generally, for $\dot s \in S_\varphi^+$, let
\[ \Theta_{\varphi_G}^{\dot s,\mf{w}}=\sum_{\pi \in \Pi_{\varphi^G}} \tx{tr}\,(\rho_\pi(\dot s))\cdot \Theta_\pi. \]
Let $\mc{H}=\tx{Cent}(s,\hat G) \cdot \varphi(W_F)$. Then $(\dot s,\mc{H})$ is a refined endoscopic datum for $G$. Let $H(F)_{x_H}$ be the cover of $H(F)$ constructed in \S\ref{sub:l2-covers}  and let $^L\xi_{H,G} : {^LH_{x_H}} \to {^LG_{x_G}}$ be the $L$-embedding \eqref{eq:liso1}. Then $\varphi^G={^L\xi_{H,H}}\circ \varphi^H$ for a tempered $L$-parameter $\varphi_H : L_F \to {^LH_{x_H}}$.

\begin{cnj} \label{cnj:charid}
If $f \in \mc{C}_c^\infty(G(F)_{x_G})$ and $f^H \in \mc{C}_c^\infty(H(F)_{x_H})$ are matching anti-genuine functions, then
\[ \Theta^{s,\mf{w}}_{\varphi_G}(f) = S\Theta_{\varphi_H}(f^H). \]
Equivalently,
\[ \Theta^{s,\mf{w}}_{\varphi_G}(\delta_G) = \sum_\gamma \Delta'_x[\mf{w}](\gamma_H,\delta_G)S\Theta_{\varphi_H}(\gamma_H), \]
where the sum runs over the strongly regular semi-simple elements $\gamma$ of $H(F)$ up to stable conjugacy, and $\gamma_H \in H(F)_{x_H}$ is an arbitrary lift of $\gamma$.
\end{cnj}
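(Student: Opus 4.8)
\textbf{Proof proposal for Conjecture \ref{cnj:charid}.}

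The strategy is to reduce the statement to its classical counterpart --- the endoscopic character identity for the linear groups $G(F)$ (or, when $x_G\neq 1$, a $z$-extension of $G$) and $H_1(F)$ --- by untwisting both the covers and the transfer factor simultaneously. Concretely, I would proceed as follows. First, choose a $z$-extension $1\to Z_1\to H_1\to H\to 1$ and an $L$-embedding $\mc{H}\to{^LH_1}$, and form the intermediary group $H_1(F)_{x_H}$, the pull-back of $H_1(F)\to H(F)\from H(F)_{x_H}$, together with the genuine character $\mu_1:H_1(F)_{x_H}\to\C^\times$ whose restriction to $Z_1(F)$ is $\lambda_1^{-1}$, exactly as in the proof of Theorem \ref{thm:llc-h} and in \S\ref{sub:ks_comp}. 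When $x_G\neq1$ one also does the analogous construction on the $G$-side using a $z$-extension $G_1$ of $G$, the $L$-embedding \eqref{eq:l-emb-z1}, and the genuine character $\mu_1^G$ of $G_1(F)_{x_G}$; by the reduction argument at the start of the proof of Proposition \ref{pro:ks_comp} (invoking Steinberg's theorem and Lemma \ref{lem:tf1_behavior} / Lemma \ref{lem:tf2_behavior}) one may assume $G=G^*$ quasi-split throughout, so that ordinary (unrefined) endoscopic data and the Whittaker normalization of \cite{KS12} apply.

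The second step is to identify the spectral data across the covers. Tensoring with $\mu_1^G$ gives an equivalence between genuine representations of $G_1(F)_{x_G}$ and representations of $G_1(F)$ inflated from $G_1(F)$, compatible with $L$-parameters and with the identifications $S_{\varphi_G}/Z(\hat G)^\Gamma=S_{\varphi_{G_1}}/Z(\hat G_1)^\Gamma$; this is precisely the content of Theorem \ref{thm:llc-h}, and it shows that $\Theta^{s,\mf w}_{\varphi_G}(f)=\Theta^{s,\mf w}_{\varphi_{G_1}}(f\otimes\mu_1^G)$ where $f\otimes\mu_1^G$ is the (non-genuine) descent of the anti-genuine $f$ to $G_1(F)$, and similarly $S\Theta_{\varphi_H}(f^H)=S\Theta_{\varphi_{H_1}}(f^H\otimes\mu_1)$. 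The third step is the matching of test functions: I would show that if $f^H$ matches $f$ in the sense of Definition above (with respect to $\Delta'_x[\mf w]$), then $f^H\otimes\mu_1$ matches $f\otimes\mu_1^G$ in the classical Langlands--Shelstad--Kottwitz sense with respect to $\Delta'[\tx{pin}]$. This is where Proposition \ref{pro:ks_comp} enters: it expresses $\Delta'[\tx{pin}](\gamma_1,\delta)$ as $\Delta'_x[\tx{pin}](\gamma_x,\delta)\cdot\mu_1(\gamma_{1,\pm})$ (and, in the $x_G\neq1$ case, one needs the evident analogue incorporating $\mu_1^G$, proved by the same computation as in \S\ref{sub:ks_comp} with $x_G$ carried along). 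Plugging this comparison into the orbital-integral identity and cancelling the genuine characters $\mu_1,\mu_1^G$ converts the cover-matching condition into the linear one, and conversely. Then the classical endoscopic character identity --- which we are assuming for $G$ (or $G_1$) and $H_1$ --- gives $\Theta^{s,\mf w}_{\varphi_{G_1}}(f\otimes\mu_1^G)=S\Theta_{\varphi_{H_1}}(f^H\otimes\mu_1)$, and tracing back through the equivalences of the second step yields the desired identity. Finally, the ``equivalently'' reformulation in terms of stable orbital integrals $S\Theta_{\varphi_H}(\gamma_H)$ follows by the usual unwinding of the definition of matching functions, using the anti-genuine behavior in Lemmas \ref{lem:tf2_antigen} and \ref{lem:tf2_behavior} to see that the geometric side is well-defined independently of the chosen lift $\gamma_H$ of $\gamma$.

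The main obstacle I anticipate is not any single step in isolation but the bookkeeping required to make the comparison of genuine characters in step three genuinely canonical: one must check that the character $\mu_1$ appearing in $\Delta'_x[\tx{pin}]$ via Proposition \ref{pro:ks_comp} is the \emph{same} $\mu_1$ that appears when descending $f^H$ and $\Theta_{\varphi_H}$, including the delicate matching of the character $\lambda_1$ with the conventions of \cite{KS99} versus \cite{LS90} (the inverse noted in the Remark following Proposition \ref{pro:ks_comp}), and that all the identifications $S_{\varphi}/Z(\hat{\,})^\Gamma$ are compatible with the isomorphism $^L\xi_{H,G}$ of \eqref{eq:liso1}. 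A secondary subtlety, in the case $x_G\neq1$, is that the $z$-extension $G_1$ chosen on the $G$-side and the $z$-extension $H_1$ on the $H$-side need not be ``compatible'' with the endoscopic embedding; here one should argue as in the independence-of-$z$-extension part of the proof of Theorem \ref{thm:llc-h}, passing to a common refinement, so that the classical character identity for $(G_1,H_1)$ --- which is what the hypothesis of the conjecture supplies, phrased as holding for connected reductive groups with the same derived subgroup as $G$ --- can actually be applied. Once these compatibilities are pinned down, the proof is a formal consequence of Theorem \ref{thm:llc-h}, Proposition \ref{pro:ks_comp}, and the assumed classical endoscopic character identities.
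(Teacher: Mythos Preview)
Your proposal is essentially correct and follows the same strategy as the paper's proof of Theorem~\ref{thm:charid}. One streamlining worth noting: rather than choosing the $z$-extensions of $G$ and $H$ independently and then passing to a common refinement to ensure compatibility with the endoscopic embedding (the ``secondary subtlety'' you flag), the paper builds them in a chain. It first fixes a $z$-extension $G_1\to G$, then takes $\mc{H}_1=\mc{H}\cdot Z(\hat G_1)$ so that the resulting endoscopic group $H_1$ for $G_1$ automatically sits over $H$ with kernel the same induced torus $A_1$; since $H_1$ need not have simply connected derived subgroup, one then takes a further $z$-extension $H_2\to H_1$. This construction makes the compatibility of the two $z$-extensions with the endoscopic relationship automatic, and the transfer-factor comparison becomes the single identity $\Delta'(\gamma_2,\delta_1)=\Delta'_x(\gamma_H,\delta_G)\cdot\mu_H(\gamma_{2,x_H})\mu_G(\delta_{1,x_G})^{-1}$ (the evident two-sided analogue of Proposition~\ref{pro:ks_comp} that you correctly anticipate). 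With this in place, both the orbital-integral matching and the character identity reduce to four-line computations, exactly as you outline.
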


\begin{thm} \label{thm:charid}
Assume that \cite[Conjecture G]{KalSimons} holds for all $z$-extensions of $G$. Then Conjecture \ref{cnj:charid} holds.
\end{thm}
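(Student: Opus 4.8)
\textbf{Proof plan for Theorem \ref{thm:charid}.}

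The plan is to reduce the endoscopic character identities for the covers $G(F)_{x_G}$ and $H(F)_{x_H}$ to those for linear groups by the same $z$-extension device used in the proof of Theorem \ref{thm:llc-h}, and then to use the comparison of transfer factors established in Proposition \ref{pro:ks_comp} to translate the classical identity (\cite[Conjecture G]{KalSimons}) into the desired one. First I would choose a $z$-extension $1 \to Z_1 \to G_1 \to G \to 1$ and pull back $x_G$ to $\bar\pi_1(G_1)$, obtaining the cover $G_1(F)_{x_G}$ and, exactly as in the proof of Theorem \ref{thm:llc-h}, the $L$-embedding \eqref{eq:l-emb-z1}, an $L$-isomorphism ${^LG_{1,x_G}} \to {^LG_1}$ determined by a $1$-cochain $r_1 \in C^1(W_F,Z(\hat G_1))$, and the genuine character $\mu_1$ of $G_1(F)_{x_G}$. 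The $L$-parameter $\varphi_G$ valued in ${^LG_{x_G}}$ then yields, via \eqref{eq:l-emb-z1}, a parameter $\varphi_{G_1}$ for the linear group $G_1$, and the $L$-packet and pairing $\Theta^{\dot s,\mf w}_{\varphi_G}$ are by construction obtained from those of $G_1$ by tensoring with $\mu_1$. On the endoscopic side one does the same: $\mc{H}_1 := \mc{H}/Z(\hat G) \cdot$ (image) determines an endoscopic datum $(\dot s, \mc{H}_1)$ for $G_1$ with endoscopic group $H_1$ a $z$-extension of $H$, and the cover $H_1(F)_{x_H}$ fits into $1 \to Z_1 \to H_1(F)_{x_H} \to H(F)_{x_H} \to 1$ with its own genuine character.

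Second, I would relate the matching of anti-genuine functions on the covers to matching of ordinary functions on the linear groups. Given anti-genuine $f \in \mc{C}_c^\infty(G(F)_{x_G})$, multiplying by $\mu_1$ (inflated to $G_1(F)_{x_G}$) and using that $\mu_1|_{Z_1(F)} = \lambda_1^{-1}$ produces a function on $G_1(F)$ transforming under $Z_1(F)$ by $\lambda_1$; likewise for $H$. Then Proposition \ref{pro:ks_comp} says precisely that the classical Langlands--Shelstad--Kottwitz transfer factor $\Delta'[\tx{pin}]$ on $H_1(F)^\tx{sr} \times G(F)^\tx{sr}$ equals $\Delta'_x[\tx{pin}](\gamma_x,\delta) \cdot \mu_1(\gamma_{1,\pm})$. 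Combining this with the orbital integral identities of Theorem \ref{thm:orbit} (which, in turn, is reduced to the classical statement by the same trick), one sees that $f^H$ matches $f$ in the sense of \S\ref{sub:tf2} if and only if the corresponding functions $f_1 := f \cdot \mu_1$, $f_1^H := f^H \cdot \mu_1$ on $G_1(F)$ and $H_1(F)$ match in the classical sense. One must be careful here with the normalization: the element $s$ (and its refinement $\dot s$) appears identically on both sides because the $L$-embedding $^LH_{x_H} \to {^LG_{x_G}}$ of \eqref{eq:liso1} was built to be compatible, via $r_1$, with $\mc{H} \to {^LH_1}$; this is exactly the compatibility recorded in the comparison of the pieces $\Delta_{III_2}$ and $\tx{inv}_\mc{H}$ inside the proof of Proposition \ref{pro:ks_comp}.

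Third, I would apply the hypothesis: \cite[Conjecture G]{KalSimons}, assumed for all $z$-extensions of $G$ (and hence, by the standard reduction, for their endoscopic groups corresponding to $H$), gives
\[ \Theta^{\dot s,\mf w}_{\varphi_{G_1}}(f_1) = S\Theta_{\varphi_{H_1}}(f_1^H). \]
Since $\Theta^{\dot s,\mf w}_{\varphi_G}(f) = \Theta^{\dot s,\mf w}_{\varphi_{G_1}}(f \cdot \mu_1)$ and $S\Theta_{\varphi_H}(f^H) = S\Theta_{\varphi_{H_1}}(f^H \cdot \mu_1)$ by the construction of the packets in Theorem \ref{thm:llc-h}, this is exactly the asserted identity $\Theta^{s,\mf w}_{\varphi_G}(f) = S\Theta_{\varphi_H}(f^H)$. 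Finally I would check independence of the auxiliary choices ($z$-extensions $G_1$, $H_1$, the cochain $r_1$, the $L$-embedding $\mc{H} \to {^LH_1}$), which follows formally because passing from $r_1$ to $z_1 r_1$ twists $\mu_1$ by the character $\chi_1$ of $G_1(F)$ and simultaneously twists $\varphi_{G_1}$ and $\Pi_{\varphi_{G_1}}$ compatibly, leaving both sides unchanged — precisely the argument already carried out in the proof of Theorem \ref{thm:llc-h} — and similarly for a common refinement $G_3$ of two $z$-extensions.

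The main obstacle I anticipate is not the reduction itself but bookkeeping of the normalizations: ensuring that the refined endoscopic element $\dot s \in \hat{\bar G}$ on the cover side is transported to the correct element on the $z$-extension side so that $\Theta^{\dot s,\mf w}$ corresponds to $\Theta^{\dot s_1,\mf w}$ with $\dot s_1$ the image of $\dot s$ in $\hat{\bar G_1}$, and that the Whittaker normalizations of $\Delta'_x[\mf w]$ and of the classical $\Delta'[\mf w]$ (including the $\epsilon$-factor in \eqref{eq:tf_whit}, which must match the $\epsilon$-factor built into $\Delta'[\mf w]$ for the pair $(G_1,H_1)$) agree. All of this is implicit in the comparison already done in \S\ref{sub:ks_comp}, so the theorem should follow without genuinely new input beyond careful assembly.
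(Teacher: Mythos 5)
Your overall strategy — reduce to linear groups via a $z$-extension, translate anti-genuine functions and packets with a genuine character $\mu$, and compare transfer factors — is the same as the paper's. But there are two concrete points where the proposal as written would break, and the paper's proof spends its effort precisely there.

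First, you assert that the endoscopic group $H_1$ attached to the pair $(\dot s,\mc{H}_1)$ for $G_1$ is ``a $z$-extension of $H$.'' That is not generally true: the kernel $Z_1 = \ker(H_1 \to H)$ is an induced torus, but nothing forces the derived subgroup of $H_1$ to be simply connected. Without simple connectedness, the class of $x_H$ in $H^2(W_F,Z(\hat H_1))$ need not vanish (cf. Proposition \ref{pro:gsc_abelian1}), so the cover $H_1(F)_{x_H}$ may fail to admit a genuine character, and your translation to a $\lambda_H$-anti-genuine function on the \emph{linear} group $H_1(F)$ collapses. The paper deals with this by inserting a second $z$-extension $1 \to A_2 \to H_2 \to H_1 \to 1$, so that $H_2 \to H$ has cohomologically trivial toral kernel $A$ and a genuine character $\mu_H$ on $H_2(F)_{x_H}$ does exist (by Corollary \ref{cor:gsc_abelian}); the classical LSK transfer factor is then taken on $H_2(F)^\tx{sr} \times G_1(F)^\tx{sr}$. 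This extra layer is not optional bookkeeping; it is the step that makes the reduction go through.

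Second, Proposition \ref{pro:ks_comp} only compares transfer factors in the setting of \S\ref{sub:tf1}, i.e.\ $x_G=1$ with no cover on the $G$-side, and its conclusion has no $\mu_G$-term. Conjecture \ref{cnj:charid} is stated in the generality of \S\ref{sub:tf2}, with both $G(F)_{x_G}$ and $H(F)_{x_H}$ present. The paper therefore proves (as an unnamed lemma in the last subsection) a strengthened comparison
\[ \Delta'(\gamma_2,\delta_1) = \Delta'_x(\gamma_H,\delta_G)\cdot \mu_H(\gamma_{2,x_H})\,\mu_G(\delta_{1,x_G})^{-1}, \]
with \emph{distinct} genuine characters $\mu_G$ on $G_1(F)_{x_G}$ and $\mu_H$ on $H_2(F)_{x_H}$. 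Your proposal collapses these into a single symbol $\mu_1$ and appeals directly to Proposition \ref{pro:ks_comp}, which does not cover the case $x_G \neq 1$; so that citation has to be replaced by the more general lemma (whose proof the paper says is ``similar to Proposition \ref{pro:ks_comp}''). Once both of these are corrected, the telescoping computation you sketch in the third paragraph is exactly what the paper carries out.
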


\subsection{Proofs of Theorems \ref{thm:orbit} and \ref{thm:charid}}

The arguments follow the line of the proofs of Theorem \ref{thm:llc-h} and Proposition \ref{pro:ks_comp}.

Choose a $z$-extension $1 \to A_1 \to G_1 \to G \to 1$ and form the fiber product $G_1(F)_{x_G}$ of $G_1(F) \to G(F) \from G(F)_{x_G}$. According to Proposition \ref{pro:gsc_abelian} and Corollary \ref{cor:gsc_abelian} there exists a genuine character $\mu_G : G_1(F)_{x_G} \to \mb{S}^1$. If $f^G$ is a complex valued anti-genuine function on $G(F)_{x_G}$, then after pulling back to $G_1(F)_{x_G}$ and multiplying with $\mu_G$ we obtain the function $f^{G_1}=f^G \cdot \mu_G$ of $G_1(F)_{x_G}$ that descends to $G_1(F)$. It is $\lambda_G$-antigenuine, where $\lambda_G$ is the restriction of $\mu_G^{-1}$ to $A_1(F)$. 

This provides a bijection between $x_G$-antigenuine functions on $G(F)_{x_G}$ and $\lambda_G$-antigenuine functions on $G_1(F)$. In the same way, but using $\mu_G^{-1}$, one obtains a bijection between $x_G$-genuine representations of $G(F)_{x_G}$ and $\lambda_G$-genuine representations of $G_1(F)$.

Dually, let $r_1 : W_F \to Z(\hat G_1)$ be the parameter of the anti-genuine character $\mu_G^{-1}$. Then 
\[ ^LG_{x_G} \to {^LG_1},\quad g \boxtimes \sigma \mapsto gr_1(\sigma) \rtimes \sigma \]
is an $L$-embedding via which the $L$-parameters valued in $^LG_{x_G}$ correspond to those $L$-parameters valued in $^LG_1$ whose composition with the projection $\hat G_1 \rtimes W_F \to (\hat G_1/\hat G) \rtimes W_F$ equals $\varphi_{\lambda_G}$.

Let $(\dot s,\mc{H})$ be a refined endoscopic pair for $G$. Setting $\mc{H}_1 = \mc{H} \cdot Z(\hat G_1)$ we obtain a refined endoscopic pair $(\dot s,\mc{H}_1)$ for $G_1$. The group $H_1$ comes equipped with a surjective homomorphism $H_1 \to H$ whose kernel is $A_1$. Its derived subgroup need not be simply connected, so $H_1 \to H$ need not be a $z$-extension. Choose a $z$-extension $1 \to A_2 \to H_2 \to H_1 \to 1$. Then $H_2 \to H$ is a surjection whose kernel $A$ is a torus that is an extension of $A_1$ by $A_2$, and thus cohomologically trivial. In particular, $H_2(F) \to H(F)$ is surjective.

Let $H_2(F)_{x_H}$ be the fiber product of $H_2(F) \to H(F) \to H(F)_{x_H}$. According to Proposition \ref{pro:gsc_abelian} and Corollary \ref{cor:gsc_abelian} there exists a genuine character $\mu_H : H_2(F)_{x_H} \to \mb{S}^1$. As for $G$, the map $f^H \mapsto f^H \cdot \mu_H =: f^{H_2}$ induces a bijection between $x_H$-antigenuine functions on $H(F)_{x_H}$ and $\lambda_H$-antigenuine functions on $H_2(F)$, where $\lambda_H^{-1}$ is the restriction of $\mu_H$ to $A(F)$. The same holds for representations.

Dually, let $r_2 : W_F \to Z(\hat H_2)$ be the parameter of the anti-genuine character $\mu_H^{-1}$. Then
\[ ^LH_{x_H} \to {^LH_2},\quad h \boxtimes \sigma \mapsto hr_2(\sigma) \rtimes \sigma \]
is an $L$-embedding via which the $L$-parameters valued in $^LH_{x_H}$ correspond to those $L$-parameters valued in $^LH_2$ whose composition with the projection $\hat H_2 \rtimes W_F \to (\hat H_2/\hat H) \rtimes W_F$ equals $\varphi_{\lambda_H}$.

\begin{lem} 
Let $\mc{H} \to {^LH_2}$ be the composition of above $L$-embedding with the inverse of \eqref{eq:liso1}. The Langlands--Shelstad--Kottwitz transfer factor corresponding to that $L$-embedding and the transfer factor \eqref{eq:tf_pin} are related as follows.

Let $\gamma_2 \in H_2(F)$ and $\gamma_H \in H(F)_{x_H}$ be lifts of $\gamma \in H(F)$ and let $\gamma_{2,x_H} \in H_2(F)_{x_H}$ be the element determined by $(\gamma_2,\gamma_H)$. Let $\delta_1 \in G_1(F)$ and $\delta_G \in G(F)_{x_G}$ be lifts of $\delta \in G(F)$ and let $\delta_{1,x_G} \in G_1(F)_{x_G}$ be the element determined by $(\delta_1,\delta_G)$. Then
\[ \Delta'(\gamma_2,\delta_1) = \Delta'_x(\gamma_H,\delta_G)\cdot \mu_H(\gamma_{2,x_H})\mu_G(\delta_{1,x_G})^{-1}, \]
where each factor is normalized either by a pinning of by a Whittaker datum.
\end{lem}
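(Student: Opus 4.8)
The plan is to reduce the identity to the comparison already carried out in Proposition \ref{pro:ks_comp}, applied after passing through the intermediate $z$-extensions. First I would observe that the right-hand side is intrinsic to the data $(H,H_2,\mu_H,G,G_1,\mu_G)$ and does not depend on the choice of the auxiliary $L$-embedding $\mc H \to {}^LH_2$ beyond what is specified; so it suffices to prove the identity for one convenient choice. The convenient choice is the one dictated by the construction in \S\ref{sub:l2-covers}: take $r_2 \in C^1(W_F,Z(\hat H_2))$ to be the parameter of $\mu_H^{-1}$ and $r_1 \in C^1(W_F,Z(\hat G_1))$ the parameter of $\mu_G^{-1}$, and use the $L$-embedding $\mc H \hookrightarrow {}^LG_{x_G} \xrightarrow{\ \cdot r_1\ } {}^LG_1$ together with ${}^LH_{x_H}\xrightarrow{\ \cdot r_2\ }{}^LH_2$ and the inverse of \eqref{eq:liso1}.

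Next I would split the comparison into three links, each of which is an instance of an already-established compatibility. The first link is between $\Delta'_x(\gamma_H,\delta_G)$ (the transfer factor of \S\ref{sub:tf2} for the cover pair $H(F)_{x_H}, G(F)_{x_G}$) and $\Delta'_x(\gamma_{\pm},\delta)$ of \S\ref{sub:tf1}; but in fact \S\ref{sub:tf2} is constructed precisely so that pulling back along $G(F)_{x_G}\to G(F)$ and the analogous map for $H$ recovers the \S\ref{sub:tf1} factor up to the genuine characters attached to $x_G$ and $x_H$, which are exactly the restrictions of $\mu_G$ and $\mu_H$ to the torus covers --- this is the content of the construction of $\tx{inv}_\mc H$ in Construction \ref{cns:rel2} and \eqref{eq:tf1_pin}. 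The second link is Proposition \ref{pro:ks_comp} itself, which compares the Langlands--Shelstad--Kottwitz factor $\Delta'[\tx{pin}](\gamma_1,\delta)$ for a $z$-extension $H_1\to H$ with $\Delta'_x[\tx{pin}](\gamma_x,\delta)\cdot\mu_1(\gamma_{1,\pm})$; I would apply it twice, once for the $z$-extension $G_1\to G$ on the $G$-side (where it degenerates, since $G$ has no nontrivial cover issue beyond $x_G$, giving the factor $\mu_G(\delta_{1,x_G})^{-1}$) and once for $H_2\to H$, giving $\mu_H(\gamma_{2,x_H})$. The third link is the elementary transitivity of Langlands--Shelstad--Kottwitz transfer factors under composition of $z$-extensions: $\Delta'(\gamma_2,\delta_1)$ for the $z$-pair $(H_2,\mc H\to{}^LH_2)$ equals $\Delta'(\gamma_1,\delta_1)$ for $(H_1,\mc H_1\to{}^LH_1)$ times the value at $\gamma_2$ of the genuine-to-non-genuine character pinned down by $r_2$ relative to $r_1$; this is a routine unwinding of the definitions in \cite[\S3]{LS87} together with the behavior of the factors under inflation along $H_2\to H_1$, exactly as in the independence-of-$z$-extension argument inside the proof of Theorem \ref{thm:llc-h}.

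Concretely, I would first reduce to $G=G^*$, $\xi=\tx{id}$, $z=1$ exactly as at the start of the proof of Proposition \ref{pro:ks_comp}, using Steinberg's theorem and Lemma \ref{lem:tf1_behavior} to move $\delta$ to a stable conjugate in $G^*(F)$; this is legitimate because both sides transform the same way under such a move (the left by \cite[(5.1)]{KalRI}, the right by Lemma \ref{lem:tf2_behavior}). Then I would choose $a$-data and $\chi$-data for $R(S,G)$, set up the presentations of $\hat S$ inside $\hat H$ and $\hat G$ with $\hat T\subset\hat H$, and track the four pieces $\Delta_I,\Delta_{II},\Delta_{III_1},\Delta_{III_2}$ as in Proposition \ref{pro:ks_comp}, now carrying along the extra $L$-embedding ${}^LH_{x_H}\to{}^LH_2$ and the corresponding Baer-summand $S(F)_{x_H}$. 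The point is that the parameter computation in the $\Delta_{III_2}$ step is identical to the one already done, with $r_1$ there replaced by the pair $(r_1,r_2)$, and the $\mu_1$ of that proof replaced by the product $\mu_G(\delta_{1,x_G})^{-1}\mu_H(\gamma_{2,x_H})$.

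The main obstacle I expect is bookkeeping rather than conceptual: making sure the genuine-versus-anti-genuine conventions on all the Baer sums and Baer inverses (Remark \ref{rem:antigen}, and the fact that now $x_G,x_H$ need not be $2$-torsion) are consistently oriented, so that the powers of $r_1,r_2$ enter with the correct sign and the cancellation of genuine behavior in the variables $\delta_{G/H}$, $\delta_G$, $\gamma_H$ works out. The secondary technical point is the transitivity of LSK transfer factors under a composite of $z$-extensions $H_2\to H_1\to H$ with the kernel $A$ an extension of $A_1$ by $A_2$; here I would invoke that $A$ is cohomologically trivial (so $H_2(F)\to H(F)$ is surjective and all the relevant $H^1$'s vanish), and that the transfer factor is insensitive to inflation along $H_2\to H_1$ once the auxiliary $L$-embedding is chosen compatibly, which is precisely the mechanism used in the last part of the proof of Theorem \ref{thm:llc-h}. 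Once these are in place, the three links compose to give the stated identity.
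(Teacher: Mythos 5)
Your concrete paragraph is exactly the argument the paper intends: the paper itself gives no proof beyond ``Similar to Proposition~\ref{pro:ks_comp} and left to the reader,'' and your proposal to rerun the $\Delta_I,\Delta_{II},\Delta_{III_1},\Delta_{III_2}$ bookkeeping of Proposition~\ref{pro:ks_comp} with the single parameter $r_1$ now replaced by the pair $(r_1,r_2)$ --- producing $\mu_G(\delta_{1,x_G})^{-1}$ from the $G_1$-side exactly as $\mu_1(\gamma_{1,\pm})$ was produced from the $H_1$-side, and $\mu_H(\gamma_{2,x_H})$ from the $H_2$-side --- is the correct generalization. The reduction to $G=G^*$ via Lemma~\ref{lem:tf1_behavior}/\ref{lem:tf2_behavior} and \cite[(5.1)]{KalRI}, and the choice $\Delta_{III_1}=1$ via the admissible isomorphism $\varphi_{\gamma,\delta}$, carry over verbatim.

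One caveat: the ``three-link'' framing at the start is not quite coherent, and the first link in particular does not exist as stated. The transfer factor $\Delta'_x(\gamma_H,\delta_G)$ of \S\ref{sub:tf2} is \emph{genuine} in $\delta_G$ when $x_G\neq 1$; it therefore cannot arise by pulling back a function of $\delta\in G(F)$ along $G(F)_{x_G}\to G(F)$, and there is in general no genuine character of $G(F)_{x_G}$ itself (only of $G_1(F)_{x_G}$, by Proposition~\ref{pro:gsc_abelian1}) that could absorb the discrepancy. So there is no factorization of the \S\ref{sub:tf2} factor through the \S\ref{sub:tf1} factor; the two constructions live in genuinely different settings ($\mc H\subset{}^LG$ versus $\mc H\subset{}^LG_{x_G}$) and must be compared directly. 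This does not hurt you, because your concrete paragraph does not actually rely on that first link --- it compares $\Delta'$ on $H_2(F)\times G_1(F)$ to $\Delta'_x$ on $H(F)_{x_H}\times G(F)_{x_G}$ in one shot, which is what the statement asks for. Similarly, the ``transitivity under composition of $z$-extensions'' you invoke as a third link is not needed once you fix $(r_1,r_2)$ up front as you do; it is a reformulation, not an extra ingredient. I would drop the three-link preamble and lead with the concrete computation.
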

\begin{proof}
Similar to Proposition \ref{pro:ks_comp} and left to the reader.
\end{proof}
Let $f^{H_2} \in \mc{C}_c^\infty(H_2(F))$ be the transfer of $f^{G_1}$ with respect to the transfer factor $\Delta'[\tx{pin}]$. Then $f^{H_2}$ is $\lambda_H$-antigenuine
and $f^H=f^{H_2} \cdot \mu_H^{-1}$ is a $x_H$-genuine function on $H(F)_{x_H}$. We have
\begin{eqnarray*}
SO_{\gamma_H}(f^H)&=&\mu_H^{-1}(\gamma_{2,x_H})SO_{\gamma_2}(f^{H_2})\\
&=&\mu_H^{-1}(\gamma_{2,x_H})\sum_{\delta_1}\Delta'(\gamma_2,\delta_1) O_{\delta_1}(f^{G_1})\\
&=&\sum_{\delta_1}\Delta'(\gamma_2,\delta_1) \mu_H^{-1}(\gamma_{2,x_H})\mu_G(\delta_{1,x_G})O_{\delta_G}(f^G)\\
&=&\sum_{\delta_G}\Delta'_x(\gamma_H,\delta_G) O_{\delta_G}(f^G),\\
\end{eqnarray*}
proving Theorem \ref{thm:orbit}.

Assuming the character identity for the group $G_1$ and its endoscopic group $H_2$ we have
\begin{eqnarray*}
\Theta_{\varphi_G}^{s,\mf{w}}(\delta_G)&=&\mu_G^{-1}(\delta_{1,x_G})\Theta_{\varphi_{G_1}}^{s,\mf{w}}(\delta_1)\\
&=&\mu_G(\delta_{1,x_G})\sum_{\gamma_2}\Delta'[\mf{w}](\gamma_2,\delta_1)S\Theta_{\varphi_{H_2}}(\gamma_2)\\
&=&\sum_{\gamma_2}\Delta'[\mf{w}](\gamma_2,\delta_1)\mu_G(\delta_{1,x_G})\mu_H^{-1}(\gamma_{2,x_H}) S\Theta_{\varphi_H}(\gamma_H)\\
&=&\sum_{\gamma_H}\Delta'_x[\mf{w}](\gamma_H,\delta_G)S\Theta_{\varphi_H}(\gamma_H),\\
\end{eqnarray*}
proving Theorem \ref{thm:charid}.

\begin{appendices}

\renewcommand{\thesubsection}{\Alph{subsection}}

\numberwithin{thm}{subsection}

\subsection{Review of some bits of Pontryagin duality} \label{app:cag}

Let $A$ be a compact abelian group (assumed Hausdorff) and let $A^\circ \subset A$ be the connected component of the identity. Then $A^\circ$ is a closed connected subgroup, and $\pi_0(A)=A/A^\circ$ is a topological group that is Hausdorff, compact, and totally disconnected, hence profinite. It is finite if and only if $A^\circ$ is open. The group $A^\circ$ is divisible (\cite[Theorem 24.25]{HRv1e2}), in fact the largest divisible subgroup of $A$.

Let $X=\tx{Hom}_\tx{cts}(A,\mb{S}^1)$ be Pontryagin dual of $A$, endowed with the usual compact open topology. Then $X$ is a discrete subgroup, and $X[\infty]$ and $A^\circ$ are mutual annihilators (\cite[Corollary 24.20]{HRv1e2}), where $X[\infty]$ denotes the torsion subgroup of $X$. Thus $X[\infty]$ is naturally identified with the Pontryagin dual of $\pi_0(A)$ and $X/X[\infty]$ with the Pontryagin dual of $A^\circ$.

\begin{lem} \label{lem:surjfin}
A continuous character $\chi : A \to \mb{S}^1$ is either surjective, or of finite order. The latter is the case precisely when $\chi$ kills $A^\circ$.
\end{lem}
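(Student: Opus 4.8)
The statement is really about the image of a continuous homomorphism $\chi$ from a compact abelian group $A$ into the circle $\mb{S}^1$. The plan is to use two standard facts: first, that the continuous image of a compact group is compact, hence $\chi(A)$ is a closed subgroup of $\mb{S}^1$; and second, the classification of closed subgroups of $\mb{S}^1$, namely that every closed subgroup of $\mb{S}^1$ is either all of $\mb{S}^1$ or finite cyclic. These two facts together immediately give the dichotomy: $\chi$ is either surjective or of finite order.

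First I would note that $\chi(A) \subseteq \mb{S}^1$ is compact, being the continuous image of the compact group $A$, and therefore closed in $\mb{S}^1$. Next I would invoke the structure of closed subgroups of $\mb{S}^1$: the only proper closed subgroups are the finite ones $\mu_n(\C)$, since any infinite subgroup of $\mb{S}^1$ is dense (an infinite subset of a compact metric space has an accumulation point, and translating by group elements spreads the accumulation everywhere, making the closure a subgroup with nonempty interior, hence all of $\mb{S}^1$). Consequently $\chi(A)$ is either $\mb{S}^1$ — i.e. $\chi$ is surjective — or equals $\mu_n(\C)$ for some $n$, in which case $\chi^n$ is trivial and $\chi$ has finite order dividing $n$.

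For the last sentence I would argue as follows. If $\chi$ has finite order $n$, then $\chi(A) \subseteq \mu_n(\C)$ is totally disconnected, so the connected set $\chi(A^\circ)$ must be a single point; since $A^\circ$ contains the identity, that point is $1$, i.e. $\chi$ kills $A^\circ$. Conversely, if $\chi$ kills $A^\circ$, then $\chi$ factors through the profinite group $\pi_0(A)=A/A^\circ$; any continuous character of a profinite group has open kernel (the preimage of a small neighborhood of $1$ in $\mb{S}^1$ contains no nontrivial subgroup, so the kernel, being open in a profinite group where open subgroups have finite index, has finite index), hence finite image, so $\chi$ is of finite order.

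I do not anticipate a genuine obstacle here; the proof is a direct assembly of elementary compactness and Pontryagin-duality facts, several of which (divisibility of $A^\circ$, the annihilator relation between $X[\infty]$ and $A^\circ$, profiniteness of $\pi_0(A)$) have already been recorded in Appendix \ref{app:cag} just above the statement. The only point requiring a little care is making the phrase ``kills $A^\circ$'' genuinely equivalent to ``finite order'' rather than merely implied by it, which the factorization-through-$\pi_0(A)$ argument handles cleanly. Alternatively, one can phrase the whole thing dually: $\chi \in X = \tx{Hom}_\tx{cts}(A,\mb{S}^1)$ has finite order iff $\chi \in X[\infty]$, and $X[\infty]$ is precisely the annihilator of $A^\circ$ by \cite[Corollary 24.20]{HRv1e2}, which is exactly the condition that $\chi$ kill $A^\circ$; this gives the second assertion with essentially no work, and the first assertion then follows since a character not killing $A^\circ$ is non-torsion in $X$, forcing its image to be an infinite, hence dense and closed, hence full, subgroup of $\mb{S}^1$.
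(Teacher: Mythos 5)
Your proof is correct, and its core step — factoring $\chi$ through the profinite group $\pi_0(A)$ once it is known to kill $A^\circ$ — is the same one the paper uses. The organization is slightly different: the paper restricts $\chi$ to $A^\circ$ at the outset and observes that $\chi(A^\circ)$ is a closed \emph{connected} subgroup of $\mb{S}^1$, hence $\mb{S}^1$ or $\{1\}$; this gives both the dichotomy and the stated equivalence in a single stroke. You instead classify $\chi(A)$ among all closed subgroups of $\mb{S}^1$ (which requires the slightly heavier ``infinite closed subgroup is dense'' fact) to get the dichotomy, and then prove the equivalence with $A^\circ$ as a separate two-direction argument; both routes are sound. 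Your alternative via the annihilator relation $X[\infty]=\mathrm{Ann}(A^\circ)$ from Hewitt--Ross is genuinely a shorter path — it reduces the entire lemma to a cited duality fact — and could stand on its own, as you note.
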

\begin{proof}
The image of the restriction of $\chi$ to $A^\circ$ is a closed connected subgroup of $\mb{S}^1$. Thus it either equals $\mb{S}^1$, or $\{1\}$. In the latter case, $\chi$ descends to a continuous character of the profinite group $\pi_0(A)$, and is thus a torsion element of $X$.
\end{proof}

The rank $X/X[\infty]$ of the torsion-free group (whether it be a finite natural number, or infinity) is equal to the Lebesgue covering dimension of $A^\circ$, cf. \cite[Theorem 24.28]{HRv1e2}. For the definition of Lebesgue covering dimension, cf. \cite[\S3.11]{HRv1e2}.

\begin{lem} \label{lem:torchartor}
Let $B$ be locally compact abelian group that satisfies one of the following conditions:
\begin{enumerate}
	\item $B$ has a compact open subgroup $K$ such that $B/K$ is finitely generated.
	\item $B$ has finitely many connected components.
\end{enumerate}
Then any continuous character $\chi : B \to \mu_\infty(\C)$ has finite order.
\end{lem}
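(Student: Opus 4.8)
The statement to prove is Lemma \ref{lem:torchartor}: a continuous character $\chi : B \to \mu_\infty(\C)$ of a locally compact abelian group $B$ satisfying (1) or (2) has finite order. The key point is that $\mu_\infty(\C)$ carries the discrete topology, so $\chi$ has open kernel; the task is to bound the index of $\ker\chi$, equivalently the size of the image $\chi(B)$, which is a subgroup of the discrete group $\mu_\infty(\C)$.

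First I would treat case (1). Since $\chi$ is continuous and $\mu_\infty(\C)$ is discrete, $\chi(K)$ is a compact subgroup of a discrete group, hence finite; in fact, because $K$ is compact and $\mu_\infty(\C)$ has no small subgroups, one even gets that $\chi|_K$ has finite image. Thus $\ker(\chi|_K)$ has finite index in $K$, and $\chi$ factors through $B/\ker(\chi|_K)$, which is an extension of the finitely generated group $B/K$ by the finite group $K/\ker(\chi|_K)$, hence itself finitely generated. A finitely generated abelian group has only finitely-many-torsion image in any torsion group — more precisely, any homomorphism from a finitely generated abelian group to $\mu_\infty(\C)$ has finite image, since the image is a finitely generated torsion abelian group, hence finite. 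Therefore $\chi(B)$ is finite, so $\chi$ has finite order.

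Next, case (2). Let $B^\circ$ be the identity component; by hypothesis $B/B^\circ$ is finite, so $B^\circ$ is open. A connected locally compact abelian group is divisible (indeed a connected topological group is divisible). Then $\chi(B^\circ)$ is a divisible subgroup of $\mu_\infty(\C)$; but $\mu_\infty(\C)=\Q/\Z$ (as an abstract group) has the property that its only divisible subgroup is... actually $\mu_\infty(\C)$ itself is divisible, so this needs the topology: $\chi|_{B^\circ}$ is continuous with $\mu_\infty(\C)$ discrete, and $B^\circ$ is connected, so $\chi(B^\circ)$ is a connected subset of a discrete space containing the identity, hence trivial. Thus $\chi$ factors through the finite group $B/B^\circ$, so again $\chi(B)$ is finite and $\chi$ has finite order.

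\textbf{Main obstacle.} The only genuinely delicate point is the assertion in case (1) that $\chi|_K$ has \emph{finite} image: a priori a continuous homomorphism from a compact group to a discrete group has compact — hence finite — image only because a discrete compact space is finite, so in fact this is immediate and not an obstacle at all. The real content is purely formal bookkeeping: reducing to a finitely generated quotient in case (1) and using connectedness versus discreteness in case (2). I expect no serious obstacle; the proof is a short sequence of standard topological-group observations, and the two cases are handled by the two mechanisms (compact-open subgroup with finitely generated quotient; connected open subgroup with finite quotient) that are precisely what the hypotheses supply.
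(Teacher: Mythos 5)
Your proof is correct and follows essentially the same route as the paper's: reduce to a compact or connected open subgroup $B'$, observe that $\chi|_{B'}$ has finite (or trivial) image for topological reasons, and then use that the relevant quotient is finitely generated to conclude the image of $\chi$ in the torsion group $\mu_\infty(\C)$ is finite. The only cosmetic difference is that you invoke compactness-in-a-discrete-group directly rather than citing Lemma~\ref{lem:surjfin}, and you factor $\chi$ through $B/\ker(\chi|_K)$ instead of passing to the power $\chi^n$; both variants are fine.
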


An example of such a group $B$ is provided by $B=S(F)$ where $F$ is a local field and $S$ is an $F$-torus.

\begin{proof}
Let $B' \subset B$ be the open subgroup that equals $K$ in the first case, and $B^\circ$ in the second case. According to Lemma \ref{lem:surjfin} in the first case, and preservation of connectedness under continuous maps in the second case, $\chi|_{B'}$ has finite order, say $n$. Then $\chi^n : B/B' \to \mu_\infty(\C)$ also has finite order, since $B/B'$ is finitely generated.
\end{proof}

Recall the following two basic constructions. If $A$ is a discrete abelian group, its profinite completion is a homomorphism $f : A \to B$ with $B$ profinite such that any other homomorphism $g : A \to C$ with $C$ profinite factors uniquely through $f$. The pair $(f,B)$ is unique up to unique isomorphism, and $B$ can be constructed as $\varprojlim A/N$ where $N$ runs over all finite index subgroups. The homomorphism $f$ has dense image. It is is injective if and only if $A$ is residually finite. A finitely generated abelian group is residually finite.

If $A$ is a locally compact abelian group, its Bohr compactification is a homomorphism $f : A \to B$ with $B$ compact (assumed Hausdorff), such that any other homomorphism $g : A \to C$ with $C$ compact (and Hausdorff) factors uniquely though $f$. The pair $(f,B)$ is unique up to unique isomorphism, and $B$ can be constructed as the closure of the image of the homomorphism
\[ A \to \prod_{A^*} \mb{S}^1,\qquad a \mapsto (\chi(a))_\chi. \]
The homomorphism $f$ has dense image. Note that the Bohr compactification commutes with finite products.

It follows directly from the universal properties that when $A$ is discrete the component group of the Bohr completion of $A$ is the profinite completion of $A$.

\begin{fct} \label{fct:ptfct}
Let $f : A \to B$ be continuous homomorphism of locally compact abelian groups and let $f^* : B^* \to A^*$ be its Pontryagin dual. Then
\begin{enumerate}
	\item $f$ is a closed injection if and only if $f^*$ is an open surjection.
	\item $f$ is injective if and only if $f^*$ has dense image.
	\item $f$ is the discretization of the compact group $B$ if and only if $f^*$ is the Bohr compactification of the discrete group $B^*$.
	\item $f$ is the profinite completion of the discrete group $A$ if and only if $f^*$ is the inclusion of the torsion subgroup (taken with discrete topology) of the compact group $A^*$.
\end{enumerate}	
\end{fct}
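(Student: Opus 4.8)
The plan is to reduce all four equivalences to a short list of standard facts about Pontryagin duality over the category of locally compact abelian groups, all available in \cite{HRv1e2}: the duality theorem, i.e.\ the canonical map $A \to A^{**}$ is a topological isomorphism and, under the identifications $A^{**}=A$ and $B^{**}=B$, one has $f^{**}=f$; annihilator duality, i.e.\ for a closed subgroup $H \subseteq B$ inflation gives a topological isomorphism $(B/H)^* \cong H^\perp$ onto a closed subgroup of $B^*$, while restriction gives a topological isomorphism $B^*/H^\perp \cong H^*$; the dual of a compact group is discrete and conversely; the dual of a profinite group is discrete and torsion, more precisely a continuous character of a profinite group has finite image and hence factors through a finite quotient; and continuous characters separate the points of a locally compact abelian group. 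The general mechanism is that each equivalence has two ``forward'' halves, one an implication about $f$ and one the dual implication about an arbitrary map, and that applying the dual implication to $f^*$ and invoking $f^{**}=f$ produces the converse half.

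For (2): if $f$ is injective and the closure $\overline{f^*(B^*)}$ were a proper closed subgroup of $A^*$, then $A^*/\overline{f^*(B^*)}$ would be a nontrivial locally compact abelian group, so its characters (which are exactly the elements of $\overline{f^*(B^*)}^\perp \subseteq A^{**}=A$) would be nontrivial; thus some $a \neq e$ in $A$ would satisfy $\psi(f(a))=1$ for all $\psi \in B^*$, forcing $f(a)=e$ since characters separate the points of $B$ --- a contradiction. Hence $f^*$ has dense image. Dually, if a map $g$ has dense image and $\psi \in \ker(g^*)$, then $\psi$ vanishes on $\overline{g(A)}$ and so $\psi=e$; applying this to $g=f^*$ gives $f^{**}=f$ injective whenever $f^*$ has dense image. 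For (1): if $f$ is a closed injection with image $H$, then $f$ identifies $A^*$ with $H^*$ topologically, and $f^*$ is the composite of the open quotient map $B^* \to B^*/H^\perp$ with the topological isomorphisms $B^*/H^\perp \cong H^* \cong A^*$, hence an open surjection. Dually, if a map $g : A \to B$ is an open surjection with kernel $K$, then $B \cong A/K$ topologically and $g^*$ is the inflation $(A/K)^* \xrightarrow{\ \sim\ } K^\perp \hookrightarrow A^*$, a closed injection; applying this to $g=f^*$ gives $f^{**}=f$ a closed injection whenever $f^*$ is an open surjection.

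For (3): saying $f$ is the discretization of $B$ means $A=B_{\mathrm{disc}}$ and $f=\mathrm{id}$, so $B^*$ is discrete, $(B_{\mathrm{disc}})^*$ is compact, and --- because the compact subsets of $B_{\mathrm{disc}}$ are the finite sets --- the compact-open topology on $(B_{\mathrm{disc}})^*$ is the topology of pointwise convergence, realizing it as a closed subgroup of $\prod_{b \in B}\mb{S}^1$ inside which $f^*$ is the tautological inclusion of the continuous characters of $B$. By (2) this inclusion has dense image, so $(B_{\mathrm{disc}})^*$ is the closure of $B^*$ in $\prod_{b}\mb{S}^1$; comparing with the explicit construction of the Bohr compactification recalled in Appendix~\ref{app:cag} (the closure of the image of $B^* \to \prod_{(B^*)^*}\mb{S}^1 = \prod_B \mb{S}^1$, using $(B^*)^*=B$) shows that $f^*$ is exactly the Bohr compactification of the discrete group $B^*$. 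The converse follows by dualizing and invoking $f^{**}=f$ with the implication just proved. For (4): if $f : A \to B$ is the profinite completion of the discrete group $A$, a continuous character of $B$ has finite image, hence factors through a finite quotient of $B$, hence corresponds under $f^*$ to a finite-order character of $A$; conversely every finite-order character of $A$ factors through a finite quotient of $A$ and hence through $B$. Thus $f^*$ is a bijection of the discrete group $B^*$ onto the torsion subgroup $A^*[\infty]$ of $A^*$, and since $B^*$ is discrete this is precisely the inclusion of $A^*[\infty]$ with the discrete topology. The converse again follows by dualizing, using $f^{**}=f$ and the uniqueness of the profinite completion.

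I do not expect a genuine obstacle; the points demanding care are purely bookkeeping: distinguishing ``open surjection'' from ``surjection'' and ``closed injection'' from ``injection'' in (1), so that one must invoke the \emph{topological} content of annihilator duality and not merely the algebraic content, and, in (3), verifying that the compact-open topology on $(B_{\mathrm{disc}})^*$ agrees with the subspace topology from $\prod_B \mb{S}^1$ and that $B^*$ is genuinely dense there rather than merely a subgroup.
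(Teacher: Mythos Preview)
Your argument is correct. The paper does not give a proof at all: it records (1)--(3) as direct citations to Hewitt--Ross (Theorems 23.25 and 26.12) and deduces (4) from (3) together with the remark preceding the Fact that for a discrete group the component group of its Bohr compactification is its profinite completion. So your proposal is not so much an alternative approach as a self-contained verification of facts the paper imports wholesale.

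The one place your route and the paper's differ in spirit is (4). You argue directly: characters of a profinite group have finite image, hence finite order, and conversely every finite-order character of $A$ factors through a finite quotient and hence through the profinite completion; this identifies $f^*$ with the inclusion $A^*[\infty] \hookrightarrow A^*$. The paper instead bootstraps from (3): since the Bohr compactification of the discrete group $A$ is dual to the discretization of the compact group $A^*$, and since $\pi_0$ of the Bohr compactification is the profinite completion (the preceding remark), dualizing the projection $\mathrm{Bohr}(A) \to \pi_0(\mathrm{Bohr}(A))$ and using the earlier observation that $X[\infty]$ and $A^\circ$ are mutual annihilators recovers the torsion inclusion. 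Your direct route is shorter and more transparent; the paper's route has the virtue of explaining \emph{why} the torsion subgroup appears, namely as the annihilator of the identity component.

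One small point of bookkeeping in your converse for (4): ``dualizing and invoking $f^{**}=f$'' alone is not quite enough, since the forward implication goes from a property of $f$ to a property of $f^*$ rather than the reverse. What you need (and what your sketch implicitly uses) is a uniqueness argument: let $\tilde f : A \to \tilde B$ be the profinite completion; by the forward direction $\tilde f^*$ is also the torsion inclusion, so $\tilde f^* = f^*$ up to a canonical isomorphism of sources, and dualizing gives $f \cong \tilde f$. This is routine, but worth spelling out.
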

\begin{proof}[References]
(1) and (2) \cite[Theorem 23.25]{HRv1e2}. (3) \cite[Theorem 26.12]{HRv1e2}. (4) from (3) and the preceding remark.
\end{proof}

Consider a complex diagonalizable group $D$. We have the decomposition $D = D^c \times D^v$, where $D^c$ is the maximal compact subgroup, and $D^v$ is the maximal real vector space. More precisely, $D=\tx{Hom}_\Z(X^*(D),\C^\times)$ and $D^c=\tx{Hom}_\Z(X^*(D),\mb{S}^1)$, $D^v=\tx{Hom}_\Z(X^*(D),\R_{>0})$. 

\begin{lem} \label{lem:diag}
Let $D_\tx{disc}$ by the abstract group $D$ equipped with the discrete topology. Consider the homomorphism $D_\tx{disc} \to D \to D^c$. The composition of its Pontryagin dual with the projection $(D_\tx{disc})^* \to \pi_0((D_\tx{disc})^*)$ is the profinite completion of $X^*(D)$.	
\end{lem}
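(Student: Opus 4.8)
The statement asks us to identify a certain homomorphism of topological groups. Write $X=X^*(D)$, a finitely generated abelian group, and recall $D=\tx{Hom}_\Z(X,\C^\times)$, $D^c=\tx{Hom}_\Z(X,\mb{S}^1)$. The plan is to unwind both sides explicitly and match them. First I would record the Pontryagin dual of the \emph{compact} group $D^c$: since $\mb{S}^1$ is the Pontryagin dual of $\Z$, we have $D^c=\tx{Hom}_\Z(X,\mb{S}^1)=\tx{Hom}_\Z(X,\tx{Hom}(\Z,\mb{S}^1))$, and by the standard adjunction this is the Pontryagin dual of $X\otimes_\Z\Z=X$; equivalently, since $X$ is finitely generated, $D^c$ is (non-canonically) a product of finitely many copies of $\mb{S}^1$ and of finite cyclic groups, whose Pontryagin dual is $X$ itself. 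So $(D^c)^*=X$ canonically, with the discrete topology.

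The second ingredient is the identification of $(D_\tx{disc})^*$ and its component group. The group $D$ is a complex diagonalizable group, hence a direct product of finitely many copies of $\C^\times$ and finitely many finite cyclic groups. Therefore $D_\tx{disc}$ is a discrete abelian group, its Pontryagin dual $(D_\tx{disc})^*$ is compact, and by Fact \ref{fct:ptfct}(4) the torsion subgroup of $(D_\tx{disc})^*$, taken with the discrete topology, is precisely the Pontryagin dual of the profinite completion of $D_\tx{disc}$; equivalently $\pi_0((D_\tx{disc})^*)$ is the Pontryagin dual of the torsion subgroup of $D_\tx{disc}$ — but I should be careful here and instead argue directly: by the remark preceding Fact \ref{fct:ptfct}, for a discrete group the component group of its Bohr compactification is its profinite completion, and $(D_\tx{disc})^*$ is exactly that Bohr compactification's... no. The clean route is: apply Fact \ref{fct:ptfct}(4) to the profinite completion $D_\tx{disc}\to \widehat{D_\tx{disc}}$; dualizing, $(\widehat{D_\tx{disc}})^* \hookrightarrow (D_\tx{disc})^*$ is the inclusion of the torsion subgroup with discrete topology. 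What we actually need is cleaner still: we want to show that the composite $D_\tx{disc}\to D\to D^c$ dualizes to the profinite completion map $X\to \widehat X$ on Pontryagin duals, followed by projection to $\pi_0$.

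So the heart of the proof is the following diagram chase. The inclusion $D^c\hookrightarrow D$ (as topological groups, $D^c$ being the maximal compact subgroup) has Pontryagin dual $D^*\to (D^c)^*=X$; composing with $D_\tx{disc}\to D$ (the identity on underlying groups, refining the topology) whose dual is $D^*\to (D_\tx{disc})^*$ — wait, that goes the wrong way. Let me restate: the map $\phi: D_\tx{disc}\to D^c$ is the composite of $\iota: D_\tx{disc}\to D$ (continuous, bijective, identity on underlying sets) and $\pi: D\to D^c$ (the projection killing $D^v$). Its dual is $\phi^*=\iota^*\circ\pi^*: (D^c)^* \to D^* \to (D_\tx{disc})^*$. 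Now $(D^c)^*=X$ as established, $\pi^*$ is the inclusion $X=(D^c)^*\hookrightarrow D^*$ dual to the projection $D\onto D^c$, and $D^*$ is the Pontryagin dual of the (non-discrete) group $D$, namely $D^*=X^*(D)=X$ as well — here one uses that $D=(\C^\times)^a\times(\text{finite})$ and $(\C^\times)^*=\Z$ (the holomorphic characters; the full Pontryagin dual of $\C^\times$ as a topological group is $\Z\times\R$, so I must be careful). The correct statement: as a \emph{topological} group $D\cong (\C^\times)^a\times F$ with $F$ finite, and $(\C^\times)^*\cong\Z\times\R$, so $D^*\cong \Z^a\times\R^a\times F^*$; the subgroup $\pi^*((D^c)^*)=X$ sits inside as $\Z^a\times\{0\}\times F^*$, and the quotient is the divisible part $\R^a$. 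Meanwhile $\iota^*: D^*\to(D_\tx{disc})^*$ is dual to $\iota$; since $\iota$ is a continuous bijection with dense (indeed all of) image, $\iota^*$ is injective with dense image, identifying $D^*$ with a dense subgroup of the compact group $(D_\tx{disc})^*$. Thus $\phi^*=\iota^*\circ\pi^*$ maps $X$ injectively into $(D_\tx{disc})^*$ with image landing in the torsion subgroup (because $X=\Z^a\times F^*$ modulo the torsion-free divisible complement maps into... hmm — actually the image of $\Z^a$ under $\iota^*$ need not be torsion).

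I expect the main obstacle to be exactly this last point: verifying that the composite $\phi^*: X\to (D_\tx{disc})^*$, followed by the projection to $\pi_0((D_\tx{disc})^*)$, is \emph{universal} among maps from $X$ to profinite groups, i.e. is the profinite completion. The cleanest way around it, which I would adopt, is a universal-property argument rather than an explicit coordinate computation: show (i) $\pi_0((D_\tx{disc})^*)$ is profinite (clear, being the component group of a compact group), (ii) the map $X\to\pi_0((D_\tx{disc})^*)$ has dense image — this follows because $\phi^*$ has dense image (as $\phi=\pi\circ\iota$ with $\iota$ having full image and $\pi$ surjective, so $\phi$ is surjective, hence $\phi^*$ injective; and the image of $\phi^*$ is dense iff $\phi$ is injective, which it is since $D_\tx{disc}\to D^c$ is injective — here $D_\tx{disc}\to D^c$ is injective precisely because... no, $\pi: D\to D^c$ is not injective, it kills $D^v$; but $D^v\cap$(discrete) — $D^v$ is a real vector space, and $D_\tx{disc}\to D^c$ has kernel $D^v$, which is nontrivial when $a>0$). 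This kernel issue means $\phi$ is \emph{not} injective, so $\phi^*$ does \emph{not} have dense image in $(D_\tx{disc})^*$ — it has dense image in the annihilator of $\ker\phi$, which is the closure of $\phi^*((D^c)^*)$. But after projecting to $\pi_0$, density can be recovered. The right move: use Fact \ref{fct:ptfct}(4) directly. We have $\phi^* = \iota^*\circ\pi^*$ and $\pi^*$ identifies $(D^c)^* = X$ with $X^*(D)\subset D^* = \tx{Hom}_\tx{cts}(D,\mb S^1)$ as the subgroup of \emph{algebraic} characters. Restricting an algebraic character of $D$ to $D_\tx{disc}$ and asking whether the resulting map $X^*(D)\to (D_\tx{disc})^*\to\pi_0((D_\tx{disc})^*)$ is the profinite completion: I would verify this by the universal property, checking that a homomorphism $X^*(D)=X\to Q$ with $Q$ finite factors through this map, which reduces (by duality, $Q^*\hookrightarrow D_\tx{disc}$ torsion, and every torsion element of $D_\tx{disc}=(\C^\times)^a\times F$ lies in $\mu_\infty^a\times F\subset D^c$) to the elementary fact that a finite-order character of $X$ is pulled back from a finite-order character of $D^c$, i.e. that finite quotients of $X$ biject with finite subgroups of $D^c=\tx{Hom}(X,\mb S^1)$. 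That elementary biduality fact for finitely generated abelian groups is what makes the whole thing go, and I would make it the technical core of the write-up.
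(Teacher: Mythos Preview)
Your plan eventually converges to the correct argument, and it is essentially the paper's: both rest on the single observation that the projection $D_\tx{disc}\to D^c$ restricts to the identity on torsion subgroups (since $D^v$ is torsion-free), together with Fact~\ref{fct:ptfct}(4) identifying $\pi_0((D_\tx{disc})^*)^*$ with the torsion subgroup of $D_\tx{disc}$. The paper's execution is much crisper --- it dualizes once and applies Fact~\ref{fct:ptfct}(4) directly, whereas you re-derive the universal property of the profinite completion by hand; your detours through $D^*\cong\Z^a\times\R^a\times F^*$ and the injectivity discussion of $\phi$ are unnecessary and can be dropped entirely.
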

\begin{proof}
The Pontryagin dual of $D^c$ is $X^*(D)$. The homomorphism $D_\tx{disc} \to D^c$ induces the identity on torsion subgroups and $\pi_0((D_\tx{disc})^*)^*$ is the torsion subgroup of $D_\tx{disc}$, so the claim follows from Fact \ref{fct:ptfct}.
\end{proof}

\begin{exa}
Consider $D=\C^\times$. Then $D^c=\mb{S}^1$ and $D^v=\R_{>0} \cong \R$, and $X^*(D)=\Z$. Now $(D_\tx{disc})^*=(\mb{S}^1_\tx{disc})^* \times (\R_\tx{disc})^*$. The second factor is connected by Fact \ref{fct:ptfct}. 
We have the exact sequence
\[ 0 \to \Q/\Z \to \mb{S}^1_\tx{disc} \to \R/\Q \to 0, \]
whose dual realizes the connected group $(\R/\Q)^*$ as the identity component of $(\mb{S}^1_\tx{disc})^*$ and the profinite group $(\Q/\Z)^*=\hat\Z$ as its component group. The latter is the profinite completion of $X^*(D)=\Z$.
\end{exa}

\subsection{Review of Langlands duality for tori over local fields} \label{app:langtori}

Let $S$ be an $F$-torus and $\hat S=\tx{Hom}(X_*(S),\C^\times)$ its complex dual. Let $K/F$ be the splitting extension of $S$. Equip $\hat S$ with its analytic topology and let $X$ be a topological group acting continuously on $\hat S$. A 1-cocycle of $X \to \hat S$ is continuous if and only if it is continuous at $1 \in X$. If $X$ is locally profinite, then the set of continuous 1-cocycles $X \to \hat S$ is the same whether we equip $\hat S$ with its analytic topology or its discrete topology, since the analytic topology on $\hat S$ does not admit small subgroups. This is the case with $X=\Gamma$ or $X=\Gamma_{K/F}$. When $F$ is non-archimedean, this is also the case with $W_F$ or $X=W_{K/F}$. But when $F$ is archimedean, $W_F$ and $W_{K/F}$ are not locally profinite. We will write $Z^1(X,\hat S)$ for the set of continuous 1-cocycles $X \to \hat S$, and $H^1(X,\hat S)$ for the group of cohomology classes of continuous 1-cocycles.

Restriction provides an inclusion $Z^1(\Gamma,\hat S) \to Z^1(W_F,\hat S)$, and inflation provides an isomorphism $Z^1(W_{K/F},\hat S) \to Z^1(W_F,\hat S)$. The groups of coboundaries $B^1(\Gamma,\hat S)=B^1(W_F,\hat S)=B^1(W_{K/F},\hat S)$ are all the same, and we obtain the inclusion $H^1(\Gamma,\hat S) \to H^1(W_F,\hat S)$ and the isomorphism $H^1(W_{K/F},\hat S) \to H^1(W_F,\hat S)$.

Langlands duality is the isomorphism $H^1(W_F,\hat S) \to \tx{Hom}_\tx{cts}(S(F),\C^\times)$, functorial in $S$, cf. \cite{Lan97}, \cite[\S6]{Lab85}. If $E/F$ is a finite Galois extension, then restriction along $W_E \subset W_F$ is identified with composition with the norm map $S(E) \to S(F)$, and corestriction along $W_E \to W_F$ is identified with restriction along $S(F) \subset S(E)$. The subgroup $H^1(\Gamma,\hat S)$ is identified with the group of characters of $S(F)$ that are trivial on the image of the norm map $S(E) \to S(F)$ for some finite Galois extension $E/F$. 

The abelian Lie group $\hat S$ has a unique maximal compact subgroup $\hat S^c$. In fact, we have the canonical $\Gamma$-stable decomposition $\hat S = \hat S^c \times \hat S^v$, where $\hat S^v$ is a finite-dimensional $\R$-vector space. This decomposition is obtained from the decomposition $\C^\times = \mb{S}^1 \times \R_{>0}$ and the isomorphism $\exp : \R \to \R_{>0}$ as
\[ \hat S = \tx{Hom}(X_*(S),\C^\times) = \tx{Hom}(X_*(S),\mb{S}^1) \times \tx{Hom}(X_*(S),\R), \]
thus
\[ \hat S^c = \tx{Hom}(X^*(S),\mb{S}^1),\qquad \hat S^v=\tx{Hom}(X_*(S),\R)=X^*(S)\otimes_\Z \R. \]

On the other hand, the decomposition $\C^\times = \mb{S}^1 \times \R_{>0}$ also induces a decomposition $\tx{Hom}_\tx{cts}(S(F),\C^\times) = \tx{Hom}_\tx{cts}(S(F),\mb{S}^1) \times \tx{Hom}_\tx{cts}(S(F),\R_{>0})$. Langlands duality is compatible with these decompositions, as the following lemma shows.

\begin{lem} \label{lem:langunit}
The following are equal.
\begin{enumerate}
	\item The group of unitary characters of $S(F)$.
	\item The subgroup $H^1_u(W_F,\hat S)$ of $H^1(W_F,\hat S)$ of cohomology classes representable by a 1-cocycle whose image in $\hat S$ is bounded in the analytic topology.
	\item The subgroup of $H^1(W_F,\hat S)$ of cohomology classes all of whose representing 1-cocycles have image in $\hat S$ that is bounded in the analytic topology.
	\item The subgroup $H^1(W_F,\hat S^c)$ of $H^1(W_F,\hat S)$.
\end{enumerate}
\end{lem}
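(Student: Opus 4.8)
The statement to be proved, Lemma \ref{lem:langunit}, asserts that four descriptions of a subgroup of $H^1(W_F,\hat S)$ all coincide: (1) the unitary characters of $S(F)$, (2) classes with \emph{some} bounded representing cocycle, (3) classes with \emph{all} representing cocycles bounded, and (4) the image of $H^1(W_F,\hat S^c)$. My plan is to prove the cycle of inclusions $(4) \subseteq (3) \subseteq (2) \subseteq (1) \subseteq (4)$, which gives all equalities at once. The $\Gamma$-stable decomposition $\hat S = \hat S^c \times \hat S^v$ recalled just before the lemma, together with Langlands duality $H^1(W_F,\hat S) \cong \tx{Hom}_\tx{cts}(S(F),\C^\times)$ and its compatibility with the decomposition $\C^\times = \mb{S}^1 \times \R_{>0}$, will be the central tools.

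\textbf{Key steps.} First, $(4)\subseteq(3)$: if a class lies in $H^1(W_F,\hat S^c)$, pick a cocycle $z$ valued in the compact group $\hat S^c$; any other representative differs by a coboundary $\sigma \mapsto \sigma(a)a^{-1}$ for $a\in\hat S$, and writing $a = a^c a^v$ with respect to the decomposition, the $\hat S^v$-part of the coboundary is $\sigma\mapsto \sigma(a^v)(a^v)^{-1}$; this may be nonzero, so I must argue instead that the image of \emph{any} representative in $\hat S^v$ is a coboundary in the vector group $\hat S^v$, hence already bounded only if it is zero --- this forces me to be careful. The cleaner route is: the projection $\hat S \to \hat S^v$ sends the class to a class in $H^1(W_F,\hat S^v)$; since $\hat S^v$ is a real vector space (uniquely divisible, cohomologically trivial for the profinite-by-$\R$ structure of $W_F$ in the relevant sense), and more concretely since $\tx{Hom}_\tx{cts}(S(F),\R_{>0})$ detects this part under Langlands duality, a class in $H^1(W_F,\hat S^c)$ has trivial $\hat S^v$-component, so every representative, after projecting to $\hat S^v$, is a coboundary valued in $\hat S^v$; but a coboundary $\sigma\mapsto\sigma(a^v)(a^v)^{-1}$ in a vector group need not be bounded, so in fact I conclude the sharper statement only after passing through (1). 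I will therefore reorganize: prove $(4) \subseteq (2)$ trivially (a cocycle valued in $\hat S^c$ is bounded), prove $(2)\subseteq(1)$ by Langlands duality plus compatibility with $\mb{S}^1\times\R_{>0}$ (a bounded cocycle has, via the explicit Langlands construction, image a bounded i.e. unitary character --- here I invoke the functoriality and explicit nature of the Lang/Labesse construction), prove $(1)\subseteq(4)$ by the same compatibility (a unitary character corresponds to a class in $H^1(W_F,\hat S^c)$ since the $\R_{>0}$-factor is killed), and finally prove $(4)\subseteq(3)$ by noting that once $(1)=(4)$, a class in $(4)$ is unitary, and if it had an unbounded representative $z'$, then $z' = z\cdot(\sigma\mapsto\sigma(a)a^{-1})$ with $z$ bounded; projecting to $\hat S^v$, the coboundary $\sigma\mapsto\sigma(a^v)(a^v)^{-1}$ must vanish because the $\hat S^v$-component of the class is trivial (as the character is unitary), hence $a^v\in(\hat S^v)^\Gamma$ and the coboundary in $\hat S^v$ is identically $1$; so $z'$ differs from $z$ by a coboundary valued in $\hat S^c$, which is bounded, so $z'$ is bounded. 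This closes the cycle.

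\textbf{Main obstacle.} The delicate point is $(2)\subseteq(1)$: translating "the cocycle has bounded image in $\hat S$" into "the corresponding character of $S(F)$ is unitary". This requires either invoking the explicit formula for the Langlands duality isomorphism (as in \cite{Lan97}, \cite{Lab85}) and tracking how boundedness of the cocycle corresponds to boundedness of the character, or --- more robustly --- using the decomposition $\hat S = \hat S^c\times\hat S^v$ to reduce to the two extreme cases $\hat S = \hat S^c$ (where every cocycle is automatically bounded and every character is automatically unitary, so nothing to prove) and $\hat S = \hat S^v$ a vector group (where I must show the only bounded cocycle class is $0$, equivalently the only character of $S(F)$ dual to a class in $H^1(W_F,\hat S^v)$ that is unitary is trivial --- this follows since $\tx{Hom}_\tx{cts}(S(F),\R_{>0})$ is torsion-free and has no nontrivial bounded, i.e. unitary, homomorphisms into $\C^\times$, because its image lies in $\R_{>0}\cap\mb{S}^1 = \{1\}$ exactly when it is unitary; but a nonzero class in $H^1(W_F,\hat S^v)$ need not be unitary, which is the whole content). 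I expect the functoriality of Langlands duality in $S$, applied to the two projections $\hat S\to\hat S^c$ and $\hat S\to\hat S^v$ (dually, to the maps of tori these induce), combined with the fact recalled in the text that Langlands duality is compatible with both product decompositions, to make this bookkeeping clean; the genuinely non-formal input is just the already-cited compatibility statement, so the proof should be short once that is properly invoked.
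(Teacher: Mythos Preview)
Your cycle $(4)\subseteq(2)\subseteq(1)\subseteq(4)$ is fine, and the trivial inclusion $(3)\subseteq(2)$ closes everything once you have $(4)\subseteq(3)$. But your argument for $(4)\subseteq(3)$ has a genuine error. You write: ``the coboundary $\sigma\mapsto\sigma(a^v)(a^v)^{-1}$ must vanish because the $\hat S^v$-component of the class is trivial, hence $a^v\in(\hat S^v)^\Gamma$.'' This does not follow. Triviality of the class in $H^1(W_F,\hat S^v)$ means only that the projection $(z')^v$ is \emph{some} coboundary; you already know it equals $\partial a^v$, which is a coboundary by construction. From $\partial a^v = \partial b$ you deduce $a^v b^{-1}\in(\hat S^v)^\Gamma$, not $a^v\in(\hat S^v)^\Gamma$. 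So the coboundary $\partial a^v$ need not be identically $1$, and your conclusion that $z'$ differs from $z$ by something valued in $\hat S^c$ is unjustified.

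The paper's route to $(2)=(3)$ bypasses this entirely with a one-line observation you missed: $B^1(W_F,\hat S)=B^1(\Gamma_{K/F},\hat S)$, because the $W_F$-action on $\hat S$ factors through the finite quotient $\Gamma_{K/F}$. Any coboundary therefore takes only finitely many values and is automatically bounded; so if one representative of a class is bounded, all are. This is what you should use in place of your flawed $(4)\subseteq(3)$ argument. The paper then handles $(3)=(4)$ directly: a bounded cocycle $z$ restricted to $W_K$ is a homomorphism with bounded image, hence lands in $\hat S^c$; projecting $z$ to $\hat S^v$ thus gives a cocycle inflated from the finite group $\Gamma_{K/F}$ valued in a real vector space, which is a coboundary, so $z$ can be modified into $\hat S^c$. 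For $(1)=(2)$ the paper, like you, simply cites Langlands.
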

\begin{proof}
The equality of (1) and (2) is well-known from \cite{Lan97}. The equality of (2) and (3) follows from the identity $B^1(W_F,\hat S)=B^1(\Gamma_{K/F},\hat S)$. For the equality of (3) and (4) we consider $z \in Z^1(W_F,\hat S)$. Its image is bounded if and only if $z(W_K) \subset \hat S$ is bounded. Since $z(W_K)$ is a subgroup of $\hat S$, it must lie in $\hat S^c$. Composing $z$ with the projection $\hat S \to \hat S^v$ we obtain an element of $Z^1(\Gamma_{K/F},\hat S^v)$. Such an element is cohomologically trivial, since $\hat S^v$ is a finite-dimensional $\R$-vector space. Therefore $z$ can be modified by a coboundary to have trivial projection to $\hat S^v$.
\end{proof}

\subsection{Some remarks on the cohomology of the Weil group} \label{app:weil}

We collect here some facts and observations about the cohomology of the Weil group $W_F$ with coefficients in a topological $W_F$-module $M$. This is an application of the following general situation: $G$ is a topological group, $M$ is an abelian topological group, and there is a continuous action of $G$ on $M$ given by a continuous map $G \times M \to M$ that satisfies the usual axioms of an action map.

There are in fact multiple different kinds of cohomology that can be considered in this situation. One class of such cohomology theories is obtained by considering the cohomology of a complex of cochains of $G$ valued in $M$, equipped with the usual differential, but where the cochains are demanded to be measurable, or continuous, or differentiable (provided both $G$ and $M$ have differentiable structure). The continuous or differentiable case was introduced and studied by Hochschild and Mostow \cite{HM62}. Our primary interest will be the continuous case, and we will denote the corresponding cohomology groups by $H^i_{c}(G,M)$. Our interest in $H^i_{c}(G,M)$ is due to Langlands' duality theorem in the case that $G=W_F$ is the Weil group of a local or global field and $M=\hat T$ is the complex dual of an $F$-torus, cf. Appendix \ref{app:langtori}.

The measurable case was introduced and studied by Moore \cite{MooreCohIII}, where it is assumed that $G$ is locally compact second countable and $M$ is polonais, cf. \cite[\S2]{MooreCohIII}. In fact, Moore introduces two definitions -- one where cochains are measurable functions, and one where cochains are equivalence classes where two measurable functions are considered equivalent if they are equal almost everywhere. Moore shows that the two definitions lead to the same cohomology group \cite[Theorem 5]{MooreCohIII}, but the second version has the advantage of endowing the cohomology groups with a natural topology \cite[\S4,\S5]{MooreCohIII}. We will denote these cohomology groups by $H^i_{m}(G,M)$. The advantage of $H^i_{m}(G,M)$ is that, given a closed normal subgroup $H \subset G$ there is a Hochschild--Serre spectral sequence $E_r^{p,q}$ that converges to $H_{m}^*(G,M)$, and such that $E_2^{p,q}=H_{m}^p(G/H,H_{m}^q(H,M))$ for those $q$ for which $H_{m}^q(H,M)$ is Hausdorff \cite[Theorem 9]{MooreCohIII}. The latter condition is automatic when $q=0,1$, but may also hold for other $q$, in particular when $H_{m}^q(H,M)=0$.

For this reason it is useful to know when $H_{m}^i(G,M)=H_{c}^i(G,M)$. This question was studied by Wigner in his thesis \cite{Wig73}. From his work one can extract the following.

\begin{lem} \label{lem:cc_comp}
Let $M$ be one of the following:
\begin{enumerate}
	\item A discrete abelian group
	\item a finite-dimensional real or complex vector space
	\item a complex diagonalizable group.
\end{enumerate}
Then $H_{m}^i(G,M)=H_{c}^i(G,M)$ for any closed subgroup $G$ of $W_F$.
\end{lem}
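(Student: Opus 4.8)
The statement to prove is Lemma~\ref{lem:cc_comp}: for $M$ discrete, a finite-dimensional real or complex vector space, or a complex diagonalizable group, the measurable and continuous cohomology $H_m^i(G,M)$ and $H_c^i(G,M)$ agree for every closed subgroup $G$ of $W_F$. The plan is to reduce to a general comparison criterion from Wigner's thesis \cite{Wig73} and then verify its hypotheses in each of the three cases. The key input is the fact that for locally compact $G$ of finite cohomological dimension (or, more generally, satisfying suitable regularity), the natural map $H_c^i(G,M)\to H_m^i(G,M)$ is an isomorphism whenever $M$ is, in a suitable sense, ``smooth enough'': concretely, Wigner proves this when $M$ is a Fr\'echet module, or a discrete module, or more generally when $M$ sits in an exact sequence built from such pieces in a way that is compatible with both theories. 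So the first step is to recall the precise statement of Wigner's comparison theorem that I intend to invoke, and to note that $G$, being a closed subgroup of $W_F$, is locally compact, second countable, and has the structure needed (it is either profinite, or an extension of $\Z$ or $\R$ by a profinite group) so that the relevant long exact sequences and spectral sequences exist in both theories.

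\textbf{The three cases.} First, if $M$ is discrete, then continuous cochains are locally constant, measurable cochains are measurable; Wigner's theorem (or a direct Shapiro/dimension-shifting argument using that $G$ has a neighborhood basis of the identity consisting of open subgroups when $G$ is profinite, and a routine extension to the $\Z$- and $\R$-quotient cases via Hochschild--Serre) gives the comparison. Second, if $M$ is a finite-dimensional real or complex vector space with continuous linear $G$-action, then $M$ is a Fr\'echet (indeed Banach) $G$-module, and this is exactly the case treated by Hochschild--Mostow \cite{HM62} on the continuous side and by Moore \cite{MooreCohIII} on the measurable side; Wigner's comparison applies directly. Third, a complex diagonalizable group $D$ decomposes $\Gamma$-equivariantly (hence $W_F$-equivariantly, the action factoring through a finite quotient) as $D = D^c\times D^v$ with $D^v\cong X^*(D)\otimes_\Z\R$ a finite-dimensional real vector space and $D^c=\mathrm{Hom}(X^*(D),\mathbb{S}^1)$ a compact group; moreover $D^c$ fits in the exact sequence $1\to D[\infty]\to D^c\to D^c/D[\infty]\to 1$, or dually one uses the exponential sequence $0\to X_*(D)\to \mathrm{Lie}(D)\to D\to 1$. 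So one reduces the diagonalizable case to the previous two by applying the five-lemma to the long exact sequences in both cohomology theories, using naturality of the comparison map and the fact that for the lattice $X_*(D)$ (discrete) and for $\mathrm{Lie}(D)$ (a finite-dimensional complex vector space) the comparison is already known.

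\textbf{Main obstacle.} The subtle point is not any single one of the three cases but making the d\'evissage legitimate: to run the five-lemma I need the long exact cohomology sequences attached to the relevant short exact sequences of coefficient modules to exist \emph{and be compatible} in measurable and in continuous cohomology, and I need the comparison map $H_c^i\to H_m^i$ to be a morphism of these long exact sequences. For measurable cohomology this is where one must be careful that the quotient maps (e.g. $\mathrm{Lie}(D)\to D$, or $D^c\to D^c/D[\infty]$) admit the regularity Moore's theory requires, i.e. that the relevant subgroups are closed and the quotients are again polonais, and that the connecting maps are the expected ones; for continuous cohomology one must ensure the short exact sequences admit continuous (not merely measurable) set-theoretic sections locally, which holds here because $X_*(D)\hookrightarrow\mathrm{Lie}(D)$ is a closed embedding of a discrete group in a vector space (local section is clear) and $D^c\to D^c/D[\infty]$ is a principal bundle with the compact totally disconnected group $D[\infty]$. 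I expect the bulk of the work, and the only place where one genuinely has to invoke the technical hypotheses of \cite{Wig73} and \cite{MooreCohIII} rather than soft arguments, to be the verification that these sections and spectral-sequence inputs are available uniformly for all closed $G\subset W_F$; once that is in place, the five-lemma and the two base cases finish the proof.
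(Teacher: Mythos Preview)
Your approach is correct in outline but takes a longer route than the paper. The paper observes that all three classes of $M$ are simultaneously \emph{locally connected complete metric topological groups}, so Wigner's comparison theorem \cite[Theorem 2, Proposition 3]{Wig73} applies directly to each of them without any d\'evissage. The entire burden of the proof then lies on the $G$ side: one must check that every closed subgroup $G\subset W_F$ is locally compact, $\sigma$-compact, and of finite Lebesgue covering dimension. In the archimedean case $W_F$ is a real Lie group of dimension $2$, so any closed subgroup is again a Lie group and has covering dimension at most $2$; in the non-archimedean case $W_F$ is a countable disjoint union of profinite spaces, hence has covering dimension $0$. That is the whole argument.

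Your d\'evissage of the diagonalizable case via the exponential sequence $0\to X_*(D)\to\mathrm{Lie}(D)\to D\to 1$ and the five-lemma would also work, but it is unnecessary, and as written it has a small gap: that sequence is exact only when $D$ is connected; for general diagonalizable $D$ the exponential map hits only $D^\circ$, so you would need an additional layer $1\to D^\circ\to D\to\pi_0(D)\to 1$ with $\pi_0(D)$ finite discrete. More importantly, you never verify the hypothesis on $G$ (finite covering dimension), which is the actual content of the lemma once one knows Wigner's criterion; your remark that $G$ ``has the structure needed'' is where the paper puts the work. The advantage of the paper's route is that it avoids entirely the compatibility-of-long-exact-sequences issue you flag as the main obstacle, since no exact sequences of coefficients are used.
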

\begin{proof}
We apply  \cite[Theorem 2]{Wig73} and \cite[Proposition 3]{Wig73}. It is clear that in all three cases $M$ is a locally connected complete metric topological group. It is enough to show that $G$ is locally compact, $\sigma$-compact, and of finite Lebesgue covering dimension. Local compactness and $\sigma$-compactness are immediate. When $F=\C$ or $F=\R$, then $W_F$ is a complex Lie group, and the same holds for $G$, so its Lebesgue covering dimension is equal to its dimension as a (real) Lie group, which is $2$. When $F$ is non-archimedean, then $W_F$ is topologically the countable disjoint union of profinite topological spaces, and the same holds for $G$. According to \cite[Chap. 3, Theorem 3.5]{HRv1e2}, the Lebesgue covering dimension of $G$ is zero.
\end{proof}

\begin{lem} \label{lem:cc_infres}
Let $H \subset G$ be a closed normal subgroup. Then the inflation-restriction sequence
\[ \begin{aligned}
1 \to& H_{m}^1(G/H,M^H) \to H_{m}^1(G,M) \to H_{m}^1(H,M)^{G/H}\\
 \to& H^2_{m}(G/H,M^H)\to H^2_{m}(G,M)
\end{aligned}\]
is exact. If in addition $H^2(H,M)$ is Hausdorff (in particular, if it is trivial), then the longer inflation-restriction sequence
\[ \begin{aligned}
1 \to& H_{m}^1(G/H,H^0_m(H,M)) \to H_{m}^1(G,M) \to H^0_m(G/H,H_{m}^1(H,M))\\
 \to& H^2_{m}(G/H,H^0_m(H,M)) \to H^2_{m}(G,M)_1 \to H^1_{m}(G/H,H_{m}^1(H,M))\\
 \to& H^3_{m}(G/H,H^0_m(H,M))
\end{aligned} \]
is exact, where $H^2_{m}(G,M)_1$ is the kernel of the restriction map $H^2_{m}(G,M) \to H^2_{m}(H,M)$.
\end{lem}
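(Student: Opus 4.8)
The plan is to deduce both inflation--restriction sequences from the Hochschild--Serre spectral sequence $E_2^{p,q}=H_m^p(G/H,H_m^q(H,M))$ for measurable cohomology, constructed by Moore \cite[Theorem 9]{MooreCohIII}, specialized to the situation where the relevant low-degree $H_m^q(H,M)$ are Hausdorff. The first four terms of any first-quadrant cohomological spectral sequence give the exact sequence of low-degree terms $0\to E_2^{1,0}\to H^1\to E_2^{0,1}\to E_2^{2,0}\to H^2_1$, where $H^2_1$ is the kernel of $H^2\to E_\infty^{0,2}\hookrightarrow E_2^{0,2}$, i.e. the kernel of the edge map $H^2_m(G,M)\to H^2_m(H,M)$. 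This is a purely formal consequence of the filtration on $H^n$ and the structure of the differentials $d_2\colon E_2^{0,1}\to E_2^{2,0}$ and $d_2\colon E_2^{0,2}\to E_2^{2,1}$, $d_3\colon E_3^{0,2}\to E_3^{3,0}$. So the only point requiring care is verifying the Hausdorffness hypotheses under which $E_2^{p,q}$ has the stated form.

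First I would recall that for $q=0$ we have $H_m^0(H,M)=M^H$ with its subspace topology, which is Hausdorff since $M$ is; and for $q=1$, Moore's theorem already guarantees $E_2^{p,1}=H_m^p(G/H,H_m^1(H,M))$ unconditionally, since $H_m^1(H,M)$ is always Hausdorff \cite[\S5]{MooreCohIII}. This immediately yields the first (shorter) five-term sequence $1\to H_m^1(G/H,M^H)\to H_m^1(G,M)\to H_m^1(H,M)^{G/H}\to H_m^2(G/H,M^H)\to H_m^2(G,M)$ with no extra hypothesis, by reading off the five-term exact sequence of low-degree terms and noting $H_m^0(G/H,H_m^1(H,M))=H_m^1(H,M)^{G/H}$.

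For the longer sequence I would additionally invoke the hypothesis that $H_m^2(H,M)$ is Hausdorff, which forces $E_2^{0,2}=H_m^0(G/H,H_m^2(H,M))=H_m^2(H,M)^{G/H}$ and, more importantly, makes the relevant higher entries $E_2^{1,2}$, $E_2^{2,1}$, etc., of the stated form so that the seven-term exact sequence of low-degree terms is valid. Concretely, the seven-term sequence reads $1\to E_2^{1,0}\to H^1\to E_2^{0,1}\to E_2^{2,0}\to \ker(H^2\to E_\infty^{0,2})\to E_2^{1,1}\to E_2^{3,0}$; substituting $E_2^{p,0}=H_m^p(G/H,H_m^0(H,M))$ and $E_2^{p,1}=H_m^p(G/H,H_m^1(H,M))$ gives exactly the displayed sequence, with $H_m^2(G,M)_1=\ker(H_m^2(G,M)\to H_m^2(H,M))$ agreeing with $\ker(H^2\to E_\infty^{0,2})$ because the edge map $H^2\twoheadrightarrow E_\infty^{0,2}\hookrightarrow E_2^{0,2}$ is the restriction to $H$. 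The parenthetical remark ``in particular, if it is trivial'' is covered since a trivial group is Hausdorff.

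The main obstacle is bookkeeping: one must be sure that the seven-term exact sequence of low-degree terms only uses $E_2^{p,q}$ with $q\le 1$ together with $E_2^{0,2}$ (which is where the extra Hausdorffness of $H_m^2(H,M)$ enters), and that no $E_2$-term with $q\ge 2$ and $p\ge 1$ actually appears in degree $\le 3$ of the filtration except through differentials landing in or emanating from the already-controlled terms. This is standard, but the topological subtlety --- that Moore's spectral sequence identifies $E_2^{p,q}$ with $H_m^p(G/H,H_m^q(H,M))$ only when $H_m^q(H,M)$ is Hausdorff --- is precisely why the two versions of the statement are separated, and I would state this dependence explicitly rather than treating the spectral sequence as a black box.
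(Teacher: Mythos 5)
Your proof is correct and follows essentially the same route as the paper, which simply cites the five- and seven-term exact sequences of low-degree terms of Moore's Hochschild--Serre spectral sequence together with the identification of the relevant $E_2^{p,q}$. The only minor over-caution is your remark that the Hausdorffness of $H_m^2(H,M)$ is also needed to control entries such as $E_2^{1,2}$: in fact the seven-term sequence only touches $E_2^{p,q}$ with $q\le 1$ plus the single entry $E_2^{0,2}$ (via $H^2_m(G,M)_1$), so the hypothesis enters only through that one identification.
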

\begin{proof}
This is the 5-term, respectively 7-term, exact sequence associated to the spectral sequence \cite[Theorem 9]{MooreCohIII}, together with the identification of the corresponding terms $E_2^{p,q}$.
\end{proof}

\begin{lem} \label{lem:h2van}
Let $V$ be a finite-dimensional real or complex vector space on which $W_F$ acts through the map $W_F \to \Gamma$ by linear automorphisms. Then $H^2_c(W_F,V)=0$.
\end{lem}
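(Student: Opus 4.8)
The statement asserts $H^2_c(W_F, V) = 0$ for $V$ a finite-dimensional real or complex vector space on which $W_F$ acts through a finite quotient $\Gamma_{E/F}$. The plan is to reduce to a cohomology computation over the Weil group of an extension where the action is trivial, and then to exploit the structure of $W_F$ as an extension of $\Gamma_{E/F}$ by $W_E$, together with the well-known vanishing of continuous cohomology of $W_E$ with real or complex vector space coefficients in the relevant degrees.

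First I would pass from continuous to measurable cohomology using Lemma \ref{lem:cc_comp}: since $V$ is a finite-dimensional real or complex vector space and $W_F$ is locally compact, $\sigma$-compact, of finite Lebesgue covering dimension, we have $H^2_c(W_F,V) = H^2_m(W_F,V)$, and likewise for any closed subgroup. This lets me invoke the Hochschild--Serre machinery of Lemma \ref{lem:cc_infres}. Let $E/F$ be a finite Galois extension through which $W_F$ acts on $V$, so that $W_E$ acts trivially. Apply the inflation-restriction sequence to the closed normal subgroup $W_E \subset W_F$ with quotient $\Gamma_{E/F} = W_F/W_E$, a finite group. The key inputs are: (i) $H^i_m(W_E, V) = H^i_c(W_E, V)$ vanishes for $i = 1, 2$ because $V$ is a $\mathbb{Q}$-vector space and $W_E$, being an extension of a procyclic group by a profinite group, has continuous cohomology with such coefficients that vanishes in positive degrees (for $i=1$ this is standard; for $i=2$ one can use that $W_E^{\mathrm{ab}}$ is $E^\times$ and a limit argument, or cite the finite cohomological dimension statements for the Weil group); (ii) $H^i_m(\Gamma_{E/F}, W) = 0$ for $i > 0$ and any $\mathbb{Q}[\Gamma_{E/F}]$-module $W$, since $\Gamma_{E/F}$ is finite and $W$ is uniquely divisible by $|\Gamma_{E/F}|$, hence cohomologically trivial. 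With $H^0_m(W_E,V) = V$ and $H^1_m(W_E,V) = 0$, the longer seven-term sequence of Lemma \ref{lem:cc_infres} (applicable since $H^2(W_E,V)$ is Hausdorff, indeed zero) shows that $H^2_m(W_F,V)_1$, the kernel of restriction to $W_E$, sits between $H^2_m(\Gamma_{E/F}, V)$ and $H^1_m(\Gamma_{E/F}, H^1_m(W_E,V)) = 0$, and since $H^2_m(\Gamma_{E/F},V) = 0$ we get $H^2_m(W_F,V)_1 = 0$; combined with the vanishing of $H^2_m(W_E,V)$ into which $H^2_m(W_F,V)$ maps with kernel $H^2_m(W_F,V)_1$, we conclude $H^2_m(W_F,V) = 0$, hence $H^2_c(W_F,V) = 0$.

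The main obstacle I anticipate is establishing the vanishing $H^2_c(W_E, V) = 0$ when $W_E$ acts trivially on the vector space $V$, i.e. $H^2_c(W_E, \mathbb{R}) = 0$ and $H^2_c(W_E, \mathbb{C}) = 0$. In the archimedean case $W_E$ is a real Lie group (either $\mathbb{C}^\times$ or the nonabelian extension of $\mathrm{Gal}(\mathbb{C}/\mathbb{R})$ by $\mathbb{C}^\times$), and one computes continuous (equivalently differentiable, by Hochschild--Mostow) cohomology directly: $W_{\mathbb{C}} = \mathbb{C}^\times \simeq \mathbb{S}^1 \times \mathbb{R}$ has the continuous cohomology of $\mathbb{S}^1$ with trivial $\mathbb{R}$-coefficients, which vanishes above degree $1$; and the $W_{\mathbb{R}}$ case follows by another inflation-restriction step. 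In the non-archimedean case $W_E$ is an extension of $\mathbb{Z}$ (generated by Frobenius) by the inertia group, which is profinite; continuous cohomology of a profinite group with $\mathbb{Q}$-vector space coefficients vanishes in positive degrees, and the quotient $\mathbb{Z}$ has cohomological dimension $1$, so a spectral sequence argument gives $H^i_c(W_E, V) = 0$ for $i \geq 2$. Alternatively, one could sidestep this by citing directly that $W_F$ has the property $H^i_c(W_F, V) = 0$ for $i \geq 2$ and any finite-dimensional $\mathbb{R}$- or $\mathbb{C}$-vector space with algebraic action, which is essentially the content one wants; I would phrase the argument to make clear which elementary facts about $W_E$-cohomology are being used, and keep the reduction via Lemma \ref{lem:cc_infres} as the structural backbone.
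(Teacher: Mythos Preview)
Your overall strategy is sound, but there is a factual error: the claim that $H^1_c(W_E, V) = 0$ is false. With trivial action, $H^1_c(W_E, V) = \tx{Hom}_c(W_E, V) = \tx{Hom}_c(E^\times, V)$, and the normalized valuation (non-archimedean) or $\log|\cdot|$ (archimedean) furnishes a nonzero continuous homomorphism $E^\times \to \R$, so $H^1_c(W_E,V) \neq 0$. Fortunately the conclusion you actually need, namely $H^1_m(\Gamma_{E/F}, H^1_m(W_E, V)) = 0$, still holds: $H^1_c(W_E, V)=\tx{Hom}_c(E^\times,V)$ is itself a $\Q$-vector space and $\Gamma_{E/F}$ is finite, so its positive-degree cohomology with these coefficients vanishes by the same unique-divisibility argument you already invoke for $H^2_m(\Gamma_{E/F},V)$. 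With this patch your seven-term argument goes through.

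For comparison, the paper's proof is more direct. Instead of first passing to $W_E$ to trivialize the action and then still having to establish $H^2_c(W_E, V) = 0$ (which, as you acknowledge, is the real content and requires exactly the compact-subgroup reduction you sketch as the ``main obstacle''), the paper applies that compact-subgroup step immediately to $W_F$ itself: take $K$ to be the inertia subgroup (non-archimedean) or $\mb{S}^1 \subset \C^\times \subset W_F$ (archimedean), use $H^i_c(K, V) = 0$ for compact $K$ and $i > 0$ (Borel--Wallach), and then the quotient $W_F/K$ is $\Z$, or $\R$, or $\R \times \Gamma_{\C/\R}$, whose second cohomology with vector-space coefficients vanishes by an easy direct check. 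Your detour through $\Gamma_{E/F}$ works once corrected, but it adds a layer that does not buy anything, since the compact-subgroup argument is the heart of the matter in either approach.
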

\begin{proof}
Using Lemma \ref{lem:cc_comp} we will identify $H_c(G,V)$ with $H_m(G,V)$ for $G$ being $W_F$ or any closed subgroup of it, and simply write $H(G,V)$.

We first note that if $K$ is a compact topological group acting continuously on $V$, then $H^i(K,V)=0$. This is \cite[Proposition IX.1.12]{BorWal} in the case that $V$ is complex (cf. the convention on p. 169 of loc. cit.). When $V$ is real, we have $H^i(K,V_\C)=0$ where $V_\C=V\otimes_\R \C$ is the complexification of $V$. But $V_\C=V\oplus V$ as a $K$-module, hence $H^i(K,V)$ is a direct summand of $H^i(K,V_\C)$ and therefore also zero. 

We now take $K$ to be a closed normal compact subgroup of $W_F$ that satisfies $H^1(K,V)=0$ and $H^2(K,V)=0$. From Lemma \ref{lem:cc_infres} we obtain the surjective homomorphism $H^2(W_F/K,V^K) \to H^2(W_F,V)$. But \cite[Theorem 10]{MooreCohIII} identifies $H^2(W_F/K,V^K)$ with the group of topological extensions of $W_F/K$ by $V^K$. So it is enough to find $K$ for which any topological extension of $W_F/K$ by $V^K$ splits (as an extension of topological groups).

When $F$ is non-archimedean and take $K$ to be the inertia subgroup of $W_F$. Since it is compact, we have $H^1(K,V)=H^2(K,V)=0$ by the above paragraph. Moreover $W_F/K=\Z$ and the splitting claim is clear. 

When $F=\C$ we take $K=\mb{S}^1$. Again by the above paragraph $H^1(K,V)=H^2(K,V)=0$. Moreover $W_F/K=\R_{>0} \cong \R$ acting trivially on $V^K=V$, and the splitting claim is again clear.

When $F=\R$, we take again $K=\mb{S}^1$ and the previous argument reduces to showing $H^2(W_F/K,V)=0$. But $W_F/K=\R_{>0} \times \Gamma=\R \times \Gamma$. We now take $K=\Gamma$. Since $K=\Z/2\Z$, the groups $H^i(K,V)$ are the usual Galois cohomology groups, and their vanishing for $i>0$ is immediate from the unique divisibility of $V$. Therefore we again reduce to the obvious vanishing of $H^2(\R,V^K)$.
\end{proof}

\begin{pro}[Rajan, Karpuk] \label{pro:cc_rajan}
Let $T$ be an $F$-torus. Then $H^2_c(W_F,\hat T)=0$.
\end{pro}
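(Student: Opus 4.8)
The strategy is to reduce the vanishing of $H^2_c(W_F,\hat T)$ to the vanishing of $H^2_c$ with coefficients in a torus to which Lemma~\ref{lem:h2van} and standard induction apply. First I would choose a finite Galois extension $K/F$ splitting $T$, so that $\hat T$ becomes a $\Gamma_{K/F}$-module, and work with the isomorphism $H^i_c(W_F,\hat T)=H^i_c(W_{K/F},\hat T)$ coming from inflation (using the identification of continuous cochains of $W_F$ with those of $W_{K/F}$, cf. Appendix~\ref{app:langtori}). Throughout, Lemma~\ref{lem:cc_comp} lets me identify $H^i_c$ with Moore's measurable cohomology $H^i_m$, so the Hochschild--Serre machinery of Lemma~\ref{lem:cc_infres} is available.

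The key device is the canonical $\Gamma$-stable decomposition $\hat T=\hat T^c\times\hat T^v$ recalled in Appendix~\ref{app:langtori}, where $\hat T^v=X^*(T)\otimes_\Z\R$ is a finite-dimensional real vector space with $W_F$ acting through $\Gamma$, and $\hat T^c=\tx{Hom}(X^*(T),\mb S^1)$ is a compact torus. Since cohomology commutes with this finite product, it suffices to show $H^2_c(W_F,\hat T^v)=0$ and $H^2_c(W_F,\hat T^c)=0$. The first is exactly Lemma~\ref{lem:h2van}. For the second, I would use the exponential sequence $1\to X^*(T)\to \tx{Hom}(X^*(T),\R)\to \hat T^c\to 1$ of $\Gamma$-modules, i.e. $1\to \Lambda\to \Lambda\otimes\R\to \hat T^c\to 1$ with $\Lambda=X_*(\hat T^c)$ a lattice. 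The long exact sequence in continuous cohomology (valid by Lemma~\ref{lem:cc_comp}, and the connecting maps exist since all terms have vanishing higher cohomology in the relevant range or are handled by Moore's theory) gives
\[
H^2_c(W_F,\Lambda\otimes\R)\to H^2_c(W_F,\hat T^c)\to H^3_c(W_F,\Lambda).
\]
The left term vanishes by Lemma~\ref{lem:h2van} again. So everything comes down to showing $H^3_c(W_F,\Lambda)=0$ for a finitely generated $\Gamma$-module $\Lambda$ with discrete topology.

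The last point is where the real work lies, and it is the main obstacle. For discrete coefficients, continuous cohomology of $W_F$ should reduce to ordinary cohomology of $W_{K/F}$ (a locally compact group that is an extension of a finite group by $C_K=W_K^{\tx{ab}}$ in the global-to-local picture, or more simply an extension of the finite group $\Gamma_{K/F}$-related quotient), and I would invoke the structure of $W_F$: when $F$ is non-archimedean, $W_F$ is an extension of $\Z$ by the (profinite) inertia group, so Lemma~\ref{lem:cc_infres} plus the fact that $\Z$ has cohomological dimension $1$ and profinite groups acting on finitely generated discrete modules have cohomology controlled by Galois cohomology (strict cohomological dimension $2$ of $F$, as used in Remark~\ref{rem:isofuse}) forces $H^3_c(W_F,\Lambda)=0$; when $F=\C$ or $F=\R$, $W_F$ is a real Lie group of dimension $\le 2$ and one argues directly as in the proof of Lemma~\ref{lem:h2van} by peeling off a compact subgroup ($\mb S^1$, resp. $\Gamma$) whose cohomology with discrete torsion-free coefficients is concentrated in degree $0$, reducing to $H^3$ of $\R$ or $\R\times\Gamma$, which vanishes. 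Assembling these, one gets $H^3_c(W_F,\Lambda)=0$ in all cases, hence $H^2_c(W_F,\hat T^c)=0$, hence $H^2_c(W_F,\hat T)=0$. I expect the delicate bookkeeping to be in making the Hochschild--Serre/inflation--restriction reductions rigorous in the topological (Moore) setting — in particular checking the Hausdorffness hypotheses needed for the seven-term sequence in Lemma~\ref{lem:cc_infres} — rather than in any individual cohomology computation.
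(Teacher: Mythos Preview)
The paper does not give a proof; it cites Rajan \cite[Theorem~2]{Raj04} for the result (proved there for $H^2_m$), invokes Lemma~\ref{lem:cc_comp} to pass to $H^2_c$, and mentions Karpuk's alternative non-archimedean argument. Your proposal is thus attempting something more ambitious than the paper does.

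Your reduction to $H^3_c(W_F,\Lambda)=0$ via the exponential sequence is a reasonable strategy, but the justification you offer for that vanishing has a genuine gap in the archimedean case. You propose to peel off $\mb S^1$ ``as in the proof of Lemma~\ref{lem:h2van}'', claiming its cohomology with discrete torsion-free coefficients is concentrated in degree~$0$. In Moore's measurable cohomology --- which is where Lemma~\ref{lem:cc_infres} gives you Hochschild--Serre --- this is false: the universal cover $0\to\Z\to\R\to\mb S^1\to 0$ is a non-split topological extension, so $H^2_m(\mb S^1,\Z)\neq 0$. Lemma~\ref{lem:h2van} works precisely because compact groups have vanishing higher cohomology with \emph{vector-space} coefficients; that vanishing fails for lattice coefficients. (In the naive continuous-cochain cohomology one does get $H^q_c(\mb S^1,\Z)=0$ for $q\geq 1$, since a continuous map from a connected space to a discrete one is constant, but then you lose the long exact sequence for the exponential sequence: a continuous map $(\C^\times)^n\to\hat T^c$ need not lift continuously to $\Lambda\otimes\R$.) In the non-archimedean case your appeal to $\tx{scd}(\Gamma_F)=2$ concerns $\Gamma_F$, not $W_F$; the passage from one to the other requires comparing the Hochschild--Serre spectral sequences for $I_F\subset\Gamma_F$ and $I_F\subset W_F$ and using that $H^q(I_F,\Lambda)$ is torsion for $q\geq 1$, which makes $H^1(\Z,-)$ and $H^1(\hat\Z,-)$ coincide on the relevant term --- this can be done, but it is not the argument you wrote.
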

\begin{proof}
This is \cite[Theorem 2]{Raj04}, who proves it for $H^2_m$, but it holds equally for $H^2_c$ by Lemma \ref{lem:cc_comp}. A different proof in the non-archimedean case was given in \cite[Theorem 3.2.2]{Karpuk}.
\end{proof}

\begin{lem} \label{lem:cc_surj_na}
Let $F$ be non-archimedean and $D$ be a complex diagonalizable group with algebraic $\Gamma$-action. Then 
\[ H^2_c(\Gamma,D^\circ) \to H^2_c(\Gamma,D) \to H^2_c(W_F,D) \to 0 \] 
is exact and $H^2_c(W_F,D) \to H^2_c(W_F,\pi_0(D))$ is an isomorphism.
\end{lem}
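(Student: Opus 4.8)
The strategy is to split the problem along the exact sequence $1 \to D^\circ \to D \to \pi_0(D) \to 1$, where $D^\circ$ is a complex torus (a diagonalizable group with connected identity component is a torus) and $\pi_0(D)$ is finite. I would run the long exact sequence in continuous cohomology associated to this short exact sequence, both for $\Gamma$ and for $W_F$, and compare them via the restriction maps.

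First, I would record the input facts. By Proposition \ref{pro:cc_rajan} applied to the torus dual to $D^\circ$, we have $H^2_c(W_F,D^\circ)=0$; and since $D^\circ$ is a complex torus, $H^3_c(\Gamma,D^\circ)$ can be controlled — here I would invoke the exponential sequence $1 \to X_*(D^\circ) \to \tx{Lie}(D^\circ) \to D^\circ \to 1$ together with $H^i_c(\Gamma, \tx{Lie}(D^\circ))=0$ for $i>0$ (Lemma \ref{lem:h2van} gives the vanishing in degree $2$; the analogous argument, or the fact that $\tx{Lie}(D^\circ)$ is uniquely divisible hence cohomologically trivial for the profinite group $\Gamma$, handles degree $3$), so that $H^3_c(\Gamma,D^\circ) \cong H^4_c(\Gamma,X_*(D^\circ))$, which vanishes because $F$ non-archimedean has strict cohomological dimension $2$. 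The same strict-cohomological-dimension input also gives $H^i_c(\Gamma, \pi_0(D))=0$ for $i>2$ since $\pi_0(D)$ is finite. Then the long exact sequence for $\Gamma$ yields $H^2_c(\Gamma,D^\circ) \to H^2_c(\Gamma,D) \to H^2_c(\Gamma,\pi_0(D)) \to H^3_c(\Gamma,D^\circ)=0$, so $H^2_c(\Gamma,D) \to H^2_c(\Gamma,\pi_0(D))$ is surjective.

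Next I would compare with $W_F$. The long exact sequence for $W_F$ gives $H^2_c(W_F,D^\circ)=0 \to H^2_c(W_F,D) \to H^2_c(W_F,\pi_0(D)) \to H^3_c(W_F,D^\circ)$. Since $\pi_0(D)$ is finite, its $W_F$-cohomology agrees with $\Gamma$-cohomology (continuous cochains on $W_F$ valued in a finite module factor through a finite quotient $\Gamma_{E/F}$); more carefully, one has the inflation map $H^2_c(\Gamma,\pi_0(D)) \to H^2_c(W_F,\pi_0(D))$, and Lemma \ref{lem:cc_surj_na}'s last assertion is precisely that, composed with $H^2_c(W_F,D)\to H^2_c(W_F,\pi_0(D))$, realizes $H^2_c(W_F,D)$. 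Concretely: the square relating the two long exact sequences, with vertical restriction maps, plus $H^2_c(W_F,D^\circ)=0$, shows $H^2_c(W_F,D) \hookrightarrow H^2_c(W_F,\pi_0(D))$; to get surjectivity I would use that the composite $H^2_c(\Gamma,D) \to H^2_c(\Gamma,\pi_0(D)) \to H^2_c(W_F,\pi_0(D))$ factors through $H^2_c(W_F,D)$ (by functoriality of the connecting maps under restriction $\Gamma \to W_F$) together with the surjectivity $H^2_c(\Gamma,D)\twoheadrightarrow H^2_c(\Gamma,\pi_0(D))$ established above and the surjectivity $H^2_c(\Gamma,\pi_0(D))\twoheadrightarrow H^2_c(W_F,\pi_0(D))$, which follows from $H^1_c(W_F,\pi_0(D))$-type considerations or, more simply, from the fact that for a finite module the inflation map in degree $2$ is surjective when $\Gamma$ has cohomological dimension $2$ (one can also cite that $H^2_c(W_F,\pi_0(D))$ is the Pontryagin dual of $H^0(F,-)$ of the dual finite group, matching $H^2(\Gamma,-)$). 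Finally, exactness of $H^2_c(\Gamma,D^\circ) \to H^2_c(\Gamma,D) \to H^2_c(W_F,D) \to 0$: the map $H^2_c(\Gamma,D) \to H^2_c(W_F,D)$ is surjective because it is the restriction map and I would identify its cokernel using the diagram and the vanishing $H^2_c(W_F,D^\circ)=0$ against $H^2_c(\Gamma,D^\circ)$; its kernel is the image of $H^2_c(\Gamma,D^\circ)$ because an element of $H^2_c(\Gamma,D)$ dying in $H^2_c(W_F,D)$ maps to something in $H^2_c(\Gamma,\pi_0(D))$ that dies in $H^2_c(W_F,\pi_0(D))$, but that restriction is injective (it is an isomorphism), forcing the element to come from $H^2_c(\Gamma,D^\circ)$.

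\textbf{Main obstacle.} The genuinely delicate point is the exactness at $H^2_c(\Gamma,D)$ in the first sequence, i.e. identifying $\ker\big(H^2_c(\Gamma,D)\to H^2_c(W_F,D)\big)$ with the image of $H^2_c(\Gamma,D^\circ)$. This requires knowing that restriction $H^2_c(\Gamma,\pi_0(D)) \to H^2_c(W_F,\pi_0(D))$ is injective (indeed an isomorphism) for finite coefficients, and that the various long exact sequences are compatible with restriction in a way that lets one chase the diagram cleanly. I expect the cleanest route is to first prove the statement for finite $D$ (where $W_F$- and $\Gamma$-cohomology coincide and everything is class-field-theoretic, cf. the computation that $H^2_c(W_F,Z(\hat G))$ is the Pontryagin dual of $K(F)$ in Proposition \ref{pro:section-k-z}), then for $D$ a torus (where $H^2_c(W_F,D)=0$ by Proposition \ref{pro:cc_rajan} and $H^2_c(\Gamma,D)$ is the obstruction), and finally assemble the general case by dévissage; throughout, the non-archimedean hypothesis enters only through $\tx{scd}(\Gamma)=2$ and Proposition \ref{pro:cc_rajan}.
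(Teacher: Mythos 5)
Your proposal follows essentially the same route as the paper: you set up the same commutative diagram comparing the two long exact sequences (for $\Gamma$ and for $W_F$) coming from $1\to D^\circ\to D\to\pi_0(D)\to 1$, and feed in the same three facts — $H^2_c(W_F,D^\circ)=0$ from Proposition \ref{pro:cc_rajan}, $H^3_c(\Gamma,D^\circ)=0$ from $\tx{scd}(\Gamma)=2$, and bijectivity of $H^2_c(\Gamma,\pi_0(D))\to H^2_c(W_F,\pi_0(D))$ — before chasing. The only substantive difference is in how the last bijectivity for the finite module $\pi_0(D)$ is justified: the paper simply cites \cite[Theorem 3.1.2]{Karpuk}, whereas your proposed shortcuts are a bit loose — ``inflation surjective in degree $2$ when $\tx{scd}(\Gamma)=2$'' is not a formal consequence of those hypotheses and would need an actual argument; the Pontryagin duality route you mention in passing is the correct one, but it in turn rests on the Weil-group version of local Tate duality, which amounts to the same input that Karpuk's theorem packages.
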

\begin{proof}
We consider the commutative diagram with exact rows
\[ \xymatrix{
H_c^2(\Gamma,D^\circ)\ar[r]\ar[d]&H_c^2(\Gamma,D)\ar[r]\ar[d]&H_c^2(\Gamma,\pi_0(D))\ar[r]\ar[d]&H_c^3(\Gamma,D^\circ)\ar[d]\\
H_c^2(W_F,D^\circ)\ar[r]&H_c^2(W_F,D)\ar[r]&H_c^2(W_F,\pi_0(D))\ar[r]&H_c^3(W_F,D^\circ)
}
\]
We have $H_c^2(W_F,D^\circ)=0$ by Proposition \ref{pro:cc_rajan} and $H_c^3(\Gamma,D^\circ)=0$ since $\tx{scd}(\Gamma)=2$. The map $H_c^2(\Gamma,\pi_0(D)) \to H_c^2(W_F,\pi_0(D))$ is bijective due to \cite[Theorem 3.1.2]{Karpuk}. The claim follows.
\end{proof}

\begin{rem} \label{rem:cc_nsurj_a}
The analogous claim in the archimedean case is false. When $F$ is complex, then $H^2_c(\Gamma,D)=0$. On the other hand, if we take $D=\mu_n(\C)$, then $H^2_c(W_F,D)=H^2_c(\C^\times,\mu_n(\C)) \neq 0$ by \cite[Theorem 10]{MooreCohIII}.

When $F$ is real, we take $D=\Z/3\Z$ with trivial $\Gamma$-action. Then $H_c^1(\C^\times,D)=0$, so Lemmas \ref{lem:cc_comp} and \ref{lem:cc_infres} give the exact sequence $H_c^2(\Gamma,D) \to H_c^2(W_F,D) \to H_c^2(\C^\times,D)^\Gamma \to H_c^3(\Gamma,D)$. But all $H_c^i(\Gamma,-)$ are 2-torsion groups, while $D$ is $2$-divisible, hence all these groups vanish, and we are left with the isomorphism $H_c^2(W_F,D) \to H_c^2(\C^\times,D)^\Gamma$. Again $H_c^2(\C^\times,D) \neq 0$.
\end{rem} 

\begin{lem} \label{lem:dual-k-z}
Let $S \to T$ be a homomorphism of $F$-tori with finite kernel $K$. Then
\begin{enumerate}
	\item The dual homomorphism $\hat T \to \hat S$ is surjective. 
	\item Write $\hat Z=\tx{ker}(\hat T \to \hat S)$ and $C=\tx{cok}(S \to T)$. Then
	\[ X^*(\hat Z^\circ) = X_*(C),\qquad X^*(\pi_0(\hat Z))=\tx{Hom}_\Z(X^*(K),\Q/\Z). \]
	\item The Pontryagin dual of the finite abelian group $K(F)$ is naturally identified with $H^2(W_F,\hat Z)$.
\end{enumerate}
\end{lem}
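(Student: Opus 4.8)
The plan is to prove the three items in order, reducing (2) and (3) to (1) together with standard cohomological machinery. For (1), the dual of $S \to T$ is the map $\hat T \to \hat S$ induced by $X_*(T) \leftarrow X_*(S)$ after applying $\tx{Hom}_\Z(-,\C^\times)$. Since $S \to T$ has finite kernel $K$, the cocharacter map $X_*(S) \to X_*(T)$ is injective with finite cokernel; concretely $X^*(K)=\tx{cok}(X_*(S) \to X_*(T))$ up to the usual duality. Applying the exact functor $\tx{Hom}_\Z(-,\C^\times)$ (exact because $\C^\times$ is divisible, hence injective as a $\Z$-module) to the short exact sequence $0 \to X_*(S) \to X_*(T) \to X^*(K)^\vee \to 0$ yields the surjection $\hat T \to \hat S$ with kernel $\hat Z := \tx{Hom}_\Z(X^*(K)^\vee,\C^\times)$, proving (1).

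For (2), I would identify $\hat Z$ more carefully. The group $\hat Z$ fits into $1 \to \hat Z \to \hat T \to \hat S \to 1$, and dualizing the cocharacter sequence shows $\hat Z^\circ$ is the subtorus with cocharacter lattice $X_*(\hat Z^\circ) = \tx{cok}(X_*(\hat S) \to X_*(\hat T))$, which by the standard identification of $X_*(\hat T)=X^*(T)$ becomes $\tx{cok}(X^*(S) \from X^*(T))^* $... more precisely, since $C = \tx{cok}(S \to T)$ is a torus (the cokernel of an isogeny-like map with finite kernel, hence a torus of the same dimension), one has the exact sequence of character lattices $0 \to X^*(C) \to X^*(T) \to X^*(S)$ and dually $0 \to X_*(S) \to X_*(T) \to X_*(C) \to 0$ is NOT exact — rather $X_*(C)$ receives $X_*(T)$ with finite cokernel. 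Tracking this through the duality $X^*(\hat Z) = \tx{cok}(X_*(S) \to X_*(T))$ and separating the torsion-free and torsion parts gives $X^*(\hat Z^\circ) = X_*(C)$ (the maximal torsion-free quotient) and $X^*(\pi_0(\hat Z)) = (\text{torsion of } X^*(\hat Z)) = \tx{Hom}_\Z(X^*(K),\Q/\Z)$, using $X^*(K) = \tx{Hom}(K,\mathbb G_m)$ and the snake lemma applied to multiplication comparisons. The cleanest route is to use $X^*(K)$ is finite, so $\tx{Ext}^1_\Z(X^*(K),\Z) = \tx{Hom}_\Z(X^*(K),\Q/\Z)$, and this Ext group is exactly the kernel of $X_*(\hat Z^\circ) \hookrightarrow X_*(\hat Z \otimes \Q)$-type comparison.

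For (3), I would combine (1), (2), and the Weil-group cohomology results from Appendix \ref{app:weil}. We have $K(F) = \tx{ker}(S(F) \to T(F))$ since taking $F$-points is left exact. By Langlands duality (Appendix \ref{app:langtori}), characters of $T(F)$ and $S(F)$ are $H^1(W_F,\hat T)$ and $H^1(W_F,\hat S)$ respectively, and the long exact sequence in $W_F$-cohomology attached to $1 \to \hat Z \to \hat T \to \hat S \to 1$ reads
\[ H^0(W_F,\hat T) \to H^0(W_F,\hat S) \to H^1(W_F,\hat Z) \to H^1(W_F,\hat T) \to H^1(W_F,\hat S). \]
The Pontryagin dual of $K(F) = \tx{ker}(S(F) \to T(F))$ is $\tx{cok}(\tx{Hom}(T(F),\C^\times) \to \tx{Hom}(S(F),\C^\times))$, i.e. $\tx{cok}(H^1(W_F,\hat T) \to H^1(W_F,\hat S))$. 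I would show this cokernel equals $H^2(W_F,\hat Z)$: from the long exact sequence, $\tx{cok}(H^1(W_F,\hat T) \to H^1(W_F,\hat S))$ injects into $H^2(W_F,\hat Z)$, and surjectivity onto all of $H^2(W_F,\hat Z)$ follows from the vanishing $H^2(W_F,\hat T)=0$, which is Proposition \ref{pro:cc_rajan} (Rajan--Karpuk).

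\textbf{Main obstacle.} The delicate point is (2) — keeping the duality bookkeeping straight between $X^*$ and $X_*$, and correctly separating the connected part $\hat Z^\circ$ from $\pi_0(\hat Z)$ so that $X^*(\hat Z^\circ)$ comes out as $X_*(C)$ (a lattice) rather than some quotient, while $X^*(\pi_0(\hat Z))$ comes out as $\tx{Hom}_\Z(X^*(K),\Q/\Z)$ (a finite group). The rest is assembling (1) with the exactness of $\tx{Hom}_\Z(-,\C^\times)$ on lattices and invoking $H^2(W_F,\hat T)=0$ for (3), both of which are essentially formal given the cited results; I expect no difficulty there. One should also double-check the compatibility: that the map of character groups $\tx{Hom}(T(F),\C^\times) \to \tx{Hom}(S(F),\C^\times)$ is indeed identified under Langlands duality with restriction $H^1(W_F,\hat T) \to H^1(W_F,\hat S)$ functorially in the torus, which is part of the functoriality statement recalled in Appendix \ref{app:langtori}.
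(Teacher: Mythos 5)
Your overall strategy matches the paper's: for (1) and (2), dualize the four-term exact sequence of character lattices and split the result into its free part $X_*(C)$ and its torsion part $\tx{Ext}^1_\Z(X^*(K),\Z) = \tx{Hom}_\Z(X^*(K),\Q/\Z)$; for (3), use the long exact cohomology sequence together with Proposition \ref{pro:cc_rajan}. However, your (1) has a gap that needs repair. The short exact sequence $0 \to X_*(S) \to X_*(T) \to X^*(K)^\vee \to 0$, with a finite right-hand term, holds only when $\tx{rank}\,X_*(S) = \tx{rank}\,X_*(T)$, i.e.\ only when $S \to T$ is an isogeny. The lemma's hypothesis is merely that the kernel $K$ is finite: the cokernel $C = \tx{cok}(S \to T)$ may have positive dimension, and then $X_*(T)/X_*(S)$ has rank $\dim C > 0$ and is certainly not $X^*(K)^\vee$. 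Consequently the identification $\hat Z = \tx{Hom}_\Z(X^*(K)^\vee,\C^\times)$ (a finite group) at the end of your (1) is wrong in general; $\hat Z^\circ$ is a torus of dimension $\dim C$. The surjectivity of $\hat T \to \hat S$ survives, since it only uses injectivity of $X_*(S)\to X_*(T)$, which does hold. In (2) you partially catch this, but the bookkeeping is still off; for instance ``$X_*(C)$ receives $X_*(T)$ with finite cokernel'' is false: $X_*(T) \to X_*(C)$ is surjective because $X^*(C)$ is a saturated sublattice of $X^*(T)$, and the torsion sits in the kernel of that surjection modulo $X_*(S)$, not a cokernel. The clean route, which is the paper's, is to apply $\tx{Hom}_\Z(-,\Z)$ to $0 \to X^*(C) \to X^*(T) \to X^*(S) \to X^*(K) \to 0$, split into two short exact sequences, and read off that $X^*(\hat Z) = X_*(T)/X_*(S)$ is an extension (which then splits) of the lattice $X_*(C)$ by the finite group $\tx{Hom}_\Z(X^*(K),\Q/\Z)$.

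Your (3) is the paper's argument with a cosmetic difference: you work with $H^1(W_F,-)$ and $\C^\times$-valued characters, where the paper restricts to the unitary groups $H^1_u$ and the compact parts $\hat T^c, \hat S^c$ so as to line up directly with Pontryagin duality of $S(F)$ and $T(F)$ via Lemma \ref{lem:langunit}. Both routes ultimately reduce to the vanishing of $H^2(W_F,\hat T)$ (resp.\ its compact direct factor) from Proposition \ref{pro:cc_rajan}, and both need the same small observations: that $\mb{S}^1$ (resp.\ $\C^\times$) is injective as a locally compact abelian group, so the dual of the left-exact sequence $0 \to K(F) \to S(F) \to T(F)$ is right exact, and that for the finite group $K(F)$ the $\mb{S}^1$- and $\C^\times$-valued characters coincide.
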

\begin{proof}
Statements (1) and (2) follow by applying $\tx{Hom}_\Z(-,\Z)$ to the exact sequence $0 \to X^*(C) \to X^*(T) \to X^*(S) \to X^*(K) \to 0$ and using the identification $\tx{Hom}_\Z(X^*(K),\Q/\Z) \to \tx{Ext}^1_\Z(X^*(K),\Z)$.

(3) By Pontryagin duality it is enough to show that in the piece of the long exact cohomology sequence $H^1_u(W_F,\hat T) \to H^1_u(W_F,\hat S) \to H^2(W_F,\hat Z)$ the second map, given by the connecting homomorphism, is surjective.

The preimage of $\hat S^c$ in $\hat T$ equals $\hat T^c$ and this leads to the exact sequence $1 \to \hat Z \to \hat T^c \to \hat S^c \to 1$. Lemma \ref{lem:langunit} reduces the problem to the vanishing of $H^2(W_F,\hat T^c)$. But this group is a direct factor of $H^2(W_F,\hat T)$, which vanishes according to Proposition \ref{pro:cc_rajan}.
\end{proof}

\subsection{Review of the algebraic fundamental group} \label{app:borovoi}

Let $F$ be a field, $F^s$ a fixed separable closure, and $\Gamma$ the Galois group of $F^s/F$. Let $G$ be a connected reductive $F$-group.

We briefly recall the concept of the universal maximal torus $T$ of $G$. Given any two Borel pair $(T_i,B_i)$ of $G_{F^s}$, for $i=1,2$, every $g \in G(F^s)$ such that $\tx{Ad}(g)(T_1,B_1)=(T_2,B_2)$ induces the same isomorphism $T_1 \to T_2$. We turn the set of Borel pairs of $G_{F^s}$ into a small category, in which there is a unique arrow between any two Borel pairs, and we have the functor from this small category into the category of tori which sends $(T_1,B_1)$ to $T_1$ and the unique arrow $(T_1,B_1) \to (T_2,B_2)$ to the isomorphism $T_1 \to T_2$. The limit of this functor is the universal maximal torus $T$. It comes equipped with the following additional structures:
\begin{enumerate}
	\item A root system $R(T,G) \subset X^*(T)$ equipped with a basis $\Delta \subset R(T,G)$.
	\item A root system $R^\vee(T,G) \subset X_*(T)$ equipped with a basis $\Delta^\vee \subset R^\vee(T,G)$.
	\item An $F$-structure for which the $\Gamma$-action leaves $\Delta$ and $\Delta^\vee$ stable. In this way, $(X^*(T),\Delta,X_*(T),\Delta^\vee)$ becomes a based root datum with $\Gamma$-action.
	\item An action of the Weyl group $\Omega(T,G)$ of the above root datum on $T$.
\end{enumerate}

Write $T_\tx{sc}$ for the torus determined by $X_*(T_\tx{sc})=Q^\vee$, where $Q^\vee \subset X_*(T)$ is the span of $R^\vee(T,G)$. Write further $T_\tx{ad}$ for the torus determined by $X^*(T_\tx{ad})=Q$, where $Q \subset X^*(T)$ is the span of $R(T,G)$. Then $T_\tx{sc}$ and $T_\tx{ad}$ are $F$-tori and we have the homomorphisms $T_\tx{sc} \to T \to T_\tx{ad}$, whose composition is an isogeny.

The Langlands dual group $\hat G$ of $G$ is the connected reductive group, defined over some fixed algebraically closed field, whose universal maximal torus $\hat T$ is equipped with the based root datum dual to that of $G$. Choosing a pinning of $\hat G$ the $\Gamma$-action on the based root datum lifts to a $\Gamma$-action on $\hat G$ by algebraic automorphisms. 

The algebraic fundamental group $\pi_1(G)$ defined by Borovoi, cf. \cite[\S1]{Brv98} is the finitely generated (discrete) abelian group with $\Gamma$-action defined as $X_*(T)/Q^\vee$. The assignment $G \mapsto \pi_1(G)$ is a covariant functor from the category of connected reductive $F$-groups to the category of $\Gamma$-modules. There is a natural identification $\pi_1(G) = X^*(Z(\hat G))$.

\end{appendices}

\bibliographystyle{amsalpha}
\bibliography{../../../TexMain/bibliography.bib}

\end{document}